\documentclass[11pt,letterpaper]{amsart}
\usepackage{amsmath,amssymb,hyperref}

\usepackage{tikz}
\usetikzlibrary{cd,decorations.markings,patterns,perspective}

\usepackage[bb=ams,scr=euler]{mathalpha}
\usepackage{enumitem}
\setenumerate{label=(\arabic*)}

\usepackage{thmtools}
\declaretheorem[name=Theorem,numberwithin=section]{theorem}
\declaretheorem[name=Corollary,sibling=theorem]{corollary}
\declaretheorem[name=Lemma,sibling=theorem]{lemma}
\declaretheorem[name=Claim,sibling=theorem]{claim}
\declaretheorem[name=Proposition,sibling=theorem]{proposition}
\declaretheorem[name=Definition,sibling=theorem]{definition}

\declaretheoremstyle[bodyfont=\normalfont]{remarkstyle}
\declaretheorem[style=remarkstyle,name=Remark,sibling=theorem]{remark}

\makeatletter
\newcommand{\abs}[1]{\left|#1\right|}
\newcommand{\bd}{\partial}
\newcommand{\C}{\mathbb{C}}

\renewcommand{\d}{\mathrm{d}}
\newcommand{\id}{\mathit{id}}
\newcommand{\pr}{\mathrm{pr}}

\newcommand{\R}{\mathbb{R}}
\newcommand{\set}[1]{\left\{#1\right\}}
\newcommand{\Z}{\mathbb{Z}}
\newcommand{\coker}{\mathop{\mathrm{coker}}}

\renewcommand\section{\@startsection{section}{1}{0pt}{-3.5ex \@plus -1ex \@minus -.2ex}{2.3ex \@plus.2ex}{\centering\bfseries}}
\renewcommand{\subsection}{\@startsection{subsection}{2}\z@{.5\linespacing\@plus.7\linespacing}{-.5em}{\normalfont\bfseries}}
\renewcommand{\subsubsection}{\@startsection{subsubsection}{3}%
  \z@{.5\linespacing\@plus.7\linespacing}{-.5em}%
  {\normalfont\bfseries}}
\makeatother

\author{Dylan Cant}
\email{dylan@dylancant.ca}
\address{Départment de mathématiques et de statistique, Université de Montreal, Pavillon André-Aisenstadt, 2920 Chemin de la Tour, Montreal, Quebec, H3T 1J4, Canada}

\author{Eric Kilgore}
\email{eric.kilgore@usc.edu}
\address{Department of Mathematics, University of Southern California, 3620 S. Vermont Ave, KAP 104, Los Angeles, CA 90089}

\author{Jun Zhang}
\email{jzhang4518@ustc.edu.cn}
\address{The Institute of Geometry and Physics, University of Science and Technology of China, 96 Jinzhai Road, Hefei, Anhui, 230026, China}

\date{\today}

\begin{document}
\title[Equivariant contact Floer
cohomology]{Equivariant Floer cohomology for
  contactomorphisms of quotient spaces}
\begin{abstract}
  This paper establishes the orderability of
  contact manifolds which are quotients of
  fillable contact manifolds under finite group
  actions compatible with the filling, the
  prototypical example being $\mathbb{R}P^{2n-1}$
  as the quotient of $S^{2n-1}$. Our approach
  employs an equivariant formulation of the
  so-called contact Floer cohomology theory. This
  leads us to develop an analogue of Givental's
  nonlinear Maslov index using the
  $\mathbf{k}[[x]]$-module structure on an
  equivariant version of contact Floer
  cohomology. A key idea is that mapping cones of
  continuation maps detect crossings with the
  discriminant (recall that Givental's index is a
  continuous integer valued function on the
  complement of the discriminant). To properly
  handle the inherent non-canonicity in defining
  such mapping cones, we lift the structure of
  contact Floer cohomology to chain level by
  defining it as an $\infty$-functor on a suitable
  $\infty$-categorification of the
  Eliashberg--Polterovich orderability relation on
  the universal cover of the contactomorphism
  group.
\end{abstract}

\maketitle

\section{Introduction}
\label{sec:introduction}

Let $G$ denote a finite group; our main results
concern rigidity phenomena for contact manifolds
$Y$ which admit the following additional
structure:
\begin{definition}\label{definition:G-filling}
  A \emph{$G$-filling} of a compact contact
  manifold $Y$ is an open\footnote{Here
    \emph{open} means that each connected
    component of $W$ is non-compact.}
  convex-at-infinity symplectic manifold
  $(W, \omega)$ with a contact-at-infinity
  symplectic $G$-action\footnote{Each $g\in G$
    acts by a symplectomorphism of $W$ which
    commutes with the Liouville flow in the convex
    end of $W$; see
    \cite[\S2.1]{alizadeh-atallah-cant-math-z-2025}
    for our conventions on convex ends.}  whose
  ideal restriction acts freely on the ideal
  contact boundary $\bd W$, and a
  contactomorphism\footnote{Note that $Y$ inherits
    a preferred coorientation from any
    $G$-filling.} $Y\to \bd W/G$. We say that $Y$
  admits an \emph{aspherical $G$-filling} if the
  filling can be chosen so that $\omega$ vanishes
  on spherical homology classes.
\end{definition}

In a sentence: we prove $Y$ is orderable in the
sense of \cite{eliashberg-polterovich-gafa-2000}
if it admits an aspherical $G$-filling $W$ and the
$G$-action on $W$ is not free.

\subsection{Background}
\label{sec:background}

To start, let us recall some basic concepts in
contact geometry. Let $(Y^{2n-1}, \xi)$ be a
contact manifold. The group of diffeomorphisms
preserving its contact structure is denoted by
$\mathrm{Cont}(Y)$; elements of this group are
called \emph{contactomorphisms}. A \emph{contact
  isotopy}\footnote{To lighten the notation, we
  use the symbol $\varphi_{t}$ rather than, say
  $\set{\varphi_{t}:t\in [0,1]}$, to denote
  contact isotopies. In the same way, a path of
  contact isotopies is denoted
  $\varphi_{s,t}$. This notational convention is
  the same as the one used in, e.g.,
  \cite{cant-sh-barcode,
    cant-hedicke-kilgore-arXiv-2023,
    cant-arXiv-2024}.}  is a path
$\varphi_t\in \mathrm{Cont}(Y)$, defined for
$t\in [0,1]$, based at $\varphi_{0}=\id$, and we
denote by $\mathrm{CI}(Y)$ the group of all such
contact isotopies. The quotient of
$\mathrm{CI}(Y)$ by the subgroup of contractible
loops is the universal cover of
$\mathrm{Cont}(Y)$.

Attempts to study $\mathrm{Cont}(Y)$ and its
universal cover have long been a major theme of
contact geometry. In
\cite{eliashberg-polterovich-gafa-2000},
Eliashberg and Polterovich propose to study these
groups via the following relation: let us say that
$\varphi_{0,t}\le \varphi_{1,t}$ if there is a
path in $\mathrm{CI}(Y)$ from $\varphi_{0,t}$ to
$\varphi_{1,t}$, say $\varphi_{s,t}$, such that:
\begin{equation}\label{eq:non-neg-path}
  s\mapsto \varphi_{s,1}(y)\text{ is never negatively transverse to $\xi$ for each $y\in Y$.}
\end{equation}
Note that, in order for this to make sense, we
require that $\xi$ is cooriented.

If $\varphi_{0,t}$ and $\varphi_{1,t}$ differ by a
contractible loop, then they are mutually related
by $\le$, and so the relation descends to the
universal cover of $\mathrm{Cont}(Y)$. If the
induced relation on the universal cover of
$\mathrm{Cont}(Y)$ is a partial order, then $Y$ is
called \emph{orderable}; otherwise, $Y$ is called
\emph{non-orderable}.

It was shown in
\cite{eliashberg-polterovich-gafa-2000} that
certain contact manifolds are orderable:
\begin{itemize}
\item real projective spaces $\mathbb{R}P^{2n-1}$
  (with standard contact structure),
\item prequantization bundles over certain
  symplectic manifolds,
\item spherical cotangent bundles $ST^{*}M$, for
  certain $M$.\footnote{For the state of the art
    result that all spherical cotangent bundles
    are orderable we refer the reader to
    \cite[Theorem~1.1.]{chernov-nemirovski-JSG-2016}.}
\end{itemize}
Since \cite{eliashberg-polterovich-gafa-2000},
much research has been done concerning
orderability of contact manifolds. A particularly
noteworthy milestone is
\cite{eliashberg-kim-polterovich-GT-2006} which
establishes the non-orderability of the ideal
boundary of stabilizations $W\times \C^{d}$ of
Liouville manifolds $W$ when $d\ge 2$; in
particular, $S^{3},S^{5},\dots$, with their
standard contact structure, are all
non-orderable.\footnote{See also
  \cite{hedicke-shelukhin-arXiv-2024} for the case
  of $W\times \C$ if $W$ is $T^{*}S^{1}$ or is a
  Weinstein manifold with dimension at least $4$.}

The tool used to prove orderability of
$\mathbb{R}P^{2n-1}$ is Givental's ``non-linear
generalization of the Maslov index'' from
\cite{givental-nl-maslov-advsov,givental-nl-maslov-LMS},
defined on the universal cover of the
contactomorphism group of projective spaces using
equivariant cohomology of generating
functions. Let us recall here that Givental's
index is a map
$\mu:\mathrm{CI}(\mathbb{R}P^{2n-1})\to \Z$
satisfying:

\begin{enumerate}[label=(G\arabic*)]
\item\label{item:G-monotonicity}
  \emph{(monotonicity)}
  $\mu(\varphi_{0,t})\le \mu(\varphi_{1,t})$ if
  $\varphi_{0,t}\le \varphi_{1,t}$;
\item\label{item:G-continuity} \emph{(continuity
    from above)}
  $\mu(\varphi_{t})=\lim_{s\to
    0+}\mu(R_{st}\varphi_{t})$ for any positive
  contact isotopy $R_{s}$;\footnote{A positive
    contact isotopy $R_{s}$ is one for which the
    curves $s\mapsto R_{s}(p)$ are positively
    transverse to the contact distribution}
\item\label{item:G-normalization}
  \emph{(normalization)} $\mu(\id)=0$;
\item\label{item:G-non-triviality}
  \emph{(non-triviality)} $\mu$ attains
  arbitrarily large values;
\item\label{item:G-discriminant}
  \emph{(discriminant)} $\mu$ is constant on the
  path-connected components of the complement of
  the discriminant.
\end{enumerate}
Recall the \emph{discriminant} in $\mathrm{CI}(Y)$
is the subset of those $\varphi_{t}$ such that
$\varphi_{1}$ has a fixed point $x$ satisfying
$\varphi_{1}^{*}\alpha_{x}=\alpha_{x}$, for
some/any contact form $\alpha$ near $x$.

One of our main results (Theorem
\ref{theorem:main-G}) involves the construction of
invariants satisfying \ref{item:G-monotonicity}
through \ref{item:G-discriminant} for contact
manifolds $Y$ other than real projective spaces,
and using Floer theory (settling Givental's claim
in \cite[pp.\,43]{givental-nl-maslov-LMS} that a
Floer theoretic approach to his index should be
possible).

\begin{remark}
  Givental's index famously generalizes the Maslov
  index: under the inclusion
  $\mathrm{Sp}(2n)\to
  \mathrm{Cont}(\mathbb{R}P^{2n-1})$ his index
  pulls-back to the Maslov index; in particular,
  this yields \ref{item:G-normalization} and
  \ref{item:G-non-triviality}. The index we define
  in Theorem \ref{theorem:main-G} also has this
  property, see Proposition
  \ref{prop:agrees-with-CZ}.
\end{remark}

Property \ref{item:G-discriminant} suggests that
``sufficiently positive paths'' must intersect the
discriminant. This is related to the dynamics of
Reeb vector fields. Recall that a \emph{Reeb
  vector field} is any vector field $R$ whose flow
preserves $\xi$ and which is positively
transverse\footnote{It can be shown that each Reeb
  vector field satisfies $\alpha(R)=1$ and
  $\d\alpha(R,-)=0$ for a unique contact form
  $\alpha$; indeed, many authors define Reeb
  vector fields in this way.} to the contact
distribution $\xi$. Let us denote the time $s$
flow of $R$ by the symbol $R_{s}$. It then follows
that:
\begin{equation*}
  \psi_{s,t}=\varphi_{t}^{-1}\circ R_{st}\text{ (defined for $s\in \R$)}
\end{equation*}
satisfies
$\lim_{s\to\pm\infty}\mu(\psi_{s,t})=\pm\infty$. In
particular, since Givental's index is locally
constant on the complement of the discriminant,
$\psi_{s,t}$ must intersect the discriminant for
infinitely many values of $s$.

In Sandon's work \cite{sandon-ann-inst-four-2011,
  sandon-equivariant-JSG-11,
  sandon-int-j-math-2012,
  sandon-geom-dedicata-2013}, the values of $s$
when $\psi_{s,t}$ intersects the discriminant were
discovered to be the critical values of suitable
generating functions and were called \emph{lengths
  of translated points}.

\begin{definition}\label{definition:spectrum}
  The \emph{spectrum} $\mathrm{Spec}_{R}(\varphi_{t})$ of $\varphi_{t}$ relative to
  a Reeb flow $R$, is the set of numbers $s\in \R$
  such that $\varphi_{t}^{-1}\circ R_{st}$ lies in
  the discriminant.
\end{definition}

The work of \cite{albers-merry-JFPTA-2013,
  albers-merry-JSG-2018} develops an elliptic
Morse theory for a pair $(R,\varphi_{t})$ whose
associated spectrum of critical values is
$\mathrm{Spec}_{R}(\varphi_{t})$; their theory is
based on the Rabinowitz--Floer homology (RFH)
theory of \cite{cieliebak-frauenfelder-PJM-2009}.

The approaches of Sandon and Albers--Merry led to
the introduction of \emph{spectral invariants},
namely, functions $c_{R}:\mathrm{CI}(Y)\to \R$
satisfying:
\begin{enumerate}[label=(R\arabic*)]
\item\label{item:R-spectrality}
  \emph{(spectrality)}
  $c_{R}(\varphi_{t})\in
  \mathrm{Spec}_{R}(\varphi_{t})$.
\end{enumerate}
The existence of such a spectral invariant is not
guaranteed, as it implies the spectrum is
non-empty, a fact which is not always true; see
\cite{cant-JSG-2024}.

More recently \cite{cant-sh-barcode,
  djordjevic_uljarevic_zhang, cant-arXiv-2024,
  djordjevic-uljarevic-zhang-arXiv-2025}
constructed such spectral invariants for certain
contact manifolds $Y$ using Hamiltonian Floer
(co)homology in a symplectic filling; one requires
that $Y$ admits a filling $W$ satisfying certain
hypotheses; we refer the readers to
\cite{cant-arXiv-2024,
  djordjevic-uljarevic-zhang-arXiv-2025} for more
details. This led to the construction of spectral
invariants satisfying \ref{item:R-spectrality} and
four additional properties:
\begin{enumerate}[label=(R\arabic*),resume]
\item\label{item:R-monotonicity}
  \emph{(monotonicity)}
  $c_{R}(\varphi_{0,t})\le c_{R}(\varphi_{1,t})$
  if $\varphi_{0,t}\le \varphi_{1,t}$,
\item\label{item:R-continuity-from-above}
  \emph{(continuity from above)}
  $c_{R}(\varphi_{t})=\lim_{s\to 0+}
  c_{R}(R_{st}\varphi_{t})$, where $R_{s}$ is any
  Reeb flow,
\item\label{item:R-normalization}
  \emph{(normalization)} $c_{R}(\id)=0$,
\item\label{item:R-sub-additivity}
  \emph{(sub-additivity)}
  $c_{R}(\varphi_{0,t}\varphi_{1,t})\le
  c_{R}(\varphi_{0,t})+c_{R}(\varphi_{1,t})$.
\end{enumerate}
One of our main results (Theorem
\ref{theorem:main-R}) guarantees the existence of
such measurements under the assumption that $Y$
admits a $G$-filling $W$ with at least one point
with non-trivial stabilizer (for the $G$-action).

Such a spectral invariant is sufficient to
guarantee the orderability of $Y$:
\begin{proposition}\label{proposition:orderable}
  The existence of $c_{R}$ satisfying
  \ref{item:R-spectrality} through
  \ref{item:R-sub-additivity} implies $Y$ is
  orderable.
\end{proposition}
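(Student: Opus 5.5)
We argue by contradiction, using the standard criterion of Eliashberg--Polterovich: $Y$ is non-orderable if and only if there is a contractible loop of contactomorphisms which is positive. Equivalently, non-orderability gives a positive contact isotopy $\varphi_{t}$ with $\varphi_{t}\le \id$; by reparametrizing and concatenating we may instead work with an element $\psi_{t}$ satisfying $R_{\epsilon t}\le \psi_{t}$ and $\psi_{t}\le \id$ for some fixed Reeb flow $R$ and some $\epsilon>0$. Here we use that any positive isotopy dominates a small Reeb flow $R_{\epsilon t}$, by compactness of $Y$.

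The plan is to show that the spectral invariant $c_{R}$ must be simultaneously positive and non-positive on some element. First, \ref{item:R-monotonicity} and \ref{item:R-normalization} give $c_{R}(\psi_{t})\le c_{R}(\id)=0$. Second, we bound $c_{R}(R_{\epsilon t})$ from below. Using \ref{item:R-spectrality}, the value $c_{R}(R_{\epsilon t})$ lies in $\mathrm{Spec}_{R}(R_{\epsilon t})$, which is the set of $s$ such that $R_{(s-\epsilon)t}$ lies in the discriminant. This is exactly $\epsilon$ plus the set of periods of closed Reeb orbits (including $0$), i.e.\ $\epsilon+\mathcal{P}$, where $\mathcal{P}=\set{0}\cup\pm\{\text{periods}\}$.

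To pin down which value is attained, we use monotonicity along the family $R_{\delta t}$ for $\delta\in[0,\epsilon]$. The function $\delta\mapsto c_{R}(R_{\delta t})$ is non-decreasing by \ref{item:R-monotonicity}. It takes values in $\delta+\mathcal{P}$, and $\mathcal{P}$ is closed and nowhere dense, with $0$ isolated by the minimal Reeb period $T_{0}>0$. For small $\delta$, continuity from above \ref{item:R-continuity-from-above} at $\delta=0$ forces $c_{R}(R_{\delta t})\to 0$, so $c_{R}(R_{\delta t})=\delta$ for $\delta<T_{0}$. Shrinking $\epsilon$ if necessary, which keeps $R_{\epsilon t}\le\psi_{t}$ valid, we get $c_{R}(R_{\epsilon t})=\epsilon>0$. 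Monotonicity then gives $0<\epsilon\le c_{R}(\psi_{t})\le 0$, a contradiction.

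The main obstacle is the reduction step: producing from a non-orderability witness an element squeezed between $R_{\epsilon t}$ and $\id$ in the relation on $\mathrm{CI}(Y)$, rather than only on the universal cover. We handle it as follows. If $\le$ fails antisymmetry, there exist $a\le b\le a$ with $a\ne b$, and then there is a non-negative, non-constant contractible loop. By the Eliashberg--Polterovich argument (\cite{eliashberg-polterovich-gafa-2000}, Criterion~1.2.C), this can be upgraded to a positive contractible loop $\ell$. Writing $\ell$ as an element of $\mathrm{CI}(Y)$ equivalent to $\id$, positivity gives $R_{\epsilon t}\le \ell$, and contractibility gives $\ell\le\id$; we take $\psi_{t}=\ell$. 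Sub-additivity \ref{item:R-sub-additivity} is not strictly needed in this route. It could alternatively be used to run the argument via $c_{R}(\ell^{k})\le k\,c_{R}(\ell)$, if one prefers to avoid the fine discreteness argument near $0$.
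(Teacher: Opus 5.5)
Your argument is correct, but it takes a different route from the paper. The paper first proves the Lipschitz estimate of Proposition~\ref{proposition:lipschitz}, which is where \ref{item:R-sub-additivity} and the Reeb spectrum gap enter (to get $c_{R}(R_{st})\le s$). It then deduces $c_{R}(R_{st})=s$, so that Shelukhin's distance satisfies $\mathrm{dist}_{\alpha}(\id,R_{st})\ge s$ and is unbounded along the Reeb flow. Finally it invokes Hedicke's theorem that this unboundedness forces orderability.

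You instead go directly through the Eliashberg--Polterovich criterion. A positive contractible loop $\ell$ would satisfy $R_{\epsilon t}\le \ell\le \id$ for small $\epsilon$, and monotonicity, normalization, spectrality and the Reeb gap then give the contradiction $0<c_{R}(R_{\epsilon t})\le c_{R}(\ell)\le 0$.

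What each route buys:
\begin{itemize}
\item Your route is more elementary. It relies only on the classical criterion rather than Hedicke's deeper result, and it proves a stronger statement, since \ref{item:R-sub-additivity} is never used.
\item It also does not need \ref{item:R-continuity-from-above}. Monotonicity gives $c_{R}(R_{\delta t})\ge c_{R}(\id)=0$, and $0\notin \delta+\mathcal{P}$ for $0<\delta<T_{0}$, so $c_{R}(R_{\delta t})>0$ directly.
\item The paper's route, in exchange, yields quantitative byproducts: Lipschitz continuity of $c_{R}$ with respect to $\mathrm{dist}_{\alpha}$, and the explicit lower bound on the Shelukhin distance to Reeb flows.
\end{itemize}

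One small imprecision: without sub-additivity, monotonicity and right-continuity do not exclude a jump of $c_{R}(R_{\delta t})$ from $\delta$ to $\delta+T_{0}$ at some $\delta<T_{0}$. So your claim that $c_{R}(R_{\delta t})=\delta$ for all $\delta<T_{0}$ is only justified for small $\delta$. With \ref{item:R-sub-additivity}, the paper's bound $c_{R}(R_{\delta t})\le\delta$ gives it on the whole range. This is harmless for your argument, since you shrink $\epsilon$ anyway, and only positivity is actually needed.
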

This statement is known in the literature (see,
e.g., \cite{allais-arlove-CCM-2026}). We quickly review the
argument; the first step is to prove:
\begin{proposition}\label{proposition:lipschitz}
  Any spectral invariant $c_{R}$ satisfying axioms
  \ref{item:R-spectrality} through
  \ref{item:R-sub-additivity} also satisfies:
  \begin{equation*}
    \abs{c_{R}(\varphi_{0,t})-c_{R}(\varphi_{1,t})}\le \mathrm{dist}_{\alpha}(\varphi_{0,t},\varphi_{1,t}),
  \end{equation*}
  where $\mathrm{dist}_{\alpha}$ is
  \emph{Shelukhin's Hofer distance}
  \cite{shelukhin-JSG-2017} associated to the
  contact form $\alpha$ whose Reeb flow is $R$.
\end{proposition}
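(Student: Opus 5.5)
Shelukhin's distance is $\mathrm{dist}_{\alpha}(\varphi_{0,t},\varphi_{1,t})=\|\varphi_{0,t}^{-1}\varphi_{1,t}\|_{\alpha}$. Here $\|\psi_{t}\|_{\alpha}$ is the infimum over isotopies representing $\psi_{t}$ of $\int_{0}^{1}\max_{Y}\abs{h_{t}}\,\d t$, where $h_{t}=\alpha(\dot\psi_{t}\circ\psi_{t}^{-1})$ is the contact Hamiltonian. The norm is invariant under inversion, so the distance is symmetric. By symmetry it therefore suffices to prove the one-sided bound
\begin{equation*}
  c_{R}(\varphi_{1,t})-c_{R}(\varphi_{0,t})\le \|\varphi_{0,t}^{-1}\varphi_{1,t}\|_{\alpha}.
\end{equation*}
Writing $\varphi_{1,t}=\varphi_{0,t}\psi_{t}$, sub-additivity \ref{item:R-sub-additivity} gives $c_{R}(\varphi_{1,t})\le c_{R}(\varphi_{0,t})+c_{R}(\psi_{t})$. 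The problem thus reduces to the single-isotopy estimate $c_{R}(\psi_{t})\le \|\psi_{t}\|_{\alpha}$. Equivalently, for any representative with Hamiltonian $h_{t}$ we want $c_{R}(\psi_{t})\le\int_{0}^{1}\max h_{t}\,\d t$.

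For this estimate we compare $\psi_{t}$ with a Reeb flow. First we reparametrize time so that $M_{t}:=\max_{Y} h_{t}$ is replaced by a constant. Precisely, after reparametrizing, the Hamiltonian becomes $\sigma'(t)h_{\sigma(t)}$, and choosing $\sigma$ appropriately makes $\max_{Y}$ of the new Hamiltonian equal to the constant $c=\int_{0}^{1}M_{t}\,\d t$, up to an arbitrarily small $\epsilon$. One approximates to handle the case where $M_{t}$ has mixed sign or the reparametrization degenerates. Reparametrization does not change the class in the universal cover, and $c_{R}$ is only defined there (or is invariant under homotopy rel endpoints, via monotonicity in both directions). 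Next, the Reeb flow $R_{ct}$ has Hamiltonian the constant $c$, which is pointwise at least the new Hamiltonian. The standard comparison lemma then gives $\psi_{t}\le R_{ct}$. The relevant fact is that $\psi_{t}^{-1}R_{ct}$ is generated by a nonnegative Hamiltonian, so the path $s\mapsto$ (concatenation) is non-negative, as required by \eqref{eq:non-neg-path}. By monotonicity \ref{item:R-monotonicity}, $c_{R}(\psi_{t})\le c_{R}(R_{ct})$.

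It remains to compute $c_{R}(R_{ct})=c$, and it is enough to show $c_{R}(R_{ct})\le c$. We use spectrality together with the definition of the spectrum: $s\in\mathrm{Spec}_{R}(R_{ct})$ if and only if $R_{-ct}R_{st}=R_{(s-c)t}$ lies in the discriminant. The flow $R_{(s-c)t}$ has time-one map $R_{s-c}$, which preserves $\alpha$. For $\abs{s-c}$ small and nonzero this map has no fixed points, since the Reeb field has no zeros and periodic orbits have periods bounded below; at $s=c$ every point is a fixed point. Hence the spectrum is discrete near $c$, and a continuity-plus-normalization argument pins the value down. Concretely, consider $\tau\mapsto c_{R}(R_{\tau t})$. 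By monotonicity it is nondecreasing; by spectrality its value lies in $\{\tau\}\cup\{\tau+T: T \text{ a period}\}$; and it equals $0$ at $\tau=0$ by normalization. A nondecreasing function with $f(\tau)-\tau$ taking values in a discrete set and $f(0)=0$ must satisfy $f(\tau)=\tau$. This uses continuity from above \ref{item:R-continuity-from-above} to rule out jumps, together with the observation that $f(\tau)-\tau$ can only jump upward as $\tau$ increases while $f$ is monotone.

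The main obstacle is precisely this identification $c_{R}(R_{\tau t})=\tau$, in particular excluding a jump of $f(\tau)-\tau$ to a nonzero period. The argument I would try first is sub-additivity applied to $R_{\tau t}R_{-\tau t}=\id$, which gives $0\le f(\tau)+f(-\tau)$. Combined with monotonicity and discreteness of periods, this forces $f(\tau)-\tau$ to be locally constant, hence identically zero. Assembling the steps then gives $c_{R}(\psi_{t})\le\|\psi_{t}\|_{\alpha}+\epsilon$ for every $\epsilon>0$, and the one-sided bound, and hence the Lipschitz estimate, follows.
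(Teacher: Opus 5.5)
Most of your argument is sound. It is essentially the paper's route in a different order. You apply sub-additivity to $\varphi_{1,t}=\varphi_{0,t}\psi_{t}$ and then monotonicity to $\psi_{t}\le R_{ct}$. The paper applies monotonicity to $\varphi_{0,t}\le R_{st}\varphi_{1,t}$, then sub-additivity, and cites the literature for the comparison with $\mathrm{dist}_{\alpha}$.

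Your reparametrization step is unnecessary. The isotopy $t\mapsto R_{A(t)}$, with $A(t)=\int_{0}^{t}\max_{Y}\abs{h_{u}}\,\d u$, has Hamiltonian dominating $h_{t}$ and is homotopic rel endpoints to $R_{ct}$ with $c=A(1)\ge 0$. This also handles the mixed-sign case.

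The genuine gap is the step $c_{R}(R_{ct})\le c$. Write $f(\tau)=c_{R}(R_{\tau t})$ and $g(\tau)=f(\tau)-\tau$. Monotonicity, spectrality, normalization and continuity from above yield at most the following: $g$ is non-decreasing, right-continuous, takes values in $\set{0}$ together with $\pm$ the Reeb periods (the spectrum of $R_{\tau t}$ also contains $\tau-T$), and vanishes for small $\tau\ge 0$. Right-continuity is perfectly compatible with an \emph{upward} jump of $g$. The inequality $0\le f(\tau)+f(-\tau)$ is a \emph{lower} bound on $f(\tau)$, so it cannot exclude such a jump either.

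Concretely, suppose all Reeb orbits are closed with common period $T$, as for $\mathbb{R}P^{2n-1}$. Then the function $f(\tau)=\tau$ for $\tau<T/2$ and $f(\tau)=\tau+T$ for $\tau\ge T/2$ satisfies every constraint you invoke, yet $f(T/2)\neq T/2$. So your claimed lemma, that a non-decreasing $f$ with $f(\tau)-\tau$ in a discrete set and $f(0)=0$ must equal $\tau$, is false.

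The missing idea is to use sub-additivity in the splitting direction. $R_{\tau t}$ is the $k$-fold pointwise composition of $R_{\tau t/k}$, so $f(\tau)\le k\,f(\tau/k)$. Continuity from above at $\id$, together with normalization, gives $f(\tau/k)\to 0$ as $k\to\infty$. The spectrum gap for Reeb flows (no periods in $(0,\hbar)$) combined with spectrality then forces $f(\tau/k)=\tau/k$ for $k$ large. Hence $f(\tau)\le \tau$ for $\tau\ge 0$. This is precisely the paper's argument, and it is exactly what kills the example above, since there $f(T/2)=3T/2>2f(T/4)=T/2$. With this replacement your proof goes through.
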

\begin{proof}[Proof of Proposition
  \ref{proposition:lipschitz}]
  If $\varphi_{0,t}\le R_{st}\varphi_{1,t}$ then:
  \begin{equation*}
    c_{R}(\varphi_{0,t})\le c_{R}(R_{st}\varphi_{1,t})\le c_{R}(\varphi_{1,t})+c_{R}(R_{st})\le c_{R}(\varphi_{1,t})+s,
  \end{equation*}
  where the last step follows from:
  \begin{equation*}
    c_{R}(R_{st})\le \lim_{k\to\infty}c_{R}(R_{st/k})k=(s/k)k=s;
  \end{equation*}
  here we appeal to the \emph{spectrum gap for
    Reeb flows}\footnote{$R_{s}$ has no
    discriminant points if
    $s\in (-\hbar,0)\cup (0,\hbar)$ for some
    $\hbar>0$.} to conclude that $s/k$ is the only
  number in the spectrum of $R_{st/k}$ which lies
  in some small neighborhood of $0$. Thus:
  \begin{equation*}
    \begin{aligned}
      \abs{c_{R}(\varphi_{0,t})-c_{R}(\varphi_{1,t})}
      &\le \min\set{s:\varphi_{0,t}\le R_{st}\varphi_{1,t}\text{ and }\varphi_{1,t}\le R_{st}\varphi_{0,t}}\\
      &\le \mathrm{dist}_{\alpha}(\varphi_{0,t},\varphi_{1,t}),
    \end{aligned}
  \end{equation*}
  where the latter inequality is well-known; see
  \cite{allais-arlove-CCM-2026,nakamura-arXiv-2023}.
\end{proof}
\begin{proof}[Proof of Proposition
  \ref{proposition:orderable}]
  It follows easily from continuity and
  spectrality that $c_{R}(R_{st})=s$, and hence
  $\mathrm{dist}_{\alpha}(1,R_{st})\ge s$. This
  implies, by work of \cite{hedicke-JTA-2024} that
  $Y$ is orderable.
\end{proof}

\begin{remark}\label{remark:spectral-geometry}
  The emerging picture from the spectral
  invariants $c_{R}$ is of a contact geometry
  analogue to the well-known theory of spectral
  invariants for Hamiltonian diffeomorphisms of
  symplectic manifolds initiated by
  \cite{viterbo92-GF,schwarz_spectral_invariants,oh-2005-duke},
  except there is one such structure \emph{for
    each Reeb flow} $R$ (as with the Hofer-type
  norms in \cite{shelukhin-JSG-2017}). However,
  certain closed contact manifolds, e.g., the
  non-orderable ones, do not admit such structures
  (this contrasts with the symplectic setting
  where all closed symplectic manifolds are
  supposed to admit spectral invariants).
\end{remark}

\begin{remark}[Hypertightness]\label{remark:hypertight-variation}
  It does not seem to be too hard to adapt the
  methods of this paper and
  \cite{albers-fuchs-merry-compositio-2015,
    albers-fuchs-merry-selecta-2017,
    oh-legendrian-entanglement, cant-arXiv-2024,
    djordjevic-uljarevic-zhang-arXiv-2025} to show
  that any counterexample $Y$ to the Weinstein
  conjecture admits invariants of type
  \ref{item:R-spectrality} through
  \ref{item:R-sub-additivity}. The essential idea
  is to exploit the fact that any counterexample
  to the Weinstein conjecture is
  \emph{hypertight}, that is, has no contractible
  Reeb orbits for some Reeb flow $R$. For such
  manifolds, one can ``do Floer theory'' directly
  in the symplectization $SY$, by appealing to the
  compactness results of \cite{BEHWZ}.
\end{remark}

\begin{remark}[Reeb orbits]
  Let us comment on the relationship of the
  invariants $\mu$ satisfying
  \ref{item:G-monotonicity} through
  \ref{item:G-discriminant} and the existence of
  Reeb orbits: any contact manifold $Y$ admitting
  such a measurement $\mu$ must admit closed Reeb
  orbits. Indeed, since $\mu(R_{st})$ can only
  change when $R_{st}$ crosses the discriminant,
  and $\mu$ attains arbitrarily large values, so
  $R_{st}$ must intersect the discriminant for
  infinitely many values of $s$. This observation
  and the discussion in Remark
  \ref{remark:hypertight-variation} shows the
  invariants of type $\mu$ are, in a certain
  sense, rarer than the invariants of type
  $c_{R}$.\footnote{In fact, in certain cases, it
    will follow from our construction Theorem
    \ref{theorem:main-G} that the values when
    $\mu(R_{st})$ changes are periods of
    \emph{contractible} Reeb orbits (we need to
    assume the map
    $\pi_{1}(\partial W)\to \pi_{1}(W)$ is
    injective to make this deduction).}
\end{remark}

\subsection{Statement of main results and
  examples}
\label{sec:statement-main-result}

Recall Definition \ref{definition:G-filling} on
aspherical $G$-fillings.

\begin{definition}\label{definition:W-contractible}
  Let $\varphi_{t}\in \mathrm{CI}(Y)$, and let $W$
  be a $G$-filling of $Y$. Define a
  \emph{$W$-contractible discriminant point} of
  $\varphi_{t}$ to be a discriminant point $x$ of $\varphi_{1}$
  such that $t\mapsto \varphi_{t}(x)$ lifts from
  $Y$ to $\bd W$ and any lift is a contractible
  loop in $W$. The \emph{$W$-contractible
    discriminant} is the subset consisting of
  those $\varphi_{t}\in \mathrm{CI}(Y)$ for which
  $\varphi_{t}$ has a $W$-contractible
  discriminant point.
\end{definition}

\begin{theorem}\label{theorem:main-R}
  Let $G$ be a finite group and suppose that $Y$
  is a contact manifold admitting an aspherical
  $G$-filling $W$. If the $G$-action on $W$
  possesses at least one point with non-trivial
  stabilizer, then there exists a contact spectral
  invariant $c_{R}$ satisfying
  \ref{item:R-spectrality} through
  \ref{item:R-sub-additivity} for any Reeb flow on
  $Y$.

  Moreover, $c_{R}(\varphi_{t})$ is the value of
  $s$ for which $R_{st}\circ \varphi_{t}^{-1}$ has
  a $W$-contractible discriminant point.
\end{theorem}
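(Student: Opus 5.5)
The plan is to build $c_{R}$ out of a $G$-equivariant version of contact Floer cohomology of the filling $W$, and to obtain non-vanishing of the relevant unit class from the fixed point of some nontrivial element of $G$. Concretely, for $\varphi_{t}\in \mathrm{CI}(Y)$, lift it to a $G$-equivariant contact isotopy of $\bd W$; this is possible since $\bd W\to \bd W/G\cong Y$ is a finite cover and isotopies lift uniquely starting at the identity. Extend the lift to a $G$-invariant Hamiltonian system on $W$ that is contact-at-infinity. For each $s$, consider the Floer cohomology $HF(R_{st}\circ\varphi_{t}^{-1})$ computed in $W$ from $G$-invariant data, restricted to contractible orbits.

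For $s$ not in the spectrum, the generators at infinity are absent, so this group is well-defined. Continuation maps along non-negative paths give a persistence module in $s$, and we use its Borel-equivariant version $HF_{G}$, which is a module over $H^{*}(BG)$. Fix a prime $p$ dividing the order of a nontrivial stabilizer; restricting to a cyclic subgroup $\Z/p$ gives the $\mathbf{k}[[x]]$-module structure mentioned in the abstract. Set
\begin{equation*}
  c_{R}(\varphi_{t})=\inf\set{s : \text{the unit } 1\in H^{*}_{G}(W) \text{ maps nontrivially, after localizing in } x, \text{ to } HF_{G}(R_{st}\varphi_{t}^{-1})}.
\end{equation*}
Here the map is the composite PSS/continuation map from the case where the system is a small positive perturbation of the identity, where $HF_{G}$ agrees with $H_{G}^{*}(W)$.

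The axioms are then checked in order. Monotonicity \ref{item:R-monotonicity} holds because continuation maps along non-negative paths compose. Normalization \ref{item:R-normalization} follows from the computation of $HF$ for $R_{st}$ with $s$ small: it is $H^{*}(W)$ for $s\in(0,\hbar)$ and vanishes for $s\in(-\hbar,0)$, since $W$ is open and the group is then the cohomology of $W$ relative to its end. Continuity from above \ref{item:R-continuity-from-above} follows by defining the invariant as an infimum over an open-in-$s$ condition. Sub-additivity \ref{item:R-sub-additivity} comes from a pair-of-pants product that is compatible with the $G$-action, together with the fact that the unit is multiplicative; in the equivariant setting one should use the product on the $\Z/p$-fixed or localized theory. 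Spectrality \ref{item:R-spectrality}, together with the refinement to \emph{$W$-contractible} discriminant points, follows because the persistence module is locally constant in $s$ off the set where contractible one-periodic orbits appear at infinity. Asphericity gives a well-defined action and compactness, and contractible orbits at infinity correspond exactly to $W$-contractible discriminant points of $R_{st}\circ\varphi_{t}^{-1}$. One must also show that the infimum is finite. This uses the Smith/localization theorem: after inverting $x$, $HF_{\Z/p}$ localizes to the Floer theory of the fixed locus $W^{\Z/p}$, which is nonempty by hypothesis. That localized theory is stable under all continuation maps, so the unit survives for $s\gg0$ and dies for $s\ll0$. Without a fixed point the localized group would be zero, which is consistent with the non-orderability of $S^{2n-1}$.

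I expect the main obstacle to be the localization step. It requires constructing a $G$-equivariant Floer theory with transversality achieved by Borel-type perturbations, and identifying its localization with the fixed-locus theory in a way that commutes with continuation maps and respects filtrations well enough for the infimum to be controlled. A second difficulty is making the choice of $s$-thresholds canonical, since mapping cones of continuation maps are only defined up to homotopy; the paper's $\infty$-functorial lift is presumably meant to handle this. I would first try to get by with the homology-level persistence module, and invoke the chain-level structure only to define the equivariant product needed for sub-additivity.
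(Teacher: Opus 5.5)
There is a genuine gap, in your definition of $c_{R}$ and in the role you give to localization. The PSS/continuation map $H^{*}_{G}(W)\cong \mathrm{HF}_{G}(R_{\epsilon t})\to \mathrm{HF}_{G}(R_{st}\varphi_{t}^{-1})$ exists only when there is a non-negative path from $R_{\epsilon t}$ to $R_{st}\varphi_{t}^{-1}$. In a convex-at-infinity filling, compactness of continuation cylinders needs monotonicity at infinity, so this means $\varphi_{t}\le R_{(s-\epsilon)t}$. On the other hand, you yourself say the localized theory is ``stable under all continuation maps'': after inverting $x$ every continuation map is an isomorphism. In the paper this is the statement that cones of continuation maps are $x$-torsion. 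So your condition holds exactly when the map exists, and $c_{R}(\varphi_{t})$ collapses to the order-theoretic threshold $\inf\set{s:\varphi_{t}\le R_{st}}$. Floer theory then contributes nothing. Normalization of this threshold is equivalent to orderability of $Y$, so the argument is circular. Local constancy of $s\mapsto\mathrm{HF}_{G}(R_{st}\varphi_{t}^{-1})$ off the spectrum says nothing about where the order threshold lies, so spectrality is not proved. For the same reason, a product on the localized theory cannot give sub-additivity.

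Localization also cannot supply ``the unit dies for $s\ll 0$''; it shows the opposite. Since $G$ acts freely on $\bd W$, $H^{*}_{G}(\bd W)$ is $x$-torsion. Hence $H^{*}_{G}(W,\bd W)\to H^{*}_{G}(W)$, which via PSS is the continuation map $\mathrm{HF}_{G}(R_{-\epsilon t})\to\mathrm{HF}_{G}(R_{\epsilon t})$, becomes an isomorphism after inverting $x$, and the localized unit is eternal. Your claim that $\mathrm{HF}(R_{st})=0$ for $s\in(-\hbar,0)$ is also false: that group is $H^{*}(W,\bd W)\cong H^{*}_{c}(W)\neq 0$.

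The paper avoids both problems. It defines $c_{R}$ by asking whether the \emph{un-localized} unit $1\in\mathrm{SH}_{\mathrm{eq}}(W)$ lies in the image of the colimit map $\mathrm{HF}_{\mathrm{eq}}(\varphi_{t}^{-1}\circ R_{st})\to\mathrm{SH}_{\mathrm{eq}}(W)$, which exists for every $s$ off the spectrum. It uses the fixed point only to show that the image of $1$ in the free quotient of $\mathrm{HM}_{\mathrm{eq}}(W)$ is nonzero and \emph{not divisible by $x$}, by restricting to a fixed minimum and computing $H^{*}_{\Z/p\Z}(\mathrm{pt};\Z/p\Z)$. This free quotient injects into $\mathrm{SH}_{\mathrm{eq}}(W)$ because cones are torsion.

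The key step, Lemma \ref{lemma:unit-not-eternal}, then needs two ingredients missing from your outline:
\begin{itemize}
\item If $1$ came from $R_{-\epsilon t}$, then $1-b$ would die for some $b$ in the image of $\mathrm{CM}_{\mathrm{eq}}(-X)$. Non-equivariantly $b$ is nilpotent, for degree reasons, with asphericity ruling out quantum corrections. Hence $b^{p^{k}}=xa$ for large $k$.
\item The characteristic-$p$ identity $(1-b)^{p^{k}}=1-b^{p^{k}}$ then shows that $1-xa$ dies. This contradicts the non-divisibility of $1$ in the free quotient.
\end{itemize}
This non-eternal unit is what yields normalization, finiteness of $c_{R}$, and hence orderability.
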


Thus we prove that all such manifolds $Y$ admit
spectral invariants for each Reeb flow, in the
sense of Remark \ref{remark:spectral-geometry},
and are, in particular, orderable.

If we assume moreover that $\mathrm{SH}(W)=0$ then
our construction produces a measurement $\mu$
generalizing the Maslov class:
\begin{theorem}\label{theorem:main-G}
  Assume the hypotheses of Theorem
  \ref{theorem:main-R}. If, in addition, the
  symplectic cohomology\footnote{The symplectic
    cohomology $\mathrm{SH}(W;\Z/p\Z)$ is a
    well-studied invariant for convex-at-infinity
    manifolds; see, e.g., \cite{viterbo-GAFA-1999,
      seidel-IP-2008, ritter-jtopol-2013}. To be
    concrete, we follow the conventions of
    \cite{cant-hedicke-kilgore-arXiv-2023,
      cant-arXiv-2024}, and work only in the free
    homotopy class of contractible loops. The
    orientation scheme needed to work with primes
    $p>2$ will be explained in
    \S\ref{sec:asymp-ops-orient-lines}}
  $\mathrm{SH}(W;\Z/p\Z)$ vanishes for some prime
  number $p$ dividing the cardinality of a
  stabilizer of the $G$-action on $W$, then there
  is a measurement $\mu:\mathrm{CI}(Y)\to \Z$
  satisfying properties \ref{item:G-monotonicity}
  through \ref{item:G-discriminant}.

  Moreover, $\mu$ can be chosen to satisfy the
  strengthened version of
  \ref{item:G-discriminant}: $\mu$ is constant on
  the path connected components of the complement
  of the $W$-contractible discriminant.
\end{theorem}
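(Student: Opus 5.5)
The plan is to build $\mu$ from a $\Z/p\Z$-equivariant version of contact Floer cohomology, following the outline in the abstract. Fix a prime $p$ as in the statement and a cyclic subgroup $\Z/p\Z\subset G$ fixing a point of $W$; this exists by Cauchy's theorem applied to a stabilizer whose order is divisible by $p$. To each $\varphi_{t}\in \mathrm{CI}(Y)$ I associate a contact-at-infinity Hamiltonian system on $W$. It is $G$-invariant, so in particular $\Z/p\Z$-invariant, and its ideal restriction lifts $\varphi_{t}$ to $\bd W$. The Borel-equivariant Floer complex $\mathrm{HF}_{\Z/p}(\varphi_{t})$ of this system is a module over $H^{*}_{\Z/p}(\mathrm{pt})\supset \mathbf{k}[[x]]$, where $x$ is the degree-two generator and $\mathbf{k}=\Z/p\Z$. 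Its generators are contractible orbits, and these correspond to $W$-contractible discriminant points. Non-negative paths $\varphi_{0,t}\le\varphi_{1,t}$ induce $\mathbf{k}[[x]]$-linear continuation maps, giving a functor from the Eliashberg--Polterovich poset.

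\textbf{Step 1: localization.} Since $\mathrm{SH}(W;\Z/p\Z)=0$, the colimit of $\mathrm{HF}(R_{st}\varphi_{t})$ as $s\to+\infty$ vanishes. The Borel spectral sequence then shows that the colimit of the equivariant groups is $x$-torsion. In the opposite direction, as $s\to-\infty$ the groups stabilize to equivariant cohomology of $W$ relative to the boundary. By the localization theorem, after inverting $x$ this becomes $H^{*}(W^{\Z/p};\mathbf{k})((x))\neq 0$, which is nonzero because the fixed set is nonempty.

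\textbf{Step 2: definition of $\mu$.} This gives a well-defined non-torsion class $\theta$, the image of $1$ coming from $-\infty$. Define $\mu(\varphi_{t})$ as the largest degree $k$ for which $\theta$, or an appropriate $x$-tower, survives nontrivially in $x^{-1}\mathrm{HF}_{\Z/p}$ truncated at degree $k$. Equivalently, $\mu(\varphi_{t})$ is the minimal grading of a class in $\mathrm{HF}_{\Z/p}(\varphi_{t})$ that is non-$x$-torsion and hit from $-\infty$; I would normalize using Proposition \ref{prop:agrees-with-CZ}. The properties then follow as below.
\begin{itemize}
\item Monotonicity \ref{item:G-monotonicity} holds because continuation maps are $x$-linear, preserve grading, and commute with the maps from $-\infty$.
\item Normalization \ref{item:G-normalization} and non-triviality \ref{item:G-non-triviality} follow from comparison with the Conley--Zehnder/Maslov index on $\mathrm{Sp}(2n)$-type isotopies near a fixed point. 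More intrinsically, non-triviality follows from Step 1: $\theta$ must eventually die in the torsion colimit, which forces the tower degree to $+\infty$.
\item Continuity from above \ref{item:G-continuity} follows because for small $s>0$, $R_{st}\varphi_{t}$ has no new generators, so the continuation map is an isomorphism.
\end{itemize}

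\textbf{Step 3: the discriminant property (main obstacle).} If $\varphi_{s,t}$ is a path avoiding the $W$-contractible discriminant, continuation along it should be an isomorphism, but the path need not be monotone. I would handle this locally. Near any point off the discriminant, write nearby elements as sandwiched $R_{-\epsilon t}\psi\le\psi'\le R_{\epsilon t}\psi$. The composite continuation $\mathrm{HF}(R_{-\epsilon t}\psi)\to\mathrm{HF}(R_{\epsilon t}\psi)$ has acyclic mapping cone when no discriminant crossing occurs, since the cone is generated by the orbits created in the window. A two-out-of-six argument then makes each intermediate map an isomorphism.

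Making the cone well defined and $x$-linear, independent of choices, requires the $\infty$-functorial lift: homotopy-coherent continuation data over simplices of non-negative paths, so that cones of composites can be compared. I would construct this by parametrized Floer data over simplices, $\Z/p$-equivariantly via Borel models $S^{\infty}\times_{\Z/p}$. Orientations for $p>2$ would be handled by the scheme of \S\ref{sec:asymp-ops-orient-lines}. Local constancy then gives constancy on path components of the complement of the $W$-contractible discriminant, which is the strengthened form of \ref{item:G-discriminant}.
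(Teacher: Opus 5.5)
There is a genuine gap, and it starts with the conclusion you draw in Step 1.

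\textbf{Step 1 gets the algebra backwards.} Each $\mathrm{CF}_{\mathrm{eq}}(\varphi_{t})$ is free over $\mathbf{k}[[x]]$, so there is a short exact sequence $0\to \mathrm{CF}_{\mathrm{eq}}\xrightarrow{x}\mathrm{CF}_{\mathrm{eq}}\to \mathrm{CF}_{\mathrm{neq}}\to 0$. After passing to the colimit, the vanishing of $\mathrm{SH}_{\mathrm{neq}}(W)$ says that $x$ acts \emph{invertibly} on $\mathrm{SH}_{\mathrm{eq}}(W)$. (The paper deduces $\mathrm{SH}_{\mathrm{neq}}=0$ from $\mathrm{SH}(W;\mathbf{k})=0$ by a triangular-differential argument when $p\ge 3$.)

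Invertibility of $x$ does not make the colimit $x$-torsion. An $x$-torsion module on which $x$ is invertible is zero. Your Borel spectral sequence argument fails because the $x$-adic filtration of the colimit is not separated, e.g. $\bigcap_{k}x^{k}\mathbf{k}((x))=\mathbf{k}((x))$. For $\C^{n}$ with the antipodal action, $\mathrm{SH}_{\mathrm{eq}}\cong\mathbf{k}((x))$: the unit never dies, it only becomes more and more divisible by $x$. Several consequences follow.
\begin{itemize}
\item Your non-triviality mechanism (``$\theta$ must eventually die'') is wrong.
\item Your definition of $\mu$ is not well posed. It uses a $\Z$-grading (``minimal grading of a non-torsion class''), but under the stated hypotheses nothing is assumed about $c_{1}$, so only $\Z/2$-gradings exist.
\item The paper's $\mu(\varphi_{t})=\sup\set{d: x^{-d}1\in\mathrm{im}(\mathrm{HF}_{\mathrm{eq}}(\varphi_{t})\to\mathrm{SH}_{\mathrm{eq}})}$ is grading-free and relies exactly on this invertibility. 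Non-triviality is then the tautology that every $x^{-d}1$ in the colimit is hit by some object.
\item Normalizing via Proposition \ref{prop:agrees-with-CZ} is unavailable. That proposition is a consequence of the construction for $W=\C^{n}$, and $\mathrm{Sp}(2n)$ does not act on a general $Y$.
\item The paper instead gets $\mu(\id)=0$ from two facts. First, the projection of the unit to the free quotient of $\mathrm{HM}_{\mathrm{eq}}(W)\cong\mathrm{HF}_{\mathrm{eq}}(R_{\epsilon t})$ is not divisible by $x$; localizing at a fixed point reduces this to $H^{*}_{\Z/p}(\mathrm{pt};\Z/p)$. Second, free quotients inject into $\mathrm{SH}_{\mathrm{eq}}$.
\end{itemize}

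\textbf{The missing ingredient: all cones are $x$-torsion.} That injectivity rests on the fact that the cone of \emph{every} continuation map, including those crossing the $W$-contractible discriminant, is $x$-torsion (Theorem \ref{theorem:main-infinity}\ref{theorem:main-infinity-1}). You only treat cones of non-crossing paths. The paper proves it in three steps:
\begin{itemize}
\item decompose a positive $1$-simplex into extenders and isomorphisms;
\item identify each extender's cone with a local Floer complex supported in the convex end, where $G$ acts freely;
\item apply a Floer-theoretic Cartan isomorphism, so the cone is a finite-dimensional non-equivariant complex.
\end{itemize}

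This fact, not localization for $H^{*}_{G}(W,\bd W)$ alone, is what makes $\mu$ finite.
\begin{itemize}
\item $\mu<\infty$: if $a_{0}\mapsto 1$ and $a_{d}\mapsto x^{-d}1$ with $a_{0},a_{d}$ in the free part of $\mathrm{HF}_{\mathrm{eq}}(\varphi_{t})$, injectivity forces $a_{0}=x^{d}a_{d}$ for all $d$. Hence $a_{0}=0$, contradicting that the unit is non-zero in $\mathrm{SH}_{\mathrm{eq}}$.
\item $\mu>-\infty$: the map $\mathrm{HF}_{\mathrm{eq}}(R_{-at})\to\mathrm{HF}_{\mathrm{eq}}(R_{\epsilon t})$ hits $x^{k}1$ for some $k$.
\end{itemize}

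Also, the groups do not stabilize as the slope tends to $-\infty$. Only small negative slope computes $H^{*}_{G}(W,\bd W)$, via the negative PSS map.

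\textbf{Step 3 is sound in outline.} It is a reasonable alternative to the paper's zig-zag argument: images in $\mathrm{SH}_{\mathrm{eq}}$ are monotone under any morphism, so sandwiching $\psi'$ between $R_{-\epsilon t}\psi$ and $R_{\epsilon t}\psi$ suffices. But it presupposes the same extender and local Floer analysis to know that monotone non-crossing continuation maps are isomorphisms.
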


The obvious example of $Y$ satisfying the
hypotheses of Theorem \ref{theorem:main-R} and
\ref{theorem:main-G} is $\mathbb{R}P^{2n-1}$, with
its standard contact structure, since it admits
the $\Z/2\Z$-filling $\C^{n}$ with a fixed point,
and $\C^{n}$ is known to have vanishing symplectic
cohomology (for any choice of coefficients
$\mathbf{k}$). Similarly, our theorem applies to
the standard lens spaces $L_{p}$, $p>1$, obtained
by quotienting $S^{2n+1}$ by the $\Z/p\Z$ action
generated by $z\mapsto e^{2\pi i /p}z$, and we
recover some of the results of
\cite{granja-karshon-pabiniak-sandon,allais-arlove-sandon-arXiv-2024}.

The fact that our measurement generalizes the
Maslov class can be made quite precise:
\begin{proposition}\label{prop:agrees-with-CZ}
  Suppose $W=\C^{n}$, with the antipodal
  $\Z/2\Z$-action. If $\varphi_{t}$ is an isotopy
  valued in the subgroup $\mathrm{Sp}(2n)$ whose
  time-1 map does not have $1$ as an eigenvalue,
  then
  $\mu(\varphi_{t})=\mathrm{CZ}(\varphi_{t})-n,$
  where the Conley-Zehnder index associated to a
  non-degenerate linear symplectic flow is as
  defined in, e.g., \cite[Chapter
  8]{polterovich-zhang-et-al-revision}.\footnote{For
    $\mu$ to be normalized to zero, we require
    $\mathrm{CZ}(R_{\epsilon t})=n$ for
    $\epsilon\in (0,1)$, where $R$ is the standard
    Reeb flow on the sphere $S^{2n-1}$, extended
    radially to $\C^{n}$. The quadratic
    Hamiltonian generating this linear isotopy is
    $\epsilon \pi \abs{z}^{2}$.}
\end{proposition}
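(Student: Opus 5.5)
Since the statement depends only on the properties \ref{item:G-monotonicity}--\ref{item:G-discriminant} of $\mu$ and on the fact that linear flows lie in $\mathrm{CI}(\mathbb{R}P^{2n-1})$, the plan is to compare $\mu$ and $\mathrm{CZ}-n$ on the subgroup of linear symplectic paths, using only their qualitative features. For a linear path $\varphi_t\in\mathrm{Sp}(2n)$, the induced contactomorphism of $\mathbb{R}P^{2n-1}=S^{2n-1}/\pm1$ has a discriminant point exactly when $\varphi_1 v=\lambda v$ for some real $\lambda>0$; here we use that the conformal factor at a fixed line must be $1$, which forces $\lambda=\pm1$ after accounting for the antipodal identification, and for a $W$-contractible point, $\lambda=1$. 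So, at least for the strengthened $W$-contractible discriminant, a linear path crosses it exactly when $\varphi_1$ acquires the eigenvalue $1$. This is precisely the Conley--Zehnder discriminant. I will concentrate on the $W$-contractible version. The eigenvalue $-1$ corresponds to a noncontractible lift in $\C^n$: the path from $v$ to $-v$ projects to a loop in $\mathbb{R}P^{2n-1}$ whose lift is not closed, so it is excluded by Definition \ref{definition:W-contractible}.

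Step one is therefore to show that $\mu$ and $\mathrm{CZ}$ are both locally constant on the space of linear paths with $\det(\varphi_1-1)\neq0$, as paths with fixed endpoints up to homotopy. For $\mu$ this is the strengthened \ref{item:G-discriminant} of Theorem \ref{theorem:main-G}, together with the invariance of $\mu$ under homotopy rel endpoints, which $\mu$ has because it is defined on the universal cover. Step two uses the classical fact that the components of nondegenerate paths in the universal cover of $\mathrm{Sp}(2n)$ are classified by $\mathrm{CZ}$, and that each component contains a representative of the form $\psi\cdot R_{ct}$. Here $R$ is the linear Reeb flow $e^{2\pi i t}$ on $\C^n$, and $\psi$ can be chosen as a fixed normal form: a product of a rotation part and a hyperbolic part. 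Concretely, I will use representatives of the form $R_{ct}$ composed with a small hyperbolic perturbation, for $c\notin\Z$, together with the paths hyperbolic in one factor that realize the intermediate parity values. It then suffices to compute $\mu$ on these model paths.

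Step three does the computation. By \ref{item:G-normalization} and \ref{item:G-continuity}, $\mu(R_{\epsilon t})=0$ for small $\epsilon>0$, which matches $\mathrm{CZ}-n=0$ by the normalization in the footnote. The key input is the jump of $\mu$ each time $R_{ct}$ crosses the discriminant, that is, at $c\in\Z$ where eigenvalue $1$ occurs with multiplicity $2n$. Here $\mathrm{CZ}$ jumps by $2n$, and I need $\mu$ to jump by exactly $2n$ as well. Monotonicity \ref{item:G-monotonicity} gives only the sign of the jump, not its size. So I expect to use the construction of $\mu$ from the $\mathbf{k}[[x]]$-module structure on equivariant contact Floer cohomology: the jump across a crossing is governed by the local equivariant Floer cohomology of the degenerate fixed locus, here a Morse--Bott family $\mathbb{R}P^{2n-1}$ (or $S^{2n-1}$ with its free $\Z/2$-action). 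That local cohomology is $H^*(\mathbb{R}P^{2n-1};\Z/2)$, which is $2n$-dimensional and shifts the $x$-torsion degree by $2n$. To get intermediate values of $\mathrm{CZ}$ instead, I will split $\C^n=\C\oplus\cdots\oplus\C$ and rotate one factor at a time at slightly different speeds, so that each crossing has a one-dimensional eigenspace in a single complex factor, giving a jump of $2$. Eigenvalue $-1$ crossings, which produce odd jumps, are handled separately by taking hyperbolic factors.

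The main obstacle is this local jump computation. It requires identifying the mapping cone of the continuation map across a crossing with an equivariant local Floer cohomology whose $\mathbf{k}[[x]]$-torsion length equals the Conley--Zehnder jump. This depends on the later machinery of the paper, together with a Morse--Bott or perturbation argument that reduces the computation to a nondegenerate linear model.
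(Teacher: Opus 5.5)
Steps one and two of your proposal are sound. By the strengthened \ref{item:G-discriminant} and the classical fact that $\mathrm{CZ}$ classifies the components of non-degenerate paths in $\mathrm{Sp}(2n)$, $\mu$ restricted to linear paths is some function $f(\mathrm{CZ})$ with $f(n)=0$.

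The gap is in Step three, which carries all the content: it is not carried out, and the mechanism you propose does not work as stated. The $\mathbf{k}$-dimension (or ``torsion length'') of the cone of a continuation map does not by itself determine the jump of $\mu$. From the triangle $\mathrm{HF}_{\mathrm{eq}}(\varphi_{0,t})\to\mathrm{HF}_{\mathrm{eq}}(\varphi_{1,t})\to H(\mathrm{Cone})$ you can read off the jump only if you already know both end groups. For instance, if both are $\mathbf{k}[[x]]$ and the map is multiplication by $x^{k}$, then $\dim_{\mathbf{k}}H(\mathrm{Cone})=k$. Beyond that, you would still need a Morse--Bott computation of $\mathrm{CF}_{\mathrm{loc}}$ for the extender at $c\in\Z$, where all of $\mathbb{R}P^{2n-1}$ is discriminant. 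You would also need separate local computations for the parity-changing crossings. The paper provides none of these.

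There is also an error about eigenvalue $-1$: such crossings do not produce odd jumps. $\mathrm{CZ}$ is unchanged when an eigenvalue passes through $-1$, and, as you observed, those points are not $W$-contractible, so $\mu$ is unchanged too. Odd jumps come from eigenvalue $+1$ crossings with odd-signature crossing form, e.g.\ passing from elliptic to positive hyperbolic through a shear.

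The missing idea, which makes the crossing analysis unnecessary, is the one the paper uses. A linear path commutes with the radial Liouville flow and with $z\mapsto -z$ on all of $\C^{n}$. So its quadratic Hamiltonian is itself ($\eta$-independent) Borel data extending $\varphi_{t}$. If $\varphi_{1}$ has no eigenvalue $1$, its only $1$-periodic orbit is the non-degenerate constant orbit at the origin. Hence $\mathrm{CF}_{\mathrm{eq}}(\varphi_{t})=\mathbf{k}[[x]]\gamma(\varphi_{t})$ with zero differential.

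For $s$ large, the continuation map to $R_{st}$ sends $\gamma(\varphi_{t})\mapsto x^{k(\varphi_{t})}\gamma(R_{st})$. Only one power of $x$ can occur, because \eqref{eq:index-formula} and \eqref{eq:index-along-end} force $\mathrm{CZ}(\varphi_{t})-\mathrm{CZ}(R_{st})+k(\varphi_{t})=0$. The map is non-zero: otherwise its cone would be free of rank two, contradicting Theorem \ref{theorem:main-infinity}\ref{theorem:main-infinity-1}.

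Do the same for $R_{\epsilon t}$, whose generator represents the unit, and subtract. This gives $\mu(\varphi_{t})=k(R_{\epsilon t})-k(\varphi_{t})=\mathrm{CZ}(\varphi_{t})-n$ for every such path at once. No classification of components, no model paths, and no local Floer cohomology are needed. This rank-one observation is also exactly the input your jump computation was missing.
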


The proof of this proposition is given in
\S\ref{sec:agre-with-conl}. In the following
subsections we give concrete examples of manifolds
to which both Theorem \ref{theorem:main-R} and
\ref{theorem:main-G} can be applied.

\subsubsection{Quotients of stabilized cotangent
  bundles}
\label{sec:stabilizations}

Let $W=T^{*}M\times \C^{d}$ and suppose that
$\Z/p\Z$ acts smoothly on $M$ with isolated fixed
points. There is an induced action on $T^{*}M$ by
canonical transformations --- importantly, this
action preserves the Liouville form
$\lambda_{T^{*}M}$. Then $\Z/p\Z$ acts
diagonally on $W$, where the action on $\C^{d}$ is
by multiplication by $e^{2\pi i/p}$. This action
preserves $\lambda_{T^{*}M}+\lambda_{\C^{d}}$, and
thus has an ideal restriction; this is a free
action on $\bd W$ because the action on $M$ has
isolated fixed points. Let $Y=\bd W/(\Z/p\Z)$. In
this setting, Theorem \ref{theorem:main-R} applies
if the action on $M$ has at least one fixed
point. If, in addition, $d>0$, then Theorem
\ref{theorem:main-G} applies. The case when
$M=\mathrm{pt}$ recovers the case of
$\mathbb{R}P^{2d-1}$ and the lens spaces discussed
above.

Interestingly, \cite[Theorem
1.16]{eliashberg-kim-polterovich-GT-2006} and
\cite[Corollary 1.3]{hedicke-shelukhin-arXiv-2024}
show the ideal boundary $\bd(T^*M \times \C^d)$ is
non-orderable (for $d\ge 2$ by
\cite{eliashberg-kim-polterovich-GT-2006}, or
$d\ge 1$ if $\dim M\ge 1$ by
\cite{hedicke-shelukhin-arXiv-2024}), and hence
does not admit measurements satisfying
\ref{item:R-spectrality} through
\ref{item:R-sub-additivity} or
\ref{item:G-monotonicity} through
\ref{item:G-discriminant}.

\subsubsection{Non-primitive prequantization
  spaces}
\label{sec:non-prim-preq-spac}

Let $(B,\omega)$ be a symplectic manifold, and
suppose there is a unitary line bundle
$\pi:W\to B$ satisfying\footnote{Recall that
  $c_{1}$ is the deRham cohomology class of
  represented by any differential two-form
  $\mathbf{c}$ solving
  $\d\alpha=\pi^{*}\mathbf{c}$, where $\alpha$ is
  a \emph{global angular form}, i.e., $\alpha$ is
  a one-form on $W$ which restricts to
  $(2\pi)^{-1}(x\d y-y\d x)/(x^{2}+y^{2})$ in each
  fiber.} $c_{1}=-[\omega]$. As explained in
\cite[pp.\,656]{mcduff-contact-boundaries}, there
is a natural way to equip $W$ with the structure
of a symplectic manifold which is
convex-at-infinity (the construction uses the
unitary structure on $W$). The fiberwise action of
$e^{2\pi i /p}$ is a symplectomorphism with fixed
point set equal to the zero set $B$. If $B$ is
symplectically aspherical, it is known that the
non-equivariant regular symplectic cohomology
$\mathrm{SH}(W;\Z/p\Z)$ vanishes (this is shown
in, e.g., \cite{cant-hedicke-kilgore-arXiv-2023}
in the case $p=2$). Therefore Theorem
\ref{theorem:main-R} and \ref{theorem:main-G}
apply to $Y=(\bd W)/(\Z/p\Z)$, when $B$ is
aspherical.

The construction shows that $\bd W$ is naturally
identified with the unit circle bundle in $W$, and
that $Y$ is simply the ideal boundary of the
negative line bundle $W^{\otimes p}$ with base
space $(B,p\omega)$. The orderability of such $Y$
was established already for a special class of
aspherical $B$ in
\cite[\S1.3.C]{eliashberg-polterovich-gafa-2000}.

Note that in order for our methods to apply, we
require that $Y=\partial W^{\otimes p}$ for $p>1$,
i.e., $Y$ is ``non-primitive,'' with respect to
tensor powers. Some assumption is clearly
necessary, since $Y=S^{2n-1}$ (which is a
primitive prequantization space) is non-orderable
\cite{eliashberg-kim-polterovich-GT-2006,hedicke-shelukhin-arXiv-2024}.

As a comparison, using the generating function
approach of
\cite{givental-nl-maslov-advsov,givental-nl-maslov-LMS},
the work of
\cite{borman-zapolsky-GT-2015,zapolsky-dedicata-2020,granja-karshon-pabiniak-sandon}
establishes the existence of measurements
satisfying \ref{item:G-monotonicity} through
\ref{item:G-discriminant} for certain special
choices of $Y$ --- in these constructions the base
$B$ is a toric manifold; see also
\cite{sandon-geom-dedicata-2013,tervil-JSG-2021}
which uses generating functions to show
$\mathrm{Spec}_{R}(\varphi_{t})\ne \emptyset$ for
all $\varphi_{t}$ and a specific Reeb vector field
$R$ when the base $B$ is a closed monotone
symplectic toric manifold (provided that $Y$ is
not the standard contact sphere).

Floer theoretic approaches also have been
developed to study the orderability (and
construction of spectral invariants) for
prequantization spaces; see
\cite{albers-shelukhin-zapolsky-2016}, and the RFH
approach\footnote{We emphasize that here the
  spectral invariants we construct are monotone
  \ref{item:R-monotonicity} and sub-additive
  \ref{item:R-sub-additivity}, and these
  properties are not established in the existing
  RFH literature.} of
\cite{albers_frauenfelder_nonlinear_maslov,albers-merry-JSG-2018,albers-kang-adv-math-2023,bae-kang-kim-math-ann-2024}.\footnote{The
  paper
  \cite{albers_frauenfelder_nonlinear_maslov}
  generalizes Givental's work
  \cite{givental-nl-maslov-advsov,givental-nl-maslov-LMS}
  in a different direction from our paper. Their
  construction does not produce integer valued
  measurements for contact isotopies
  $\varphi_{t}$, but rather studies the
  ``asymptotic growth rates'' of Rabinowitz-Floer
  homology groups when one iterates positive
  paths.}

\subsubsection{Milnor fibers and their stabilizations}
\label{sec:milnor-fibers}

We can also apply our results to the links of
isolated singularities of certain weighted
homogeneous polynomials.\footnote{We thank
  Klaus Niederkr\"uger for bringing these
  examples of $G$-fillings to our attention.}

Following classical singularity theory
\cite{milnor68-singular-points,
  milnor-orlik}, consider the polynomial
$f_{k}:\C^{n+1}\to\C$ given by
\begin{equation*}
  f_{k}(z) = z_{0}^{k} + z_{1}^{2} +\cdots+ z_{n}^{2},
\end{equation*}
and let
$W_{k} = \set{z \in \C^{n+1} :
  f_{k}(z)=\epsilon}$ be its \emph{Milnor
  fiber} for some small $\epsilon > 0$. The
space $W_{k}$ is a Liouville manifold, and
its ideal boundary $Y = W_{k} \cap \bd B(r)$
(for $r$ sufficiently large relative to
$\epsilon$) is a contact manifold.

The manifold $W_{k}$ carries a natural
$\Z/2\Z$-action generated by the involution
\begin{equation*}
  \iota(z_{0},z_{1},\dots,z_{n}) = (z_{0},-z_{1},\dots,-z_{n}).
\end{equation*}
The fixed points of $\tau$ in $W_{k}$ occur
where $z_{1}=\dots=z_{n}=0$. Imposing the
defining equation $f_{k}(z)=\epsilon$, we
find exactly $k$ fixed points corresponding
to the $k$-th roots of $\epsilon$. Because we
choose $r$ such that $r^{k} \gg \epsilon$,
these fixed points lie deep in the interior
of $W_{k}$, so $\iota$ acts freely on the
boundary $Y$. Consequently, $W_{k}$ is a
valid $\Z/2\Z$-filling for the quotient
contact manifold $Y / (\Z/2\Z)$. Because the
action has non-trivial stabilizers, Theorem
\ref{theorem:main-R} applies to guarantee the
existence of the spectral invariants $c_{R}$
for $Y / (\Z/2\Z)$.

It is worth noting that $W_{k}$ typically
does not satisfy the hypotheses of Theorem
\ref{theorem:main-G} for $k\ge 2$. As
discussed in
\cite[Theorem~6.3]{kwon-van-koert-BLMS-2016},
the Milnor fibers of isolated singularities
of weighted homogeneous polynomials (such as
$W_{k}$) have non-vanishing symplectic
cohomology (because they possess weakly exact
Lagrangians).

However, to obtain a measurement $\mu$
satisfying properties
\ref{item:G-monotonicity} through
\ref{item:G-discriminant}, we can pass to the
stabilization $W_{k} \times \C^{d}$ for
$d \ge 1$, as in \S\ref{sec:stabilizations};
this satisfies the hypotheses of Theorem
\ref{theorem:main-G}.

\subsection{Infinity categorification of the
  Eliashberg--Polterovich relation}
\label{sec:infin-categ-eliashb-intro}

The arguments used in the proofs of Theorems
\ref{theorem:main-R} and \ref{theorem:main-G} are
based on mapping cones of chain maps. We briefly
preview the set-up:
\begin{itemize}
\item to each contact isotopy $\varphi_{t}$ lying
  in the complement of the discriminant (see
  Definition \ref{definition:W-contractible}), we
  will associate a chain complex
  $\mathrm{CF}_{\mathrm{eq}}(\varphi_{t})$;
\item to each non-negative path $\varphi_{s,t}$
  \eqref{eq:non-neg-path} between $\varphi_{0,t}$
  and $\varphi_{1,t}$ we will associate a chain
  map
  $\mathrm{CF}_{\mathrm{eq}}(\varphi_{0,t})\to
  \mathrm{CF}_{\mathrm{eq}}(\varphi_{1,t})$;
\item the mapping cone of this chain map is
  related to the intersections of the path
  $\varphi_{s,t}$ with the discriminant.
\end{itemize}

It is known that it is difficult to work with such
cones on the level of homology groups (e.g., this
leads to the theory of triangulated
categories). Many arguments can be presented much
more simply when one is working ``on chain
level.'' On the other hand, it is also known to be
somewhat difficult to properly keep track of chain
homotopy terms when working on chain level. A
modern but well-established approach to working on
chain level is the theory of $\infty$-categories
following \cite{lurie-HTT-2009,lurie-HA-2017}.

The first goal in this section (continued in
\S\ref{sec:infin-categ-contact-isotopies}) is
to introduce an $\infty$-category
$\mathscr{C}(Y)$ of contact
isotopies. Roughly speaking, an infinity
category is a collection of sets
$\mathscr{C}_{k}(Y)$ of \emph{$k$-simplices},
for $k=0,1,\dots$, together with ``face'' and
``degeneracy'' relationships between these
sets of simplices. The $0$-simplices should
be considered as the ``objects'' of the
category and the $1$-simplices should be
considered as the ``morphisms.'' In our
construction, the $0$-simplices
$\mathscr{C}_{0}(Y)$ are contact isotopies
$\varphi_{t}$ lying outside the
discriminant\footnote{The construction admits
  a minor variation where
  $\mathscr{C}_{0}(Y)$ is the complement of
  the $W$-contractible discriminant.} and the
$1$-simplices $\mathscr{C}_{1}(Y)$ are
essentially the non-negative paths
$\varphi_{s,t}$ as in
\eqref{eq:non-neg-path}.\footnote{The precise
  definition of $\mathscr{C}_{1}(Y)$ uses the
  notion of a Moore path; see Definition
  \ref{definition:moore-path}.} Each
$1$-simplex
$\varphi_{s,t}\in \mathscr{C}_{1}$ has two
\emph{faces} in $\mathscr{C}_{0}(Y)$, and
these are simply the endpoints of the
non-negative path.

In a general $\infty$-category, the
$k$-simplices represent homotopies between
morphisms and their compositions. The
definition of the set of $k$-simplices
$\mathscr{C}_{k}(Y)$ in our $\infty$-category
and the face and degeneracy maps will be
explained in
\S\ref{sec:infin-categ-contact-isotopies}.

Our main structural theorem is that the
\emph{equivariant Floer cohomology
  complex}\footnote{Here we are referring to the
  ``Borel'' version of equivariant cohomology, as
  advanced by \cite{seidel-smith-GAFA-2010,
    seidel-eq-pop, shelukhin-zhao-JSG-2021,
    gonzalez-mak-pomerleano-arXiv-2023,
    cazassus-jtopol-2024, sampietro-christ-2025}.}
induces an $\infty$-functor:
\begin{equation}\label{eq:infinity-functor}
  \mathrm{CF}_{\mathrm{eq}}:\mathscr{C}(Y)\to \mathrm{N}_{\mathrm{dg}}\mathrm{Ch}(\mathbf{k}[[x]]).
\end{equation}
The right hand side is the so-called
\emph{dg-nerve} of the category of
graded\footnote{Our gradings will be valued in the
  group $\Z/2\Z$ (or sometimes the trivial
  group).} chain complexes over the ring
$\mathbf{k}[[x]]$. The only piece of information
we need about this dg-nerve is \emph{how to define
  a functor valued in it}. Luckily the definition
is rather compact:

\begin{definition}[Taken from
  {\cite[\S7.6]{pardon-GT-2016}}]\label{definition:infinity-functor}
  An $\infty$-functor \eqref{eq:infinity-functor}
  consists of the following assignments:
  \begin{itemize}
  \item each zero simplex $\sigma=\varphi_{t}$ is
    sent to a (graded) $\mathbf{k}[[x]]$-module:
    $$V_{\sigma}=\mathrm{CF}_{\mathrm{eq}}(\varphi_{t})$$
    with a differential $d_{\sigma}$ of degree
    $1$;
  \item each $n$-dimensional simplex $\sigma$ is
    sent to a map:
    $$\mathfrak{c}_{\sigma}:V_{\sigma|0}\to
    V_{\sigma|n}$$ of degree $1-n$; here
    $\sigma|j$ is the $j$th vertex of $\sigma$.
  \end{itemize}
  These maps are required to satisfy the
  structural equation:
  \begin{equation}\label{eq:dg-nerve}
    \sum_{j=1}^{n-1}(-1)^{j}(\mathfrak{c}_{\sigma|[j\dots n]}\circ \mathfrak{c}_{\sigma|[0\dots j]}-\mathfrak{c}_{\sigma|[0\dots \hat{j}\dots n]})=\mathfrak{c}_{\sigma}\circ d_{\sigma|0}+(-1)^{n}d_{\sigma|n}\circ \mathfrak{c}_{\sigma};
  \end{equation}
  additionally we require the following for
  degenerate simplices:
  \begin{equation}\label{eq:dg-nerve-degen}
    \mathfrak{c}_{\sigma}=\left\{
      \begin{aligned}
        &\id&&\text{ if $\sigma$ is a degenerate $1$-simplex},\\
        &0&&\text{ if $\sigma$ is a degenerate $k$-simplex with $k>1$}.
      \end{aligned}
    \right.
  \end{equation}
\end{definition}
Importantly, because $1$-simplices are sent to
chain maps, we can talk about their mapping cones
in a simple and canonical way. We now state the
theorem which will be used to prove Theorem
\ref{theorem:main-R} and \ref{theorem:main-G}:
\begin{theorem}\label{theorem:main-infinity}
  If $Y$ admits an aspherical $\Z/p\Z$-filling $W$
  with $p$ prime, then there is an
  $\infty$-functor \eqref{eq:infinity-functor} for
  $\mathbf{k}=\Z/p\Z$ satisfying the following:
  \begin{enumerate}[label=(\alph*)]
  \item\label{theorem:main-infinity-0} the output
    $\mathrm{CF}_{eq}(\varphi_{t})$ is a finitely
    generated $\mathbf{k}[[x]]$-module, for each
    zero simplex
    $\varphi_{t}\in \mathscr{C}_{0}(Y)$;
  \item\label{theorem:main-infinity-discriminant} if $\varphi_{s,t}$ is a $1$-simplex
    in $\mathscr{C}(Y)$ such that
    $\varphi_{s,t}$ does not lie on the
    $W$-contractible discriminant, for each
    $s$, then the associated chain map is a
    chain homotopy equivalence;
  \item\label{theorem:main-infinity-1} the
    homology of the cone of the chain map
    associated to any $1$-simplex in
    $\mathscr{C}(Y)$ is $x$-torsion (i.e.,
    multiplication by $x^{d}$ on the homology of
    the cone acts by $0$, for some positive
    integer $d$);
  \item\label{theorem:main-infinity-2} if the
    $\Z/p\Z$-action on $W$ has at least one fixed
    point, then the homology of
    $\mathrm{CF}_{\mathrm{eq}}(\varphi_{t})$ is
    not $x$-torsion, for any object
    $\varphi_{t}\in \mathscr{C}_{0}(Y)$;
  \item\label{theorem:main-infinity-3} for
    $\epsilon$ small enough, the cone of
    $\mathrm{CF}_{\mathrm{eq}}(R_{-\epsilon
      t})\to
    \mathrm{CF}_{\mathrm{eq}}(R_{\epsilon
      t})$ is not an acyclic
    complex.\footnote{Recall that an acyclic
      complex is one whose homology is zero.}
  \end{enumerate}
\end{theorem}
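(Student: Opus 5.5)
The plan is to build $\mathrm{CF}_{\mathrm{eq}}$ as Borel-equivariant Hamiltonian Floer cohomology in the filling $W$, following the contact Floer cohomology of \cite{cant-hedicke-kilgore-arXiv-2023, cant-arXiv-2024}. To a contact isotopy $\varphi_{t}$ we associate the lift of $\varphi_t\circ$(pullback) to $\bd W$. This lift commutes with the $G=\Z/p\Z$ action, since it is the lift of an isotopy on the quotient. We extend it to a $G$-equivariant Hamiltonian system on $W$ which is $1$-homogeneous in the convex end. Non-discriminant (resp.\ non-$W$-contractible-discriminant) means there are no contractible orbits at infinity. Hence the Floer complex $\mathrm{CF}(\varphi_t)$ is well defined and finitely generated by contractible orbits lying in a compact region.

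Equivariance is handled by the Seidel--Smith construction. We use $S^\infty$-families of almost complex structures (or perturbation data) that are $\Z/p\Z$-equivariant. This produces a $\mathbf{k}[[x]]$-module $\mathrm{CF}(\varphi_t)\otimes\mathbf{k}[[x]]$, with the equivariant differential counting $\Z/p\Z$-twisted cascades. Asphericity rules out bubbling. Orientations for $p>2$ come from the scheme of \S\ref{sec:asymp-ops-orient-lines}. This gives (a) directly.

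For the $\infty$-functor, a $k$-simplex of $\mathscr{C}(Y)$ is a $\Delta^{k}$-family of non-negative Moore paths. We choose, coherently over the simplices, families of continuation data parametrized by Stasheff-type associahedra/cubes, together with equivariant parameters. The operations $\mathfrak{c}_\sigma$ count index $1-k$ solutions over these families. The structural equation \eqref{eq:dg-nerve} follows from the boundary of the 1-dimensional moduli spaces: broken configurations give the composition terms, and face degenerations give the $\hat{j}$ terms. The key analytic input is that non-negativity of the path gives the maximum principle and energy bound in the convex end. Degenerate simplices are handled by choosing $s$-independent data, which makes $\mathfrak{c}=\id$ or $0$ as required.

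For (b): if the path avoids the $W$-contractible discriminant, we reverse it or use the standard homotopy to its inverse, which gives a homotopy inverse through a 2-simplex argument. Equivalently, the continuation map is an isomorphism at the level of orbits, since no orbit crosses infinity.

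For (c) we localize: after inverting $x$, the equivariant complex is quasi-isomorphic to the Floer complex of the fixed locus $W^{G}$, by Seidel--Smith/Smith inequality localization. The fixed locus is compact, since the action is free at infinity. So $x^{-1}\mathrm{CF}_{\mathrm{eq}}\simeq H^{*}(W^{G})\otimes\mathbf{k}((x))$, and continuation maps become isomorphisms after localization. The cone is therefore finitely generated with vanishing localization, hence $x$-torsion.

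For (d): when $W^G\neq\emptyset$, this localized homology is nonzero, so the homology cannot be $x$-torsion.

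For (e): a small negative/positive pair $R_{\mp\epsilon t}$ computes $H^{*}(W)$ and $H^{*}(W,\bd W)$ equivariantly, and the map between them is the Borel-equivariant version of the map from $H^*(W,\bd W)$ to $H^*(W)$. The cone computes $H^*_{G}(\bd W)=H^*(\bd W/G)$. This is nonzero because $\bd W$ is a nonempty closed manifold.

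The main obstacle is the chain-level localization theorem in (c). It must hold compatibly with continuation maps for non-negative paths, which requires equivariant transversality and control of the Floer trajectories near $W^G$ uniformly in the homogeneous setting.
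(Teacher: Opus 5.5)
Your construction of the functor, and your arguments for (a) and (e), are in line with the paper. For the construction, the paper makes your ``coherent choice'' of regular Borel continuation data over cubes precise by showing $\mathscr{D}^{*}(W)\to\mathscr{C}(Y)$ is a trivial Kan fibration and taking a section. For (e), the PSS identifications plus Theorem \ref{theorem:commutative-square} give $H^{*}_{G}(\bd W)=H^{*}(Y)\neq 0$. Parts (b)--(d), however, have genuine gaps.

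In (b), the reversal of a non-negative path is non-positive, so it is not a $1$-simplex of $\mathscr{C}(Y)$. By Lemma \ref{lemma:isom-homot-categ}, a $1$-simplex is invertible in $\mathrm{h}\mathscr{C}(Y)$ only if its time-one endpoint is stationary, so no $2$-simplex can supply an inverse. Part (b) is exactly the non-formal claim that such non-invertible morphisms are sent to equivalences. Your alternative, ``isomorphism at the level of orbits,'' is unjustified: the interior orbit sets of the two endpoint data are unrelated, and a reverse continuation map has no a priori $C^{0}$ or energy bound in the convex end.

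In (c) you rely, as you acknowledge, on a chain-level Floer localization $x^{-1}\mathrm{CF}_{\mathrm{eq}}\simeq H^{*}(W^{G})\otimes\mathbf{k}((x))$ that is natural in continuation maps. This is not available. Regular Borel data only satisfy $g^{-1}\psi_{g\eta,t}g=\psi_{\eta,t}$, so the Hamiltonian at a pole is not $G$-invariant and there is no complex of fixed orbits to localize onto. Passing to invariant data runs into the equivariant transversality problem near $W^{G}$, which Seidel--Smith handle only under a stable normal trivialization hypothesis. Your (d) inherits this gap.

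The missing idea is local Floer cohomology of discriminant crossings. It yields (b) and (c) together and uses only that the action is free at infinity.
\begin{itemize}
\item Using Lemma \ref{lemma:can-be-decomposed} and a chopping argument, the paper writes any positive $1$-simplex as a composition of isomorphisms and ideal restrictions of extenders (Theorem \ref{theorem:composition}). Extenders are Hamiltonian systems on $SY$ isolating at most one transverse crossing, whose new orbits share a single negative action value.
\item Inserting the extender far out in the convex end by Liouville conjugation, Claim \ref{claim:cone-matrix} shows the continuation map is $\left[\begin{smallmatrix}\id\\ 0\end{smallmatrix}\right]$ with respect to the splitting $\mathrm{CF}_{\mathrm{eq}}(X_{\eta,1,t})=\mathrm{CF}_{\mathrm{eq}}(X_{\eta,0,t})\oplus E$. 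Hence the cone is equivalent to $E$, which is generated by orbits in the region where $G$ acts freely.
\item The Cartan isomorphism gives $E\simeq\mathrm{CF}(X_{1,t})\otimes\mathrm{CM}(EG)\simeq\mathrm{CF}_{\mathrm{loc}}(\psi_{s,t},\kappa)$. This is a finite-dimensional $\mathbf{k}$-complex, hence $x$-torsion, and it vanishes when the path avoids the $W$-contractible discriminant (Lemma \ref{lemma:case-no-intersections}). Torsion of cones is preserved under composition.
\end{itemize}
For (d), no Floer localization is needed. PSS identifies $\mathrm{HF}_{\mathrm{eq}}(R_{\epsilon t})$ with $\mathrm{HM}_{\mathrm{eq}}(W)$. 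Projecting to the quotient complex at a fixed minimum $q_{0}$ shows the unit is not torsion (Proposition \ref{proposition:unit-not-torsion}). Then (c) transfers non-torsion to every object through $1$-simplices $\varphi_{t}\to R_{at}\leftarrow R_{\epsilon t}$, since a chain map with $x$-torsion cone preserves the free rank of homology.
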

The construction of the $\infty$-functor is
performed in
\S\ref{sec:equiv-floer-compl}. The proof of
\ref{theorem:main-infinity-1} uses a local
Floer homology argument in
\S\ref{sec:local-floer-cohom}. The proofs of
\ref{theorem:main-infinity-2} and
\ref{theorem:main-infinity-3} uses the PSS
isomorphism (Theorem
\ref{theorem:pss}). Property
\ref{theorem:main-infinity-3} is added to
ensure the theorem is not trivially satisfied
by a constant functor; we will in fact prove
something stronger and identify a particular
homology class in
$\mathrm{CF}_{\mathrm{eq}}(R_{\epsilon t})$
which projects to a non-zero element in the
cone from \ref{theorem:main-infinity-3}.

\subsubsection{Homology level functor}
\label{sec:homology-level-functor}

Taking homology of the $\infty$-functor in Theorem
\ref{theorem:main-infinity} gives an ordinary
functor:
\begin{equation}\label{eq:taking-homology}
  \mathrm{HF}_{\mathrm{eq}}:\mathrm{h}\mathscr{C}(Y)\to \mathrm{Mod}(\mathbf{k}[[x]])
\end{equation}
where $\mathrm{h}\mathscr{C}(Y)$ is the so-called
\emph{homotopy category}\footnote{see
  \cite[\S1.2.3]{lurie-HTT-2009} for the
  definition of $\mathrm{h}\mathscr{C}$.} of
$\mathscr{C}(Y)$ and
$\mathrm{Mod}(\mathbf{k}[[x]])$ is the category of
modules over $\mathbf{k}[[x]]$. Since
$\mathrm{HF}_{\mathrm{eq}}$ is a regular functor
defined on a regular category, we can take its
colimit in the usual sense. This leads to
the definition:
\begin{definition}
  The equivariant symplectic cohomology
  $\mathrm{SH}_{\mathrm{eq}}(W)$ is defined to be
  the colimit of \eqref{eq:taking-homology} in
  the category of $\mathbf{k}[[x]]$-modules.
\end{definition}
The $\mathbf{k}[[x]]$-module
$\mathrm{SH}_{\mathrm{eq}}(W)$ will be used to
define the invariants $c_{R}$ and $\mu$. As we
shall see, the invariant $\mu$ relies on the
$\mathbf{k}[[x]]$-module structure in an important
way.

\subsection{PSS map}
\label{sec:pss-introduction}

There is a larger $\infty$-category
$\bar{\mathscr{C}}(Y)$ such that
$\bar{\mathscr{C}}_{0}(Y)=\mathrm{CI}(Y)$ and
$\mathscr{C}(Y)\subset \bar{\mathscr{C}}(Y)$ is
the full subcategory spanned by the contact
isotopies without discriminant points, (or
$W$-contractible discriminant points, both
variants work equally well).

Inside this larger $\infty$-category
$\bar{\mathscr{C}}(Y)$, we will define
another $\infty$-category $\mathscr{P}(Y)$,
which we call the \emph{PSS
  category}.\footnote{The name comes from
  \cite{piunikhin-salamon-schwarz-1996}.}
Roughly speaking, simplices in
$\mathscr{P}(Y)$ have some of their vertices
in $\mathscr{C}(Y)$, and the other vertices
are fixed at the constant system $\id$ (see
\S\ref{sec:PSS-category} for the precise
definition). For the purposes of the
introduction, it suffices to say:
\begin{itemize}
\item any $1$-simplex $\id\to \varphi_{t}$ in
  $\bar{\mathscr{C}}(Y)$, with
  $\varphi_{t}\in \mathscr{C}(Y)$, is in
  $\mathscr{P}(Y)$.
\end{itemize}
Such $1$-simplices represent ``continuation data''
from the constant system $\id$ to some output
system $\varphi_{t}\in \mathrm{CI}(Y)$, which must
necessarily satisfy $\id \le \varphi_{t}$ in the sense of \cite{eliashberg-polterovich-gafa-2000}.

\begin{theorem}\label{theorem:pss}
  Assume the hypotheses of Theorem
  \ref{theorem:main-infinity}.  Counting solutions
  of moduli spaces of type
  \cite{piunikhin-salamon-schwarz-1996,frauenfelder-schlenk-israeljm-2007}
  defines an $\infty$-functor:
  \begin{equation}\label{eq:pre-PSS-diagram}
    \mathscr{P}(Y)\to \mathrm{N}_{\mathrm{dg}}(\mathrm{Ch}(\mathbf{k}[[x]]))
  \end{equation}
  where:
  \begin{itemize}
  \item non-identity $0$-simplices $\varphi_{t}$
    are sent to the outputs
    $\mathrm{CF}_{\mathrm{eq}}(\varphi_{t})$ of
    the functor from Theorem
    \ref{theorem:main-infinity};
  \item the identity $0$-simplex $\id$ is sent to
    the equivariant Morse cohomology complex
    $\mathrm{CM}_{\mathrm{eq}}(P)$ associated to a
    $G$-equivariant Morse-Smale pseudogradient $P$
    which points outwards in the convex end of $W$
    (the definition is reviewed in
    \ref{sec:defin-borel-equiv}).
  \end{itemize}
  By virtue of how $\infty$-functors work, the
  aforementioned $1$-simplices in $\mathscr{P}(Y)$
  joining $\id$ to a
  $\varphi_{t}\in \mathscr{C}(Y)$ are sent to
  chain maps:
  \begin{equation*}
    \mathrm{PSS}:\mathrm{CM}_{\mathrm{eq}}(X)\to \mathrm{CF}_{\mathrm{eq}}(\varphi_{t}).
  \end{equation*}
  If we apply this to a $1$-simplex of the form
  $\varphi_{s,t}=R_{\epsilon st}$, for
  $\epsilon>0$ small enough, then:
  \begin{equation*}
    \mathrm{PSS}:\mathrm{CM}_{\mathrm{eq}}(X)\to \mathrm{CF}_{\mathrm{eq}}(R_{\epsilon t}).
  \end{equation*}
  is a chain homotopy equivalence which can
  be understood as an equivariant version of
  the PSS isomorphism of
  \cite{piunikhin-salamon-schwarz-1996,
    frauenfelder-schlenk-israeljm-2007}.
\end{theorem}

\subsubsection{The negative slope PSS map}
\label{sec:negative-slope-pss}

There is a ``reflected'' version of the
aforementioned set-up. Inside
$\bar{\mathscr{C}}(Y)$, one can also define
an $\infty$-category $\mathscr{N}(Y)$, which
we call the \emph{negative PSS category}; see
\S\ref{sec:PSS-category} for the precise
definition. For the purposes of the
introduction, it suffices to say:
\begin{itemize}
\item any $1$-simplex $\varphi_{t}\to \id$ in
  $\bar{\mathscr{C}}(Y)$, with
  $\varphi_{t}\in \mathscr{C}(Y)$, is in
  $\mathscr{N}(Y)$.
\end{itemize}
Such $1$-simplices represent ``continuation
data'' from $\varphi_{t}$ to the constant
system $\id$, and so the existence of a
$1$-simplex implies $\varphi_{t}\le \id$.

\begin{theorem}\label{theorem:negative-pss}
  Assume the hypotheses of Theorem
  \ref{theorem:main-infinity}. Counting solutions
  of moduli spaces of type
  \cite{piunikhin-salamon-schwarz-1996,frauenfelder-schlenk-israeljm-2007}
  defines an $\infty$-functor:
  \begin{equation}\label{eq:pre-PSS-diagram}
    \mathscr{N}(Y)\to \mathrm{N}_{\mathrm{dg}}(\mathrm{Ch}(\mathbf{k}[[x]]))
  \end{equation}
  where:
  \begin{itemize}
  \item non-identity $0$-simplices $\varphi_{t}$
    are sent to the outputs
    $\mathrm{CF}_{\mathrm{eq}}(\varphi_{t})$ of
    the functor from Theorem
    \ref{theorem:main-infinity};
  \item the identity $0$-simplex $\id$ is
    sent to the equivariant Morse cohomology
    complex $\mathrm{CM}_{\mathrm{eq}}(-P)$
    associated to a $G$-equivariant
    Morse-Smale pseudogradient $-P$ which
    points \emph{inwards} in the convex end
    of $W$ (here $P$ points outwards so $-P$
    points inwards).
  \end{itemize}
  By virtue of how $\infty$-functors work,
  the aforementioned $1$-simplices in
  $\mathscr{N}(Y)$ joining
  $\varphi_{t}\in \mathscr{C}(Y)$ to $\id$
  are sent to chain maps:
  \begin{equation*}
    \mathrm{PSS}:\mathrm{CF}_{\mathrm{eq}}(\varphi_{t})\to \mathrm{CM}_{\mathrm{eq}}(-X).
  \end{equation*}
  If we apply this to a $1$-simplex of the form
  $\varphi_{s,t}=R_{\epsilon(s-1)t}$, for
  $\epsilon>0$ small enough, then:
  \begin{equation*}
    \mathrm{PSS}:\mathrm{CF}_{\mathrm{eq}}(\varphi_{t})\to \mathrm{CM}_{\mathrm{eq}}(-X).
  \end{equation*}
  is a chain homotopy equivalence.
\end{theorem}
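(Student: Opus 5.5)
The plan is to mirror the proof of Theorem \ref{theorem:pss} with the roles of input and output exchanged, and to reduce the construction to the positive case by a duality/reflection. First I will define, for each simplex $\sigma$ of $\mathscr{N}(Y)$ whose last vertices are the constant system $\id$, a parametrized moduli space of ``negative PSS'' configurations. Each such configuration is a Floer cylinder over a half-infinite domain $(-\infty,0]\times \R/\Z$, with incoming end asymptotic to an orbit of the Hamiltonian system lifting $\varphi_{t}$ to $W$. The cylinder is capped at $s=0$ by a plane, and on the plane the Hamiltonian term is cut off to zero. The output is then an incidence with a (stable) manifold of the inward-pointing pseudogradient $-P$. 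The Borel-equivariant version uses the usual $S^{\infty}$-family (for $\Z/p\Z$, the lens space model with parameters $x^{k}$ recording index $k$ cells) of $G$-equivariant data, exactly as in the construction of $\mathrm{CF}_{\mathrm{eq}}$ in \S\ref{sec:equiv-floer-compl}. Counting rigid elements of these moduli spaces, summed over the simplicial parameter spaces attached to $\sigma$, gives the maps $\mathfrak{c}_{\sigma}$. Faces in which all vertices lie in $\mathscr{C}(Y)$ are assigned the maps already produced by Theorem \ref{theorem:main-infinity}, and the face consisting only of $\id$ vertices is assigned the equivariant Morse continuation data (identity/zero on degenerate simplices).

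The structural equation \eqref{eq:dg-nerve} comes from the boundary of the one-dimensional components. There are three kinds of boundary. Breaking at the incoming end gives $\mathfrak{c}_{\sigma}\circ d_{\sigma|0}$. Breaking along $-P$ flow lines at the output gives $d_{\sigma|n}\circ\mathfrak{c}_{\sigma}$. Breakings at intermediate simplicial vertices, together with collisions of the simplex parameters, give the composition and face terms. Degenerate simplices are handled by choosing data pulled back along degeneracy maps, so that the moduli spaces have an extra free $\R$-parameter and contribute zero (or the identity for degenerate $1$-simplices).

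The essential analytic input is compactness, and this is the step I expect to be the main obstacle. The non-negativity of the path from $\varphi_{t}$ to $\id$ gives an a priori energy bound and a maximum principle in the convex end. The reason is that the Hamiltonian only decreases along the cutoff, which is the reverse of the positive PSS case. Consequently the inward-pointing field $-P$ is needed, so that gradient flow lines, and solutions near the Morse output, stay in a compact region; solutions must not escape to infinity where $-P$ flows inward. I would check the maximum principle using the standard argument of \cite{cant-arXiv-2024}: the Hamiltonians are of contact type, with $\partial_{s}H\le 0$ at infinity in the correct direction. Asphericity of $W$ excludes sphere bubbling. Transversality is achieved by equivariant perturbations, which are free because $G$ acts freely on the Borel parameter space.

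Finally, for the specific $1$-simplex $R_{\epsilon(s-1)t}$ with $\epsilon$ small, I will show that $\mathrm{PSS}$ is a chain homotopy equivalence. I would compose it with the positive PSS map of Theorem \ref{theorem:pss} and with continuation maps. Alternatively, I would use the standard small-Hamiltonian argument: for $\epsilon$ below the minimal Reeb period, the Hamiltonian generating $R_{-\epsilon t}$ can be taken $C^{2}$-small and Morse in the interior, with no nonconstant orbits. By the usual adiabatic/Floer--Morse correspondence, the rigid negative PSS solutions are then constant, so the map is upper-triangular with respect to the action filtration with identity on the diagonal, hence an isomorphism of chain complexes over $\mathbf{k}[[x]]$.
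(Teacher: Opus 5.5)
Your construction of the functor is essentially the paper's. The paper also just reflects the $\mathscr{P}(Y)$ construction: hybrid Morse--Floer data glued by $u(-\infty)=q(\infty)$ instead of the condition in \eqref{eq:hybrid-morse-floer}, regularity for every subcube, ends of one-dimensional moduli spaces giving \eqref{eq:dg-nerve}, and an extra free parameter for degenerate simplices.

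Two things need adjusting. First, you cannot choose data simplex by simplex. The coherent choice (compatible with faces, degeneracies, and the section already fixed on $\mathscr{C}(Y)$) comes from the reflected version of the trivial Kan fibration in Lemma~\ref{lemma:triv-kan-fibr-PSS}, together with a section extending the given one. Second, your reason for the inward-pointing $-P$ is not the right one. A $1$-simplex $\varphi_{t}\to\id$ is again continuation along a non-negative path, so the sign condition at infinity is the same as for the positive PSS maps. Compactness then holds for Morse--Borel data of either type: the Floer part is confined by the usual maximum principle, and the Morse segment runs monotonically in $r$ between a critical point and $u(-\infty)$. What forces the inward choice is the low-slope statement, since $\mathrm{HF}_{\mathrm{eq}}(R_{-\epsilon t})$ models $\mathrm{HM}_{\mathrm{eq}}(W,\partial W)$ rather than $\mathrm{HM}_{\mathrm{eq}}(W)$.

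For the equivalence, your first suggestion cannot work. The only bridge to the positive PSS maps is Theorem~\ref{theorem:commutative-square}, and its top arrow crosses slope zero and is not an equivalence. Its cone is not acyclic by Theorem~\ref{theorem:main-infinity}\ref{theorem:main-infinity-3}, and on the Morse side $\mathfrak{c}_{\mathrm{CM}}$ realizes $\mathrm{HM}_{\mathrm{eq}}(W,\partial W)\to\mathrm{HM}_{\mathrm{eq}}(W)$. So no invertibility can be extracted from that square.

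Your second suggestion is a genuinely different route from the paper's.
\begin{itemize}
\item The paper (\S\ref{sec:low-slope-pss}, reflected) writes down a reverse PSS map using data that is $s$-independent outside $\Omega$. It shows one composite is homotopic to the identity: glue, deform through the convex space of non-positively curved connections that are Reeb-proportional at infinity, then use the maximum principle and asphericity to reduce to constant curves. The other composite is deferred to the PSS/Frauenfelder--Schlenk argument. This needs no smallness and works for an arbitrary regular Morse--Borel datum.
\item Your filtration argument gives a chain isomorphism outright and avoids the nodal gluing. However, you must choose the Morse--Borel datum at the poles to be a pseudogradient of the same $C^{2}$-small Morse Hamiltonian, so that generators and filtrations match. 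This is allowed because sections can be chosen through any prescribed regular data, and different sections give homotopic values.
\end{itemize}

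Rather than running the adiabatic analysis for $\eta$-dependent equivariant data, it is cleaner to reduce mod $x$. A chain map of finitely generated free $\mathbf{k}[[x]]$-complexes is a homotopy equivalence once it is a quasi-isomorphism mod $x$: by Nakayama the cone is acyclic, hence contractible over the PID $\mathbf{k}[[x]]$. Mod $x$, only data over the bottom of $BG$ enter, as in \eqref{eq:cf-neq}. There the map is block-triangular with the non-equivariant PSS maps on the diagonal, and your small-Hamiltonian argument applies to those.
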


\subsubsection{Continuation across slope zero}
\label{sec:cont-across-slope}

The two complexes
$\mathrm{CM}_{\mathrm{eq}}(X)$ and
$\mathrm{CM}_{\mathrm{eq}}(-X)$ are naturally
related to one another, in that there is a
canonical chain homotopy class of maps:
\begin{equation*}
  \mathfrak{c}_{\mathrm{CM}}:\mathrm{CM}_{\mathrm{eq}}(-X)\to \mathrm{CM}_{\mathrm{eq}}(X)
\end{equation*}
defined by counting Morse continuation lines;
the definition is reviewed in
\S\ref{sec:defin-morse-continuation
  map}. This Morse continuation map interacts
with nicely PSS and the Floer cohomology
continuation maps; this result is well-known
to experts and similar statements appear
already in
\cite{cieliebak-frauenfelder-oancea-ASENS-2010,
  ritter_circle_actions,
  albers-kang-vanishing-RFH,
  cant-hedicke-kilgore-arXiv-2023}. We
include the precise formulation for
completeness.
\begin{theorem}\label{theorem:commutative-square}
  Let $\mathfrak{c}$ be the continuation map
  associated to the $1$-simplex $\Phi$ in
  $\mathscr{C}(Y)$ such that $\Phi^{01}$ is
  the path
  $\varphi_{s,t}=R_{\epsilon(2s-1)t}$ joining
  $R_{-\epsilon t}$ to $R_{\epsilon t}$. Then
  the following diagram commutes up to chain
  homotopy:
  \begin{equation*}
    \begin{tikzcd}
      \mathrm{CF}_{\mathrm{eq}}(R_{-\epsilon t})\arrow[d,"\mathrm{PSS}"]\arrow[r,"\mathfrak{c}"]&\mathrm{CF}_{\mathrm{eq}}(R_{\epsilon t})\arrow[from=2-2,"\mathrm{PSS}"]\\
      \mathrm{CM}_{\mathrm{eq}}(-X)\arrow[r,"\mathfrak{c}_{\mathrm{CM}}"]&\mathrm{CM}_{\mathrm{eq}}(X).
    \end{tikzcd}
  \end{equation*}
  The PSS maps are those from Theorem
  \ref{theorem:pss} and
  \ref{theorem:negative-pss}.
\end{theorem}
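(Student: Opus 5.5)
Both composites $\mathrm{PSS}\circ\mathfrak{c}$ and $\mathfrak{c}_{\mathrm{CM}}\circ\mathrm{PSS}$ are maps $\mathrm{CF}_{\mathrm{eq}}(R_{-\epsilon t})\to \mathrm{CM}_{\mathrm{eq}}(X)$ defined by counting configurations. The plan is to show that both are chain homotopic to a single ``mixed'' map. Its moduli space consists of a Floer half-cylinder $u:(-\infty,0]\times S^1\to W$, asymptotic at $-\infty$ to an orbit of the system generating $R_{-\epsilon t}$, solving a continuation equation whose Hamiltonian is cut off to zero near $s=0$. The equivariant version carries the Borel parameter $z\in S^\infty$ together with the usual $\Z/p\Z$-equivariant data. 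At $s=0$ the half-cylinder caps off to a plane, and its centre is attached to a half-infinite flow line of $P$ converging to a critical point in $\mathrm{CM}_{\mathrm{eq}}(X)$. In each case the chain homotopy will come from a one-parameter family of such moduli spaces: the boundary of the compactified 1-dimensional family produces $\mathfrak{h}d+d\mathfrak{h}$ together with the two composites, once the ends are identified with broken configurations.

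\textbf{Step 1: $\mathrm{PSS}\circ\mathfrak{c}$.} Since $\mathrm{PSS}$ and $\mathfrak{c}$ are the images of $1$-simplices, I would concatenate them, $R_{-\epsilon t}\to R_{\epsilon t}\to \id$. Here the composite continuation data, from slope $-\epsilon$ through $\epsilon$ back to the identity at slope $0$ near the cap, is a $2$-simplex of $\bar{\mathscr{C}}(Y)$. This $2$-simplex should lie in the negative PSS category $\mathscr{N}(Y)$ only when the final vertex uses $-P$. That mismatch is exactly why $\mathfrak{c}_{\mathrm{CM}}$ appears. So instead I would directly stretch the neck between the continuation cylinder and the PSS half-cylinder. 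By standard gluing, for large neck length the count equals $\mathrm{PSS}\circ\mathfrak{c}$. Deforming the neck length down to $0$ gives the chain homotopy to the mixed map. This step is essentially the $\infty$-functoriality argument of Theorem~\ref{theorem:pss}, run with a $2$-simplex whose first two vertices lie in $\mathscr{C}(Y)$.

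\textbf{Step 2: $\mathfrak{c}_{\mathrm{CM}}\circ\mathrm{PSS}$.} Here the composite involves negative-slope PSS data ending at $-P$, followed by a Morse continuation line from $-P$ to $P$. I would insert a gradient segment of length $\ell$ attached to the cap, along which the pseudogradient interpolates from $-P$ to $P$. As $\ell\to\infty$ the configurations break into exactly this composite. As $\ell\to 0$, the interpolation happens in a collapsing interval, and the count becomes the mixed map with $P$.

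\textbf{Main obstacle: compactness during these deformations.} The Hamiltonians have slopes crossing zero, so during deformations the Hamiltonian slope near the cap is not everywhere non-negative. For $\epsilon$ small, however, the ``non-negative path'' condition holds in the continuation direction, and this gives the $C^0$ bound via the maximum principle in the convex end. For the Morse part, flow lines of the interpolated pseudogradient could escape to infinity. I would handle this by making the interpolation from $-P$ to $P$ happen only along the flow-line parameter, so that escaping lines are excluded as in the definition of $\mathfrak{c}_{\mathrm{CM}}$. The remaining ingredients should be routine given the construction in \S\ref{sec:equiv-floer-compl}:

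\begin{itemize}
\item asphericity rules out bubbling;
\item the equivariant transversality uses the same Borel-type perturbation scheme as before;
\item orientations are handled as in \S\ref{sec:asymp-ops-orient-lines}.
\end{itemize}
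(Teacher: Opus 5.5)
\textbf{There is a genuine gap.} You have reversed the right-hand arrow. In the statement it points from $\mathrm{CM}_{\mathrm{eq}}(X)$ up to $\mathrm{CF}_{\mathrm{eq}}(R_{\epsilon t})$, and it is the positive PSS map of Theorem~\ref{theorem:pss}. So the claim is
\[
\mathfrak{c}\simeq\mathrm{PSS}_{+}\circ\mathfrak{c}_{\mathrm{CM}}\circ\mathrm{PSS}_{-}
\]
as maps $\mathrm{CF}_{\mathrm{eq}}(R_{-\epsilon t})\to\mathrm{CF}_{\mathrm{eq}}(R_{\epsilon t})$. It is not a comparison of two maps into $\mathrm{CM}_{\mathrm{eq}}(X)$.

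This is not cosmetic, because the map you use in Step~1 does not exist. You want $\mathrm{CF}_{\mathrm{eq}}(R_{\epsilon t})\to\mathrm{CM}_{\mathrm{eq}}(X)$ as the image of a $1$-simplex $R_{\epsilon t}\to\id$. But the path from slope $\epsilon$ back down to slope $0$ is negative. Consequently:
\begin{itemize}
\item it is not a $1$-simplex of $\bar{\mathscr{C}}(Y)$, nor of $\mathscr{N}(Y)$, whose $1$-simplices $\varphi_t\to\id$ require $\varphi_t\le\id$;
\item the concatenation $R_{-\epsilon t}\to R_{\epsilon t}\to\id$ is not a $2$-simplex;
\item Floer data along this path has positive curvature in the convex end, so the maximum principle is unavailable however small $\epsilon$ is.
\end{itemize}
The $\pm P$ mismatch you point to is not the real obstruction; positivity is. Your neck-stretching deformation in Step~1 runs through exactly such data, so it has no compactness. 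Your "main obstacle" paragraph is therefore wrong for this leg. Step~2 is fine: shrinking a $-P\to P$ Morse segment is admissible, since it goes from type $-$ to type $+$.

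The missing ingredient is nodal gluing. The paper works directly with $\mathrm{PSS}_{+}\circ\mathfrak{c}_{\mathrm{CM}}\circ\mathrm{PSS}_{-}$. Its configurations are a negative PSS half-cylinder, a Morse continuation line from $-P$ to $P$, and a positive PSS half-cylinder. Shrinking the Morse segment to length zero yields pairs of half-cylinders whose removable singularities coincide. Gluing at this node gives genuine cylinders for continuation data running $-\epsilon\to0\to\epsilon$. That data is non-negative and homotopic to $R_{\epsilon(2s-1)t}$ rel endpoints, so it computes $\mathfrak{c}$ up to homotopy.

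If you want to keep your two-step structure, replace your right-hand map by the Frauenfelder--Schlenk inverse $Q$ of \eqref{eq:inverse-of-PSS}. It keeps slope $\epsilon$ and caps with the autonomous Hamiltonian $\epsilon h(r)$. Then $Q\circ\mathfrak{c}$ can plausibly be deformed to your mixed map by scaling the cap Hamiltonian to $\lambda\epsilon h(r)$ for $\lambda\in[0,1]$, which keeps the data non-negative throughout. You still need $\mathrm{PSS}_{+}\circ Q\simeq\id$ to return to the stated square. That follows formally from $Q\circ\mathrm{PSS}_{+}\simeq\id$ if you take the homotopy-equivalence clause of Theorem~\ref{theorem:pss} as given. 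Proving it directly, however, is the same nodal-gluing argument, so this step has to be stated explicitly rather than bypassed.
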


\begin{remark}
  It is a bit awkard to have three PSS
  theorems; in an ideal world one imagines
  there is a single $\infty$-category
  containing both $\mathscr{P}(Y)$ and
  $\mathscr{N}(Y)$, with a single
  $\infty$-functor, such that Theorem
  \ref{theorem:commutative-square} is a
  simple corollary rather than an independent
  theorem. However, the proof of Theorem
  \ref{theorem:commutative-square} involves
  nodal gluing and it is not a priori clear
  how to package these nodal interfaces into
  a single infinity-category.
\end{remark}

\subsection{On canonicity}
\label{sec:canonicity}

The reader may wonder whether our construction is
``canonical,'' i.e., whether any two readers who
follow our construction will obtain the ``same''
$\infty$-functor. Our invariant is not canonical,
in the same way ``the Morse complex'' is not
canonical: two readers can follow each step of the
construction correctly but pick auxiliary data
differently and end up with non-isomorphic
complexes $\mathrm{CF}_{\mathrm{eq}}(\varphi_{t})$
(they should, at least, be quasi-isomorphic).

Our invariant is canonical in a weaker sense:
there exists a \emph{canonical diagram} in the
category of $\infty$-categories:
\begin{equation*}
  \begin{tikzcd}
    {\mathscr{D}(W)}\arrow[r,"{\mathrm{CF}_{\mathrm{eq}}}"]\arrow[d,"\pi"]&{\mathrm{N}_{\mathrm{dg}}\mathrm{Ch}(\mathbf{k}[[x]])}\\
    {\mathscr{C}(Y)}&{}
  \end{tikzcd}
\end{equation*}
where the vertical map is a \emph{trivial Kan
  fibration}; see \cite[\S2]{lurie-HTT-2009} or
\S\ref{sec:survey-infty-categories} for the
definition. This diagram is canonical, in that two
readers will agree on the result. This statement
is of course not entirely mathematical (and
borders on the philosophical). A more mathematical
formulation of ``canonicity'' is that the diagram
is well-defined up to a canonical isomorphism. A
similar statement holds with $\mathscr{C}(Y)$
replaced by the PSS category $\mathscr{P}(Y)$.

The $\infty$-functor from Theorem
\ref{theorem:main-infinity} is defined as a
precomposition $\mathrm{CF}_{\mathrm{eq}}\circ s$,
where $s$ is a section of $\pi$ (a similar
statement holds for the $\infty$-functor in
Theorem \ref{theorem:pss}). Well-known results
from \cite{lurie-HTT-2009} imply these sections
exist in abundance. Moreover, using basic
properties of trivial Kan fibrations, one can
relate the two functors
$\mathrm{CF}_{\mathrm{eq}}\circ s$ and
$\mathrm{CF}_{\mathrm{eq}}\circ s'$ by picking a
``path'' between $s$ and $s'$ in the ``space'' of
sections and thereby prove the outputs are chain
homotopy equivalent. Such a comparison between
sections $s,s'$ will prove the homology level
functors agree up to natural isomorphism. In this
sense, the lack of canonicity is isolated into the
single ``contractible choice'' of section $s$.

Of course, such geometric language as ``path'' and
``space'' needs to be interpreted properly, and we
return to this more precisely in
\S\ref{sec:survey-infty-categories}.

\subsection{Definition of the measurements}
\label{sec:defin-meas}

The first step is to appeal to the fact:
{\itshape if a prime number $p$ divides the
  order of the stabilizer of some point
  $w\in W$, then there is a $\Z/p\Z$-subgroup
  of the stabilizer}.\footnote{See, e.g.,
  \cite[Theorem IV.2.1]{aluffi-GSM-2009}.}
Then define $Y'$ to be the quotient by the
action of this $\Z/p\Z$ subgroup. Since $Y'$
is a covering of $Y$, it is sufficient to
prove Theorem \ref{theorem:main-R} and
\ref{theorem:main-G} for $Y'$ instead of
$Y$. Thus, for the remainder of the
discussion, we may suppose that $Y=Y'$ and
$G=\Z/p\Z$. We can therefore apply Theorems
\ref{theorem:main-infinity} and
\ref{theorem:pss}.

The quantities $c_{R}$ and $\mu$ satisfying
Theorems \ref{theorem:main-R} and
\ref{theorem:main-G} are defined in terms of a
\emph{unit element}
$1\in \mathrm{SH}_{\mathrm{eq}}$. It is
constructed as follows: the PSS chain maps
$\mathrm{CM}_{\mathrm{eq}}(X)\to
\mathrm{CF}_{\mathrm{eq}}(\varphi_{t})$ from
Theorem \ref{theorem:pss}, associated to a
$1$-simplex $\varphi_{s,t}$, yield cycles
$1(\varphi_{s,t})\in
\mathrm{CF}_{\mathrm{eq}}(\varphi_{t})$
corresponding to the sum of local minima in
$\mathrm{CM}_{\mathrm{eq}}(X)$.
\begin{lemma}\label{lemma:unit-well-defined-in-colimit}
  The image of the element $1(\varphi_{s,t})$ in
  the colimit $\mathrm{SH}_{\mathrm{eq}}$ is
  independent of the choice of $1$-simplex
  $\varphi_{s,t}$.
\end{lemma}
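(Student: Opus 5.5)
The plan is to exploit the $\infty$-functor structure of the PSS functor from Theorem \ref{theorem:pss} on the PSS category $\mathscr{P}(Y)$, together with the fact that the colimit $\mathrm{SH}_{\mathrm{eq}}$ is taken over the homotopy category $\mathrm{h}\mathscr{C}(Y)$. Two cases need to be compared: two $1$-simplices $\varphi_{s,t}$ and $\varphi'_{s,t}$ from $\id$ to the \emph{same} target $\varphi_{t}$, and two $1$-simplices from $\id$ to \emph{different} targets $\varphi_{0,t}$ and $\varphi_{1,t}$. The second case reduces to the first. Since the colimit is filtered along the Eliashberg--Polterovich relation, we choose a target $\psi_{t}\in\mathscr{C}_{0}(Y)$ with $\varphi_{i,t}\le\psi_{t}$. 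For instance, take $\psi_{t}=R_{Ct}\varphi_{0,t}\varphi_{1,t}$-type systems perturbed off the discriminant, or simply a sufficiently positive Reeb push of both. We then pick $1$-simplices $\varphi_{i,t}\to\psi_{t}$ in $\mathscr{C}(Y)$. By definition of the colimit, the image of $1(\varphi_{i,s,t})$ in $\mathrm{SH}_{\mathrm{eq}}$ equals the image of its pushforward under the continuation map to $\mathrm{HF}_{\mathrm{eq}}(\psi_{t})$.

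The key step is then a horn-filling argument in $\mathscr{P}(Y)$. Given a $1$-simplex $a:\id\to\varphi_{t}$ in $\mathscr{P}(Y)$ and a $1$-simplex $b:\varphi_{t}\to\psi_{t}$ in $\mathscr{C}(Y)$, I would show that the $\Lambda^{2}_{1}$-horn $(a,b)$ extends to a $2$-simplex $\sigma$ of $\mathscr{P}(Y)$ whose remaining edge $c:\id\to\psi_{t}$ is some $1$-simplex of $\mathscr{P}(Y)$. Concretely, one concatenates the non-negative Moore paths and takes the straight $2$-simplex of paths. Equation \eqref{eq:dg-nerve} with $n=2$ then gives
\begin{equation*}
  \mathfrak{c}_{b}\circ\mathfrak{c}_{a}-\mathfrak{c}_{c}=\mathfrak{c}_{\sigma}\circ d_{\id}+d_{\psi}\circ\mathfrak{c}_{\sigma}
\end{equation*}
up to sign, so $\mathfrak{c}_{b}\circ\mathrm{PSS}_{a}$ and $\mathrm{PSS}_{c}$ are chain homotopic. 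Applying both sides to the cycle given by the sum of local minima in $\mathrm{CM}_{\mathrm{eq}}(X)$ shows that $\mathfrak{c}_{b}(1(a))$ and $1(c)$ are homologous in $\mathrm{CF}_{\mathrm{eq}}(\psi_{t})$.

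It remains to show that any two $1$-simplices $c,c':\id\to\psi_{t}$ in $\mathscr{P}(Y)$ give homologous units. For this I would construct a $2$-simplex in $\mathscr{P}(Y)$ with edges $c$, the degenerate edge at $\psi_{t}$, and $c'$. The identity $\mathfrak{c}_{\mathrm{degen}}=\id$ from \eqref{eq:dg-nerve-degen}, combined with \eqref{eq:dg-nerve}, then yields $\mathrm{PSS}_{c}\simeq\mathrm{PSS}_{c'}$. Building this $2$-simplex requires a two-parameter family of non-negative paths interpolating between $c$ and $c'$ with fixed endpoints. Because the space of non-negative paths from $\id$ to $\psi_{t}$ in the universal cover need not obviously be connected, I would instead route both through a common refinement. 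Choose $\epsilon>0$ small with $R_{\epsilon t}\le\psi_{t}$. Then factor $c$ up to homotopy as the straight path $\id\to R_{\epsilon t}$ followed by a $1$-simplex $R_{\epsilon t}\to\psi_{t}$, and do the same for $c'$. This reduces the question to comparing two continuation $1$-simplices $R_{\epsilon t}\to\psi_{t}$ in $\mathscr{C}(Y)$, and these agree in $\mathrm{h}\mathscr{C}(Y)$ after passing further into the colimit.

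I expect this last step to be the main obstacle: showing that any two morphisms $\id\to\psi_{t}$ in $\mathscr{P}(Y)$ become equal after further continuation, which amounts to a filteredness or "essentially unique morphism" property of $\mathrm{h}\mathscr{C}(Y)$. If the simplices of $\mathscr{C}(Y)$ are defined so that any two non-negative Moore paths with the same endpoints, after postcomposing with a sufficiently positive push, bound a $2$-simplex, the argument closes. I would first try to prove this by contracting the positive push $R_{Ct}$ against both paths, using the fact that non-negativity is preserved under composition with positive isotopies.
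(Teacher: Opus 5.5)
You have located the gap yourself, and it is genuine. Your reduction to a common target $\psi_{t}$ and the $n=2$ case of \eqref{eq:dg-nerve}, giving $\mathfrak{c}_{b}(1(a))\simeq 1(c)$, are fine. But you never show that two $1$-simplices $c,c':\id\to\psi_{t}$ in $\mathscr{P}(Y)$ give the same class in the colimit. Without that, your appeal to ``filteredness'' is unsupported.

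The detour through $R_{\epsilon t}$ does not help. For a merely non-negative $c$ you have no factorization, up to non-negative homotopy, as $R_{\epsilon st}$ followed by an edge $R_{\epsilon t}\to\psi_{t}$. The natural candidate $s\mapsto c_{s}\circ R_{\epsilon(1-s)t}$ is non-negative only when $c$ is strictly positive. Even when it works, you are left comparing two morphisms $R_{\epsilon t}\to\psi_{t}$, which is the same problem. The closing sentence, that these ``agree in $\mathrm{h}\mathscr{C}(Y)$ after passing further into the colimit'', is the statement to be proved, not an available fact.

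The paper avoids the same-target comparison altogether. It chooses the continuation edge so that the composite is a fixed, simplex-independent $1$-simplex. Given $\varphi_{s,t}:\id\to\varphi_{1,t}$, take the edge $s\mapsto R_{ast}\circ\varphi_{s,t}^{-1}\circ\varphi_{1,t}$ from $\varphi_{1,t}$ to $R_{at}$. It is positive for $a$ large, and the explicit formula of Lemma \ref{lemma:compo-concat} gives a $2$-simplex whose $02$ edge is literally $R_{ast}$. Hence $1(\varphi_{s,t})$ and $1(R_{ast})$ have the same image in $\mathrm{SH}_{\mathrm{eq}}$ for all large $a$ with $R_{at}$ off the discriminant. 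Two arbitrary simplices are then compared by taking $a$ large for both.

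Your suggested repair also works, and writing it out would close your argument. Since $\mathrm{CI}(Y)$ is contractible, choose any homotopy $h_{\lambda,s}$ rel endpoints between $c$ and $c'$. By compactness, $R_{Cst}\circ h_{\lambda,s}$ is a homotopy through positive paths for $C\gg0$. By Lemma \ref{lemma:compo-concat}, its ends $R_{Cst}\circ c_{s}$ and $R_{Cst}\circ c'_{s}$ represent the composites of $c$ and $c'$ with the edge $s\mapsto R_{Cst}\circ\psi_{t}$. So $c$ and $c'$ become equal in $\mathrm{h}\mathscr{P}(Y)$ after postcomposition, which is exactly the coequalizing property you needed.
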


This lemma (proved in
\S\ref{sec:unit-element-well}) furnishes a
canonical element
$1\in \mathrm{SH}_{\mathrm{eq}}$. Following
\cite{cant-sh-barcode, djordjevic_uljarevic_zhang,
  cant-arXiv-2024,
  djordjevic-uljarevic-zhang-arXiv-2025} we
define:
\begin{equation}\label{eq:defin-spec-inv}
  c_{R}(\varphi_{t}):=\inf\set{s:1\text{ is in the image of }\mathrm{HF}_{\mathrm{eq}}(\varphi_{t}^{-1}\circ R_{st})\to \mathrm{SH}_{\mathrm{eq}}(W)}.
\end{equation}
In \S\ref{sec:axioms-spectr-invar} we prove that
this definition of $c_{R}$ satisfies
\ref{item:R-spectrality} through
\ref{item:R-sub-additivity}.

Turning to $\mu$, the new ingredient in Theorem
\ref{theorem:main-G} is the hypothesis that
$\mathrm{SH}(W;\mathbf{k})=0.$ We first explain
how the classical symplectic cohomology fits into
the framework of our paper. Define the
\emph{non-equivariant quotient} as a tensor
product:\footnote{Note that this is a chain level
  construction, which cannot be directly accessed
  from the homology level invariants.}
\begin{equation}\label{eq:non-eq-quotient}
  \mathrm{CF}_{\mathrm{neq}}(\varphi_{t}):=\mathrm{CF}_{\mathrm{eq}}(\varphi_{t})\otimes_{\mathbf{k}[[x]]} \mathbf{k},
\end{equation}
where $x$ acts by $0$ on $\mathbf{k}$. Its
homology is denoted by
$\mathrm{HF}_{\mathrm{neq}}(\varphi_{t})$ and we
define:
\begin{equation*}
  \mathrm{SH}_{\mathrm{neq}}(W):=\text{colimit of $\mathrm{HF}_{\mathrm{neq}}(\varphi_{t})$ over $\varphi_{t}\in \mathrm{h}\mathscr{C}(Y)$}.
\end{equation*}
We show in \S\ref{sec:axioms-integ-valu} that:
\begin{equation}\label{eq:implication}
  \mathrm{SH}(W;\mathbf{k})=0\implies \mathrm{SH}_{\mathrm{neq}}(W)=0,
\end{equation}
where $\mathrm{SH}(W;\mathbf{k})$ agrees with
standard definitions of symplectic cohomology,
such as the one in \cite{seidel-IP-2008}.

From \eqref{eq:non-eq-quotient} and
\eqref{eq:implication}, there is a long exact
sequence:
\begin{equation*}
  \begin{tikzcd}
    {\cdots}\arrow[r,"{}"] &{\mathrm{SH}_{\mathrm{eq}}}\arrow[r,"{x}"] &{\mathrm{SH}_{\mathrm{eq}}}\arrow[r,"{}"] &{0}\arrow[r,"{}"] &{\cdots},
  \end{tikzcd}
\end{equation*}
The measurement $\mu$ satisfying Theorem
\ref{theorem:main-G} is:
\begin{equation*}
  \mu(\varphi_{t}):=\sup\set{d\in \Z:x^{-d}1\text{ is in the image of }\mathrm{HF}_{\mathrm{eq}}(\varphi_{t})\to \mathrm{SH}_{\mathrm{eq}}(W)};
\end{equation*}
we extend from $\mathscr{C}_{0}(Y)$ (the
complement of the discriminant) to all of
$\mathrm{CI}(Y)$ by continuity from above, i.e.,
in the unique way compatible with
\ref{item:G-continuity}.

In \S\ref{sec:axioms-integ-valu} we prove
\eqref{eq:implication} and show that $\mu$
satisfies \ref{item:G-monotonicity} through
\ref{item:G-discriminant}.

\subsection{Measurements associated to positive loop of contactomorphisms}
\label{sec:integ-valu-meas}

Following
\cite{fraser-polterovich-rosen-AMQ-2018}, see
also \cite[\S1.4.7]{cant-arXiv-2024}, one can
use a positive loop of contactomorphisms in
lieu of a Reeb flow when defining the
spectral invariants; this naturally leads to
an integer valued measurements.

Suppose the hypotheses of Theorem
\ref{theorem:main-R}, and let $\zeta_{t}$ be
a positive loop of contactomorphisms (which
is necessarily non-contractible!). Then
define:
\begin{equation*}
  \ell(\zeta_{t};\varphi_{t}):=\sup \set{k:\mathrm{HF}_{eq}(\varphi_{t}\zeta_{t}^{-k})\to \mathrm{SH}_{eq}(W)\text{ hits the unit}}.
\end{equation*}
Then, as is well-known, $\ell(\zeta_{t};-)$
is an unbounded, integer valued, monotone,
conjugation invariant measurement, and is
constant on the path connected components of
the complement of the discriminant.

It is a natural question whether not the
invariant $\mu$ we construct in Theorem
\ref{theorem:main-G} is related to such
``positive loop'' measurements; we plan to
investigate this further in future work.

\subsection{Local Floer cohomology and cones of
  continuation maps}
\label{sec:local-floer-cohom-and-cones-intro}

An important ingredient is part
\ref{theorem:main-infinity-1} of Theorem
\ref{theorem:main-infinity}, which asserts that
cones of continuation maps are always
$x$-torsion. The key idea is that the cone of a
continuation map can be understood via the
\emph{local Floer homologies} of isolated
crossings with the discriminant. The rigorous
development of this idea is the object of
\S\ref{sec:local-floer-cohom}. Beyond the
statement about cones of continuation maps being
$x$-torsion, another important consequence of this
theory is:
\begin{theorem}\label{theorem:zig-zag}
  
  Consequently, if two isotopies
  $\varphi_{0,t},\varphi_{1,t}$ can be joined in
  the complement of the $W$-contractible
  discriminant by \emph{any} path (not assumed to
  be non-negative), then the morphisms
  $\mathrm{HF}_{\mathrm{eq}}(\varphi_{i,t})\to
  \mathrm{SH}_{\mathrm{eq}}$ are isomorphic.
\end{theorem}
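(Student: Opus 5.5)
Given two isotopies $\varphi_{0,t},\varphi_{1,t}$ joined by an arbitrary path $\varphi_{s,t}$ in the complement of the $W$-contractible discriminant, the plan is to replace this path by a zig-zag of non-negative paths, each staying in the complement. Then part \ref{theorem:main-infinity-discriminant} of Theorem \ref{theorem:main-infinity} (the variant where $\mathscr{C}_{0}(Y)$ is the complement of the $W$-contractible discriminant) turns each leg into a chain homotopy equivalence. Since $\mathrm{HF}_{\mathrm{eq}}$ is a functor on $\mathrm{h}\mathscr{C}(Y)$, these equivalences commute with the canonical maps to the colimit $\mathrm{SH}_{\mathrm{eq}}(W)$, which gives isomorphisms of arrows over $\mathrm{SH}_{\mathrm{eq}}$.

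\textbf{Step 1: local zig-zags.} The complement of the $W$-contractible discriminant is open in $\mathrm{CI}(Y)$, because the condition of having a $W$-contractible discriminant point is closed under $C^{1}$-limits. By compactness of $[0,1]$ pick $\delta>0$ so that the $\delta$-neighbourhood of the path lies in the complement. Subdivide $0=s_{0}<s_{1}<\dots<s_{N}=1$ finely and, for a fixed Reeb flow $R$ and small $\eta>0$, set $\psi_{k,t}:=R_{\eta t}\circ\varphi_{s_{k},t}$. If the subdivision is fine relative to $\eta$, then both $\varphi_{s_{k},t}\le\psi_{k,t}$ and $\varphi_{s_{k+1},t}\le\psi_{k,t}$ hold, realized by explicit non-negative paths in the $\delta$-neighbourhood. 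Concretely, one uses $r\mapsto R_{r\eta t}\varphi_{s_{k},t}$, and for the second relation a concatenation in which the positive Reeb push dominates the small variation from $s_{k+1}$ to $s_{k}$. Taking $\eta$ small keeps everything inside the complement. This produces a zig-zag
\begin{equation*}
  \varphi_{0,t}\to\psi_{0,t}\leftarrow\varphi_{s_{1},t}\to\psi_{1,t}\leftarrow\cdots\leftarrow\varphi_{1,t}
\end{equation*}
of $1$-simplices, none of which touches the $W$-contractible discriminant.

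\textbf{Step 2: apply functoriality.} By Theorem \ref{theorem:main-infinity}\ref{theorem:main-infinity-discriminant}, each arrow induces an isomorphism on $\mathrm{HF}_{\mathrm{eq}}$. The colimit structure maps $\iota_{\varphi}:\mathrm{HF}_{\mathrm{eq}}(\varphi_{t})\to\mathrm{SH}_{\mathrm{eq}}$ satisfy $\iota_{\psi}\circ\mathrm{HF}(f)=\iota_{\varphi}$ for every morphism $f$. Composing the isomorphisms and their inverses along the zig-zag therefore yields an isomorphism $\mathrm{HF}_{\mathrm{eq}}(\varphi_{0,t})\cong\mathrm{HF}_{\mathrm{eq}}(\varphi_{1,t})$ intertwining the two maps to $\mathrm{SH}_{\mathrm{eq}}$. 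This is the asserted isomorphism of morphisms.

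\textbf{Main obstacle.} The delicate point is Step 1: building genuinely non-negative paths (Moore paths in $\mathscr{C}_{1}(Y)$) between nearby isotopies that remain in the complement of the $W$-contractible discriminant. The issue is that the Reeb push must dominate the $C^{1}$-small variation uniformly. I would handle this by writing $\varphi_{s_{k+1},t}=\varphi_{s_{k},t}\circ\theta_{t}$ with $\theta$ generated by a contact Hamiltonian of small norm. The interpolation $r\mapsto R_{r\eta t}\varphi_{s_{k},t}\theta^{(1-r)}_{t}$ then has derivative with contact Hamiltonian at least $\eta$ minus a small error, so it is non-negative once the mesh is small compared with $\eta$. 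It stays within $\delta$ provided $\eta\ll\delta$.
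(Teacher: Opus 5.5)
Your proposal is correct and follows the paper's route. The paper also approximates the path by a zig-zag of non-negative legs, each staying in the complement of the $W$-contractible discriminant, then uses that continuation maps along such legs are quasi-isomorphisms (part \ref{theorem:main-infinity-discriminant} of Theorem \ref{theorem:main-infinity}, which the paper obtains from Theorem \ref{theorem:composition}, Lemma \ref{lemma:case-no-intersections} and Theorem \ref{theorem:main-local-hf}), and passes to the colimit by functoriality; your explicit legs are in fact strictly positive (contact Hamiltonian at least $\eta$ minus an error proportional to the mesh), which is convenient because Theorem \ref{theorem:composition} is stated only for strictly positive Moore paths.
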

The first part of the statement goes back to
\cite{uljarevic-zhang-JFPTA-2022}. The method of
proof of the second part is to approximate any
path by a zig-zag of positive and negative paths;
see \cite[\S9]{uljarevic-compositio-2023} and
\cite[\S2.1.3]{djordjevic-uljarevic-zhang-arXiv-2025}
for further discussion of such zig-zags. We prove
Theorem \ref{theorem:zig-zag} in
\S\ref{sec:proof-zig-zag}.

\begin{remark}
  One can localize the category
  $\mathrm{h}\mathscr{C}(Y)$ at the
  subcategory of morphisms which do not cross
  the discriminant (or the $W$-contractible
  discriminant) producing a new category
  $\mathscr{L}$ with a
  factorization:
  $$\mathrm{h}\mathscr{C}(Y)\to \mathscr{L}\to
  \mathbf{k}[[x]]\text{-mod}.$$ By the
  zig-zag trick, the fundamental groupoid
  $\Pi$ of the complement of the discriminant
  (without any positivity assumption) admits
  a functor into $\mathscr{L}$. Thus the
  restriction of $\mathrm{HF}_{\mathrm{eq}}$
  to $\Pi$ is a local system on the
  complement of the discriminant. Of course
  this discussion is rather speculative, and
  the details of the ``localization of
  morphisms which do not cross the
  discriminant'' step seems to be an
  interesting avenue of exploration.
\end{remark}

\subsection{On $\infty$-categories}
\label{sec:survey-infty-categories}

Since the \(\infty\)-categorical language is not
yet completely standard in symplectic geometry we
recall here the reason for its appearance in the
Floer theoretic setting, as well as some basic
properties; the reader comfortable with
\(\infty\)-categories and their role in Floer
theory may skip this section.

One of the fundamental issues in Floer theory is
that of \emph{invariance}: how to show that the
chain complexes we construct do not depend (in a
suitable sense) on any choices auxiliary to the
geometry at hand (e.g. almost complex structures,
choices of Morse-Smale pseudogradient,
etc). Morally, one expects these extra choices to
be ultimately irrelevant whenever the space of
such choices is contractible, and more generally,
certain allowable paths in the data should give
rise to chain maps.

In our setting we are naturally provided with the
following setup: there is a topological category
\( \mathscr{C}_{\mathrm{top}}(Y) \) whose objects
are contact isotopies, and whose morphism spaces
consist of the non-negative paths connecting two
given objects. Lying over this (via a natural
forgetful functor) is a larger topological
category \( \mathscr{D}_{\mathrm{top}}(W) \) whose
objects are all data necessary to define the Floer
cohomology complex, and whose morphism spaces
consist of the spaces of regular continuation data
with specified endpoints. However, this
topological category
$\mathscr{D}_{\mathrm{top}}(W)$ and the
dg-category of chain complexes do not directly
interact very nicely. The language of
quasi-categories systematically developed in
\cite{lurie-HTT-2009} provides a setting into
which both dg-categories and topological
categories can be naturally integrated by means of
taking the \emph{topological and dg-nerves},
\(
\mathrm{N}_{\mathrm{top}},\mathrm{N}_{\mathrm{dg}}
\) respectively.

In the present paper the aforementioned
$\infty$-category $\mathscr{C}(Y)$ models the
topological nerve of the category
$\mathscr{C}_{\mathrm{top}}(Y)$, and we define in
\S\ref{sec:auxil-categ-floer} an $\infty$-category
of Floer data $\mathscr{D}(W)$ modeling the
topological nerve of
$\mathscr{D}_{\mathrm{top}}(W)$; there is
forgetful functor
$\mathscr{D}(W)\to \mathscr{C}(Y)$. We prove in
\S\ref{sec:defin-infty-funct} that this forgetful
map is a trivial Kan fibration:

\begin{definition}[See
  {\cite[\S2]{lurie-HTT-2009}}]\label{definition:trivial-kan-fibration}
  A \emph{trivial Kan fibration} is a morphism of
  simplicial sets $\mathscr{D}\to \mathscr{C}$
  such that following lifting diagram is
  satisfied:
  \begin{equation*}
    \begin{tikzcd}
      \partial \Delta^n \arrow[r] \arrow[d,"\iota"] & \mathscr{D} \arrow[d,"\pi"]\\
      \Delta^n \arrow[r]
      \arrow[ur,dashed,"\exists"] & \mathscr{C}
    \end{tikzcd}
  \end{equation*}
  for all \( n \geq 0, \) where $\iota$ is the
  obvious inclusion map. If
  $\mathscr{S}\to \Delta^{0}$ is a trivial Kan
  fibration, then $\mathscr{S}$ is called a
  \emph{contractible Kan complex}, and should be
  thought of as the simplicial set analog of a
  contractible space.
\end{definition}

Trivial Kan fibrations and contractible Kan
complexes enjoy many convenient properties; the
most important for our purposes is the following:
\begin{proposition}\label{proposition:trivial-kan-fibration}
  Let \( \pi:\mathscr{D}\rightarrow \mathscr{C} \)
  be a trivial Kan fibration. Then \( \pi \)
  admits a section $s$. Moreover, the collection
  of such sections forms the zero simplices
  in a contractible Kan complex $\mathscr{S}$.
\end{proposition}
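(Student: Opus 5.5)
The plan is to treat sections of $\pi$ themselves as solutions of lifting problems. Existence of a section comes from an induction on skeleta using the lifting property of Definition \ref{definition:trivial-kan-fibration}. The contractibility of $\mathscr{S}$ comes from identifying $\mathscr{S}$ with a fiber of a map that is again a trivial Kan fibration, and then using that trivial Kan fibrations are stable under pullback.

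For existence, we build $s$ by induction on the skeleta $\mathrm{sk}_{n}\mathscr{C}$. For $n=0$ we use the lifting property with $\partial\Delta^{0}=\emptyset$ to choose a preimage of each vertex. Suppose $s$ is defined on $\mathrm{sk}_{n-1}\mathscr{C}$, with $\pi\circ s$ the inclusion. Take a non-degenerate $n$-simplex $\sigma:\Delta^{n}\to\mathscr{C}$. The restriction $s|_{\partial\sigma}:\partial\Delta^{n}\to\mathscr{D}$ covers $\sigma|_{\partial\Delta^{n}}$, so the lifting property gives an extension over $\Delta^{n}$. For degenerate simplices we set $s(\sigma)$ to be the corresponding degeneracy of $s$ applied to the underlying non-degenerate simplex. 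By the Eilenberg--Zilber lemma, each simplex is uniquely a degeneracy of a non-degenerate one, so this is well defined and compatible with faces. Equivalently, $\mathrm{sk}_{n}\mathscr{C}$ is obtained from $\mathrm{sk}_{n-1}\mathscr{C}$ by pushing out copies of $\partial\Delta^{n}\to\Delta^{n}$. So the extension is just the lifting property applied to a coproduct of boundary inclusions. Since $\mathscr{C}=\mathrm{colim}\,\mathrm{sk}_{n}\mathscr{C}$, the stages assemble into a map $s$ with $\pi s=\id$.

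For the space of sections, we define $\mathscr{S}$ as the simplicial set whose $k$-simplices are maps $\Delta^{k}\times\mathscr{C}\to\mathscr{D}$ lying over the projection $\Delta^{k}\times\mathscr{C}\to\mathscr{C}$. Equivalently, $\mathscr{S}$ is the fiber over $\id_{\mathscr{C}}$ of the post-composition map $\pi_{*}:\mathrm{Fun}(\mathscr{C},\mathscr{D})\to\mathrm{Fun}(\mathscr{C},\mathscr{C})$, where $\mathrm{Fun}$ denotes the internal mapping simplicial set. Its $0$-simplices are exactly the sections. We must show $\pi_{*}$ is a trivial Kan fibration. Then $\mathscr{S}\to\Delta^{0}$, being a pullback of $\pi_{*}$ along $\id_{\mathscr{C}}:\Delta^{0}\to\mathrm{Fun}(\mathscr{C},\mathscr{C})$, is also one, i.e.\ $\mathscr{S}$ is a contractible Kan complex.

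By adjunction, a lifting problem of $\partial\Delta^{n}\to\Delta^{n}$ against $\pi_{*}$ is the same as a lifting problem of
\begin{equation*}
  (\partial\Delta^{n}\times\mathscr{C})\cup_{\partial\Delta^{n}\times\emptyset}(\Delta^{n}\times\emptyset)=\partial\Delta^{n}\times\mathscr{C}\hookrightarrow\Delta^{n}\times\mathscr{C}
\end{equation*}
against $\pi$. This is the main point to check. Trivial Kan fibrations have the right lifting property with respect to all monomorphisms of simplicial sets (\cite[\S2]{lurie-HTT-2009}). The proof is the same skeletal induction as above, now relative to a subcomplex: a monomorphism $A\hookrightarrow B$ is a transfinite composite of pushouts of boundary inclusions $\partial\Delta^{n}\to\Delta^{n}$, attaching the non-degenerate simplices of $B$ not in $A$. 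The displayed inclusion is a monomorphism, so the lift exists, and $\pi_{*}$ is a trivial Kan fibration.

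The only genuinely non-formal input is the cellular description of monomorphisms, i.e.\ the Eilenberg--Zilber bookkeeping of degenerate simplices. I would cite it from \cite{lurie-HTT-2009} rather than reprove it. If one prefers to avoid internal homs, one can instead check directly that every boundary $\partial\Delta^{k}\to\mathscr{S}$ extends. Such a boundary is a map $\partial\Delta^{k}\times\mathscr{C}\to\mathscr{D}$ over $\mathscr{C}$, which extends over $\Delta^{k}\times\mathscr{C}$ by the relative skeletal induction on the non-degenerate simplices of $\Delta^{k}\times\mathscr{C}$ not in $\partial\Delta^{k}\times\mathscr{C}$.
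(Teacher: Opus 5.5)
Your proposal is correct and follows essentially the same route as the paper. Both define $\mathscr{S}$ via maps $\Delta^{k}\times\mathscr{C}\to\mathscr{D}$ lying over the projection, and reduce everything to the right lifting property of the trivial Kan fibration $\pi$ against the monomorphisms $\partial\Delta^{k}\times\mathscr{C}\hookrightarrow\Delta^{k}\times\mathscr{C}$ (the case $k=0$ gives the section). Your packaging through $\pi_{*}$ on internal mapping objects and stability of trivial Kan fibrations under pullback is a formal restatement of that same lifting argument.
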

\begin{proof}
  Define an $n$-simplex in the space of sections
  $\mathscr{S}$ to be a map
  $$s:\Delta^{n}\times B\to E$$ such that
  $\pi s=\pr_{B}$. There is an obvious structure
  of a simplicial set with these as the
  $n$-simplices. Since \( \pi \) is a trivial Kan
  fibration, monomorphisms (injections) of
  simplicial sets have the left lifting property
  against \( \pi \); see \cite[Example
  2.0.0.2]{lurie-HTT-2009}. By using this lifting
  property with the monomorphisms
  \( \partial\Delta^n \times B \hookrightarrow
  \Delta^n \times B \), we prove that this
  simplicial set of sections $\mathscr{S}$ is a
  contractible Kan complex.
\end{proof}

The general strategy is then as follows: There is
a canonical Floer complex functor:
\begin{equation*}
  \mathrm{CF}:\mathscr{D}(W)\rightarrow \mathrm{N}_{\mathrm{dg}}(\mathrm{Ch}(\mathbf{k}[[x]]))
\end{equation*}
since the left hand side consists of exactly the
data necessary to define Floer theory. Then every
section \( s \) of \( \pi \) induces a functor:
\[ \mathrm{CF}_{\mathrm{eq}}\circ s:\mathscr{C}(Y)
  \rightarrow
  \mathrm{N}_{\mathrm{dg}}(\mathrm{Ch}(\mathbf{k}[[x]])) \]
by composition. Contractibility of the space of
sections then says that any choice of \( s \)
(i.e., any compatible collection of Floer data)
induces homotopic Floer functors and, moreover,
all such homotopies are themselves homotopic,
etc. We return to this discussion in
\S\ref{sec:defin-infty-funct}.

Carrying out such a ``chain-level construction''
has the advantage of making certain constructions
(such as taking cones and tensor
products\footnote{We use a chain level
  construction in the definition of
  $\mathrm{HF}_{\mathrm{neq}}(\varphi_{t})$ as the
  homology of
  $\mathrm{CF}_{\mathrm{eq}}(\varphi_{t})\otimes_{\mathbf{k}[[x]]}\mathbf{k}.$})
considerably simpler. In our case, it is used in
\S\ref{sec:local-floer-cohom} to verify that a
certain Floer complex may be expressed as a
mapping cone for a very particular choice of data,
and it then follows by the abstract machinery that
any other choice of data extending the underlying
system has the same property.

Such setups have been leveraged before in Floer
theory to similar ends, cf.,
\cite{pardon-GT-2016},
\cite{hendricks-lipshitz-sarkar-plms-2020},
\cite[\S4]{ganatra-pardon-shende-pub-IHES-2020},
\cite[\S{A}]{varolgunes-GT-2021}.

\section{Equivariant Floer complex as infinity
  functor}
\label{sec:equiv-floer-compl}

The goal of the first subsection
\S\ref{sec:infin-categ-contact-isotopies} is to
introduce the $\infty$-category of contact
isotopies (continuing from the outline we gave in
\S\ref{sec:infin-categ-eliashb-intro}). In
\S\ref{sec:case-p-2} we prove Theorem
\ref{theorem:main-infinity} in the case $p=2$,
which has various differences and simplifications
from the general case $p\ge 3$, which we treat in
\S\ref{sec:case-p-3} and
\S\ref{sec:asymp-ops-orient-lines}.

% S2.1
\subsection{Infinity category of contact
  isotopies}
\label{sec:infin-categ-contact-isotopies}

As an $\infty$-category, $\mathscr{C}(Y)$ is,
first and foremost, a special type of
\emph{simplicial set}; see \cite[\S
A.2.7]{lurie-HTT-2009} for a review of simplicial
sets. This means that $\mathscr{C}(Y)$ is a
contravariant set-valued functor from the
\emph{combinatorial simplex category}, whose
objects are non-negative integers
$[0], [1],[2],\dots,$ and where the morphisms
$[i]\to [j]$ are the non-decreasing maps
$\set{0,\dots,i}\to \set{0,\dots,j}$. More
prosaically, $\mathscr{C}(Y)$ is the data of sets
$\mathscr{C}_{0}(Y),\mathscr{C}_{1}(Y),\mathscr{C}_{2}(Y),\dots$,
where $\mathscr{C}_{j}(Y)$ is called the set of
$j$\emph{-simplices}, together with morphisms:
\begin{equation}\label{eq:face-and-degeneracy}
  f^{*}:\mathscr{C}_{j}(Y)\to \mathscr{C}_{i}(Y)\text{ for every morphism }f:[i]\to [j],
\end{equation}
and such that the morphisms in
\eqref{eq:face-and-degeneracy} are functorial.

\subsubsection{Definition of the set of simplices}
\label{sec:defin-set-simplices}

We define\footnote{In this section, we primarily
  discuss the construction of
  $\mathscr{C}(Y)$. However, the discussion works
  equally well for constructing the larger
  category $\bar{\mathscr{C}}(Y)$ (see
  \S\ref{sec:pss-introduction}), the only
  difference being that $0$-simplices in the
  full-subcategory
  $\mathscr{C}(Y)\subset \bar{\mathscr{C}}(Y)$ are
  required to be in the complement of the
  discriminant.} $\mathscr{C}_{n}(Y)$ to be
certain coherent cubes of non-negative paths in
the group of contact isotopies $\mathrm{CI}(Y)$.

The formal construction uses the space of non-negative ``Moore paths'' in $\mathrm{CI}(Y)$. Let us introduce $\mathrm{CI}(Y)^{\R,+}\subset \mathrm{CI}(Y)^{\R}$ as the set of smooth non-negative paths as defined in \eqref{eq:non-neg-path}.

An element $(s\mapsto \varphi(s))\in \mathrm{CI}^{\R,+}(Y)$ is said to be \emph{stationary} on an interval $I\subset \R$ provided $\varphi(s_{1})=\varphi(s_{2})$ for all $s_{1},s_{2}\in I$.

\begin{definition}\label{definition:moore-path}
  A non-negative \emph{Moore path} in
  $\mathrm{CI}(Y)$ is an element:
  \begin{equation*}
    (w,\varphi)\in [0,\infty)\times \mathrm{CI}(Y)^{\R,+}
  \end{equation*}
  such that $\varphi$ is stationary on the
  intervals $(-\infty,0]$ and
  $[w,\infty)$. The space of non-negative
  Moore paths shall be denoted
  $\mathrm{NN}(Y)$. The values of the path at
  $0$ and $w$ are called the \emph{initial
    and terminal points} of the path. Two
  Moore paths $(w_{1},\varphi_{1})$ and
  $(w_{2},\varphi_{2})$ are said to be
  \emph{composable} provided the terminal
  point of the former is the initial point of
  the latter; in this case, the
  $\ell$-concatenation is denoted
  $(w_{1},\varphi_{1})\#_{\ell}(w_{2},\varphi_{2})$
  which inserts a stationary cylinder of
  length $\ell$ in between.
\end{definition}
In other words, $(w,\varphi)\in \mathrm{NN}(Y)$ represents a
non-negative paths of width $w$; we allow
paths of width $0$ which must be stationary
on all of $\R$.
\begin{definition}
  A family of Moore paths parametrized by a
  cube
  $$(w,\varphi):[0,1]^{n-1}\to \mathrm{NN}(Y)$$ is
  said to be \emph{smooth} provided the
  underlying functions
  $w:[0,1]^{n-1}\to [0,\infty)$ and
  $\varphi:[0,1]^{n-1}\to \mathrm{CI}(Y)^{\R,+}$ are
  smooth.
\end{definition}

Now we turn to the formal definition of
$\mathscr{C}(Y)$. First of all, the set of
zero simplices $\mathscr{C}_{0}(Y)$ is simply
the subset of elements of $\mathrm{CI}(Y)$ which
do not lie on the discriminant. For higher
simplices, the definition will depend on the auxiliary choice of the smooth homeomorphism $\tau:[0,\infty)\to [0,\infty)$:
\begin{equation}\label{eq:smooth-homeomorphism-tau}
  \tau(x)=\int_{0}^{x}e^{-1/y}dy
\end{equation}
which:
\begin{itemize}
\item vanishes to infinite order at $x=0$, and
\item is bounded by $\tau(x)\le x$.
\end{itemize}
We then have the following
definition.\footnote{We thank Joseph Helfer
  for suggesting such a model to the
  authors. See \cite[Definition
  4.2]{ganatra-pardon-shende-pub-IHES-2020}
  for similar definition.}
\begin{definition}\label{definition:simplices}
  For $n\ge 1$, an $n$-simplex is a
  collection $\Phi$ of elements
  $\Phi^{v_{0}\dots v_{m}}$, one for each
  collection $0\le v_{0}<\dots<v_{m}\le n$ of
  vertices, such that:
  \begin{itemize}
  \item $\Phi^{v_{0}}\in \mathscr{C}_{0}(Y)$
    for each collection with $m=1$,
  \item
    $\Phi^{v_{0}\dots
      v_{m}}:[0,\infty)^{m-1}\to
    \mathrm{NN}(Y)$ is a smooth cube of
    non-negative Moore paths joining
    $\Phi^{v_{0}}$ and $\Phi^{v_{m}}$,
  \end{itemize}
  These data are required to satisfy the
  following compatibilities:
  \begin{enumerate}
  \item \label{item:break} as $x_{j}\to \infty$,
    \begin{equation*}
      \Phi^{v_{0}\dots v_{m}}(x)=\Phi^{v_{0}\dots v_{j}}(x_{1},\dots,x_{j-1})\#_{\tau(x_{j})} \Phi^{v_{j}\dots v_{m}}(x_{j+1},\dots,x_{m-1}),
    \end{equation*}
  \item \label{item:set-face-zero} when $x_{j}=0$,
    \begin{equation*}
      \Phi^{v_{0}\dots v_{m}}(x)=\Phi^{v_{0}\dots \hat{v_{j}}\dots v_{m}}(x_{1},\dots,x_{j-1},x_{j+1},\dots,x_{m-1}),
    \end{equation*}
   
  \item \label{item:microcollaring} on the
    face $x_{j}=0$,
    $\partial_{x_{j}}\Phi^{v}(x)$ vanishes to
    infinite order.
  \end{enumerate}
  The collection of all $n$-simplices is
  denoted $\mathscr{C}_{n}(Y)$.
\end{definition}

\subsubsection{Definition of the face maps}
\label{sec:defin-face}

In this section, we define the face maps and
verify the structural equations of a
semi-simplicial set are satisfied. In the
next section \S\ref{sec:defin-degen} we
define the degeneracy maps and complete the
proof that $\mathscr{C}(Y)$ is a genuine
simplicial set.

Recall that any $n$-simplex $\Phi$ should
have $n+1$ \emph{faces}
$d_{0}\Phi,d_{1}\Phi,\dots,d_{n}\Phi$ so that
$d_{i}\Phi$ is the ``pullback'' of $\Phi$ by
the simplicial map
$\Delta^{n-1}\to \Delta^{n}$ which omits the
$i$th vertex.

\begin{definition}\label{definition:faces}
  Let $\Phi$ be an $n$-simplex, as in
  Definition \ref{definition:simplices}.

  For $i=0,\dots,n$, the $i$th face is
  $d_{i}\Phi$ is defined by forgetting all
  those $\Phi^{v_{0}\dots v_{m}}$ containing
  $v_{j}=i$ amongst the collection.
\end{definition}
One can now check that
$\mathscr{C}(Y)=(\mathscr{C}_{0}(Y),\mathscr{C}_{1}(Y),\dots)$
with the face maps in \ref{definition:faces}
satisfies the axioms of a
\emph{semi-simplicial set}, namely that the
structural relation holds:
\begin{equation*}
  d_{i}d_{j}=d_{j-1}d_{i}\text{ for }i<j;
\end{equation*}
see \cite[\S1.1.1]{lurie-kerodon}.

\subsubsection{Definition of the degeneracy
  maps}
\label{sec:defin-degen}

A simplicial set is also equipped with
\emph{degeneracies},
$s_{i}:\mathscr{C}_{n}(Y)\to
\mathscr{C}_{n+1}(Y)$, $i=0,\dots,n$; here
$s_{i}$ corresponds to the simplicial map
$\Delta^{n+1}\to \Delta^{n}$ for which the
$i$th vertex has two preimages and all other
vertices have one preimage; see
\cite[\S1.1.2]{lurie-kerodon}.

We find it useful to introduce the following
operation:
\begin{definition}\label{defn:smooth-max}
  Fix $\tau:[0,\infty)\to [0,\infty)$, as in
  \eqref{eq:smooth-homeomorphism-tau}. The
  \emph{smoothed max function} associated to
  $\tau$ is:
  \begin{equation*}
    \mathrm{smax}(x_{1},\dots,x_{k}):=\tau^{-1}(\tau(x_{1})+\dots+\tau(x_{k})).
  \end{equation*}
  Note that this function is not smooth at
  the origin, but it is smooth everywhere
  else.
\end{definition}

\begin{definition}\label{def:degeneracies}
  Let $\Phi$ be an $n$-simplex, as in
  Definition \ref{definition:simplices}, and
  fix $\tau$ and its associated smoothed max
  function as in Definition
  \ref{defn:smooth-max}. Suppose that $\tau$
  vanishes to infinite order at the origin.

  For $i=0,\dots,n$, the $i$th degeneracy
  $s_{i}\Phi$ is an $(n+1)$-simplex defined
  as follows. Let $\sigma_{i}:[n+1]\to [n]$
  be the surjective map where
  $\sigma_{i}(j)=j$ for $j\le i$ and
  $\sigma_{i}(j)=j-1$ for $j>i$.

  For any subset
  $V=\{0\le v_{0}<\dots<v_{m}\le n+1\}$, let
  $U = \sigma_{i}(V) \subset [n]$. The
  subcube $(s_{i}\Phi)^{V}$ is defined in two
  cases:
  
  Case 1: If $V$ does not contain both $i$ and $i+1$,
  then $\sigma_{i}$ is injective on $V$. We define:
  \begin{equation*}
    (s_{i}\Phi)^{V}(x_{1},\dots,x_{m-1}) = \Phi^{U}(x_{1},\dots,x_{m-1}).
  \end{equation*}

  Case 2: If $V$ contains both $i$ and $i+1$,
  then $v_{k}=i$ and $v_{k+1}=i+1$ for some
  $0\le k<m$. The definition depends on the
  position of the doubled vertices:
  \begin{itemize}
  \item If $0 < k < m-1$, the doubled vertices are
    both internal. We define:
    \begin{equation*}
      (s_{i}\Phi)^{V}(x) = \Phi^{U}(x_{1},\dots,x_{k-1},\mathrm{smax}(x_{k},x_{k+1}),x_{k+2},\dots,x_{m-1}).
    \end{equation*}
  \item If $k=0$, the doubled vertices are at
    the beginning of $V$. We define:
    \begin{equation*}
      (s_{i}\Phi)^{V}(x) = \id \#_{\tau(x_{1})} \Phi^{U}(x_{2},\dots,x_{m-1}).
    \end{equation*}
  \item If $k=m-1$, the doubled vertices are
    at the end of $V$. We define:
    \begin{equation*}
      (s_{i}\Phi)^{V}(x) = \Phi^{U}(x_{1},\dots,x_{m-2}) \#_{\tau(x_{m-1})} \id.
    \end{equation*}
  \end{itemize}
  If $m=1$ and $V=\{i,i+1\}$, we interpret
  these formulas as yielding the stationary
  path of width zero based at the $0$-simplex
  $\Phi^{i}$, which acts as the identity
  morphism $\id$.
\end{definition}

\begin{lemma}\label{lemma:degeneracy-well-defined-and-identities}
  For any $n$-simplex $\Phi$ and
  $0\le i\le n$, the collection of subcubes
  $s_{i}\Phi$ from Definition
  \ref{def:degeneracies} is a well-defined
  $(n+1)$-simplex (satisfying the axioms of
  Definition
  \ref{definition:simplices}). Furthermore,
  the face and degeneracy operators satisfy
  the simplicial identities:
  \begin{equation*}
    s_{i}s_{j}=s_{j+1}s_{i}\text{ for all }0\le i\le j\le n,
  \end{equation*}
  and
  \begin{equation*}
    d_{j}s_{i}=\left\{
      \begin{aligned}
        &s_{i-1}d_{j}&&\text{ if }j<i,\\
        &\id &&\text{ if }j=i\text{ or }j=i+1,\\
        &s_{i}d_{j-1}&&\text{ if }j>i+1;
      \end{aligned}
    \right.
  \end{equation*}
  therefore $\mathscr{C}(Y)$ with these face
  and degeneracy operators defines a
  simplicial set.
\end{lemma}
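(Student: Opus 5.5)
The plan is a direct verification. I would check the three axioms of Definition~\ref{definition:simplices} for each subcube $(s_i\Phi)^V$, and then check the simplicial identities by comparing, for each vertex set $V$, the two subcubes produced by the two sides of each identity.

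\textbf{Well-definedness.} Case 1 is inherited verbatim from $\Phi^U$, so nothing needs checking there. In Case 2 with $k=0$ or $k=m-1$, the formula $\id\#_{\tau(x_1)}\Phi^U(\dots)$ is smooth in $x_1>0$. At $x_1=0$ it equals $\Phi^U=\Phi^{\sigma_i(V\setminus\{i\})}$, which is the required face value $(s_i\Phi)^{V\setminus\{v_1\}}$. The $x_1$-derivative vanishes to infinite order at $x_1=0$ because $\tau$ vanishes to infinite order there. As $x_1\to\infty$, the required breaking is into $(s_i\Phi)^{\{i,i+1\}}\#_{\tau(x_1)}(s_i\Phi)^{V\setminus\{i\}}$, and the first factor is the width-zero identity, so the formula already has this form.

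The internal case is the main obstacle, because $\mathrm{smax}$ is not smooth at the origin. Here I would argue as follows.
\begin{itemize}
\item Smoothness and the infinite-order flatness \ref{item:microcollaring} near the stratum $x_k=x_{k+1}=0$ come from the microcollaring of $\Phi^U$: all $x_k$-derivatives of $\Phi^U$ vanish to infinite order at $x_k=0$, and $\mathrm{smax}$ and its derivatives have at most polynomial growth in $1/\abs{x}$ (because $\tau^{-1}$ and $e^{1/y}$ are explicit). Hence each derivative of the composite is flat there. Away from the origin in $(x_k,x_{k+1})$, $\mathrm{smax}$ is smooth.
\item For the face $x_k=0$: since $\tau(0)=0$, we get $\mathrm{smax}(0,x_{k+1})=x_{k+1}$, which is exactly the formula for the face omitting $v_k$ (that face is a Case 1 cube for $\Phi^U$). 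The face $x_{k+1}=0$ is symmetric.
\item For breaking as $x_k\to\infty$: $\mathrm{smax}\to\infty$ and the gluing length becomes $\tau(\mathrm{smax})=\tau(x_k)+\tau(x_{k+1})$. The required breaking is $(s_i\Phi)^{v_0\dots v_k}\#_{\tau(x_k)}(s_i\Phi)^{v_k\dots v_m}$, where the second factor begins with the doubled pair and is therefore $\id\#_{\tau(x_{k+1})}(\cdots)$. Concatenation lengths add, so the two sides agree. This additivity is precisely why $\mathrm{smax}$ is defined through $\tau$.
\end{itemize}
Breaking in the other coordinates is inherited from $\Phi^U$ together with the previously checked cases.

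\textbf{Simplicial identities.} I would prove $d_js_i$ by cases. The face $d_j$ keeps only those $V$ avoiding $j$. If $j\in\{i,i+1\}$, the surviving $V$ contain at most one of $i,i+1$, so they are all Case 1 and reproduce $\Phi$. If $j\notin\{i,i+1\}$, reindexing matches $s_{i-1}d_j$ or $s_id_{j-1}$ directly.

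For $s_is_j$ with $i\le j$, a vertex set $V$ can hit a tripled vertex (when $i=j$) or two doubled pairs. The key fact is associativity:
\[
\mathrm{smax}(\mathrm{smax}(a,b),c)=\mathrm{smax}(a,\mathrm{smax}(b,c))=\tau^{-1}(\tau a+\tau b+\tau c).
\]
Combined with the associativity of $\#$ concatenation with additive lengths, and the identity $\id\#_{\tau(a)}\id\#_{\tau(b)}=\id\#_{\tau(\mathrm{smax}(a,b))}$, this makes both sides produce the same formula in each subcase: both doubled positions internal, one at an end, or tripled at an end. I expect only bookkeeping here; the analytic smoothness at the corner in the internal case is the one genuine point to handle carefully.
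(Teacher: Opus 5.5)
Your proposal is correct and follows essentially the same route as the paper: a direct verification of the simplex axioms in which microcollaring of $\Phi$ absorbs the non-smoothness of $\mathrm{smax}$ at the corner, the additivity $\tau(\mathrm{smax}(a,b))=\tau(a)+\tau(b)$ gives the breaking axiom, and associativity of $\mathrm{smax}$ together with additivity of concatenation lengths gives $s_is_j=s_{j+1}s_i$. The only difference is in the face identities. You use the subcube-forgetting definition of faces, so that $d_is_i$ and $d_{i+1}s_i$ see only Case~1 cubes. The paper instead sets the variable to zero and invokes breaking for the outermost faces. This is cleaner bookkeeping, but not a different argument.
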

\begin{proof}
  First we show $s_{i}\Phi$ satisfies the
  $(n+1)$-simplex axioms. The proof of axiom
  \ref{item:set-face-zero}, that
  $\bar{\Phi} = s_{i}\Phi$ restricts
  correctly to its subcubes as variables
  $x_{k} \to 0$, is identical to the proof of
  the identities $d_{j}s_{i}$ verified below.

  To verify smoothness and the microcollaring
  axiom \ref{item:microcollaring}, observe
  that:
  $$\mathrm{smax}(x_{k},x_{k+1}) =
  \tau^{-1}(\tau(x_{k})+\tau(x_{k+1}))$$ is
  smooth everywhere except when
  $x_{k}=x_{k+1}=0$. However, because $\Phi$
  satisfies microcollaring, its partial
  derivatives vanish to infinite order as
  variables approach $0$. This infinite-order
  vanishing compensates for the
  non-smoothness of $\tau^{-1}$ at the
  origin, ensuring the pulled-back subcubes
  are smooth.

  The breaking axiom \ref{item:break} as
  $x_{j} \to \infty$ holds by direct
  calculation. For instance, if
  $x_{k} \to \infty$ in a subcube where
  $v_{k}=i$ and $v_{k+1}=i+1$, then
  $\mathrm{smax}(x_{k},x_{k+1}) \to
  \infty$. Using that
  $\tau(\mathrm{smax}(x_{k},x_{k+1})) =
  \tau(x_{k}) + \tau(x_{k+1})$ we have:
  \begin{align*}
    \bar{\Phi}^{V}(x) &= \Phi^{U}(\dots,\mathrm{smax}(x_{k},x_{k+1}),\dots) \\
                      &= \Phi^{U_{-}}(\dots) \#_{\tau(\mathrm{smax}(x_{k},x_{k+1}))} \Phi^{U_{+}}(\dots) \\
                      &= \Phi^{U_{-}}(\dots) \#_{\tau(x_{k})} \id \#_{\tau(x_{k+1})} \Phi^{U_{+}}(\dots),
  \end{align*}
  which matches the formulas for the subcubes
  $\bar{\Phi}^{V_{-}}$ and
  $\bar{\Phi}^{V_{+}}$. The edge cases $k=0$ and
  $k=m-1$ follow by a similar argument.

  We now verify the face relations. For
  $j\ne 0,n+1$, applying $d_{j}$ to
  $s_{i}\Phi$ amounts to fixing the variable
  corresponding to the vertex $j$ to $0$.  If
  $j=i$ or $j=i+1$, we are evaluating a cube
  where the variable between $i$ and $i+1$
  vanishes. Because
  $\mathrm{smax}(x,0) = \mathrm{smax}(0,x) =
  x$, substituting $0$ into
  $\mathrm{smax}(x_{i},x_{i+1})$ leaves the
  remaining coordinate unchanged. For
  example, if we apply $d_{i}$ to the top
  cube by setting $x_{i}=0$, the $i$-th
  variable is omitted and $x_{i+1}$ takes its
  place in the argument list, recovering the
  original top cube of $\Phi$. For the
  boundary cases $i=0$ and $i=n$, the
  internal faces $d_{1}s_{0} = \id$ and
  $d_{n}s_{n} = \id$ are handled similarly by
  setting the corresponding variable to zero,
  since $\tau(0)=0$ means $\#_{\tau(0)}\id$
  acts as the identity. The outermost faces
  $d_{0}s_{0} = \id$ and $d_{n+1}s_{n} = \id$
  follow from the breaking axiom
  \ref{item:break} as the relevant variable
  tends to infinity. Thus $d_{i}s_{i} = \id$
  and $d_{i+1}s_{i} = \id$ hold for all $i$.

  For $j > i+1$, setting the $j$-th variable
  to $0$ omits it, while $x_{i}$ and
  $x_{i+1}$ are unaffected. Writing this out
  for the top cube yields:
  \begin{equation*}
    \Phi(x_{1},\dots,\mathrm{smax}(x_{i},x_{i+1}),\dots,x_{j-1},0,x_{j+1},\dots,x_{n}).
  \end{equation*}
  The $0$ occurs at the $(j-1)$-th position
  in the argument list passed to $\Phi$. By
  Axiom \ref{item:set-face-zero} for $\Phi$,
  this evaluates to the $j-1$ face of $\Phi$
  evaluated with the doubled $i$-th vertex,
  which is exactly $s_{i}(d_{j-1}\Phi)$. An
  identical argument handles $j < i$, proving
  $d_{j}s_{i} = s_{i-1}d_{j}$.

  We now verify the degeneracy relations
  $s_{i}s_{j} = s_{j+1}s_{i}$ for
  $0\le i\le j\le n$. When $0 < i < j < n$,
  applying $s_{j}$ then $s_{i}$ (or
  vice-versa) results in an $(n+2)$-simplex
  containing the non-overlapping terms:
  \begin{equation*}
    \Phi(x_{1},\dots,\mathrm{smax}(x_{i},x_{i+1}),\dots,\mathrm{smax}(x_{j+1},x_{j+2}),\dots,x_{n+1}),
  \end{equation*}
  which is symmetric. When
  $i = j > 0$, both operators create a bracketed
  maximum (of the form $\mathrm{smax}(a,\mathrm{smax}(b,c))$). Since
  $$\tau(\mathrm{smax}(a,b)) = \tau(a) +
  \tau(b),$$ we have:
  \begin{align*}
    \mathrm{smax}(\mathrm{smax}(x_{i},x_{i+1}),x_{i+2}) &= \tau^{-1}(\tau(x_{i})+\tau(x_{i+1})+\tau(x_{i+2})) \\
                                                        &= \mathrm{smax}(x_{i},\mathrm{smax}(x_{i+1},x_{i+2})).
  \end{align*}
  In the boundary case $i=j=0$, evaluating
  $s_{0}s_{0}\Phi$ introduces two prepended
  identity paths. Their concatenated length
  is $\tau(x_{1}) + \tau(x_{2})$, giving:
  \begin{equation*}
    \id \#_{\tau(x_{1}) + \tau(x_{2})} \Phi(x_{3},\dots,x_{n+1}).
  \end{equation*}
  Conversely, $s_{1}s_{0}\Phi$ yields:
  \begin{equation*}
    \id \#_{\tau(\mathrm{smax}(x_{1},x_{2}))} \Phi(x_{3},\dots,x_{n+1}).
  \end{equation*}
  These are equal since
  $\tau(\mathrm{smax}(x_{1},x_{2})) =
  \tau(x_{1}) + \tau(x_{2})$. The remaining
  boundary cases where $j=n$ are handled
  symmetrically. This completes the proof.
\end{proof}

\subsubsection{Verification of the
  $\infty$-category axiom}
\label{sec:verif-infty-categ}

An $\infty$-category is not merely a simplicial
set, but is required to satisfy additional
\emph{horn-filling} properties; see
\cite[Definition 1.1.2.4]{lurie-HTT-2009}. See
also \cite[Notation A.2.7.2]{lurie-HTT-2009} and
\cite[Example A.2.7.3]{lurie-HTT-2009}.

\begin{proposition}\label{theorem:C-is-infty-category}
  The simplicial set $\mathscr{C}(Y)$ is an
  $\infty$-category.
\end{proposition}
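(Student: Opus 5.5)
We must show that every inner horn $\Lambda^n_k\to\mathscr{C}(Y)$, $0<k<n$, extends to an $n$-simplex. A horn supplies, for each subset $V=\{v_0<\dots<v_m\}\subset[n]$ other than $[n]$ itself and $[n]\setminus\{k\}$, a cube $\Phi^V$ satisfying the axioms of Definition \ref{definition:simplices}. We therefore have to construct two new cubes: the missing face $\Phi^{[n]\setminus\{k\}}:[0,\infty)^{n-2}\to\mathrm{NN}(Y)$ and the top cube $\Phi^{[n]}:[0,\infty)^{n-1}\to\mathrm{NN}(Y)$. The plan is to define the top cube first on the union of all its boundary faces except the face $x_k=0$, and then to extend it inward. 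The missing face is then \emph{defined} as the restriction of the top cube to $x_k=0$, so axiom \ref{item:set-face-zero} holds for it by construction.

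The boundary data prescribed by the horn are the following. On each face $x_j=0$ with $j\neq k$, the top cube must equal $\Phi^{[n]\setminus\{j\}}$. As $x_j\to\infty$, for any $j$ including $j=k$, it must equal the concatenation $\Phi^{0\dots j}\#_{\tau(x_j)}\Phi^{j\dots n}$. Both pieces of this concatenation are given by the horn, since $\{0..j\}$ and $\{j..n\}$ are neither $[n]$ nor $[n]\setminus\{k\}$. The simplex axioms for the horn's faces ensure that these prescriptions agree on overlaps. We then compactify: reparametrize $[0,\infty]\cong[0,1]$ via a map that is flat at the ends, so that the prescribed data live on the union $\Lambda$ of the faces $\{x_j=0\}_{j\neq k}$ and $\{x_j=\infty\}_{\text{all } j}$ of the cube $[0,1]^{n-1}$. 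The one face not in $\Lambda$ is $\{x_k=0\}$, and $\Lambda$ is a deformation retract of the whole cube.

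We extend along the retraction $r$, which projects in the $x_k$-direction from the face $x_k=\infty$, suitably bent near the corners. Concretely, we set $\Phi^{[n]}(x)$ equal to the datum at $r(x)$. This needs two ingredients. First, the target $\mathrm{NN}(Y)$ with its Moore-path structure must be closed under the operations we use; since we only precompose existing smooth families with a reparametrization of the parameter cube, non-negativity of the paths and the Moore structure are preserved automatically. Second, the resulting cube must be smooth and satisfy the microcollaring axiom \ref{item:microcollaring}. For this we choose $r$ so that it is the identity to infinite order near $\Lambda$ in the collar directions, for instance using cutoffs that are flat like $\tau$. Combined with the microcollaring of the given faces, this should make the glued map smooth.

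The main obstacle is the breaking axiom at $x_k\to\infty$ on the new face $x_k=0$. There the restriction of the top cube must decompose as $\Phi^{0..j}\#\Phi^{j..n}$ with the prescribed lengths, and the widths $w$ must vary smoothly with the concatenation length $\tau(x_j)$. The retraction has to respect the product structure near each face $x_j=\infty$. I would handle this by induction on the corner strata: define the extension first near the corners where several $x_j=\infty$, using concatenations of lower-dimensional extensions, and then fill the remaining interior by the retraction. Using a flat reparametrization of the compactified cube should keep everything smooth.
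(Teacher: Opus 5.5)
Your proposal is correct and follows the paper's proof. The horn prescribes the top cube on every face except $x_k=0$; you extend it inward by precomposing with a retraction of the cube onto the union of those faces (the Serre lifting/open-box argument), the breaking and microcollaring conditions give smoothness, and the missing face is read off as the restriction to $x_k=0$. The differences are cosmetic: the paper retracts by stereographic projection from the centre of the missing face and then uses a vector-field flow to push the cube off that point, and your extra induction on corner strata is not needed, since near each $x_j=\infty$ (including where it meets $x_k=0$) the data is already fully prescribed by concatenations of horn data.
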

\begin{proof}
  Fix $i\ne 0,n$, and fix $n-1$ simplices $\Phi_{0},\dots,\Phi_{i-1},\Phi_{i+1},\dots,\Phi_{n}$ such that:
  \begin{equation*}
    d_{j}\Phi_{k}=d_{k-1}\Phi_{j}
  \end{equation*}
  holds for all $j<k$, $j\ne i$. One needs to
  prove the equation:
  \begin{equation}\label{eq:horn-filling}
    d_{j}\Phi=\Phi_{j}\text{ for }j\ne i
  \end{equation}
  admits a solution $\Phi$. The key idea is
  that $\Phi$ is uniquely determined by its
  main cube $\Phi^{0\dots n}$, and
  \eqref{eq:horn-filling} specifies all faces
  of this main cube except for the $x_{i}=0$
  face. The existence of $\Phi$ then follows
  from the usual ``Serre lifting property,''
  which asserts that data specified on all
  faces of a cube, except for one, can be
  extended to the interior of the cube. The
  collaring conditions \ref{item:break} and
  \ref{item:microcollaring} for $\Phi_{j}$
  ensure that one can pick an extension which
  is smooth, and which satisfies
  \ref{item:break}, \ref{item:set-face-zero},
  and \ref{item:microcollaring}. The details
  of this latter assertion go as follows: let
  $P$ be the union of all boundary faces of a
  cube $Q$ except for one face, and let $p$
  be the center point of this missing
  face. The ``stereographic'' projection map
  based at $p$ maps $Q-p$ continuously onto
  $P$, and in fact defines a continuous
  retraction $R$ of the inclusion
  $P\subset Q$. By precomposing with $R$, we
  can extend the data defined on $P$ to data
  defined on $Q-p$. The crucial observation
  is that, even though $R$ is not smooth at
  the points which are mapped to the faces of
  codimension two or higher, the fact the
  data on $P$ satisfies \ref{item:break} and
  \ref{item:microcollaring} ensures that the
  pulled back data is in fact smooth. It is a
  routine affair to extend from $Q-p$ to all
  $Q$, by flowing by a vector field on $Q$
  which vanishes near $P$ and which maps $Q$
  into $Q-p$.
\end{proof}
\begin{remark}\label{remark:barC-also}
  The same exact argument proves that the larger
  category $\bar{\mathscr{C}}(Y)$, where we do not
  require vertices lie off of the discriminant, is
  an infinity category.
\end{remark}

\subsubsection{Equivalent morphisms}
\label{sec:equivalent-morphisms}

In an $\infty$-category, two $1$-simplices
$\sigma,\sigma'$ are \emph{equivalent} if
there is $2$-simplex $\Phi$ satisfying
$\Phi^{01}=\sigma$,
$\Phi^{02}=\sigma'$, and
$\Phi^{12}=\id$, where $\id$ means a
degenerate $1$-simplex; see
\cite[pp.\,39]{lurie-HTT-2009}.

Consider $\sigma,\sigma'$ as elements
in $\mathrm{NN}(Y)$. In our case, all the
information of $\Phi$ is contained in the top
dimensional cube:
\begin{equation*}
  \Phi^{012}(x_{1})\in \mathrm{NN}(Y),
\end{equation*}
depending on $x_{1}\in [0,\infty)$, satisfying:
\begin{itemize}
\item $\Phi^{012}$ has fixed endpoints,
\item $\Phi^{012}(0)=\Phi^{02}$,
\item $\Phi^{012}(x_{1})=\Phi^{01}\#_{\tau(x_{1})} \id$,
\end{itemize}
for $x_{1}$ large enough.  From this
description, and standard facts about Moore
paths, we see that: {\itshape two
  $1$-simplices $\sigma,\sigma'$ are
  equivalent if and only if they are
  homotopic in the space of non-negative
  paths $\mathrm{NN}(Y)$ with fixed
  endpoints.} It follows that the homotopy
category $\mathrm{h}\mathscr{C}(Y)$ is the
category considered in
\cite[2.2.9]{cant-hedicke-kilgore-arXiv-2023}.

\subsubsection{On the complexity of
  $\mathscr{C}(Y)$}
\label{sec:complexity-mathscrcy}

In many appearances of $\infty$-categories in
Floer theory, the $\infty$-category of data
$\mathscr{C}$ is a contractible Kan complex (this
is the case, in, e.g., the treatment of
Hamiltonian Floer theory in
\cite{pardon-GT-2016}). In such cases, the
$\infty$-category itself is not of significant
independent interest, and its use is solely in
achieving more canonical constructions (this is
not to say the use of $\infty$-categories is
unimportant in these contexts). In other
appearances, such as
\cite{ganatra-pardon-shende-pub-IHES-2020,
  varolgunes-GT-2021} the $\infty$-category of
data $\mathscr{C}$ satisfies the property that the
natural map to the nerve of the homotopy category:
\begin{equation}\label{eq:natural-map}
  \mathscr{C}\to \mathrm{N}(\mathrm{h}\mathscr{C})
\end{equation}
is a trivial Kan fibration (Definition
\ref{definition:trivial-kan-fibration}); see
\cite[\S4.4]{ganatra-pardon-shende-pub-IHES-2020}
and \cite[\S3.2.2]{varolgunes-GT-2021} for a
related discussion. In these cases, the
homotopy category $\mathrm{h}\mathscr{C}$
carries some amount of complexity, but no
additional complexity is hidden in
$\mathscr{C}$. It is notable that in our
case, the category $\mathscr{C}(Y)$ does not
satisfy this property.

\begin{lemma}\label{lemma:natural-not}
  The morphism \eqref{eq:natural-map} is not a
  trivial Kan fibration for
  $\mathscr{C}=\mathscr{C}(Y)$ if
  $\pi_{k}(\mathrm{Cont}(Y))$ is non-trivial for
  at least one number $k\ge 3$.
\end{lemma}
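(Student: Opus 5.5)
The plan is to show that $\mathscr{C}(Y)\to\mathrm{N}(\mathrm{h}\mathscr{C}(Y))$ does not have the right lifting property against $\partial\Delta^{n}\subset\Delta^{n}$ for some $n\ge 4$.

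\textbf{Step 1: nerves of ordinary categories.} For $n\ge 4$, every map $\partial\Delta^{n}\to\mathrm{N}(\mathcal{A})$, with $\mathcal{A}$ an ordinary category, extends uniquely to $\Delta^{n}$. This holds because an $n$-simplex of a nerve is determined by its $2$-skeleton, and for $n\ge 4$ all $2$-faces already lie in $\partial\Delta^{n}$. Consequently, if the map \eqref{eq:natural-map} were a trivial Kan fibration, then every map $\partial\Delta^{n}\to\mathscr{C}(Y)$ with $n\ge4$ would extend to an $n$-simplex. The hypothesis $k\ge3$ enters exactly here: we will use $n=k+1\ge 4$.

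\textbf{Step 2: building a boundary sphere.} Fix a $0$-simplex $\varphi_{t}$ and a Reeb flow $R$. Let $m=k-1\ge2$, and pick a smooth family $g_{z}(s)\in\mathrm{Cont}(Y)$, $z\in S^{m}$, $s\in[0,1]$, with the following properties:
\begin{itemize}
\item $g_{z}(0)=g_{z}(1)=\id$;
\item $g_{z_0}\equiv\id$ at a basepoint $z_{0}$;
\item the adjoint $S^{m}\to\Omega\mathrm{Cont}(Y)$ is not null-homotopic.
\end{itemize}
Such a family exists because $\pi_{m}(\Omega\mathrm{Cont})=\pi_{k}(\mathrm{Cont})\neq0$.

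By compactness of $S^{m}\times[0,1]\times Y$, for $K$ large each path $s\mapsto R_{Ks}\,g_{z}(s)$ is positive. Right-multiplying these paths by $\varphi$ (applied as isotopies in $t$, for instance by inserting them as $R_{Kst}g_{z}(st)$-type families) gives a positive path in $\mathrm{CI}(Y)$ from $\varphi_{t}$ to $\psi_{t}=R_{Kt}\varphi_{t}$. Perturb $K$ so that $\psi_{t}$ is off the discriminant.

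We now write $S^{m}$ as $[0,\infty)^{m}$ with its boundary (and the region near infinity) collapsed to $z_{0}$. This turns the family into a top cube $\Phi^{0\dots n-1}$ for an $(n-1)$-simplex, with $n-1=m+1$. Its vertices are $\varphi,\dots,\varphi,\psi$, all lower cubes are degenerate (of the form $\id\#_{\tau}(\text{base path})$), and the reparametrisation is made microcollared and compatible with \ref{item:break} using $\tau$, as in Lemma~\ref{lemma:degeneracy-well-defined-and-identities}. Taking this simplex as one face of $\partial\Delta^{n}$, and degenerate simplices built from the basepoint path $z_{0}$ for all remaining faces, gives a map $\partial\Delta^{n}\to\mathscr{C}(Y)$. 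The face-compatibility conditions have to be arranged, but this is routine bookkeeping.

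\textbf{Step 3: no filler exists.} A filler $\Phi$ would supply, through its top cube restricted to the appropriate faces, a null-homotopy of the $S^{m}$-family inside the space of non-negative Moore paths from $\varphi_{t}$ to $\psi_{t}$. We then compose with the forgetful map $(w,\varphi_{s,t})\mapsto(s\mapsto\varphi_{s,1})$, followed by left multiplication by $(R_{Ks}\varphi_{1})^{-1}$ suitably rescaled. This produces a null-homotopy in $\Omega\mathrm{Cont}(Y)$ of the adjoint of $g$, which is a contradiction.

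I expect the main obstacle to be Step 2, and specifically encoding a sphere family as a genuine boundary $\partial\Delta^{n}\to\mathscr{C}(Y)$ that satisfies axioms \ref{item:break}–\ref{item:microcollaring}, together with checking that a filler really yields a null-homotopy of $S^{m}$ rather than of some contractible piece. I would handle this by choosing the family to be constant near every face and near infinity, so that every face cube is literally degenerate. The extraction in Step 3 then only uses the top cube on a compact sub-cube whose boundary maps to the basepoint path.
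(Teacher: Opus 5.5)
\textbf{There is a genuine gap in Step 2, and it breaks the strategy.} The family $R_{Kst}\,g_z(st)\,\varphi_t$ does not end at $\psi_t=R_{Kt}\varphi_t$. At $s=1$ it is the isotopy $R_{Kt}\,g_z(t)\,\varphi_t$, which depends on $z$, so your simplex does not have fixed vertices.

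This cannot be repaired within your approach. Since $\mathrm{CI}(Y)$ is contractible, the space of all (not necessarily non-negative) Moore paths in $\mathrm{CI}(Y)$ from $\varphi_t$ to $\psi_t$ is contractible. So for any $S^m$-family of such paths with fixed endpoints, the time-one projection $z\mapsto(s\mapsto\varphi_{s,1})$ is null-homotopic in the space of paths in $\mathrm{Cont}(Y)$ from $\varphi_1$ to $\psi_1$, which is homeomorphic to $\Omega\mathrm{Cont}(Y)$. Hence no fixed-endpoint family can project to the non-trivial adjoint of $g$, and the forgetful map in Step 3 detects nothing.

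A quick sanity check shows something must be wrong: your argument never uses non-negativity or any rigidity. It would apply verbatim to the simplicial set built from arbitrary Moore paths in $\mathrm{CI}(Y)$. There all mapping spaces are contractible, and the map to the nerve of the homotopy category \emph{is} a trivial Kan fibration.

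\textbf{The missing idea is to encode the class ``vertically'' and then use a rigidity input.} This is what the paper does.
\begin{itemize}
\item Write $f\in\pi_k(\mathrm{Cont}(Y))\cong\pi_{k-2}(\Omega^2\mathrm{Cont}(Y))$ as a $(k-2)$-cube of loops in $\Omega\mathrm{Cont}(Y)$.
\item Each such loop $s\mapsto f_{s,\cdot}$ is a Moore path in $\mathrm{CI}(Y)$ from $\id$ to $\id$ whose time-one map $f_{s,1}\equiv\id$ is constant. It is therefore trivially non-negative.
\item This yields a $(k-1)$-simplex with top cube $f\#_{\tau(x_1)+\dots+\tau(x_{k-2})}\id$ (working in $\bar{\mathscr{C}}(Y)$).
\item It also yields a map $\partial\Delta^k\to\bar{\mathscr{C}}(Y)$ which is $f$ on the $[1\dots k]$ face and degenerate elsewhere. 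Using $n=k\ge3$ is fine, since nerves of categories are $2$-coskeletal.
\item A filler gives a homotopy $\eta\mapsto\Psi(\eta,\cdot)$ from the constant family to $f$ through non-negative Moore loops at $\id$.
\item The essential input is that there are no $C^0$-small non-constant non-negative loops of contactomorphisms. It implies that the set of parameters where the time-one loop $s\mapsto\Psi_{s,1}$ is constant is open. It is also closed and non-empty, hence it is the whole cube.
\item So the null-homotopy stays inside $\Omega^2\mathrm{Cont}(Y)$, forcing $f=0$.
\end{itemize}
Without this rigidity step, no contradiction can be reached.
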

\begin{proof}
  The property we need about the nerve of the
  ordinary category $\mathrm{h}\mathscr{C}$ is
  that any map
  $\bd\Delta^{k}\to\mathrm{N}(\mathrm{h}\mathscr{C})$
  extends to a map
  $\Delta^{k}\to\mathrm{N}(\mathrm{h}\mathscr{C})$,
  if $k\ge 3$.\footnote{Indeed, the information of a
    $3$-simplex in
    $\mathrm{N}(\mathrm{h}\mathscr{C})$ is a
    sequence of three composable morphisms
    $f,g,h$, while the data of
    $\partial
    \Delta^{3}\to\mathrm{N}(\mathrm{h}\mathscr{C})$
    is the four sequences
    \begin{itemize}
    \item $f,g$ on the $\set{0,1,2}$ face,
    \item $f,gh$ on the $\set{0,1,3}$ face,
    \item $fg,h$ on the $\set{0,2,3}$ face,
    \item $g,h$ on the $\set{1,2,3}$ face.
    \end{itemize}
    One can simply take $f,g$ from the
    $\set{0,1,2}$ face and $h$ from the
    $\set{1,2,3}$ face, and prove that the three
    simplex $(f,g,h)$ fills the map
    $\partial \Delta^{3}\to
    \mathrm{N}(\mathrm{h}\mathscr{C})$; the
    verification uses the fact that the faces of
    the constituents of a map
    $\partial \Delta^{3}$ satisfy various
    equations amongst each other.} In particular, if
  \eqref{eq:natural-map} is a trivial Kan
  fibration, then any map
  $\bd\Delta^{k}\to \mathscr{C}(Y)$ extends to a
  map $\Delta^{k}\to \mathscr{C}(Y)$, for
  $k\ge 3$. We will use the
  existence of a non-trivial class
  $f\in \pi_{k}(\mathrm{Cont}(Y))$ to produce a
  boundary $\bd \Delta^{k}\to \mathscr{C}(Y)$
  which cannot be filled, thereby proving the
  lemma.

  It is more convenient to work in
  $\bar{\mathscr{C}}(Y)$ where the identity
  $0$-simplex is allowed; this is without
  loss of generality, since the space of
  Moore paths joining $\id$ to $\id$ is
  naturally identified with the space of
  Moore paths joining $\varphi_{t}$ to
  $\varphi_{t}$, via pointwise composition
  (using the group structure of
  $\mathrm{Cont}(Y)$).
  
  First of all, by standard adjunction
  identifications, we can think of $f$ as a
  $k-2$ cube in the space of Moore paths in
  the space of based loops of
  $\mathrm{Cont}(Y)$. In particular, this
  induces a $k-1$ simplex $\Phi$ in
  $\bar{\mathscr{C}}(Y)$ given by the
  formula:
  \begin{equation*}
    \Phi(x_{1},\dots,x_{k-2})=f(x_{1},\dots,x_{k-2})\#_{\tau(x_{1})+\dots+\tau(x_{k-2})}\id.
  \end{equation*}
  
  Now consider
  $\Gamma(f)=\partial \Delta^{k}\to
  \bar{\mathscr{C}}(Y)$ which agrees with $f$ on the
  $[1\dots k]$ face, and agrees with $\id$ on each
  other face. Then:
  
  \textbf{Claim.} {\itshape If $\Gamma(f)$ can be
    filled to a $k$-simplex in $\bar{\mathscr{C}}(Y)$,
    then $f$ represents the trivial element in
    $\pi_{k}(\mathrm{Cont}(Y))$.}

  If so, then there exists $\Psi$ so that:
  \begin{equation*}
    \Psi(x_{1},\dots,x_{k-1})=\id\#_{\tau(x_{1})} f(x_{2},\dots,x_{k-1})\#_{\tau(x_{2})+\dots+\tau(x_{k-1})}\id
  \end{equation*}
  for $x_{1}$ large enough, while
  $\Psi(0,x_{1},\dots,x_{k-1})=\Psi^{02\dots
    k}(x_{2},\dots,x_{k-1})=\id$, and so
  $\eta\in [0,\infty)\mapsto
  \Psi(\eta,x_{1},\dots,x_{k-1})$ acts as a
  homotopy between $\id$ and $f$. In general
  $\Psi(\eta,x_{1},\dots,x_{k-1})$ is some
  non-negative loop based at $\id$; since
  there do not exist $C^{0}$ small
  non-constant and non-negative loops of
  contactomorphisms,\footnote{The
    non-existence of $C^{0}$ small
    non-negative and non-constant loops of
    contactomorphisms follows from the
    existence of generating functions and
    action selectors for Legendrian isotopies
    in $1$-jet spaces due to
    \cite{chekanov-funkt-analiz-1996} see
    also \cite{colin_sandon}.} the $t=1$
  endpoint of the isotopies must remain fixed
  at $\id$. Thus the homotopy over
  $\eta\in [0,\infty)$ proves that $f$ was
  the zero element in
  $\pi_{k}(\mathrm{Cont}(Y))$.
\end{proof}

\begin{remark}
  In \cite[Theorem 1]{casals-spacil-JTA-2016}
  it is shown that
  $i:U(n)\to \mathrm{Cont}(S^{2n-1})$ is
  injective on homotopy groups. Thus Lemma
  \ref{lemma:natural-not} is not vacuous.
\end{remark}

% 2.2
\subsection{Equivariant Floer cohomology over
  characteristic two}
\label{sec:case-p-2}

We split the proof of Theorem
\ref{theorem:main-infinity} into the two cases
$p=2$ and $p\ge 3$. There are differences between
the cases:
\begin{itemize}
\item in the case $G=\Z/2\Z$, we consider $EG$ as
  $S^{\infty}\subset \R^{\infty}$ with the
  antipodal $\Z/2\Z$-action, and use the shift
  self-symmetry of $\R^{\infty}:$
  $$\tau:(\eta_{0},\eta_{1},\dots)\mapsto(0,\eta_{0},\eta_{1},\dots);$$
\item in the case $G=\Z/p\Z$ with $p\ge 3$, we
  consider $EG$ as $S^{\infty}\subset \C^{\infty}$
  with the $\Z/p\Z$-action inherited from the
  circle action on $\C^{\infty}$, and we use the
  shift self-symmetry of $\C^{\infty}$ instead of
  $\R^{\infty}$. This leads to various differences
  in the recipe for constructing equivariant Floer
  complexes.
\item in the case $p=2$, so $\mathbf{k}=\Z/2\Z$,
  we do not need to discuss orientations in Floer
  theory, whereas the case $p\ge 3$ requires such
  discussion.
\end{itemize}
In the remainder of this subsection
\S\ref{sec:case-p-2} we focus solely on the case
$p=2$.

\subsubsection{Reminder of the Borel equivariant
  cohomology complex}
\label{sec:borel-construction}

Our approach to defining equivariant Floer
cohomology complexes follows
\cite{seidel-smith-GAFA-2010,seidel-eq-pop}. Their
strategy is to emulate the Borel construction
\cite{borel-book-PU-press-1960} in the language of
Morse theory in such a way that it carries over to
Floer theory. We briefly recall the idea in the
context of Morse theory: given a compact manifold
$M$ with a $\Z/2\Z$-action, one considers
pseudogradients on $M\times_{\Z/2\Z} S^{\infty}$
of a specific type (some care is needed due to the
non-compactness of $S^{\infty}$).  Appropriately
counting the flow lines of such a pseudogradient
defines the equivariant Morse differential.

One thinks of a pseudogradient $P$ on
$M\times_{\Z/2\Z} S^{\infty}$ as a
$\Z/2\Z$-equivariant vector field on
$M\times S^{\infty}$. It is convenient to fix a
standard pseudogradient $V$ on $S^{\infty}$, and
then only consider $P$ which are related to $V$
under the projection
$M\times S^{\infty}\to S^{\infty}$. Such vector
fields can be thought of as $S^{\infty}$-families
of vector fields on $M$ satisfying a certain
$\Z/2\Z$-equivariance. In the Floer theoretic
setting, one similarly considers
$S^{\infty}$-families of auxiliary data as the
necessary input to obtain a chain complex.

\subsubsection{Auxiliary category of Floer data}
\label{sec:auxil-categ-floer}

As explained in
\S\ref{sec:survey-infty-categories}, we do not
directly define an infinity functor
$\mathscr{C}(Y)\to
\mathrm{N}_{\mathrm{dg}}(\mathrm{Ch})$, but rather
construct a diagram of the form:
\begin{equation}\label{eq:forgetful-diagram}
  \begin{tikzcd}
    {\mathscr{D}^{*}(W)}
    \arrow[d,"{\mathrm{forget}}",swap]
    \arrow[r,"{\mathrm{CF}_{\mathrm{eq}}}"]
    &{\mathrm{N}_{\mathrm{dg}}(\mathrm{Ch})}\\
    {\mathscr{C}(Y)}
  \end{tikzcd}
\end{equation}
where $\mathscr{D}^{*}(W)$ is a larger
$\infty$-category whose objects consist of all the
necessary data needed to define the Floer complex;
for similar set-up, we refer the reader to
\cite[Eqn.\,7.0.1]{pardon-GT-2016}. We will first
define $\mathscr{D}(W)$ and then define
$\mathscr{D}^{*}(W)\subset \mathscr{D}(W)$ as a
subcategory of ``regular'' simplices. The goal of
the present section is to construct
$\mathscr{D}(W)$ and the forgetful map. In
\S\ref{sec:defin-infty-funct}, we show that the
forgetful map in \eqref{eq:forgetful-diagram} is a
\emph{trivial Kan fibration} (see
\cite[Eqn.\,7.5.3]{pardon-GT-2016}). Precomposing
$\mathrm{CF}_{\mathrm{eq}}$ with a section of this
fibration induces an $\infty$-functor
$\mathscr{C}(Y)\to
\mathrm{N}_{\mathrm{dg}}(\mathrm{Ch})$; this is
the $\infty$-functor satisfying Theorem
\ref{theorem:main-infinity}.

The definition is in terms of \emph{Borel data},
namely pairs $(\psi_{\eta,t},J)$
where:\footnote{We refer the reader to
  \cite[pp.\,1468]{seidel-smith-GAFA-2010} for a
  similar set-up.}
\begin{itemize}
\item $J$ is a $G$-invariant, $\omega$-tame and
  Liouville-equivariant-at-$\infty$ almost complex
  structure on $W$,
\item $\psi_{\eta,t}$ is a family of Hamiltonian
  isotopies of $W$ depending on a parameter
  $\eta\in S^{\infty}$;
\end{itemize}
Borel data is supposed to satisfy the following:
\begin{enumerate}[label=(B\arabic*)]
\item\label{item:B-equiv} (\emph{equivariance})
  $g^{-1}\psi_{g\eta,t}g=\psi_{\eta,t}$,
\item (\emph{shift invariance})
  $\psi_{\tau(\eta),t}=\psi_{\eta,t}$, where
  $\tau:\R^{\infty}\to \R^{\infty}$ shifts by $1$,
\item\label{item:B-polar} (\emph{polar})
  $\psi_{\eta,t}$ is independent of $\eta$ in
  fixed neighborhoods of the poles,
\item\label{item:B-IR} (\emph{ideal restriction})
  the Hamiltonian generator $X_{\eta,t}$ of
  $t\mapsto \psi_{\eta,t}$ is $\eta$-independent
  and Liouville-equivariant outside of a compact
  set.
\end{enumerate}
Consequently there is a contact isotopy arising as
the ideal restriction, denoted
$\varphi_{t}=\mathrm{IR}(\psi_{\eta,t})$.

As in Definition \ref{definition:moore-path},
one can define a \emph{Moore
  paths} of Borel data to be a smooth map:\footnote{The axioms
  \ref{item:B-polar} and \ref{item:B-IR}
  involve open neighborhoods, and these
  neighborhoods are assumed to be uniform
  along any smooth family.}
\begin{equation*}
  s\in \R\mapsto (\psi_{s,\eta,t},J_{s})
\end{equation*}
together with a ``width'' $w\in [0,\infty)$
such that $\psi_{s,\eta,t},J_{s}$ is
$s$-independent on the intervals $[w,\infty)$
and $(-\infty,0]$; such a path is said to be
\emph{non-negative} provided the ideal restriction lies in the space of non-negative paths $\mathrm{NN}(Y)$.
\begin{definition}\label{definition:mathfrakF}
  The space of non-negative Moore paths of
  such Borel data is denoted $\mathfrak{F}(W)$,
  and admits an ideal restriction map to
  $\mathrm{NN}(Y)$.
\end{definition}

Then an $n$-simplex
$\Phi\in \mathscr{D}_{n}(W)$ is defined exactly as in Definition \ref{definition:simplices} except:
\begin{itemize}
\item zero simplices $\mathscr{D}_{0}(W)$ are
  required to have ideal restrictions in
  $\mathscr{C}_{0}(Y)$.
\item $\mathrm{NN}(Y)$ is replaced by $\mathfrak{F}(W)$.
\end{itemize}
The face and degeneracy maps are exactly as
in Definition \ref{definition:faces} and
\ref{def:degeneracies}. The same proof of the
``horn-filling axioms'' given in Theorem
\ref{theorem:C-is-infty-category} works here
to show $\mathscr{D}(W)$ is an
$\infty$-category. Moreover, there is an
obvious ``ideal restriction''
$\infty$-functor
$\mathscr{D}(W)\to \mathscr{C}(Y)$.

Inside of $\mathscr{D}(W)$ we will define a
subcategory $\mathscr{D}^{\ast}(W)$ consisting of
those simplices which achieve \emph{regularity},
i.e., which render a certain (countable)
collection of moduli spaces transverse. We defer
the definition of $\mathscr{D}^{\ast}(W)$ and the
proof that it is an $\infty$-category to the end
of the next section \S\ref{sec:moduli-spaces}.

\subsubsection{Moduli spaces}
\label{sec:moduli-spaces}

First of all, we fix a pseudogradient $V$ on
$\mathbb{R}P^{\infty}$. In projective coordinates,
the time $s$ flow of $V$ is given by:
\begin{equation*} [x_{0}:x_{1}:x_{2}:\dots]\mapsto
  [x_{0}:e^{s}x_{1}:e^{2s}x_{2}:\dots].
\end{equation*}
This pseudogradient on $\mathbb{R}P^{\infty}$ has
one critical point $v_{k}$ of each index $k$, and
we refer to these as \emph{poles}. The lift of $V$
to the double cover $S^{\infty}$ will also be
denoted by $V$, and it has two critical points
$v_{k,\pm}$ of each index, which we also call
poles. Let $\mathscr{P}(v_{k,\pm})$ denote the
space of parametrized flow lines of $-V$ on
$S^{\infty}$ joining $v_{k,\pm}$ to
$v_{0,+}$. These form open manifolds of the
expected dimension, namely $k$, and can be
compactified to manifolds with corners in a
reasonable way; see, e.g.,
\cite[\S3.1]{seidel-eq-pop}.

Each triple $(\Phi,k,\pm)$, where $\Phi$ is an
$n$-simplex in $\mathscr{D}(W)$, $k=0,1,2,\dots$,
and $\pm$ is a sign, determines a moduli space
$\mathscr{M}(\Phi,k,\pm)$ of triples
$(x,\pi,u)$ of the following type:
\begin{itemize}
\item
  $x\in [0,\infty)^{n-1}$,
\item $\pi\in \mathscr{P}(v_{k,\pm})$, and,
\item $u$ solves a suitable version of Floer's
  equation on $\R\times \R/\Z$.
\end{itemize}

For technical use, we also fix a smooth function
$f:\R\to \R$ such that:
\begin{itemize}
\item $f'(t)$ is $1$-periodic and non-negative,
\item $f'(t)$ vanishes in a neighborhood of $0$,
  and,
\item $f(0)=0$ and $f(1)=1$.
\end{itemize}
and a $[0,1]$-valued cut-off function $\rho$
satisfying:
\begin{itemize}
\item $\rho'(t)\le 0$,
\item $\rho'(t)$ is supported in the interior of
  the region where $f=1$,
\item $\rho(0)=1$ and $\rho(1)=0$;
\end{itemize}
\begin{figure}[h]
  \centering
  \begin{tikzpicture}
    \draw[black!50!white] (-1,-1) grid (2,2);
    \draw[line width=.6pt] (-1,-1)
    to[out=0,in=180] +(1,1) to[out=0,in=180]
    +(2,2) to[out=0,in=180] +(3,3) ;
    \begin{scope}[shift={(4,0)}]
      \draw[black!50!white] (-1,-1) rectangle
      (2,2) (-1,0)--(1,0); \draw[line width=.6pt]
      (-1,1)--(0.5,1)to[out=0,in=180](1,0)--(2,0);
    \end{scope}
  \end{tikzpicture}
  \caption{Graph of auxiliary functions
    $f:\R\to \R$ and $\rho:\R\to \R$. We require
    the derivative $\rho'(t)$ to be supported in
    the interior of the interval where $f=1$.}
  \label{fig:graph-of-f}
\end{figure}
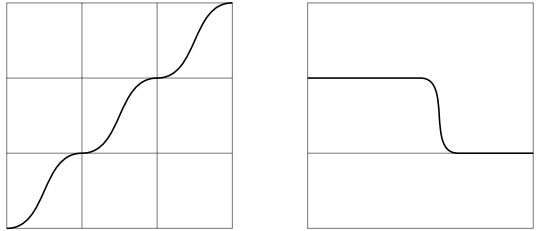
For similar use of such auxiliary functions
$f,\rho$ in setting up the continuation equation,
we refer the reader to
\cite{cant-sh-barcode,cant-hedicke-kilgore-arXiv-2023,cant-arXiv-2024}.\footnote{In
  order for our constructions to be canonical, in
  the sense that any two readers of the paper will
  agree on all constructions, it is necessary that
  the readers pick the same cut-off functions
  $f,\rho$. To make this choice canonical, we
  should give a precise formula for the cut-off
  functions $f$ and $\rho$. Such formulas can be
  found in any advanced calculus text.}

The role of $f$ will be the following: if
$\psi_{\eta,t}$ is a Hamiltonian isotopy,
extended to all times $t\in \R$ by the rule
$\psi_{\eta,t+1}=\psi_{\eta,t}\psi_{\eta,1}$,
then we will consider the
time-reparametrization $\psi_{\eta,f(t)}$.

Let us first consider
$\mathscr{M}(\Phi,k,\pm)$ in the case when
$n=0$. Let $X_{\eta,t}$ be the generator of
$\psi_{\eta,f(t)}$, and consider the
following equation for a pair $(\pi,u)$:
\begin{equation}\label{eq:zero-simplex-eqn-mod-2}
  \left\{
    \begin{aligned}
      &u:\R\times \R/\Z\to W\text{ and }\pi:\R\to S^{\infty},\\
      &\pi'(s)=-V(\pi(s))\text{ with }\pi(-\infty)=v_{k,\pm}\text{ and }\pi(+\infty)=v_{0,+},\\
      &\bd_{s}u+J(u)(\bd_{t}u-X_{\pi(s),t}(u))=0,\\
      &\text{the integral of }\omega(\bd_{s}u,\bd_{t}u-X_{\pi(s),t}(u))\text{ over the cylinder is finite}.
    \end{aligned}
  \right.
\end{equation}
See Figure \ref{fig:zero-simplex-eqn-mod-2} for an
illustration:
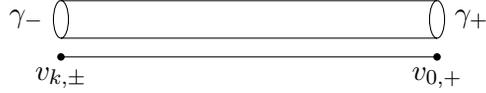
\begin{figure}[h]
  \centering
  \begin{tikzpicture}[scale=.5]
    \draw (0,0) circle (0.2 and 0.5)
    (0,0.5)--+(10,0) (0,-0.5)--+(10,0) (10,0)
    circle (0.2 and 0.5); \draw
    (0,-1)node[fill,circle,inner
    sep=1pt]{}node[below]{$v_{k,\pm}$}--+(10,0)node[fill,circle,inner
    sep=1pt]{}node[below]{$v_{0,+}$}; \node at
    (10.2,0) [right]{$\gamma_{+}$}; \node at
    (-0.2,0) [left]{$\gamma_{-}$};
  \end{tikzpicture}
  \caption{Illustration of a solution to
    \eqref{eq:zero-simplex-eqn-mod-2}. With our
    cohomological conventions, $\gamma_{+}$ will
    be considered as the ``input.''}
  \label{fig:zero-simplex-eqn-mod-2}
\end{figure}

Let us call Borel data \emph{non-degenerate}
provided the time-1 map $\psi_{\eta,1}$ has
only non-degenerate fixed points when $\eta$
is a pole. In this case, it follows from
\ref{item:B-polar} that the solutions $u$ are
asymptotic to stationary solutions over
non-degenerate orbits of $X_{v_{k,\pm},t}$
and $X_{v_{0,+},t}$ at the two ends.

At this stage, we impose our first regularity
assumption for an $n$-simplex $\Phi$:
\begin{enumerate}[label=($\ast\arabic{*}$)]
\item\label{item:ast-non-degenerate} The
  vertices of $\Phi$ are non-degenerate Borel
  data.
\end{enumerate}
This ensures the asymptotics of solutions to
\eqref{eq:zero-simplex-eqn-mod-2} behave well
for all $0$-simplices.

We next define $\mathscr{M}(\phi,k,\pm)$ for
a single non-negative Moore path
$\phi\in \mathfrak{F}(W)$, expressed in the
form $(\psi_{\eta,\sigma,t},J_{\sigma})$ with
$\sigma\in \R$. Write $X_{\eta,s,t}$ and
$Y_{\eta,s,t}$ for the generators of
$\psi_{\eta,-s,f(t)}$ with respect to $t$ and
$s$, and (abusing notation) let us relabel $J_{s}=J_{-s}$.

Then $\mathscr{M}(\phi,k,\pm)$ is defined to be
the solutions $(\pi,u)$ to:
\begin{equation}\label{eq:single-moore-path-eqn-mod-2}
  \left\{
    \begin{aligned}
      &u:\R\times \R/\Z\to W\text{ and }\pi:\R\to S^{\infty},\\
      &\pi'(s)=-V(\pi(s))\text{ with }\pi(-\infty)=v_{k,\pm}\text{ and }\pi(+\infty)=v_{0,+},\\
      &(\bd_{s}u-\rho(t)Y_{\pi(s),s,t}(u))+J_{s}(u)(\bd_{t}u-X_{\pi(s),s,t}(u))=0,\\
      &\text{the integral of }\omega(\bd_{s}u-\rho(t)Y_{\pi(s),s,t}(u),\bd_{t}u-X_{\pi(s),s,t}(u))\text{ is finite}.
    \end{aligned}
  \right.
\end{equation}

Finally, we define
$\mathscr{M}(\Phi^{v_{0}\dots v_{m}},k,\pm)$,
$n\ge 1$, as the moduli space of triples
$(x,\pi,u)$ such that:
\begin{equation}\label{eq:mega-moduli}
  \left\{
    \begin{aligned}
      &x\in [0,\infty)^{m-1},\text{ }\pi:\R\to S^{\infty},\text{ and }u:\R\times \R/\Z\to W,\\
      &(\pi,u)\in \mathscr{M}(\Phi^{v_{0}\dots v_{m}}(x),k,\pm),
    \end{aligned}
  \right.
\end{equation}
where
$\mathscr{M}(\Phi^{v_{0}\dots v_{m}}(x),k,\pm)$ is
defined in
\eqref{eq:single-moore-path-eqn-mod-2}. Let
us note that \ref{item:ast-non-degenerate}
implies that each solution $u$ is asymptotic
to non-degenerate orbits at its two ends.

\begin{remark}\label{remark:energy-estimates}
  As is typical in Floer theory, it is
  necessary to show these moduli spaces have
  certain compactness properties; this boils down
  to establishing a priori estimates on
  certain energy integrals; see, e.g.,
  \cite[\S2.2.6]{cant-arXiv-2024} and
  \cite[\S2.2.5]{cant-hedicke-kilgore-arXiv-2023}
  for some related discussion. We will not
  discuss this aspect of the theory further,
  as it is by now well-established.
\end{remark}

This leads to the second regularity requirement:
\begin{enumerate}[label=($\ast\arabic{*}$),resume]
\item\label{item:ast-transversality} the
  moduli space of continuation cylinders
  $\mathscr{M}(\Phi^{v_{0}\dots
    v_{m}},k,\pm)$ is cut transversally (in
  the parametric sense).
\end{enumerate}
See, e.g., \cite[\S3.2]{seidel-eq-pop} for related
discussion. We define
$\mathscr{D}^{*}(W)\subset \mathscr{D}(W)$:
\begin{definition}\label{definition:regular-n-simplex-D}
  An $n$-simplex $\Phi$ is regular provided
  \ref{item:ast-non-degenerate} holds and
  \ref{item:ast-transversality} holds for all subcubes $\set{v_{0},\dots,v_{m}}\subset \set{0,\dots,n}$. The collection of regular simplices
  is denoted $\mathscr{D}^{*}(W)$
\end{definition}
\begin{proposition}\label{proposition:degeneracy-closed}
  The subset of regular simplices is closed under taking degeneracy maps.
\end{proposition}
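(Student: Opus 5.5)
We need to show: if $\Phi\in\mathscr{D}_n(W)$ is regular, then $s_i\Phi$ is regular for each $i=0,\dots,n$. The plan is to compare each subcube of $s_i\Phi$ with a subcube of $\Phi$ through the explicit formulas of Definition~\ref{def:degeneracies}. We then argue that the parametric moduli spaces of $s_i\Phi$ are cut out transversally because they are pulled back from transversally cut out moduli spaces of $\Phi$ along maps that are submersive in the relevant sense.

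First, condition \ref{item:ast-non-degenerate}: the vertices of $s_i\Phi$ are the vertices of $\Phi$, with $\Phi^{i}$ repeated. So non-degeneracy is inherited immediately.

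For \ref{item:ast-transversality}, fix $V=\{v_0<\dots<v_m\}\subset[n+1]$ and split into the cases of Definition~\ref{def:degeneracies}.

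\emph{Case 1} ($V$ does not contain both $i$ and $i+1$). Here $(s_i\Phi)^V=\Phi^U$ with the same parameters. Hence $\mathscr{M}((s_i\Phi)^V,k,\pm)=\mathscr{M}(\Phi^U,k,\pm)$, which is transverse by hypothesis.

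\emph{Case 2, interior} ($0<k<m-1$). The family is $\Phi^U\circ S$, where
\begin{equation*}
  S(x)=(x_1,\dots,\mathrm{smax}(x_k,x_{k+1}),\dots,x_{m-1}).
\end{equation*}
The moduli space $\mathscr{M}((s_i\Phi)^V,k,\pm)$ is then the fiber product of $\mathscr{M}(\Phi^U,k,\pm)\to[0,\infty)^{m-2}$ with $S$. Away from $x_k=x_{k+1}=0$, the map $S$ is a smooth submersion, since $\partial_{x_k}\mathrm{smax}=\tau'(x_k)/\tau'(\mathrm{smax})>0$ when $x_k>0$. A fiber product of a transverse parametric moduli space with a submersion is transverse: the linearized operator of the pulled-back problem surjects because the original one already surjects using the parameter directions, and those directions are hit by $dS$. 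At the corner locus $x_k=x_{k+1}=0$, and more generally where one of $x_k,x_{k+1}$ vanishes, we use the face structure instead. On the face $x_{k+1}=0$ the cube restricts to $\Phi^U$ itself, and microcollaring \ref{item:microcollaring} makes the family constant to infinite order in the normal direction. Parametric transversality on a collared face then reduces to transversality of the face problem, where the normal parameter direction contributes a trivially surjective factor. I would phrase transversality ``in the parametric sense'' stratum by stratum on the manifold with corners, which makes this precise.

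\emph{Case 2, endpoint} ($k=0$; the case $k=m-1$ is symmetric). The family is $\id\#_{\tau(x_1)}\Phi^U(x_2,\dots)$. Inserting a stationary (hence $s$-independent) segment of Borel data equal to the vertex datum $\Phi^{i}$ only translates the continuation cylinder. The moduli space is therefore diffeomorphic to $[0,\infty)\times\mathscr{M}(\Phi^U,k,\pm)$, via $u\mapsto u(\cdot+\tau(x_1),\cdot)$ together with the corresponding shift of the flow line $\pi$. The boundary case $m=1$, $V=\{i,i+1\}$, gives the width-zero stationary path. Its moduli space is the $\R$-translation-invariant Floer problem for a single non-degenerate datum. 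Here I expect to have to check carefully how the conventions treat this trivial case: regular vertex data should make the parametrized Morse–Bott-free Floer problem transverse, as in the definition of $\mathrm{CF}_{\mathrm{eq}}$ for $0$-simplices.

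\emph{Main obstacle.} The delicate step is the $k=0$ translation. The flow line $\pi$ is coupled to $s$ through $X_{\pi(s),s,t}$, so translating $u$ must also translate $\pi$, and the endpoint conditions on $\pi$ are translation invariant. The concern is whether the insertion really produces a product of $\mathscr{M}(\Phi^U)$ with a parameter, or whether it changes which portion of the $\pi$-flow meets the nonstationary part of the path. I would resolve this by writing the moduli spaces as parametrized over $\pi\in\mathscr{P}(v_{k,\pm})$ and using the $\R$-reparametrization of $\pi$. Under that reparametrization the shift becomes a diffeomorphism of $\mathscr{P}(v_{k,\pm})\times$ (solutions). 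This shows the linearized operators are conjugate, so surjectivity transfers.
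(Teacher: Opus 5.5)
Your proposal is correct and is essentially the paper's argument: each subcube of $s_i\Phi$ is pulled back from a subcube of $\Phi$, so surjectivity of the linearized operators is inherited and non-degeneracy of the vertices is immediate. The pullback is along the identity, along the $\mathrm{smax}$-compression, or, after the translation identification for $\id\#_{\tau(x_1)}(-)$ and $(-)\#_{\tau(x_{m-1})}\id$, along a coordinate projection. Your extra checks fill in details the paper leaves implicit: at the corner $x_k=x_{k+1}=0$, where $\mathrm{smax}$ is not smooth, microcollaring makes the parameter directions contribute nothing, and in the endpoint case the flow line $\pi$ must be translated together with $u$.
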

\begin{proof}
  Let us consider the $i$th degeneracy $s_{i}\Phi$. Each subcube of $s_{i}\Phi$ is equal to one of the subcubes of $\Phi$, perhaps with two adjacent variables compressed together via $\mathrm{smax}(x_{i},x_{i+1})$, or perhaps with an additional $\id\#_{\tau(x_{1})}(-)$ or $(-)\#_{\tau(x_{n})}\id$; see Definition \ref{def:degeneracies}.

  The equations associated to $\varphi$ and
  $\id\#_{\tau}\varphi$ are naturally
  identified (due to the obvious translation
  symmetry of the set-up). It follows that
  the equations associated to $s_{i}\Phi$ and
  its subcubes are pulled back from analogous
  equations for $\Phi$ along a fibration
  $\pi:[0,\infty)^{k}\to [0,\infty)^{k'},$
  where either:
  \begin{itemize}
  \item $k'=k$ (in which case the fibration
    is an isomorphism) or 
  \item $k'=k+1$ (in which
    case $\pi$ is defined using
    $\mathrm{smax}$, or is a projection map in
    the edge cases $s_{0}$ and $s_{n}$).
  \end{itemize}
  This implies that $s_{i}\Phi$ and its faces
  satisfy \ref{item:ast-transversality} are
  surjective, since the we assume $\Phi$
  satisfies \ref{item:ast-transversality};
  the other item
  \ref{item:ast-non-degenerate} is more
  obvious.
\end{proof}

\begin{proposition}\label{proposition:Dstar-is-infinity}
  The subset of regular simplices is a
  sub-$\infty$-category.
\end{proposition}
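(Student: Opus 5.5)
We have to show two things: that $\mathscr{D}^{*}(W)$ is a simplicial subset of $\mathscr{D}(W)$, and that it has the inner horn-filling property. Closure under degeneracies is Proposition \ref{proposition:degeneracy-closed}. Closure under faces follows from the definitions. By Definition \ref{definition:faces}, each subcube of a face $d_{i}\Phi$ is literally a subcube $\Phi^{v_{0}\dots v_{m}}$ of $\Phi$. Regularity in Definition \ref{definition:regular-n-simplex-D} is a condition imposed subcube by subcube: \ref{item:ast-non-degenerate} on the vertices and \ref{item:ast-transversality} on every subcube. So every face of a regular simplex is regular.

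For inner horns, fix $0<i<n$ and a horn $\Lambda^{n}_{i}\to \mathscr{D}^{*}(W)$, i.e. compatible regular simplices $\Phi_{j}$ for $j\ne i$. First apply the horn-filling argument of Proposition \ref{theorem:C-is-infty-category} inside $\mathscr{D}(W)$ to get a filler $\Phi$. The only new datum is the main cube $\Phi^{0\dots n}$, whose restriction to every boundary face except $\{x_{i}=0\}$ is prescribed. Every proper subcube of $\Phi$ is a subcube of some $\Phi_{j}$ with $j\ne i$, except one: the face $\Phi^{0\dots\hat{i}\dots n}=d_{i}\Phi$, which is itself the data on $\{x_{i}=0\}$ produced by the retraction construction. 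The vertices already satisfy \ref{item:ast-non-degenerate}, since every vertex appears in some $\Phi_{j}$. So the task is to perturb this filler, keeping the data on $P$ (the union of prescribed faces) fixed, until \ref{item:ast-transversality} holds for the two cubes $\Phi^{0\dots n}$ and $\Phi^{0\dots\hat{i}\dots n}$.

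This perturbation step is the main obstacle, and I would handle it with a relative parametric Sard--Smale argument. First perturb the $(n-2)$-cube on the free face, fixing its boundary. Its boundary consists of subcubes of the $\Phi_{j}$, so it is already regular there. Transversality near a boundary stratum that is already regular is an open condition. The only subtle region is the breaking region $x_{k}\to\infty$. There the breaking axiom \ref{item:break} forces the cube to be a concatenation of already-regular lower cubes, and standard gluing shows that transversality for the broken configurations implies transversality for large gluing parameter. Hence perturbations are only needed on a compact subset of the interior. Next do the same for the top cube, which is now fixed on its entire boundary.

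The perturbations are of the Borel data $(\psi_{\eta,\sigma,t},J_{\sigma})$, and they must be taken within the class satisfying \ref{item:B-equiv}--\ref{item:B-IR}. Following \cite[\S3.2]{seidel-eq-pop}, I would use perturbations that are supported over compact regions of $W$, away from the poles in the $\eta$ variable, and equivariant and shift invariant. Such perturbations suffice for surjectivity of the linearized operator at solutions whose $\pi$ lies off the poles. On the remaining solutions, where the data is polar, transversality reduces to that of the non-equivariant continuation problem. To preserve the smoothness and microcollaring condition \ref{item:microcollaring}, multiply the perturbations by a cutoff vanishing to infinite order near the fixed faces. There are countably many moduli spaces, one for each $k$ and $\pm$, so a Baire category argument produces a single generic perturbation regularizing all of them. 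The perturbed $\Phi$ is then a regular filler, which shows that $\mathscr{D}^{*}(W)$ is an $\infty$-category.
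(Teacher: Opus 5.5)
Your proposal is correct and is essentially the paper's argument. You solve the horn in $\mathscr{D}(W)$ as in Proposition~\ref{theorem:C-is-infty-category}, then perturb the new data relative to the horn, recursively over the parametric moduli spaces, using perturbations compactly supported in $W$ (so the ideal restriction is unchanged), which reduces the problem to the equivariant transversality of \cite{seidel-smith-GAFA-2010,seidel-eq-pop}; you also make explicit points the paper leaves implicit (closure under faces, that $d_{i}\Phi$ is the only new proper subcube, the collaring conditions, and gluing near the breaking region), but the route is the same.
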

\begin{proof}
  This follows from standard results asserting that
  ``transversality is generic;'' indeed, the horn
  filling problem can be solved in
  $\mathscr{D}(W)$, and simplices can be perturbed
  so as to lie in $\mathscr{D}^{*}(W)$ (the
  perturbation can be made relative the horn if
  all of the faces in the horn are already in
  $\mathscr{D}^{*}(W)$).
  
  We comment on how exactly we achieve
  transversality for the moduli space
  $\mathscr{M}(\Phi,k,\pm)$. It is a
  parametric moduli space over
  $[0,\infty)^{n-1}$, and solutions ``break''
  into solutions of the lower parametric
  families as one coordinate $x_{i}$
  approaches $\infty$ or $0$. One
  ``recursively'' achieves transversality (in
  a manner similar to
  \cite{seidel-book-2008}). We only perturb
  in a compactly supported way in $W$ to
  avoid influencing the ideal
  restriction. Since the perturbation is
  allowed to depend on the location in the
  parameter space $[0,\infty)^{n-1}$, the
  problem boils down to one in
  \cite[pp.\,1468]{seidel-smith-GAFA-2010}
  and \cite[\S4]{seidel-eq-pop} where
  transversality for families over the flow
  line spaces $\mathscr{P}(v_{k,\pm})$ is
  considered, (one also appeals to standard
  techniques such as those in
  \cite[\S8]{mcduff-salamon-book-2012}).
\end{proof}

\subsubsection{Definition of the equivariant chain
  complex}
\label{sec:defin-equiv-chain}

Let $\Phi=(\psi_{\eta,t},J)$ be a single Borel datum in
$\mathscr{D}_{0}^{*}(W)$. Following
\cite{seidel-smith-GAFA-2010,seidel-eq-pop}, we
define:
\begin{equation*}
  \mathrm{CF}_{\mathrm{eq}}(\psi_{\eta,t},J):=\mathrm{CF}(\psi_{t})\otimes \mathbf{k}[[x]],
\end{equation*}
where $\psi_{t}=\psi_{v_{0,+},t}$ and where
$\mathrm{CF}(\psi_{t})$ is the $\mathbf{k}$-vector
space generated by contractible $1$-periodic
orbits of $\psi_{t}$. The differential is defined
as a power series:
\begin{equation*}
  d_{eq}=\sum x^{k}d_{k}=\sum x^{k}(d_{k,+}+d_{k,-})
\end{equation*}
where $d_{k,\pm}$ counts solutions in the family
$\mathscr{M}(\Phi,k,\pm)$, as follows:
\begin{equation*}
  \begin{aligned}
    d_{k,+}(\gamma_{+})&:=\sum \set{u(-\infty):(\pi,u)\in \mathscr{M}_{1}(\Phi,k,+)/\R\text{ and }u(+\infty)=\gamma_{+}},\\
    d_{k,-}(\gamma_{+})&:=\sum \set{g(u(-\infty)):(\pi,u)\in \mathscr{M}_{1}(\Phi,k,-)/\R\text{ and }u(+\infty)=\gamma_{+}}.
  \end{aligned}
\end{equation*}
Note that the $d_{k,-}$ term twists the orbit by
$g$, which is necessary for the count to make
sense. Note as well that $d_{0,+}$ is the usual
Floer differential and $d_{0,-}=0$.

The fact that $d_{eq}^{2}=0$ is essentially an
argument in family Floer cohomology, similar to
some discussion in \cite[Lemma
42]{brocic-cant-arXiv-2025}, and is explained in
\cite{seidel-smith-GAFA-2010} and
\cite{seidel-eq-pop}. We briefly review the
argument:

\begin{proof}[Proof that the equivariant
  differential squares to zero]
  As is usual in Floer theory, we prove
  $d_{eq}^{2}=0$ by considering the non-compact
  ends of the 1-dimensional moduli spaces
  $\mathscr{M}_{2}(\Phi,k,\pm)/\R$. Here an
  \emph{end} is a proper embedding of $[0,\infty)$
  into the moduli space, understood via the
  unbounded sequences $(\ell_{n},\pi_{n},u_{n})$
  which remain in the end. Of course, in this case
  $\Phi$ is a single choice of Borel data so
  $\ell_{n}\in \mathscr{M}(\Delta^{n})$ is
  entirely irrelevant. We consider two kinds of
  ends $(\pi_{n},u_{n})$:
  \begin{enumerate}[label=(\roman*)]
  \item\label{item:equivariant-diff-1}
    $[\pi_{n}]\in \mathscr{P}(v_{k,\pm})/\R$
    converges (but $u_{n}$ does not converge),
  \item\label{item:equivariant-diff-2} $[\pi_{n}]$
    breaks into a limit in
    $\mathscr{P}(v_{l_{1},\pm})/\R\times
    \mathscr{P}(v_{l_{2},\pm})/\R$ where
    $l_{1}+l_{2}=k$ and both $l_{i}>0$.
  \end{enumerate}
  As in
  \cite{seidel-smith-GAFA-2010,seidel-eq-pop} the
  fact that the pseudogradient is invariant under
  the self-similarity map and the involution
  implies that:
  \begin{equation*}
    \set{\text{flows lines from }v_{l_{1},\epsilon_{1}}\text{ to }v_{l_{2},\epsilon_{2}}}\simeq \mathscr{P}(v_{l_{2}-l_{1},\epsilon_{1}\epsilon_{2}}),
  \end{equation*}
  and it is with respect to this identification
  that only spaces of flow lines starting at
  $v_{0,+}$ appear in the formula
  \ref{item:equivariant-diff-2}.

  The linearization of the parametric equation is
  a linear map:
  \begin{equation*}
    T\mathscr{P}(v_{k,\pm})\times W^{1,p}(u_{n}^{*}TW)\to L^{p}(u_{n}^{*}TW),
  \end{equation*}
  and the restriction to $W^{1,p}$ is a
  Cauchy-Riemann operator $D_{u_{n}}$ whose index
  depends on the asymptotic orbits
  $\gamma_{-},\gamma_{+}$ and the homology class
  of $[u]$ relative $\gamma_{-},\gamma_{+}$. We
  record the formula for its index:
  \begin{equation}\label{eq:index-formula}
    \mathrm{index}(D_{u_{n}})=2c_{1}^{\mathfrak{s}}(u)+\mathrm{CZ}_{\mathfrak{s}}(\gamma_{+})-\mathrm{CZ}_{\mathfrak{s}}(\gamma_{-}),
  \end{equation}
  where:
  \begin{itemize}
  \item $\mathfrak{s}$ is any generic section of
    $\mathrm{\det}_{\C}(TW)$ non-vanishing along
    $\gamma_{\pm}$,
  \item $c_{1}^{\mathfrak{s}}(u)$ is the signed
    intersection number
    $\mathrm{PD}(\mathfrak{s}^{-1}(0))\cap [u]$,
  \item the Conley-Zehnder indices
    $\mathrm{CZ}_{\mathfrak{s}}(\gamma_{\pm})$ are
    computed using trivializations of
    $TW|_{\gamma_{\pm}}$ which render
    $\mathfrak{s}$ constant;
  \end{itemize}
  see, e.g., \cite{salamon-notes-1997}; we also
  refer the reader to \cite{cant-thesis-2022} and
  the references therein for further details on
  this formula. In particular, along any sequence
  in $\mathscr{M}_{2}(\Phi,k,\pm)$ it holds that
  $\mathrm{index}(D_{u_{n}})=2-k$. In case
  \ref{item:equivariant-diff-1}, one picks
  representatives:
  $$(\pi_{n},u_{n})\in
  \mathscr{M}_{2}(\Phi,k,\pm)/\R$$ so that
  $\pi_{n}$ converges (after passing to a
  subsequence). Then $u_{n}$ converges on compact
  subsets by standard Floer theory to a solution
  $u_{\infty}$ in $\mathscr{M}_{d}(\Phi,k,\pm)$
  with $d\le 2$. By \ref{item:ast-transversality}
  it holds that:
  \begin{equation}\label{eq:most-are-empty}
    \dots=\mathscr{M}_{-1}(\Phi,k,\pm)=\mathscr{M}_{0}(\Phi,k,\pm)=0,
  \end{equation}
  if $k\ne 0$, while if $k=0$ then
  $\mathscr{M}_{0}(\Phi,0,+)$ consists of only the
  stationary cylinders. Note that if $k=0$, then
  $\pi_{n}$ is irrelevant, and the argument boils
  down to Floer's original argument and shows
  $d_{0}^{2}=0$. Assume $k>0$ for the rest of the
  proof.

  If $u_{n}$ does not converge uniformly, then it
  must break off non-stationary Floer differential
  cylinders (solutions to the $s$-independent
  equation) at one of the ends. Each Floer differential
  cylinder must have a positive index
  \eqref{eq:index-formula}, by
  \ref{item:ast-non-degenerate} and
  \ref{item:ast-transversality}. It then follows
  from \eqref{eq:most-are-empty} that only one
  non-stationary Floer cylinder of index $1$
  breaks off, and the other limit $u_{\infty}$ is
  in $\mathscr{M}_{1}(\Phi,k,\pm)$. For each
  $k\ge 1$, the count of these breakings
  (interpreted in the usual Floer theoretic sense)
  gives:
  \begin{equation*}
    d_{0}d_{k}+d_{k}d_{0}.
  \end{equation*}
  Standard Floer theory gluing proves each
  hypothetical breaking in the fiber product
  inside of
  $\mathscr{M}_{1}/\R\times \mathscr{M}_{1}/\R$
  actually does appear as the end of the moduli
  space $\mathscr{M}_{2}/\R$. In general we will
  suppress discussion of gluing.
  
  Turning now to the other non-compact ends
  \ref{item:equivariant-diff-2}. These breakings
  only occur when $k\ge 2$. In this case, if
  $[\pi_{n}]$ breaks into $[\pi_{-}],[\pi_{+}]$,
  we claim that $[\pi_{n},u_{n}]$ must converge
  (after a subsequence) into a broken
  configuration $[\pi_{-},u_{-}]$ and
  $[\pi_{+},u_{+}]$ in
  $\mathscr{M}_{1}/\R\times
  \mathscr{M}_{1}/\R$. This is the only
  possibility compatible with
  \ref{item:equivariant-diff-2} and the index
  formula \eqref{eq:index-formula}. There is some
  small technicality regarding how the breakings
  are interpreted, when the self-similarity and
  involution maps are taken into account; see
  \cite[pp.\,972]{seidel-eq-pop} for related
  discussion. It suffices to say that this
  technicality explains that the breakings of type
  \ref{item:equivariant-diff-2} correspond to the
  sum of compositions $d_{l_{1}}d_{l_{2}}$ where
  $l_{1}+l_{2}=k$ and $l_{1},l_{2}>0$, extending
  the case $d_{k}d_{0}+d_{0}d_{k}$ to the
  intermediate values. In total, one shows:
  \begin{equation*}
    \sum_{l_{1}+l_{2}=k} d_{l_{1}}d_{l_{2}}=0,\text{ for each $k$}
  \end{equation*}
  which is equivalent to $d_{eq}^{2}=0$.
\end{proof}

\subsubsection{Chain homotopies associated to
  higher simplices}
\label{sec:chain-homot-assoc}

We continue from
\S\ref{sec:infin-categ-eliashb-intro}. Recall that
a map
$\mathscr{D}^{*}(W)\to
\mathrm{N}_{\mathrm{dg}}(\mathrm{Ch}(\mathbf{k}[[x]]))$
is the data of:
\begin{enumerate}[label=(\alph*)]
\item\label{item:chain-homot-stage-1} a
  finitely generated graded\footnote{We need
    to use $\Z/2\Z$-gradings in certain signs
    when the characteristic of the field is
    $p\ge 3$; in the case $p=2$ we can use
    ungraded modules.}
  $\mathbf{k}[[x]]$-module $V_{\Phi}$ with
  differential $d_{\Phi}$ of degree $1$ for
  each zero simplex $\Phi$,
\item\label{item:chain-homot-stage-2} a map
  $\mathfrak{c}_{\Phi}:V_{\Phi|0}\to V_{\Phi|n}$
  of degree $1-n$ for each $n$-simplex $\Phi$.
\end{enumerate}
In the present case where
$\mathrm{char}(\mathbf{k})=2$, these are supposed
to satisfy:
\begin{equation}\label{eq:dg-nerve-repeat}
  \sum_{j=1}^{n-1}\mathfrak{c}_{\Phi|[j\dots n]}\circ \mathfrak{c}_{\Phi|[0\dots j]}+\mathfrak{c}_{\Phi|[0\dots \hat{j}\dots n]}=\mathfrak{c}_{\Phi}\circ d_{\Phi|0}+d_{\Phi|n}\circ \mathfrak{c}_{\Phi},
\end{equation}
and, concerning degeneracies,
\begin{equation}\label{eq:degenerate-nerve-repeat}
  \mathfrak{c}_{s_{i}\Phi}=\left\{
    \begin{aligned}
      &0&&\text{ for each $n$-simplex $\Phi$ with $n\ge 1$},\\
      &\id&&\text{ for each $0$-simplex $\Phi$}.
    \end{aligned}
    \right.
\end{equation}

The first stage \ref{item:chain-homot-stage-1} is
done: each zero simplex $\Phi=(\psi_{\eta,t},J)$
is sent to its equivariant Floer cohomology
complex
$V_{\Phi}=\mathrm{CF}_{\mathrm{eq}}(\psi_{\eta,t},J)$
with differential $d_{\Phi}=d_{eq}$ as defined in
\S\ref{sec:defin-equiv-chain}.

The second stage \ref{item:chain-homot-stage-2} is
satisfied by the following definition:
\begin{equation*}
  \mathfrak{c}_{\Phi}=\sum x^{k}\mathfrak{c}_{k}=\sum x^{k}(\mathfrak{c}_{k,+}+\mathfrak{c}_{k,-})
\end{equation*}
where $\mathfrak{c}_{k,\pm}$ are defined by:
\begin{equation*}
  \begin{aligned}
    \mathfrak{c}_{k,+}(\gamma_{+})&=\sum \set{u(-\infty):(x,\pi,u)\in \mathscr{M}_{0}(\Phi^{0\dots n},k,+)\text{ with }u(+\infty)=\gamma_{+}},\\
    \mathfrak{c}_{k,-}(\gamma_{+})&=\sum \set{g(u(-\infty)):(x,\pi,u)\in \mathscr{M}^{0\dots n}_{0}(\Phi,k,-)\text{ with }u(+\infty)=\gamma_{+}};
  \end{aligned}
\end{equation*}
here the ``$0$'' subscript in
$\mathscr{M}_{0}$ signifies the dimension of
the moduli space. It remains only to verify
the equations \eqref{eq:dg-nerve-repeat} and
\eqref{eq:degenerate-nerve-repeat}. To simplify the notation, let us denote by: $\mathscr{M}_{d}(\Phi)=\mathscr{M}_{d}(\Phi^{0\dots n},k,\pm)$.

The equation
\eqref{eq:dg-nerve-repeat} is proved by analyzing the
$1$-dimensional component
$\mathscr{M}_{1}(\Phi)$. The moduli
space admits a map:
$$x:\mathscr{M}_{1}(\Phi)\to
[0,\infty)^{n-1},$$ which records the
underlying cubical coordinates; this is used
to assign algebraic interpretations to the
non-compact ends:
\begin{enumerate}[label=(\roman*)]
\item\label{item:type-of-end-1}
  $\mathfrak{c}_{\Phi|[j\dots n]}\circ
  \mathfrak{c}_{\Phi|[0\dots j]}$: ends where
  one of the cubical coordinates $x_{j}$ diverges to $\infty$,
\item\label{item:type-of-end-2}
  $\mathfrak{c}_{\Phi|[0\dots\hat{j}\dots n]}$:
  ends where one of the cubical coordinates $x_{j}$ converges to $0$,
\item\label{item:type-of-end-3}
  $\mathfrak{c}_{\Phi}\circ d_{\Phi|0}$ and
  $d|_{\Phi|n}\circ c_{\Phi}$: ends where the
  cubical coordinate $x$ converges in the
  interior $(0,\infty)^{n-1}$.
\end{enumerate}
In the remainder of this section we make this
informal sketch more precise.

\begin{proof}[Proof of equation \eqref{eq:dg-nerve-repeat}]
  The moduli space $\mathscr{M}(\Phi,k,\pm)$
  defined in \eqref{eq:mega-moduli} admits a
  ``forgetful'' map to
  $\mathscr{P}(v_{k,\pm})\times [0,\infty)^{n-1}$.

  Referring to the Fredholm index in
  \eqref{eq:index-formula}, it holds that:
  \begin{equation}\label{eq:index-along-end}
    \mathrm{index}(u)+k+n-1=d\text{ for }(x,\pi,u)\in \mathscr{M}_{d}(\Phi,k,\pm),
  \end{equation}
  As in \ref{item:type-of-end-1} through
  \ref{item:type-of-end-3}, let us study the
  parameter map
  $x:\mathscr{M}_{1}\to [0,\infty)^{n-1}$. We begin
  with:
  \begin{claim}
    Along any end
    $(x_{\nu},\pi_{\nu},u_{\nu})\in
    \mathscr{M}_{1}(\Phi,k,\pm)$ such that
    the $x_{\nu,i}$ converges to $0$ but the
    other coordinates converge in the
    interior $(0,\infty)$, and $\pi_{n}$
    converges, some subsequence will converge
    to a solution
    $(x_{\infty},\pi_{\infty},u_{\infty})$
    for the restriction
    $\mathscr{M}_{0}(\Phi|_{[0\dots
      \hat{i}\dots n]},k,\pm)$.
  \end{claim}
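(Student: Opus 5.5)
Near the face $x_{i}=0$ the equation for the cube reduces exactly to the equation for the face simplex. By axiom \ref{item:set-face-zero} of Definition \ref{definition:simplices}, $\Phi^{0\dots n}(x)$ restricted to $\{x_{i}=0\}$ equals $\Phi^{0\dots\hat{i}\dots n}$ evaluated at the remaining coordinates. By the microcollaring axiom \ref{item:microcollaring}, the family depends smoothly on $x$ up to and including this face. So the plan is to treat $(x_{\nu},\pi_{\nu})$ as a convergent sequence of parameters and invoke standard compactness for a smoothly varying family of Floer equations.

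\textbf{Step 1: parameters.} Pass to a subsequence so that $\pi_{\nu}\to\pi_{\infty}$ in $\mathscr{P}(v_{k,\pm})$, unbroken by assumption. We also have $x_{\nu}\to x_{\infty}$ with $x_{\infty,i}=0$ and all other coordinates in $(0,\infty)$. The Moore paths $\Phi^{0\dots n}(x_{\nu})$ then converge in $C^{\infty}_{\mathrm{loc}}$, together with their widths, to $\Phi^{0\dots\hat{i}\dots n}(x_{\infty}')$, where $x'_{\infty}$ is $x_{\infty}$ with the $i$th entry deleted. The same convergence holds for the generators $X_{\eta,s,t}$, $Y_{\eta,s,t}$ and for $J_{s}$. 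Widths are bounded because no coordinate escapes to infinity. The ideal restrictions and the neighborhoods in \ref{item:B-polar} and \ref{item:B-IR} are uniform along the family.

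\textbf{Step 2: a priori estimates.} The energy bound for continuation cylinders with non-negative ideal restriction holds uniformly over a compact set of parameters (Remark \ref{remark:energy-estimates}). The relevant inputs are asphericity, which rules out sphere bubbling, and the convexity and maximum principle at infinity, which give $C^{0}$ bounds. Gromov--Floer compactness then yields, after passing to a subsequence, convergence of $u_{\nu}$ on compact subsets to a solution $u_{\infty}$ of the limiting equation. Since the $(x,\pi)$ components converge, the only possible loss of compactness is breaking off Floer cylinders at the ends $s\to\pm\infty$.

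\textbf{Step 3: index count.} This is the main point. Apply \eqref{eq:index-along-end} to the face, which has one fewer cube coordinate. Any element of $\mathscr{M}_{1}(\Phi)$ then determines a configuration on $\Phi^{0\dots\hat{i}\dots n}$ of total dimension $0$. Transversality \ref{item:ast-transversality} for the face, which holds because $\Phi$ is regular on all subcubes, implies:
\begin{itemize}
\item $\mathscr{M}_{d}$ of the face is empty for $d<0$;
\item every non-stationary Floer cylinder at the non-degenerate ends (by \ref{item:ast-non-degenerate}) has index at least $1$, and dimension at least $1$ after quotienting by translations.
\end{itemize}
Any breaking would therefore force the principal component into negative dimension, a contradiction. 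Hence no energy escapes, $u_{\nu}\to u_{\infty}$ uniformly with matching asymptotics, and $(x_{\infty},\pi_{\infty},u_{\infty})\in\mathscr{M}_{0}(\Phi|_{[0\dots\hat{i}\dots n]},k,\pm)$.

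\textbf{Expected obstacle.} The delicate point is Step 1 at the corner-adjacent face: the limit family must genuinely be the face family, with uniform smooth convergence of the data. This is exactly what microcollaring guarantees, since $\partial_{x_{i}}\Phi$ vanishes to infinite order at $x_{i}=0$. For the edge cases $i=1$ or $i=n-1$, where the face concerns an endpoint vertex, we would check via \ref{item:set-face-zero} that the $\#$-concatenation length $\tau(x_{i})\to0$ causes no degeneration.
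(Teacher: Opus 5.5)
Your proposal is correct and follows the same route as the paper. The face axiom makes the equation for $\Phi^{0\dots n}(x_\nu)$ converge to that of the face, standard compactness (with asphericity excluding bubbling) gives a limit, and \eqref{eq:index-along-end} shows the dimension drops by one; your Step~3 usefully spells out the no-breaking index argument that the paper leaves implicit. There are minor slips that do not affect the argument: microcollaring plays no role in this compactness statement, since smoothness of the cube up to the face plus the face axiom already give convergence of the data; a non-stationary Floer cylinder has index $\ge 1$, hence dimension $\ge 0$ (not $\ge 1$) after quotienting by translations; and there is no special edge case at $i=1$ or $i=n-1$, because every face $x_i=0$ omits an interior vertex and the concatenation $\#_{\tau(x_i)}$ enters only through the breaking axiom as $x_i\to\infty$.
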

  \begin{proof}[Proof of claim]
    If follows from \ref{item:set-face-zero}
    that the PDE which $u_{n}$ solves
    converges to the one for the restricted
    simplex; and then, by standard
    compactness results, $u_{n}$ also
    converges (after passing to a
    subsequence) in the stated sense,
    yielding the claim. The dimension drops
    by $1$ by inspecting
    \eqref{eq:index-along-end}.
  \end{proof}

  These ends lead to the terms of the form
  $\mathfrak{c}_{\Phi|[0\dots\hat{i}\dots
    n]}$ appearing in
  \eqref{eq:dg-nerve-repeat}. All such
  contributions do appear as ends of
  one-dimensional moduli spaces by standard
  Floer theoretic ``gluing'' (here we do not
  actually glue together two solutions, but
  one still appeals to the inverse function
  theorem; see
  \cite[\S4]{brocic-cant-JFPTA-2024} for
  related discussion).
  
  The same argument used for the claim works
  if $x_{i}\to 0$ holds for
  multiple values of $i$, (i.e., all
  coordinates converge in $[0,\infty)$ and
  $\pi_{n}$ converges). Then the index will
  drop by the number of coordinates which
  converged to $0$. For generic data, we can
  therefore assume that only one coordinate
  ever converges to $0$, provided the other
  coordinates converge in $[0,\infty)$ and
  $\pi_{n}$ converges.

  Let us now analyze the case when one of the
  coordinates, say $x_{i}$, diverges to $\infty$, but
  the other coordinates converge in $[0,\infty)$.

  Then the property \ref{item:break}, stating that:
  \begin{equation*}
    \Phi^{0\dots n}(x)=\Phi^{0\dots i}(x_{1},\dots,x_{i-1})\#_{\tau(x_{i})}\Phi^{i\dots n}(x_{i+1},\dots,x_{n-1})
  \end{equation*}
  enables us to extract two limits. The first
  $u_{+}$ is just the limit of the original
  sequence on compact subsets. The second
  $u_{-}$ is the limit of retranslations by
  $\tau(x_{i})$. Moreover, using the same
  parametrizations, $\pi$ also converges to
  some pair of flow lines $\pi_{-},\pi_{+}$
  of indices $k_{-},k_{+}$ on $S^{\infty}$
  (after passing to a subsequence). By
  compactness, the other cubical coordinates
  converge and we write
  $x_{-}=(x_{1},\dots,x_{i-1})$ and
  $x_{+}=(x_{i+1},\dots,x_{n})$ for these
  limits. We claim:
  \begin{itemize}
  \item
    $(u_{-},x_{-},\pi_{-})\in
    \mathscr{M}_{d_{-}}(\Phi|_{[i\dots
      n]},k_{-},\pm)$,
  \item
    $(u_{+},x_{+},\pi_{+})\in
    \mathscr{M}_{d_{+}}(\Phi|_{[0\dots
      i]},k_{+},\pm)$,
  \end{itemize}
  for appropriate $\pm$ signs modulo applying the
  involution and using the shift self-symmetry of
  $S^{\infty}$, as in
  \cite{seidel-smith-GAFA-2010,seidel-eq-pop}. Here
  $d_{+}+d_{-}\le 0$. The only possibility is
  $d_{+}=d_{-}=0$, and $k_{-}+k_{+}=k$ (rather
  than $k_{-}+k_{+}\le k$). This implies that
  $\pi_{n}$ actually converges in the generalized
  Morse sense to the configuration
  $\pi_{-},\pi_{+}$ we already found.

  Therefore, the pair
  $(u_{-},x_{-},\pi_{-}),(u_{+},x_{+},\pi_{+})$
  contributes to the term:
  \begin{equation*}
    \mathfrak{c}_{\Phi|[i\dots n]}\mathfrak{c}_{\Phi|[0\dots i]};
  \end{equation*}
  all such contributions actually do appear as
  ends of one-dimensional moduli spaces by
  standard Floer theoretic gluing for parametric
  families of continuation cylinders, similarly to
  \cite[pp.\,972]{seidel-eq-pop}.

  Essentially the same argument implies that
  one cannot have two coordinates which
  diverge to infinity, without contradicting
  regularity.

  All of the other ends are of the form where
  $x$ converges in $(0,\infty)^{n-1}$. If
  $\pi_{n}$ converges to $(\pi_{-},\pi_{+})$ with
  positive indices $k_{-},k_{+}$ in the Morse
  theoretic sense, then one concludes the terms of
  the form:
  \begin{equation*}
    \mathfrak{c}_{k_{-}}d_{k_{+}}+d_{k_{-}}\mathfrak{c}_{k_{+}}.
  \end{equation*}
  In all other cases permitted by regularity,
  $\pi_{n}$ converges in
  $\mathscr{P}(v_{k,\pm})/\R$, and one concludes
  the terms of the form
  $\mathfrak{c}_{0}d_{k}+d_{k}\mathfrak{c}_{0}$. Summing
  together all of these possible ends gives the
  desired relation \eqref{eq:dg-nerve-repeat}.
\end{proof}

\begin{proof}[Proof of the degeneracy equation \eqref{eq:degenerate-nerve-repeat}]
  The case of a degenerate $1$-simplex is
  just the well-known fact that the
  continuation map associated to stationary
  continuation data is equal to the identity
  map on chain level.
  
  For degenerate $n$-simplices with $n>1$,
  the argument is essentially the same as the
  one given in the proof of Proposition
  \ref{proposition:degeneracy-closed}. The
  top cube of $s_{i}\Phi$ is pulled back from
  the top cube of $\Phi$ by a submersion:
  $$\pi:[0,\infty)^{n}\to
  [0,\infty)^{n-1},$$ defined by an explicit
  formula. The moduli spaces associated to
  different points in the same fiber of $\pi$
  are canonically identified, and the
  linearizations at solutions above
  $x\in [0,\infty)^{n}$ are surjective if the
  linearizations of above $\pi(x)$ are
  surjective. In particular, there is an
  empty zero dimensional component of the
  moduli space above $[0,\infty)^{n}$
  associated to $s_{i}\Phi$, and so it holds
  that $\mathfrak{c}_{s_{i}\Phi}=0$, as desired.
\end{proof}

\subsubsection{Definition of the $\infty$-functor}
\label{sec:defin-infty-funct}

Most of the arguments needed in this subsection
were already explained in
\S\ref{sec:survey-infty-categories}. The main
technical result of this subsection is:
\begin{lemma}\label{lemma:is-a-trivial-kan-fibration}
  The forgetful map
  $\mathscr{D}^{*}(W)\to \mathscr{C}(Y)$ is a
  trivial Kan fibration.
\end{lemma}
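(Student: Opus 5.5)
We must show that for every $n\ge 0$ and every commutative square consisting of $\partial\Delta^{n}\to\mathscr{D}^{*}(W)$ and $\Delta^{n}\to\mathscr{C}(Y)$, there is a lift $\Delta^{n}\to\mathscr{D}^{*}(W)$. As in the proof of Proposition \ref{theorem:C-is-infty-category}, a simplex in either category is determined by its top cube together with its vertices. When $n\ge 1$, the boundary data already determines every proper subcube. So the problem reduces to the following. Given a smooth cube $\Phi_{\mathscr{C}}^{0\dots n}:[0,\infty)^{n-1}\to\mathrm{NN}(Y)$, and a lift of it to $\mathfrak{F}(W)$ on the faces $x_{j}=0$ and near $x_{j}=\infty$ (the latter given by the concatenations in \ref{item:break}), we must extend the lift over the whole cube. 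The extension must have the prescribed ideal restriction, satisfy \ref{item:break}, \ref{item:set-face-zero} and \ref{item:microcollaring}, and be regular.

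The case $n=0$ comes first. Given $\varphi_{t}\in\mathscr{C}_{0}(Y)$, we need non-degenerate Borel data whose ideal restriction is $\varphi_{t}$. We take a contact Hamiltonian generating $\varphi_{t}$, extend it homogeneously on the convex end and arbitrarily inside, and average over $G$; this is possible because the ideal action is free and $\varphi_{t}$ commutes with it after lifting to $\bd W$. Since $\varphi_{t}$ has no discriminant points, the resulting flow has no fixed points at infinity, so every $1$-periodic orbit lies in a compact region. A generic compactly supported $G$-equivariant perturbation then makes the orbits non-degenerate. For $\eta$ near the poles we take the data to be constant, and elsewhere we use a shift-invariant equivariant interpolation. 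A $G$-invariant $J$ exists by the usual averaging argument for compatible structures.

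For $n\ge 1$ we use the key structural fact: the fiber of the ideal-restriction map $\mathfrak{F}(W)\to\mathrm{NN}(Y)$ is convex, or at least contractible. Its elements are compactly supported perturbations of the generator in $W$, families over $S^{\infty}$ satisfying \ref{item:B-equiv}--\ref{item:B-IR}, and $G$-invariant almost complex structures, all with fixed behaviour at infinity. We first choose any lift of $\Phi_{\mathscr{C}}^{0\dots n}$, for instance by a $G$-averaged homogeneous extension together with a fixed cut-off. We then correct it by a fiberwise convex combination with the boundary data, using a partition of unity that is subordinate to collars of the faces. Convex combinations of Hamiltonian generators preserve the prescribed ideal restriction. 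The collars should be chosen compatibly with the $\tau$-reparametrized breaking regions, so that \ref{item:break} holds for large $x_{j}$ and \ref{item:microcollaring} holds near $x_{j}=0$. For this we make the cut-offs depend on $x$ only through functions that are flat to infinite order at $x_{j}=0$, in the same way the stereographic retraction was handled in Proposition \ref{theorem:C-is-infty-category}.

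The step I expect to be the main obstacle is regularity, condition \ref{item:ast-transversality}, for the new top cube relative to the already-regular boundary. I would follow the recursive scheme in the proof of Proposition \ref{proposition:Dstar-is-infinity}. We perturb the interior of the cube by $G$-equivariant, compactly supported Hamiltonian terms that vanish to infinite order near the faces and the breaking ends, and apply Sard--Smale to the parametric moduli spaces over $\mathscr{P}(v_{k,\pm})\times[0,\infty)^{n-1}$ for all $k$ at once. This is a countable intersection of residual sets. Surjectivity near the boundary comes from gluing: it is inherited from the regular faces and from the regular broken configurations, and it persists in a neighbourhood, which is why perturbations supported away from the boundary suffice. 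Equivariance is handled as in \cite{seidel-smith-GAFA-2010,seidel-eq-pop}. Because $\eta$ varies in $S^{\infty}$ and the free $G$-action on $S^{\infty}$ lets the perturbation depend on $\eta$, somewhere-injectivity issues from fixed points in $W$ do not obstruct transversality away from the poles. Finally, the vertices are already regular by assumption, so they need no further perturbation.
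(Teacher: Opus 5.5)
Your route is the paper's: extend the homogeneous lift of the contact Hamiltonian into $W$ with a $G$-invariant radial cut-off (the paper's $f(r)H$), then perturb on the compact part of $W$ to land in $\mathscr{D}^{*}(W)$. Your collars, microcollaring and relative Sard--Smale step fill in that sketch.

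There is, however, a genuine gap, and the paper's sketch has it too. When you correct the lift by a convex combination with the boundary data, you assume that the fiber of $\mathfrak{F}(W)\to\mathrm{NN}(Y)$ over the given path, \emph{with the prescribed upstairs endpoints}, is non-empty. That is false in general. The ideal restriction preserves the width, and a Moore path of Borel data of width $0$ is $s$-independent on all of $\R$, compact part included.

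Here is a counterexample to the lifting property. Take $\varphi_{t}\in\mathscr{C}_{0}(Y)$ and two distinct regular Borel data $D_{0}\neq D_{1}$ lifting it; these exist because regular data are generic. Consider the lifting problem where $\partial\Delta^{1}\to\mathscr{D}^{*}(W)$ picks $(D_{0},D_{1})$ and $\Delta^{1}\to\mathscr{C}(Y)$ picks the degenerate edge $s_{0}\varphi_{t}$, i.e.\ the width-zero constant path. Any lift $E$ has $E^{01}$ of width $0$, so $E^{01}$ is constant, which forces $D_{0}=D_{1}$.

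So, with the definitions as written, the fiber over $\varphi_{t}$ has one path component for each regular lift of $\varphi_{t}$, and the forgetful map is not a trivial Kan fibration. The same obstruction arises for any cube in which some downstairs path has width zero while the prescribed upstairs endpoints differ.

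What your argument does prove is the lifting property when the boundary lift assigns the same Borel datum to equal vertices of $\mathscr{C}(Y)$, for instance when it comes from a partially defined section. In that case a width-zero path at some parameter forces its endpoints to coincide downstairs, hence upstairs. Near such parameters you can use the cut-off lift plus a fixed compactly supported correction, which is stationary wherever its ideal restriction is. This suffices to construct the sections $\Sigma$ used afterwards.

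The lemma as stated, and hence the contractible Kan complex of all sections obtained from it via Proposition~\ref{proposition:trivial-kan-fibration}, requires redefining $\mathfrak{F}(W)$. The compact part of the data must be allowed to vary along a Moore parameter not tied to the width of its ideal restriction.

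One further point to check. Your claim that regularity near $x_{j}=\infty$ is inherited by gluing gives, for each $k$ separately, regularity only for $\tau(x_{j})$ beyond a $k$-dependent threshold. The breaking axiom makes the data an exact, unperturbable concatenation on a fixed region $x_{j}\geq R$. Your relative Sard--Smale argument therefore needs that threshold to be uniform in $k$, or a breaking condition that is only asymptotic on the compact part.
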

\begin{proof}
  One shows that
  $\mathscr{D}(W)\to \mathscr{C}(Y)$ is a trivial
  Kan fibration by direct construction. Using a
  $G$-invariant radial function $r$, one can
  cut-off the generating Hamiltonians functions
  from the symplectization end to the filling and
  explicitly construct extensions; e.g., one can
  use a formula of the form $f(r)H$ where $f$
  vanishes outside the symplectization
  end. Extensions constructed in this way will not
  be regular, but they can be perturbed on the
  compact part to be made regular.
\end{proof}

Therefore, by Proposition
\ref{proposition:trivial-kan-fibration}, there is
a contractible Kan complex of sections
$\Sigma:\mathscr{C}(Y)\to \mathscr{D}^{*}(W)$ of
this forgetful map. We define:
\begin{equation*}
  \mathrm{CF}_{\mathrm{eq}}\circ \Sigma:\mathscr{C}(Y)\to \mathrm{N}_{\mathrm{dg}}\mathrm{Ch}(\mathbf{k}[[x]]),
\end{equation*}
as the $\infty$-functor satisfying Theorem
\ref{theorem:main-infinity}. While this depends on
a choice of section, this choice is
``contractible'' in the $\infty$-category sense,
and any two choices give equivalent functors (in
an appropriate sense).

It is perhaps reassuring to observe the following:
for any
$(\psi_{\eta,t},J)\in \mathscr{D}_{0}^{*}(W)$,
there is a section $\Sigma$ such that
$\Sigma(\varphi_{t})=(\psi_{\eta,t},J)$, where
$\varphi_{t}$ is the ideal restriction of
$\psi_{\eta,t}$. Indeed, this follows from the
lifting property for the diagram:
\begin{equation*}
  \begin{tikzcd}
    {\Delta^{0}}\arrow[d,"{}"]\arrow[r,"{}"] &{\mathscr{D}^{*}(W)}\arrow[d,"{}"]\\
    {\mathscr{C}(Y)}\arrow[ru,dashed,"{\Sigma}"]\arrow[r,"{\id}"]
    &{\mathscr{C}(Y).}
  \end{tikzcd}
\end{equation*}
This means that, given any desired regular
extension of $\varphi_{t}$ to Borel data
$(\psi_{\eta,t},J)$, one can assume that the
output of
$\mathrm{CF}_{\mathrm{eq}}\circ
\Sigma(\varphi_{t})$ agrees with the equivariant
Floer complex
$\mathrm{CF}_{\mathrm{eq}}(\psi_{\eta,t},J)$ from
\S\ref{sec:defin-equiv-chain} ``on the nose.'' In
any case, the chain level constructions we will
appeal to are insensitive to the precise choice of
$\Sigma$.

% 2.3
\subsection{The case when $p\ge 3$}
\label{sec:case-p-3}

For the rest of this section we fix a prime
\( p\ge 3 \) and let \( G = \mathbf{k}=\Z/p\Z \).

The main contents of the construction are similar
to the \( \Z/2\Z \)-case. The principal difference
lies in the underlying Morse model for \( BG. \)
As noted at the start of \S\ref{sec:case-p-2}, in
the case $p\ge 3$, we work on
\( S^{\infty}\subset \C^{\infty} \) with the
diagonal \( G \subset U(1) \) action, and use the
shift operator \( \tau \) associated to the
standard \( \C \)-valued coordinates. The usual
perfect Morse function on \( \C P^{\infty} \)
pulls back to a \( U(1) \) invariant Morse--Bott
function on \( S^{\infty} \) given by:
\begin{equation}\label{eq:morse-bott-function}
  f(z_0,z_1,\ldots) = \sum_{n = 0}^{\infty} n\left|z_n\right|^{2}.
\end{equation}
Well chosen \( G \)-equivariant perturbations turn
this into a Morse function on \( BG \) (see
\cite[\S 4]{shelukhin-zhao-JSG-2021} for
discussion of this construction, indeed we are
happy to work with the same model). Moreover, this
may be done compatibly with the action of
\( \tau \).

Ultimately we are concerned with the
pseudogradient as in \S\ref{sec:moduli-spaces}. We
take the pseudogradient $B$ on
$\mathbb{C}P^{\infty}$ whose time-$s$ flow is
given by:
\begin{equation*} [z_{0}:z_{1}:z_{2}:\dots]\mapsto
  [z_{0}:e^{s}z_{1}:e^{2s}z_{2}:\dots].
\end{equation*}
Fixing a connection on the line bundle
$S^{\infty}\to \mathbb{C}P^{\infty}$, we obtain a
unique lift to a vector field $B$ on $S^{\infty}$
taking values in the horizontal distribution of the
connection; this vector field is Morse-Bott with
Lyapunov function given by
\eqref{eq:morse-bott-function}. One ``Morsifies''
$B$ by adding a vector field $F$ tangent to the
fibers of the map
$S^{\infty}\to \mathbb{C}P^{\infty}$. One
considers $V=B+F$ as the $p>2$ analogue of the
vector field $V$ considered in
\S\ref{sec:moduli-spaces}. For convenience, one
can assume that the connection is flat in a
neighborhood of the poles of
$\mathbb{C}P^{\infty}$.

One picks $F$ and the connection so that:
\begin{itemize}
\item the restriction of $F$ to the fiber above a critical point of $B$ is as in Figure \ref{fig:mod-p-equivariant};
\item $F$ and the connection are $\Z/p\Z$ equivariant;
\item $F$ and the connection is invariant under the shift map:
  $$\tau:(z_{0},z_{1},z_{2},\dots)\mapsto
  (0,z_{0},z_{1},\dots).$$
\item the \emph{total vector field} $V=B+F$ is
  Morse-Smale;
\end{itemize}
We leave the construction of such $F$ and such
connection to the reader, and refer to
\cite{shelukhin-zhao-JSG-2021} for further
discussion.

\begin{figure}[h]
  \centering
  \begin{tikzpicture}
    \foreach \x in {0,120,240} {
      \draw[postaction={decorate,decoration={markings,mark=at
          position 0.5 with {\arrow{>}}}}, thick]
      (\x:1) arc ({\x}:{\x+60}:1);
      \draw[postaction={decorate,decoration={markings,mark=at
          position 0.5 with {\arrow{>}}}}, thick]
      (\x:1) arc ({\x}:{\x-60}:1);
      \node[circle,fill,inner sep=1pt] at (\x:1)
      {}; } \foreach \x in {0,120,240} {
      \node[circle,draw,fill=white,inner sep=1pt]
      at (\x+60:1) {}; }
    \path (0:1)node[right]{$v_{0,e}$}
    (120:1)node[above left]{$v_{0,g}$}
    (240:1)node[below left]{$v_{0,g^{2}}$};
    \path (60:1)node[above right]{$v_{1,e}$}
    (180:1)node[left]{$v_{1,g}$} (300:1)node[below
    right]{$v_{1,g^{2}}$};
  \end{tikzpicture}
  \caption{Restriction of $F$ to the critical
    circle has $2p$ critical points (shown for
    $p=3$). Here $v_{1,e}$ is required to be a
    positive pole.}
  \label{fig:mod-p-equivariant}
\end{figure}

This \( V \) then has exactly \( p \) zeros of
index \( n \) for each \( n \geq 0, \) which are
related to each other by the \( G \) action, and
\( \tau \) acts by shifting the index by
2. Choosing distinguished critical points
\( v_{0,e}, v_{1,e}, \) of index 0 and 1
respectively, we have a natural labeling of the
rest \( v_{n,g} \) by the data of their index
\( n, \) and the element \( g \in G \) which sends
\( v_{n,g} \) to the appropriate shift of
\( v_{0,e} \) or \( v_{1,e}. \) Coherent
orientations of the finite dimensional spaces of
flow lines completes the Morse setup; we describe
this in greater detail in the sequel. To keep
things shorter we fix such a model on $BG$ once
and for all as input to the rest of the
construction.\footnote{Unlike the case of
  $B\Z/2\Z=\mathbb{R}P^{\infty}$, the Morse-Smale
  gradient on $BG$ is not canonically defined, as
  we have written it. This is a minor loss of
  canonicity in our construction, and it seems the
  optimal resolution would be to encode the
  remaining ambiguity as auxiliary data to be
  included in the category $\mathscr{D}(W)$. For
  reasons of space, we leave the details of this
  optimization to the reader.}

Having fixed a Morse model on $BG$, the category
of auxiliary data $\mathscr{D}(W)$, and moduli
spaces are essentially as in \S\ref{sec:case-p-2},
with the following modifications:
\begin{itemize}
\item In addition to conditions \ref{item:B-equiv}
  through \ref{item:B-IR} above, auxiliary data
  must be identical near \( v_{0,e},v_{1,e}. \)
\item Associated to a \( k \)-simplex of Borel
  data \( \Phi \) we have moduli spaces:
  \[ \mathscr{M}(\Phi,k,g,i) \] where
  \( i \in \{0,1\} \) identifies the input point
  (otherwise as in \S\ref{sec:moduli-spaces}).
\end{itemize}

Fixing a Borel datum \( \Phi=(\psi_{\eta,t},J), \)
the equivariant chain complex now has underlying
\( \mathbf{k}[[x]] \) module:
\[ \mathrm{CF}_{eq}(\Phi) := \mathrm{CF}(\psi_{t})
  \otimes_{\mathbf{k}} \left(\mathbf{k}[[x]]
    \oplus \mathbf{k}[[x]][1]\right).  \] Here
\( \mathrm{CF}(\psi_{t})\) is the direct sum of
the \( \mathbf{k} \)-orientation lines
$\mathfrak{o}(\gamma)$ of contractible 1-periodic
orbits $\gamma$ of
\( \psi_{t}=\psi_{(1,0,0,\dots),t} \). This module
carries a differential, given by:
\begin{align}\label{eqn:p3-differential}
  d_{eq}(\gamma\otimes 1) &= \sum_{k \geq 0} x^{k}d^{2k}_{0}(\gamma) \otimes 1 + x^{k}d^{2k+1}_{0}(\gamma)\otimes \theta\\
  d_{eq}(\gamma \otimes \theta) &= \sum_{k \geq 0} x^{k}d^{2k+1}_{1}(\gamma)\otimes \theta + x^{2k+2}d^{2k+2}_{1}(\gamma)\otimes 1,
\end{align}
where $1,\theta$ denote the generators of
$\mathbf{k}[[x]] \oplus \mathbf{k}[[x]][1]$, and:
\[ d^{k}_{i}(\gamma) = \sum_{g \in G}\sum_{u\in
    \mathscr{M}(\Phi,k,g,i)} \mathfrak{o}(u). \]
The inner sum is over all
$u\in \mathscr{M}(\Phi,k,g,i)$ which are rigid up
to translation and which are asymptotic to
$\gamma$ at their input end. The symbol
$\mathfrak{o}(u)$ can be considered in two ways:
\begin{enumerate}
\item $\mathfrak{o}(u)=\pm \gamma'$, where
  $\gamma'$ is the output asymptotic of $u$, and
  the sign $\pm$ is determined by comparing the
  orientation lines of $\mathscr{M}(\psi,k,g,i)$
  with a system of coherent orientations, as in
  \cite{floer-hofer-math-z-1993};
\item
  $\mathfrak{o}(u)\in
  \mathrm{Hom}(\mathfrak{o}(\gamma),\mathfrak{o}(\gamma'))$
  is a canonical isomorphism constructed by the
  linear gluing theory of
  \cite{floer-hofer-math-z-1993} (see, e.g.,
  \cite{abouzaid-EMS-2015,pardon-GT-2016}).
\end{enumerate}

That \( d_{eq} \) squares to 0 follows from an
analysis of the moduli spaces identical to that of
\ref{sec:defin-equiv-chain} above, replacing the
\( \Z/2 \) counting of that section with the usual
orientation line formalism (see
\ref{sec:asymp-ops-orient-lines}).

Chain homotopies from higher simplices, and the
definition of the \( \infty \)-functor are also
built as above, mutatis mutandis.

In the next section, we digress and explain the
set-up of orientations; the reader who is
comfortable with the framework of
\cite[pp.\,290]{abouzaid-EMS-2015}, and/or
\cite[\S C.13.2]{pardon-GT-2016} can safely skip
to \S\ref{sec:equivariant-pss-map}.

% 2.4
\subsection{Gradings, asymptotic operators, and
  orientation lines}
\label{sec:asymp-ops-orient-lines}

The essential ingredient in the construction is
\cite[Theorem 10]{floer-hofer-math-z-1993} and
their kernel gluing lemma. We will also appeal to
\cite[\S C.13.1]{pardon-GT-2016} for orientations
on ``spaces of flow lines on $\Delta^{n}$,'' and
\cite[\S A]{shelukhin-zhao-JSG-2021} for
discussion of the orientations on the spaces of
Morse flow lines on $BG$, for $G=\Z/p\Z$.

\subsubsection{Asymptotic operators}
\label{sec:asymptotic-operators}

An \emph{asymptotic operator} is a first order
differential operator $\mathfrak{A}$, acting on
$\R^{2n}$-valued functions on $\R/\Z$, of the
form:
\begin{equation*}
  \mathfrak{A}=-J\bd_{t}-S(t)
\end{equation*}
where $t\in \R/\Z\mapsto S(t)$ is a smooth loop of
symmetric matrices and $J$ is the standard complex
structure on $\R^{2n}$. Such operators are
elliptic and self-adjoint. If $\mathfrak{A}$
induces an isomorphism
$C^{\infty}(\R/\Z,\R^{2n})\to
C^{\infty}(\R/\Z,\R^{2n})$ then we say
$\mathfrak{A}$ is \emph{non-degenerate}.

\subsubsection{Cauchy-Riemann operators with
  asymptotics}
\label{sec:cauchy-riem-oper}

Given non-degenerate asymptotic operators
$\mathfrak{A}_{-},\mathfrak{A}_{+}$, we denote by
$C(\mathfrak{A}_{-},\mathfrak{A}_{+})$ the space
of operators on the cylinder $\R\times \R/\Z$ of
the form:
\begin{equation*}
  \bd_{s}+J\bd_{t}+S(s,t).
\end{equation*}
such that $-J\bd_{t}-S(s,t)$ converges to
$\mathfrak{A}_{\pm}$ as $s\to\pm\infty$. The
convergence should be in a suitable topology whose
details are not so relevant to the present
discussion.

Then $C(\mathfrak{A}_{-},\mathfrak{A}_{+})$ is a
convex space of Fredholm operators $D$, see
\cite{salamon-notes-1997}, all of which have the
same Fredholm index
$i(\mathfrak{A}_{-},\mathfrak{A}_{+})$. Moreover
there is a continuous real-line bundle:
\begin{equation*}
  \mathrm{det}(\mathfrak{A}_{-},\mathfrak{A}_{+})\to C(\mathfrak{A}_{-},\mathfrak{A}_{+})
\end{equation*}
whose restriction to any stratum with fixed kernel
dimension agrees with:
\begin{equation*}
  \det(\ker D\oplus (\coker D)^{\vee}),
\end{equation*}
The fact that these bundles patch together across
the different strata is a fundamental property of
Fredholm operators, and is discussed in
\cite{mcduff-salamon-book-2012}.

\subsubsection{Orientation lines}
\label{sec:orientation-lines}

The real line bundle
$\det(\mathfrak{A}_{-},\mathfrak{A}_{+})$ admits
two orientations $O,\bar{O}$ (since the base space
is contractible). Let us denote by
$\mathfrak{o}(\mathfrak{A}_{-},\mathfrak{A}_{+})$
the \emph{orientation line} generated by
$O,\bar{O}$ modulo the relation that
$O+\bar{O}=0$. This object
$\mathfrak{o}(\mathfrak{A}_{-},\mathfrak{A}_{+})$
is a free $\Z$-module of rank $1$.

\subsubsection{Automatic isomorphism}
\label{sec:autom-isom}

In \cite{salamon-notes-1997} it is shown that
$\bd_{s}-\mathfrak{A}$ is an isomorphism for any
non-degenerate asymptotic operator
$\mathfrak{A}$. Thus
$\mathfrak{o}(\mathfrak{A},\mathfrak{A})$ is
\emph{canonically} identified with $\mathbf{k}$,
because the determinant line at
$D=\bd_{s}-\mathfrak{A}$ is the determinant line
of zero, which is canonically oriented.

\subsubsection{Kernel gluing}
\label{sec:kernel-gluing}

The kernel gluing theorem of
\cite{floer-hofer-math-z-1993} gives isomorphisms:
\begin{itemize}
\item
  $\mathfrak{o}(\mathfrak{A}_{0},\mathfrak{A}_{1})
  \otimes\mathfrak{o}(\mathfrak{A}_{1},\mathfrak{A}_{2})
  \to\mathfrak{o}(\mathfrak{A}_{0},\mathfrak{A}_{2})$
\end{itemize}
which are associative, and such that
``multiplication'' by the canonical positive
generator of
$\mathfrak{o}(\mathfrak{A},\mathfrak{A})$ acts by
the identity.

Moreover, the pairing:
\begin{equation*}
  \mathfrak{o}(\mathfrak{A}_{0},\mathfrak{A}_{1})
  \otimes \mathfrak{o}(\mathfrak{A}_{1},\mathfrak{A}_{0})
  \to \mathfrak{o}(\mathfrak{A}_{0},\mathfrak{A}_{0})
  \simeq \Z
\end{equation*}
canonically identifies
$\mathfrak{o}(\mathfrak{A}_{0},\mathfrak{A}_{1})\to
\mathfrak{o}(\mathfrak{A}_{1},\mathfrak{A}_{0})$,
as follows: one sends a generator of
$\mathfrak{o}(\mathfrak{A}_{0},\mathfrak{A}_{1})$
to the unique generator of
$\mathfrak{o}(\mathfrak{A}_{1},\mathfrak{A}_{0})$
which pairs with it to produce $1$. Associativity
proves the composition of the canonical
identifications
$\mathfrak{o}(\mathfrak{A}_{0},\mathfrak{A}_{1})\to
\mathfrak{o}(\mathfrak{A}_{1},\mathfrak{A}_{0})\to
\mathfrak{o}(\mathfrak{A}_{0},\mathfrak{A}_{1})$
is the identity map.

\subsubsection{Twisting identifications}
\label{sec:twist-ident}

Given $\mathfrak{A}$ and a loop
$\varphi:\R/\Z\to \mathrm{U}(n)$ consider the
conjugation:
\begin{equation*}
  \varphi_{*}\mathfrak{A}:=\varphi\mathfrak{A}\varphi^{-1}.
\end{equation*}
Multiplication of kernel and cokernel elements by
$\varphi$ yields an identification:
\begin{equation}\label{eq:twist-ident}
  \mathfrak{o}(\mathfrak{A}_{0},\mathfrak{A}_{1})
  \to \mathfrak{o}(\varphi_{*}\mathfrak{A}_{0},\varphi_{*}\mathfrak{A}_{1}).
\end{equation}
Unpacking the construction from
\cite{floer-hofer-math-z-1993} shows
\eqref{eq:twist-ident} is compatible with the
kernel gluing maps from \S\ref{sec:kernel-gluing},
in the sense that the diagram commutes:
\begin{equation*}
  \begin{tikzcd}
    \mathfrak{o}(\mathfrak{A}_{0},\mathfrak{A}_{1})\otimes
    \mathfrak{o}(\mathfrak{A}_{1},\mathfrak{A}_{2})\arrow[r]\arrow[d]
    &\mathfrak{o}(\varphi_{*}\mathfrak{A}_{0},\varphi_{*}\mathfrak{A}_{1})
    \otimes \mathfrak{o}(\varphi_{*}\mathfrak{A}_{1},\varphi_{*}\mathfrak{A}_{2})\arrow[d]\\
    \mathfrak{o}(\mathfrak{A}_{0},\mathfrak{A}_{2})\arrow[r]
    &\mathfrak{o}(\varphi_{*}\mathfrak{A}_{0},\varphi_{*}\mathfrak{A}_{2})
  \end{tikzcd}
\end{equation*}
where the vertical maps are the kernel gluing
maps, and the horizontal maps are the twisting
identifications.

The twisting identification between
$\mathfrak{o}(\mathfrak{A},\mathfrak{A})$ and
$\mathfrak{o}(\varphi_{*}\mathfrak{A},\varphi_{*}\mathfrak{A})$
preserves the canonical generators from
\S\ref{sec:autom-isom}, since the empty basis is
conjugated to the empty basis.

\subsubsection{Parallel transport}
\label{sec:parall-transp}

Suppose that
$\mathfrak{A}_{0,s},\mathfrak{A}_{1,s}$,
$s\in[0,1]$, are families of asymptotic operators
which remain non-degenerate. Then there is a
\emph{parallel transport map}:
\begin{equation}\label{eq:parallel-transport-map}
  \mathfrak{o}(\mathfrak{A}_{0,0},\mathfrak{A}_{1,0})\to   \mathfrak{o}(\mathfrak{A}_{0,1},\mathfrak{A}_{1,1}),
\end{equation}
defined by considering any continuous family
$D_{s}\in
C(\mathfrak{A}_{0,s},\mathfrak{A}_{1,s})$ and the
associated determinant line bundle over this
family.

\subsubsection{Reference asymptotic operator}
\label{sec:reference-operator}

Let $\mathfrak{R}=-J\bd_{t}-\delta$ for
$\delta\in(0,2\pi)$. This acts as a reference
operator.

It will be important for us to consider the
\emph{complex linear} orientation:
\begin{equation*}
  \mathfrak{o}(\varphi_{*}\mathfrak{R},\mathfrak{R})\to \Z,
\end{equation*}
The space of complex linear $D$ is an affine
subspace, so the resulting orientation of
$\mathrm{det}(\varphi_{*}\mathfrak{R},\mathfrak{R})$
is independent of the choice of complex linear
$D$.

Consequently, we obtain canonical isomorphisms:
\begin{equation}\label{eq:twisting-functor}
  \mathfrak{o}(\mathfrak{A},\mathfrak{R})
  \to \mathfrak{o}(\varphi_{*}\mathfrak{A},\varphi_{*}\mathfrak{R})
  \to \mathfrak{o}(\varphi_{*}\mathfrak{A},\mathfrak{R}),
\end{equation}
where in the latter step we multiply by the
generator of
$\mathfrak{o}(\varphi_{*}\mathfrak{R},\mathfrak{R})$
corresponding to the complex linear orientation.

\subsubsection{Linearization procedure}
\label{sec:line-proc}

Given a non-degenerate orbit $\gamma$ of a
Hamiltonian system in a symplectic manifold $W$:
\begin{equation*}
  \gamma(t)-X_{t}(\gamma(t))=0.
\end{equation*}
Let us consider travelling symplectic coordinates
$e_{t}:B(\epsilon)\to W$, where $B(\epsilon)$ is
the standard symplectic ball of capacity
$\epsilon$, such that $e_{t}(0)=\gamma(t)$. Then
nearby loops can be expressed as $e_{t}(\eta(t))$,
and the non-linear functional is represented by:
\begin{equation*}
  \eta'(t)+\d e_{t}(\eta(t))^{-1}\left[\partial_{t}e_{t}(\eta(t))-X_{t}(e_{t}(\eta(t))\right].
\end{equation*}
The linearization of this is equal to
$J\mathfrak{A}_{e}$ where $\mathfrak{A}_{e}$ is an
asymptotic operator.

Changing $e$ amounts to conjugating
$J\mathfrak{A}_{e}$ by a linear symplectic map
$\Psi(t)$,
\begin{equation*}
  J^{-1}\Psi(t)J\mathfrak{A}_{e'}\Psi(t)^{-1}=\mathfrak{A}_{e}.
\end{equation*}
Because $\mathrm{Sp}(2n)$ deformation retracts to
$\mathrm{U}(n)$, one obtains a canonical parallel
transport isomorphism:
\begin{equation}\label{eq:pt1}
  \mathfrak{o}(\mathfrak{R},\varphi_{*}\mathfrak{A}_{e'})\to \mathfrak{o}(\mathfrak{R},\mathfrak{A}_{e}).
\end{equation}
This can be precomposed with the morphism
constructed in \S\ref{sec:reference-operator}:
\begin{equation}\label{eq:pt2}
  \mathfrak{o}(\mathfrak{R},\mathfrak{A}_{e'})\to \mathfrak{o}(\mathfrak{R},\varphi_{*}\mathfrak{A}_{e'}).
\end{equation}

\begin{claim}
  The assignment
  $e\mapsto
  \mathfrak{o}(\mathfrak{R},\mathfrak{A}_{e})$,
  with the morphisms obtained by composing
  \eqref{eq:pt2} and \eqref{eq:pt1} defines a
  functor from the indiscrete groupoid spanned by
  choices of symplectic
  coordinates.\hfill$\square$
\end{claim}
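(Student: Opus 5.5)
The plan is to recast every morphism as a parallel transport of determinant lines over a single well-behaved parameter space, and then get functoriality from homotopy invariance of parallel transport. Fix two choices of travelling coordinates $e,e'$ about $\gamma$. The transition $\Psi(t)=\Psi_{e,e'}(t)$ is a loop in $\mathrm{Sp}(2n)$, and these loops satisfy the cocycle identity $\Psi_{e,e''}=\Psi_{e,e'}\Psi_{e',e''}$ pointwise in $t$. Polar decomposition $\Psi=\varphi P$ gives the unitary part $\varphi=U(\Psi)$ and the canonical path $\Psi_{r}=\varphi P^{r}$, $r\in[0,1]$, from $\varphi$ to $\Psi$. Conjugating $\mathfrak{A}_{e'}$ by $\Psi_{r}$ in the sense of \S\ref{sec:line-proc} produces a path of asymptotic operators from $\varphi_{*}\mathfrak{A}_{e'}$ to $\mathfrak{A}_{e}$.

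The first step is to check that every operator along this path is non-degenerate. Conjugation by a symplectic loop identifies the kernels, so non-degeneracy is preserved. Consequently \eqref{eq:pt1} is a genuine parallel transport \eqref{eq:parallel-transport-map}.

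The second step is the identity axiom. When $e=e'$ we have $\Psi\equiv 1$, so $\varphi=1$ and the twisting map \eqref{eq:twist-ident} is the identity. The complex linear generator of $\mathfrak{o}(\mathfrak{R},\mathfrak{R})$ coincides with the canonical generator of \S\ref{sec:autom-isom}, because $\partial_{s}-\mathfrak{R}$ is itself complex linear and invertible. The transport along the constant path is the identity, so the composite of \eqref{eq:pt2} and \eqref{eq:pt1} is the identity.

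The third step is to enlarge the parameter space so that both twisting and transport become transports over it. I would view the composite \eqref{eq:pt2}--\eqref{eq:pt1} as parallel transport over a space $\mathcal{X}$ of pairs $(\Theta,D)$, where $\Theta$ is a loop in $\mathrm{Sp}(2n)$ and $D$ is a Cauchy--Riemann operator with asymptotics $\mathfrak{R}$ and $\Theta_{*}\mathfrak{A}_{e'}$. On this space:
\begin{enumerate}
\item the twisting identifications \eqref{eq:twist-ident} vary continuously in the unitary loop;
\item the complex linear orientations of $\mathfrak{o}(\varphi_{*}\mathfrak{R},\mathfrak{R})$ form a continuous section over the space of unitary loops, since complex linear operators form an affine, hence connected, family.
\end{enumerate}
Together these say that \eqref{eq:twisting-functor} is itself a parallel transport over a path of unitary loops. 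The morphism $e'\to e$ then becomes the transport along a path in $\mathcal{X}$ whose image in the space of $\mathrm{Sp}$-loops starts at the identity loop and ends at $\Psi_{e,e'}$.

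The fourth step is the cocycle relation. Compatibility of twisting with kernel gluing, from the commutative square in \S\ref{sec:twist-ident}, shows that the complex orientations multiply correctly. Concretely, gluing the complex orientations of $\mathfrak{o}(\varphi\varphi'_{*}\mathfrak{R},\varphi_{*}\mathfrak{R})$ and of $\mathfrak{o}(\varphi_{*}\mathfrak{R},\mathfrak{R})$ yields the complex orientation of $\mathfrak{o}(\varphi\varphi'_{*}\mathfrak{R},\mathfrak{R})$, since glued complex-linear operators are complex linear. The composite $e''\to e'\to e$ is therefore the transport along a concatenated path ending at $\Psi_{e,e'}\Psi_{e',e''}=\Psi_{e,e''}$. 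The direct morphism $e''\to e$ is the transport along the polar path of $\Psi_{e,e''}$.

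The main obstacle is showing that these two paths are homotopic rel endpoints. The difficulty is that polar decomposition is not multiplicative: $U(\Psi\Psi')\neq U(\Psi)U(\Psi')$ in general. Here I would use that $\mathrm{Sp}(2n)\to \mathrm{U}(n)$ has contractible fibres, namely the positive symmetric symplectic matrices. It follows that the space of paths in loop-$\mathrm{Sp}$ that start at a unitary loop, end at a prescribed loop, and stay in the correct fibre class is contractible. Both paths can then be pushed into such a family, so they are homotopic rel endpoints. Homotopy invariance of parallel transport of the determinant bundle, which is continuous across strata as in \S\ref{sec:cauchy-riem-oper}, then gives equality of the two maps.
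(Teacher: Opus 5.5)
The paper gives no argument for this claim (it is only marked $\square$), so your proposal has to stand on its own. Your identity step and your ingredients are right: continuity of twisting, continuity of complex orientations, and the gluing law for complex orientations. The architecture, however, has a genuine gap.

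\textbf{Notation.} Write $L\mathrm{Sp}(2n)$ and $L\mathrm{U}(n)$ for the loop groups, and $\Theta\cdot\mathfrak{A}$ for the change-of-coordinates action of \S\ref{sec:line-proc}. Write $T_{\varphi}$ for the twist-and-glue map \eqref{eq:pt2} of a unitary loop $\varphi$, and $M_{\Theta}$ for the composite of \eqref{eq:pt2} and \eqref{eq:pt1} attached to a transition loop $\Theta$.

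\textbf{First gap: paths between Maslov components.} Your third step needs a path of unitary loops from the constant loop $1$ to the unitary part $\varphi$ of $\Psi_{e,e'}$. Only then is $T_{\varphi}$ a parallel transport and $M_{\Psi_{e,e'}}$ a transport along a path in $\mathcal{X}$ starting over $1$. Such a path exists only if $\Psi_{e,e'}$ has Maslov index zero.

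Two travelling coordinate systems along $\gamma$ can differ by a loop of any Maslov index $k$: precompose $e_{t}$ with $\mathrm{diag}(e^{2\pi ikt},1,\dots,1)$. In that case:
\begin{itemize}
\item $1$ and $\Psi_{e,e'}$ lie in different components of $L\mathrm{Sp}(2n)$;
\item the Conley--Zehnder indices of $\mathfrak{A}_{e'}$ and $\mathfrak{A}_{e}$ differ by $2k$;
\item $T_{\varphi}$ is not a parallel transport along any path of non-degenerate operators.
\end{itemize}
This is exactly the situation the complex-linear orientation of \S\ref{sec:reference-operator} is there for, so homotopy invariance cannot carry the cocycle condition across components. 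Across components it must come from the algebraic identity $T_{\varphi}\circ T_{\varphi'}=T_{\varphi\varphi'}$. That identity holds because twisting commutes with gluing and preserves complex orientations, and glued complex orientations are complex. You state this in your fourth step, but then fold it back into the path picture.

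\textbf{Second gap: the homotopy rel endpoints.} Your last step is false as stated. Paths in $L\mathrm{Sp}(2n)$ with fixed endpoints are not unique up to homotopy, since $\pi_{1}(L\mathrm{Sp}(2n))\cong\pi_{1}(\mathrm{Sp}(2n))\oplus\pi_{2}(\mathrm{Sp}(2n))\cong\Z$, generated by loops constant in $t$. Your two concatenated paths change the unitary part. So contractibility of the fibres of $\mathrm{Sp}(2n)\to\mathrm{U}(n)$ does not place them in one fibrewise family. Since the unitary segments are arbitrary choices, the two paths can differ by any element of this $\Z$.

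The two transports do agree, but for a different reason. By your items (1)--(2), $\varphi\mapsto T_{\varphi}(o)$ is a continuous nowhere-vanishing section of the pulled-back orientation system over $L\mathrm{U}(n)$. Since $L\mathrm{U}(n)$ is a deformation retract of $L\mathrm{Sp}(2n)$, that system has trivial monodromy.

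This must be argued upstairs, because on the space of asymptotic operators itself the system is not trivial on components. For example, take $n=1$ and $\mathfrak{A}$ with constant $S$ whose time-one map is hyperbolic with positive eigenvalues. The rotations $e^{i\theta}$, $\theta\in[0,\pi]$, give a closed loop $\theta\mapsto e^{i\theta}_{*}\mathfrak{A}$. Transport around it is the twist by $-1$, which acts by $(-1)^{\mathrm{index}}$, and the index is odd here.

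\textbf{A repair covering both issues.} Consider the two maps $(\Theta,\Theta')\mapsto M_{\Theta}\circ M_{\Theta'}$ and $(\Theta,\Theta')\mapsto M_{\Theta\Theta'}$. They are continuous families of isomorphisms $\mathfrak{o}(\mathfrak{R},\mathfrak{A})\to\mathfrak{o}(\mathfrak{R},\Theta\Theta'\cdot\mathfrak{A})$ over $L\mathrm{Sp}(2n)\times L\mathrm{Sp}(2n)$. This follows from continuity of the polar decomposition, of twisting, of complex orientations, and of transport. Hence they agree on a component as soon as they agree at one point. Every component contains a pair of unitary loops, and there both maps reduce to $T_{\varphi}T_{\varphi'}=T_{\varphi\varphi'}$.
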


\begin{definition}
  The \emph{orientation line}
  $\mathfrak{o}(\gamma)$ is the limit of
  $e\mapsto
  \mathfrak{o}(\mathfrak{R},\mathfrak{A}_{e})$.
\end{definition}

A similar linearization procedure can be done at
any solution of the Floer-type PDEs considered in
this text (without varying parameters), and one
obtains canonical identifications:
\begin{equation*}
  \mathfrak{o}(D_{u,e})\to \mathfrak{o}(\mathfrak{A}_{-,e},\mathfrak{A}_{+,e})\to \mathfrak{o}(\gamma_{-})\otimes \mathfrak{o}(\gamma_{+}),
\end{equation*}
where $e$ is a travelling family of symplectic
coordinates defined along the domain of $u$ which
restrict to families of coordinates at the
asymptotic ends $\gamma_{-},\gamma_{+}$ (in
general, one tensors the orientation lines of all
asymptotics).

\begin{claim}
  The identifications
  $\mathfrak{o}(D_{u,e})\to
  \mathfrak{o}(\gamma_{-})\otimes
  \mathfrak{o}(\gamma_{+})$ are compatible with
  the conjugation isomorphisms arising from
  changing $e$.\hfill$\square$
\end{claim}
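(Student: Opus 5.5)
The plan is to reduce the claim to the compatibilities already recorded in \S\ref{sec:kernel-gluing} through \S\ref{sec:reference-operator}. First set up the comparison. Two travelling coordinate systems $e,e'$ along the domain $\R\times\R/\Z$ of $u$ differ by a map $\Psi:\R\times\R/\Z\to \mathrm{Sp}(2n)$. It is asymptotic to loops $\Psi_{\pm}:\R/\Z\to\mathrm{Sp}(2n)$ at the ends, and these are exactly the changes of coordinates $e_{\pm}\to e'_{\pm}$ at $\gamma_{\pm}$. Since $\mathrm{Sp}(2n)$ deformation retracts onto $\mathrm{U}(n)$, and the parallel transport maps \eqref{eq:parallel-transport-map} are independent of the path through non-degenerate families (a contractible choice), I would first homotope $\Psi$, relative to its asymptotic behaviour up to this retraction, to a map valued in $\mathrm{U}(n)$. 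After this step the conjugation isomorphisms $\mathfrak{o}(D_{u,e'})\to\mathfrak{o}(D_{u,e})$ and $\mathfrak{o}(\gamma_{\pm})$-identifications are all given by twisting identifications \eqref{eq:twist-ident} composed with parallel transport. Moreover, $\Psi$ on the cylinder supplies a free homotopy between the loops $\Psi_{-}$ and $\Psi_{+}$.

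Next I would factor the identification $\mathfrak{o}(D_{u,e})\to\mathfrak{o}(\gamma_{-})\otimes\mathfrak{o}(\gamma_{+})$ into elementary steps. The first step is $\mathfrak{o}(D_{u,e})\to\mathfrak{o}(\mathfrak{A}_{-,e},\mathfrak{A}_{+,e})$. The second is the kernel gluing
\[
\mathfrak{o}(\mathfrak{A}_{-,e},\mathfrak{A}_{+,e})\simeq\mathfrak{o}(\mathfrak{A}_{-,e},\mathfrak{R})\otimes\mathfrak{o}(\mathfrak{R},\mathfrak{A}_{+,e}).
\]
The third is the canonical duality $\mathfrak{o}(\mathfrak{A}_{-,e},\mathfrak{R})\simeq\mathfrak{o}(\mathfrak{R},\mathfrak{A}_{-,e})$ from \S\ref{sec:kernel-gluing}. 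Then I would check commutativity square by square. The twisting identification commutes with kernel gluing by the diagram in \S\ref{sec:twist-ident}, and it preserves the canonical generators of $\mathfrak{o}(\mathfrak{A},\mathfrak{A})$, so it also commutes with the duality. The only new input is the correction from $\Psi_{\pm*}\mathfrak{R}$ back to $\mathfrak{R}$ via the complex linear orientation. For this I would use that kernel gluing of complex linear operators is complex linear, so complex orientations are multiplicative under gluing. This gives a factorization of the generator of $\mathfrak{o}(\Psi_{-*}\mathfrak{R},\mathfrak{R})$ through $\mathfrak{o}(\Psi_{-*}\mathfrak{R},\Psi_{+*}\mathfrak{R})$, where the latter is oriented by a complex linear operator built from the homotopy $\Psi$ on the cylinder.

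With these in place, the two ways around the square differ by the following product: the complex-linear generators attached to $\Psi_{-}$ and to $\Psi_{+}$, glued against the complex-linear generator of the cylinder piece. This product is $+1$ by the multiplicativity and the automatic-isomorphism normalisation of \S\ref{sec:autom-isom}. I expect the main obstacle to be the sign bookkeeping in this last step, namely the reordering of tensor factors and the duality convention on the negative end, since these could introduce Koszul signs depending on $\mathrm{index}(D_{u})$. I would first try to choose $\delta$ and the ordering convention so that all complex-linear pieces have even index; then the reorderings contribute no sign, and only the identity $O_{\C}\cdot O_{\C}=O_{\C}$ needs checking.
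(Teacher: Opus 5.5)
Your argument is correct and is the routine unpacking the paper intends: the paper gives no proof, closing the claim with $\square$ after collecting exactly the ingredients you use (kernel gluing, its compatibility with twisting, and the complex-linear normalization of \S\ref{sec:reference-operator}), and your device of isolating the $s$-dependence of the frame change in a complex-linear middle piece is how those ingredients combine. Your sign worry resolves itself without any choice of $\delta$. Complex-linear operators have complex kernels and cokernels, so their index is automatically even. Moreover, once $\mathfrak{o}(\mathfrak{R},\mathfrak{R})$ is inserted in the middle, the comparison uses only associativity of gluing, including the duality step at the negative end, so no tensor factors are ever reordered.
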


\subsubsection{Note on parametric moduli spaces}
\label{sec:note-param-moduli}

Recall that in the construction of the equivariant
cohomology infinity-functor, we count solutions
$(x,\pi,u)$ of parametric moduli spaces where:
\begin{itemize}
\item $x\in [0,\infty)^{n-1}$,
\item $\pi$ is a flow line on $S^{\infty}$ for a
  $G$- and $\tau$-invariant pseudogradient,
\item $u$ solves Floer's continuation cylinder
  equation for data which depends on $\ell$ and
  $\pi$;
\end{itemize}
see \eqref{eq:mega-moduli} from
\S\ref{sec:moduli-spaces} for the precise details.

Let us denote by $D_{x,\pi,u}$ the
linearization associated to the parametric
problem. Parallel transport from $D_{x,\pi,u}$
to $D_{u}$ in the space of Fredholm operators
produces an identification :
\begin{equation*}
  \mathfrak{o}(D_{x,\pi,u})\to \mathfrak{o}(T_{x})\otimes \mathfrak{o}(T_{\pi})\otimes \mathfrak{o}(D_{u})=\mathfrak{o}(T_{x})\otimes \mathfrak{o}(T_{\pi})\otimes \mathfrak{o}(\gamma_{-})\otimes \mathfrak{o}(\gamma_{+}),
\end{equation*}
where $T_{x},T_{\pi}$ are the
(finite-dimensional) tangent spaces to:
\begin{itemize}
\item the cube $[0,\infty)^{n-1}$ at $x$, and,
\item the space of flow lines on $BG$ with the
  same asymptotics as $\pi$.
\end{itemize}
We comment here that \cite[\S
C.13.1]{pardon-GT-2016} considers orientations on
$T_{x}$, in a slightly different context, while
\cite[\S A]{shelukhin-zhao-JSG-2021} considers
orientations on $T_{\pi}$.

\subsubsection{The definition of the
  infinity-functor}
\label{sec:defin-infin-funct}

Fix the Morse model on $BG$ as described in
\S\ref{sec:case-p-3}. Given a zero-simplex
$(\varphi_{\eta,t},J)$ in the $\infty$-category of
regular Borel data $\mathscr{D}(W)$, we define:
\begin{equation*}
  \mathrm{CF}_{\mathrm{eq}}(\varphi_{\eta,t},J)=\bigoplus \mathfrak{o}(\gamma)\otimes\mathfrak{o}(y)\otimes \mathbf{k}[[x]],
\end{equation*}
where the direct sum is over pairs $\gamma, y$
where:
\begin{itemize}
\item $y$ is a critical point in the fiber of $BG$
  over the minimum on $\mathbb{C}P^{\infty}$;
\item $\gamma$ is an orbit of the system
  $H_{y,t}=H_{t}$ over the distinguished lift of
  $y$; see Figure \ref{fig:mod-p-equivariant} for
  details on distinguished lifts $v_{0,e}$ and
  $v_{1,e}$.
\end{itemize}
Here $\mathfrak{o}(y)$ is the orientation line for
the unstable manifold of $y$. Standard Morse
theory and the self-similarity transformations
give isomorphisms:
\begin{equation*}
  \mathfrak{o}(T_{\pi})\to \mathfrak{o}(y_{-})\otimes \mathfrak{o}(x^{k}y_{+})\simeq \mathfrak{o}(y_{-})\otimes \mathfrak{o}(y_{+})\otimes \mathfrak{o}(x^{k}),
\end{equation*}
where we abuse notation and let $x^{k}$ denote the
critical point of index $2k$ on the base space
$\mathbb{C}P^{\infty}$, and let $x^{k}y$ denote
the ``shift'' of $y$ by the self-similarity map
$k$ times.

The count of rigid-up-to-translation trajectories
can be considered as:
\begin{equation*}
  d_{eq}:\mathrm{CF}_{\mathrm{eq}}(\varphi_{\eta,t},J,\mathbf{k})\to   \mathrm{CF}_{\mathrm{eq}}(\varphi_{\eta,t},J,\mathbf{k})
\end{equation*}
provided we pick orientations for
$\mathfrak{o}(x^{k})$ (and provided we follow the
overall ``twisting asymptotics by the group
action'' scheme of
\cite{seidel-smith-GAFA-2010,seidel-eq-pop,shelukhin-zhao-JSG-2021},
similarly to in \S\ref{sec:case-p-2}).

\begin{claim}
  If one picks the complex linear orientations of
  $\mathfrak{o}(x^{k})$, then it holds that
  $d_{eq}^{2}=0$.
\end{claim}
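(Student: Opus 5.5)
The plan is to reduce $d_{eq}^{2}=0$ to a sign check on the boundary of one-dimensional parametric moduli spaces, exactly as in the $p=2$ argument of \S\ref{sec:defin-equiv-chain}, and then show that the complex linear orientations of $\mathfrak{o}(x^{k})$ make all boundary contributions cancel in pairs. I would fix input and output generators $\gamma_{+}\otimes y_{+}$ and $\gamma_{-}\otimes y_{-}$, and a power $x^{k}$. The coefficient of the composite $d_{eq}^{2}$ is then a signed count of broken configurations. These configurations are the boundary points of the compactified one-dimensional space $\overline{\mathscr{M}}_{2}(\Phi,k,g,i)/\R$. The ends come in two types: (i) the Morse flow line $[\pi]$ converges and a Floer cylinder of index one splits off; (ii) $[\pi]$ breaks at an intermediate critical point of $V$ on $S^{\infty}$, and by the index formula \eqref{eq:index-formula} the Floer component breaks simultaneously. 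Compactness and gluing are as in the $p=2$ case, so the only new content is the orientations.

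Next I would orient the one-dimensional moduli space using the identification of \S\ref{sec:note-param-moduli}:
\begin{equation*}
\mathfrak{o}(D_{\pi,u})\simeq \mathfrak{o}(T_{\pi})\otimes\mathfrak{o}(\gamma_{-})\otimes\mathfrak{o}(\gamma_{+}).
\end{equation*}
I would combine this with the Morse isomorphism $\mathfrak{o}(T_{\pi})\simeq\mathfrak{o}(y_{-})\otimes\mathfrak{o}(y_{+})\otimes\mathfrak{o}(x^{k})$. Since the boundary of an oriented compact one-manifold has signed count zero, it suffices to show that at each end the boundary orientation equals (up to a universal sign) the product of the orientations used to define the two factors of $d_{eq}\circ d_{eq}$. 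For ends of type (i) this is the standard Floer--Hofer kernel gluing compatibility of \S\ref{sec:kernel-gluing}, together with associativity, and the factor $\mathfrak{o}(x^{k})$ is simply carried along. For ends of type (ii) I need two facts. First, Morse gluing must satisfy $\mathfrak{o}(x^{l_{1}})\otimes\mathfrak{o}(x^{l_{2}})\to\mathfrak{o}(x^{l_{1}+l_{2}})$ under the self-similarity $\tau$. Second, the twisting of asymptotics by $g\in G$ must be compatible with gluing; this compatibility is the commutative square of \S\ref{sec:twist-ident}, and the twisting preserves canonical generators.

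The main obstacle is this type (ii) compatibility. The identification of the flow-line space from $v_{l_{1}}$ to $v_{l_{2}}$ with $\mathscr{P}(v_{l_{2}-l_{1}})$ goes through the shift $\tau$ and the $G$-action, and I must check that it respects the chosen orientations of $\mathfrak{o}(x^{k})$. The point is that $\tau$ on $\C^{\infty}$ and the $G\subset U(1)$ action are complex linear. The unstable manifold of $x^{k}$ in $\mathbb{C}P^{\infty}$ is a complex cell, and its complex orientation is preserved by $\tau$ and by $g$. Moreover, complex orientations are additive under gluing with no sign, because complex dimensions are even and so no Koszul sign arises. I would verify this first on the base $\mathbb{C}P^{\infty}$, where the flow is explicit: the unstable manifolds are affine complex charts, and gluing is a limit of complex linear maps. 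I would then lift to the fiber direction $F$, using the orientation conventions of \cite[\S A]{shelukhin-zhao-JSG-2021} for the one-dimensional circle pieces. With any other choice, such as a sign $\epsilon_{k}$ on $\mathfrak{o}(x^{k})$ with $\epsilon_{l_{1}}\epsilon_{l_{2}}\neq\epsilon_{l_{1}+l_{2}}$, the type (ii) ends would fail to cancel; this is exactly why the claim specifies the complex linear choice.

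Finally, I would collect the terms. Type (i) ends give $d^{0}d^{k}+d^{k}d^{0}$ and type (ii) ends give $\sum d^{l_{1}}d^{l_{2}}$, so the signed count shows that the $x^{k}$ coefficient of $d_{eq}^{2}$ vanishes for every $k$.
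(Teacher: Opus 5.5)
Your proposal is correct and follows essentially the same route as the paper, which gives no argument of its own and defers to the orientation analysis in \cite[\S A]{shelukhin-zhao-JSG-2021}. Like that source, you use the boundary of the one-dimensional moduli spaces, with compactness and gluing as in the $p=2$ case, and rely on the complex-linear orientations of $\mathfrak{o}(x^{k})$ to make the self-similarity identifications multiplicative, so that the ends where the flow line breaks cancel against the ends where a Floer cylinder breaks.
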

\begin{proof}
  This is a simplified version of what is shown in
  \cite[\S A]{shelukhin-zhao-JSG-2021}.
\end{proof}

This complex produces the first half of an
infinity-functor. The second half concerns the
morphisms associated to $n$-simplices $\Phi$ for
$n\ge 1$. Counting the rigid elements
$(x,\pi,u)$ produces a map:
\begin{equation*}
  \mathfrak{c}_{\Phi}:\mathrm{CF}_{\mathrm{eq}}(\Phi|_{0})\to   \mathrm{CF}_{\mathrm{eq}}(\Phi|_{n}),
\end{equation*}
provided one also picks orientations of the
$n-1$ cube $[0,\infty)^{n-1}$. Then:
\begin{claim}\label{claim:infty-holds}
  If one orients the $j$-dimensional cubes in the
  standard way, so that $\bd_{1},\dots,\bd_{j}$ is
  an oriented basis, for $j<n$, it holds that:
  \begin{equation*}
    \sum_{j=1}^{n-1}(-1)^{j}(\mathfrak{c}_{\Phi|[j\dots n]}\circ \mathfrak{c}_{\Phi|[0\dots j]}-\mathfrak{c}_{\Phi|[0\dots\hat{j} \dots n]})=\mathfrak{c}_{\Phi}\circ d_{\Phi|0}+(-1)^{n}d_{\Phi|n}\circ \mathfrak{c}_{\Phi}.
  \end{equation*}
\end{claim}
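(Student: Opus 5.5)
The plan is to repeat the characteristic-two argument for \eqref{eq:dg-nerve-repeat}, now keeping track of signs. Fix an $n$-simplex $\Phi$ and a one-dimensional component of the parametric moduli space $\mathscr{M}_{1}(\Phi)$ of triples $(x,\pi,u)$. By \S\ref{sec:note-param-moduli}, its orientation line is identified with
\[
\mathfrak{o}(T_{x})\otimes\mathfrak{o}(T_{\pi})\otimes\mathfrak{o}(\gamma_{-})\otimes\mathfrak{o}(\gamma_{+}).
\]
After the standard cube orientation is fixed on $T_{x}$, and the complex-linear orientations on $\mathfrak{o}(x^{k})$, each component becomes an oriented $1$-manifold. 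Its compactification consists of the ends (i)--(iii) listed in the $p=2$ proof. The signed count of boundary points of a compact oriented $1$-manifold is zero, so it remains to compute the boundary sign at each type of end and match it with the corresponding term of the claimed identity.

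First I treat the ends where some $x_{j}\to 0$. These lie over the face $\{x_{j}=0\}$ of $[0,\infty)^{n-1}$. For the standard orientation $\bd_{1},\dots,\bd_{n-1}$, the outward normal at this face is $-\bd_{j}$, and the induced boundary orientation differs from the standard orientation of $[0,\infty)^{n-2}$ by $(-1)^{j}$ (moving $-\bd_{j}$ to the front costs $(-1)^{j-1}$, and the minus sign one more). This yields $-(-1)^{j}\mathfrak{c}_{\Phi|[0\dots\hat{j}\dots n]}$, moved to the left side.

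Next come the ends where $x_{j}\to\infty$, which lie over the face at infinity with normal $+\bd_{j}$. By the breaking axiom \ref{item:break}, the cube factors as $[0,\infty)^{j-1}\times[0,\infty)^{n-j-1}$. The sign has three sources:
\begin{itemize}
\item moving $\bd_{j}$ to the front, which contributes $(-1)^{j-1}$;
\item commuting $\mathfrak{o}(T_{x_{+}})$, of degree $n-j-1$, past the degree-$(1-j)$ data of $\mathfrak{c}_{\Phi|[0\dots j]}$, which is a Koszul sign;
\item the kernel gluing isomorphisms of \S\ref{sec:kernel-gluing} together with the Morse gluing on $BG$.
\end{itemize}
Gluing is associative and compatible with twisting (\S\ref{sec:twist-ident}), and the complex-linear orientations have even degree. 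Consequently the Floer and $BG$ factors contribute no extra sign beyond the Koszul rule. Combining these should give exactly $(-1)^{j}\mathfrak{c}_{\Phi|[j\dots n]}\circ\mathfrak{c}_{\Phi|[0\dots j]}$.

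Finally, for the interior ends, breaking a Floer or equivariant differential cylinder at the input gives $\mathfrak{c}_{\Phi}\circ d_{\Phi|0}$, while breaking at the output requires commuting the degree-one differential past $T_{x}$ and the degree of $\mathfrak{c}_{\Phi}$. This yields $(-1)^{n}d_{\Phi|n}\circ\mathfrak{c}_{\Phi}$, following the convention of \cite[\S C.13.1]{pardon-GT-2016}. The $BG$-breakings into $(\pi_{-},\pi_{+})$ with positive indices are absorbed into $d_{eq}$, with signs as in \cite[\S A]{shelukhin-zhao-JSG-2021}, exactly as in the earlier claim that $d_{eq}^{2}=0$.

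I expect the main obstacle to be the breaking end. There one must check that the Koszul sign from reordering $T_{x_{-}}$, $T_{x_{+}}$, $T_{\pi_{\pm}}$ and the Floer orientation lines combines with $(-1)^{j-1}$ to give $(-1)^{j}$ uniformly in $n$ and $j$. My approach would be to reduce this to Pardon's cube-flow computation by treating the Floer and $BG$ data as even-degree "coefficients."
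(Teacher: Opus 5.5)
Your plan matches the paper's. The paper proves this claim in one line, by combining the end classification already done for $p=2$ with the boundary and orientation results of \cite[\S C]{pardon-GT-2016}, and your proposal ends with the same deferral to Pardon.

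One correction concerns the breaking end, which you single out as the crux.

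\begin{itemize}
\item With $\mathfrak{o}(T_x)$ placed first, your face computation gives boundary sign exactly $(-1)^{j}$ at $x_j\to 0$.
\item The target identity requires the breaking and face terms at the same $j$ to have opposite signs. So the end $x_j\to\infty$ must carry boundary sign exactly $(-1)^{j-1}$.
\item That is already the sign contributed by the outward normal $+\partial_j$ alone, so the remaining reorderings must produce no net sign.
\end{itemize}

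They do not produce one. Write $o_A=[x_A]\otimes r_A$ and $o_B=[x_B]\otimes r_B$ for the canonical orientations of the two rigid pieces, coming from $\Phi|[0\dots j]$ and $\Phi|[j\dots n]$. Each has total degree $0$, being the determinant line of an isomorphism. Hence $[x_A]\otimes[x_B]\otimes r_B\otimes r_A=o_B\otimes o_A$ with no sign. The remaining reordering of the $\mathfrak{o}(T_\pi)$ and Floer factors inside $r_B\otimes r_A$ is exactly the Koszul sign already built into composing $\mathfrak{c}_{\Phi|[j\dots n]}\circ\mathfrak{c}_{\Phi|[0\dots j]}$. This is the same bookkeeping that gives $d_{\mathrm{eq}}^2=0$.

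Consequently the factor $(-1)^{(n-j-1)(j-1)}$ you list is not a net contribution. Combined only with $(-1)^{j-1}$, as you propose, it would give $(-1)^{(j-1)(n-j)}$, which already fails at $n=4$, $j=2$.

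Likewise, treating the Floer and $BG$ data as even-degree coefficients is not the right reduction. The lines $\mathfrak{o}(\gamma)$ and the $\theta$-summand $\mathfrak{o}(v_1)$ can be odd. What makes them harmless is the degree-zero observation above, not evenness.
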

\begin{proof}
  This follows from the boundary
  classification and orientation properties
  established by Pardon in \cite[\S
  C]{pardon-GT-2016}.
\end{proof}

This completes our description of the
infinity-functor in the case $p\ge 3$.

% 3

\section{PSS isomorphism and localization in Morse
  theory}
\label{sec:equivariant-pss-map}

In this section we prove Theorem
\ref{theorem:pss}, and part
\ref{theorem:main-infinity-2} of Theorem
\ref{theorem:main-infinity}.

Let \( W \) be an aspherical \( G \) filling of a
closed contact manifold $Y$, for some prime cyclic group
\( G \) of order \( p \). Throughout we use
the field $\mathbf{k}=\Z/p\Z$, which we generally
suppress from the notation.

The usual PSS map in symplectic cohomology, as
constructed in
\cite{frauenfelder-schlenk-israeljm-2007}, yields
an isomorphism:
\begin{equation*}
  \mathrm{HM}^{*}(W) \cong \mathrm{HF}(R_{\epsilon t})
\end{equation*}
where the left hand side is the Morse cohomology
of \( W \), and the right hand side is the Floer
cohomology of any linear-at-infinity Hamiltonian
whose slope $\epsilon$ with respect to a fixed
Reeb flow $R$ on \( \bd W \) is sufficiently small
and positive. We'll implement this in our
equivariant setup, and lift the construction to
the chain level statement stated in Theorem
\ref{theorem:pss}.

The strategy to prove Theorem \ref{theorem:pss} is
four-fold:
\begin{enumerate}
\item define the Morse equivariant cohomology
  complex $\mathrm{CM}_{\mathrm{eq}}(P_{\eta})$
  for suitable ``Borel families'' of
  pseudogradients $P_{\eta}$ on $W$; see
  \S\ref{sec:equivariant-morse-complex};
\item Prove a localization result in
  \S\ref{sec:loc-in-eq-morse}, where fixed
  points of the $G$-action in $W$ are shown
  to imply the equivariant cohomology
  $\mathrm{HM}_{\mathrm{eq}}(P_{\eta})$ is
  not $x$-torsion;
\item construct the $\infty$-categories
  $\mathscr{P}(Y),\mathscr{N}(Y)$ and the $\infty$-functors
  appearing in the statements of Theorems
  \ref{theorem:pss} and \ref{theorem:negative-pss}; see
  \S\ref{sec:equiv-pss-constr};
\item prove the resulting $\mathrm{PSS}$
  chain maps, are quasi-isomorphisms,
  provided the Floer data is a small Reeb
  flow; this is achieved by constructing
  chain-homotopy inverses to the
  $\mathrm{PSS}$ maps, as in
  \cite{frauenfelder-schlenk-israeljm-2007};
  see \S\ref{sec:low-slope-pss}.
\end{enumerate}

\subsection{The equivariant Morse complex}
\label{sec:equivariant-morse-complex}

In this section, we are concerned with defining
the $G$-equivariant Morse complex
$\mathrm{CM}_{\mathrm{eq}}(\pm P_{\eta})$ and
developing its properties. As above, $G=\Z/p\Z$,
and we work over a base field $\mathbf{k}=\Z/p\Z$.

\subsubsection{Morse--Borel data}
\label{sec:morse-borel-data}

The auxiliary data is analogous
\S\ref{sec:auxil-categ-floer}.
\begin{definition}
  A \emph{Morse--Borel datum} is a family of
  vector fields \( P_{\eta} \) on $W$
  parameterized by $\eta\in EG=S_{\infty}$ such
  that:
  \begin{enumerate}[label=(M\arabic*)]
  \item\label{item:M-equivariant}
    $g_{*}P_{\eta}=P_{g\eta}$, for all $g\in G$,
  \item\label{item:M-liouville} $P_{\eta}$
    agrees with $aZ$, where $Z$ is the
    Liouville vector field and $a\in \R$, outside a compact
    set,\footnote{We use the Liouville flow
      here only because it is a canonical
      outwards pointing vector field; any
      other outwards pointing vector field
      would suffice.}
  \item\label{item:M-self-similarity}
    $P_{\tau(\eta)}=P_{\eta}$, where $\tau$ is the
    self-similarity map,
  \item\label{item:M-near-poles} $P_{\eta}$ is
    independent of $\eta$ on neighborhoods of the
    critical points of the model $G$-invariant
    Morse pseudogradient $V$ of $S^{\infty}=EG$.
  \item\label{item:M-morse} $P_{\eta}$ is a Morse
    pseudogradient, whenever $\eta\in EG$ is a
    zero of $V$.
  \end{enumerate}
\end{definition}
Here the model $G$-invariant Morse pseudogradient
on $EG$ is as described in \S\ref{sec:case-p-2} in
the case $p=2$, or in \S\ref{sec:case-p-3} in the
case $p\ge 3$.

\begin{remark}
  Note there are two types of Morse--Borel
  data, those which point outwards
  ($P_{\eta}=aZ$ for $a>0$ in the end) versus those
  which point inwards ($P_{\eta}=aZ$ with $a<0$ in the
  end). We say the former are of \emph{type
    $+$}, while the latter are of \emph{type
    $-$}.
\end{remark}

\subsubsection{Spaces of flow lines}
\label{sec:spaces-flow-lines}

Associated to such Morse--Borel data
\( P_{\eta} \) one can form the moduli spaces
$\mathscr{M}(P_{\eta})$ of flowlines
\( (q,\pi): \R \rightarrow W \times S^{\infty} \)
satisfying:
\begin{equation*}
  \left\{
    \begin{aligned}
      &q'(s) = -P_{\pi(s)}(q(s)),&&\lim_{s \rightarrow \pm \infty} q(s) \text{ exists,}\\
      &\pi'(s) = -V(\pi(s)).      
    \end{aligned}
  \right.
\end{equation*}
Any such flow line has well-defined asymptotics
$(q_{\pm},\eta_{\pm})$.

\begin{definition}
  If the moduli spaces $\mathscr{M}(P_{\eta})$ are smooth
  manifolds of the expected dimension given by:
  \begin{equation*}
    \mathrm{index}(q_{-})-\mathrm{index}(q_{+})+\mathrm{index}(\eta_{-})-\mathrm{index}(\eta_{+}),
  \end{equation*}
  then we say that $P_{\eta}$ is \emph{regular}
  Morse--Borel data.
\end{definition}

The existence and genericity of regular data
follow from the usual Morse-theoretic
transversality arguments.

\subsubsection{Definition of the Borel-equivariant
  Morse complex}
\label{sec:defin-borel-equiv}

The equivariant Morse complex associated to
Morse--Borel data $P_{\eta}$ is defined to be the
direct sum:
\begin{itemize}
\item $\mathrm{CM}_{\mathrm{eq}}(P_{\eta}):=\bigoplus
  \mathfrak{o}(q)\otimes \mathfrak{o}(y)\otimes
  \mathbf{k}[[x]],$
\end{itemize}
similarly to the Floer case (see
\S\ref{sec:defin-equiv-chain} and
\S\ref{sec:defin-infin-funct}), where the direct
sum runs over pairs $(q,y)$ such that:
\begin{itemize}
\item $y$ is a critical point of the
  pseudogradient on $BG$ lying above the pole
  $[1:0:\dots]$ in projective space,\footnote{With
    the correct interpretation of ``projective
    space'' (as either $\mathbb{R}P^{\infty}$ or
    $\mathbb{C}P^{\infty}$), this handles both
    cases $p=2$ and $p\ge 3$; in the former case,
    there is a unique choice of $y$, while in the
    latter case, one can arrange the construction
    so there are two choices of $y$.}
\item $q$ is a critical point of $P_{\eta}$, where
  $\eta$ is the distinguished lift of $y$ from
  $BG$ to $EG$. The word ``distinguished'' is to be
  understood vis-a-vis Figure
  \ref{fig:mod-p-equivariant} from
  \S\ref{sec:case-p-3}: the distinguished lifts
  were denoted $v_{0,e},v_{1,e}$.
\end{itemize}
The differential $d_{eq}$ on this complex is
defined by counting the $1$-dimensional components
in $\mathscr{M}(P_{\eta})$ and interpreting the count as
follows: if $\pi(s)$ is asymptotic to
$g\tau^{k}(\eta_{-})$ and $\eta_{+}$, and $q(s)$
is asymptotic to $gq_{-},q_{+}$, then $(q,\pi)$ is
registered as contributing towards the
coefficient:
\begin{equation*}
  \mathfrak{o}(q_{+})\otimes \mathfrak{o}(y_{+})\mapsto x^{k}\mathfrak{o}(q_{-})\otimes \mathfrak{o}(y_{-}),
\end{equation*}
where $y_{\pm}$ are the projections of
$\eta_{\pm}$ to $BG$. This is all in a manner
similar to \S\ref{sec:defin-infin-funct}. The
differential squares to zero by the same arguments
used in \cite{seidel-smith-GAFA-2010,
  seidel-eq-pop, shelukhin-zhao-JSG-2021} and in
\S\ref{sec:case-p-2}, \S\ref{sec:case-p-3}, and
\S\ref{sec:asymp-ops-orient-lines}.

\subsubsection{Definition of the Morse continuation map}
\label{sec:defin-morse-continuation map}

Consider the space of Moore paths $\mathfrak{M}$ consisting
of pairs of:
\begin{itemize}
\item a real number
$w\in [0,\infty)$ and
\item a smooth path
$s\in \R\mapsto P_{\eta,s}$ of vector fields
on $W$ (parametrized by $\eta$) such that
$\partial_{s}P_{\eta,s}=0$ for $s$ outside
$[0,w]$,
\end{itemize}
such that \ref{item:M-equivariant}, and \ref{item:M-self-similarity} are satisfied for all $s$.

\begin{definition}\label{definition:remains-at-zero}
  It will be important in the sequel to consider families $P_{\eta,s}$ such that $P_{\eta,s}=0$ holds identically. In this case we say that $P_{\eta,s}$ \emph{remains at $0$}.
\end{definition}

If we fix two endpoints, say, $P_{\eta}$ and
$P_{\eta}'$, then we let
$\mathfrak{M}(P_{\eta},P_{\eta}')$ be the
subspace of Moore paths whose endpoints are
$P_{\eta}$ and $P_{\eta}'$. Each of these morphism spaces in $\mathfrak{M}$ is a convex space.

\begin{definition}\label{definition:admissible-MMP}
  A path $\phi\in \mathfrak{M}$ is said to be
  \emph{admissible} for the Morse continuation map
  provided that the endpoints $P_{\eta,0}$ and $P_{\eta,w}$ of $\phi$ are valid Morse--Borel data, and either:
  \begin{itemize}
  \item $P_{\eta,0},P_{\eta,w}$ are both of type $+$ or both of type $-$,
  \item $P_{\eta,0}$ is of type $-$ and
    $P_{\eta,w}$ is of type $+$.
  \end{itemize}
  In other words, paths from type $+$ to
  type $-$ are not admissible.
\end{definition}

Associated to an admissible path $\phi\in \mathfrak{M}$, expressed as $P_{\eta,s}$,
one can form the moduli space
$\mathscr{M}(\phi)$ of solutions $(q,\pi)$ to
the following equation:
\begin{equation}
  \label{eq:morse-cont-map}
  \left\{
    \begin{aligned}
      &q'(s)=-X_{-s,\pi(s)}(q(s)),&&\lim_{s\to\pm\infty}q(s)\text{ exists},\\
      &\pi'(s)=-V(\pi(s)).\\ 
    \end{aligned}
  \right.
\end{equation}
Definition \ref{definition:admissible-MMP} is
used to prove an a priori bound for solutions
of \eqref{eq:morse-cont-map}: any sequence
$q_{k}$ of solutions to
\eqref{eq:morse-cont-map} must remain in a
uniform compact set.

By counting the rigid elements (as in
\S\ref{sec:defin-borel-equiv}) associated to
the moduli spaces $\mathscr{M}(\phi)$ for
generic admissible paths, one defines chain maps between
the equivariant Morse complexes:
\begin{equation}\label{eq:morse-continuation-map}
  \mathfrak{c}_{\phi}:\mathrm{CM}_{\mathrm{eq}}(P_{\eta})\to \mathrm{CM}_{\mathrm{eq}}(P'_{\eta}).
\end{equation}
The map is independent of the choice of
morphism $\phi$ from $P_{\eta}$ to $P_{\eta}'$ in
$\mathfrak{M}$, up to chain
homotopy. Moreover, standard gluing arguments
then prove that continuation maps commute
with each other up to chain homotopy, and
consequently one shows all chain complexes
$\mathrm{CF}_{\mathrm{eq}}(P_{\eta})$ of the
same type (either type $-$ or type $+$) are
chain homotopy equivalent (via continuation
maps). Note that the map in the statement of
Theorem \ref{theorem:commutative-square} is a
special case of
\eqref{eq:morse-continuation-map}, when
$P_{\eta},P_{\eta}'$ are of different types.

The direct limit of the homology groups
$\mathrm{HM}_{\mathrm{eq}}(P_{\eta})$ over
the diagram of choices $P_{\eta}$ of the same
type (connected by continuation maps) yields
two invariants:
\begin{itemize}
\item type $+$: \emph{the equivariant cohomology} $\mathrm{HM}_{\mathrm{eq}}(W)$,
\item type $-$: \emph{the relative equivariant cohomology} $\mathrm{HM}_{\mathrm{eq}}(W,\partial W)$,
\end{itemize}
and the morphisms \eqref{eq:morse-continuation-map} in $\mathfrak{M}$ connecting type $-$ to type $+$ yield a map:
\begin{equation*}
  \mathrm{HM}_{\mathrm{eq}}(W,\partial W)\to \mathrm{HM}_{\mathrm{eq}}(W).
\end{equation*}

\subsubsection{The unit element in equivariant
  Morse cohomology}
\label{sec:unit-element}

Let us comment on the construction of a class
$1\in \mathrm{HM}_{\mathrm{eq}}(W)$. Let
$P_{\eta}$ be Morse--Borel data of type
$+$. Each generating pair
$(q,y)\in
\mathrm{CM}_{\mathrm{eq}}(P_{\eta})$ where
$\mathrm{index}(q)=\mathrm{index}(y)=0$
admits canonical isomorphisms:
\begin{equation*}
  \Z\simeq \mathfrak{o}(q)\text{ and }\Z\simeq\mathfrak{o}(y),
\end{equation*}
because the orientation line of a zero-dimensional
space has a canonical generator (corresponding to
the empty basis). Taking their tensor product with
$1\in \mathbf{k}[[x]]$, and direct summing over
such pairs $(q,y)$ produces an element in the
complex $\mathrm{CM}_{\mathrm{eq}}(P_{\eta})$, see
\S\ref{sec:defin-borel-equiv}, denoted by the
symbol $1$.

\begin{lemma}
  The element $1$ satisfies $d_{eq}(1)=0$, and is
  preserved under the continuation maps of \S\ref{sec:defin-morse-continuation map}.
\end{lemma}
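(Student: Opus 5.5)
We prove both assertions by a dimension count, using the fact that with cohomological conventions the differential and the continuation maps can only increase the total degree index$(q)$+index$(y)$+2(power of $x$).

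\textbf{The cycle property $d_{eq}(1)=0$.} The coefficient of $d_{eq}(1)$ on a generator $(q_{-},y_{-})x^{k}$ counts rigid-up-to-translation flow lines $(q,\pi)$. These have $q(+\infty)=q_{+}$ of index $0$ and $\pi(+\infty)=\eta_{+}$ of index $0$. The expected dimension is
\begin{equation*}
\mathrm{index}(q_{-})-\mathrm{index}(q_{+})+\mathrm{index}(\eta_{-})-\mathrm{index}(\eta_{+})=\mathrm{index}(q_{-})+\mathrm{index}(\eta_{-}),
\end{equation*}
and it must equal $1$. Here $\eta_{-}=g\tau^{k}(\eta'_{-})$, so $\mathrm{index}(\eta_{-})\ge k$ for $p=2$, respectively $\ge 2k$ for $p\ge3$. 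We split into two cases.

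\emph{Case 1: $\pi$ is non-constant.} Then the $\R$-translation acts freely on $\pi$ alone. Since $q_{+}$ is a minimum of $P_{\eta_{+}}$, the $q$-component is essentially forced. The contributions have the form $q$ constant at a minimum and $\pi$ a rigid index-one flow line in $EG$ from $\eta_{-}$ to $\eta_{+}$. These cancel in pairs, exactly as in the computation that the constant class is a cycle in the Morse model of $BG$ (for $p=2$ the two flow lines from $v_{1,\pm}$ to $v_{0,+}$ cancel mod $2$; for $p\ge3$ this is the corresponding statement in \cite[\S A]{shelukhin-zhao-JSG-2021}).

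Making Case 1 rigorous is the step I expect to be the main obstacle. The issue is that $P_{\eta}$ depends on $\eta$, so $q$ need not be constant along $\pi$. I would handle it by the following deformation argument:
\begin{itemize}
\item choose a path of Morse--Borel data from $P_{\eta}$ to data that is $\eta$-independent near the index-$0$ critical points $q_{+}$ (possible by \ref{item:M-near-poles} and a $G$-equivariant cutoff);
\item use that $1$ is preserved by continuation maps, proved below, to reduce to this data;
\item observe that for such data the solutions factor as a constant $q$ times a Morse trajectory of $V$, so the $BG$ computation applies directly.
\end{itemize}

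\emph{Case 2: $\pi$ is constant.} Then $k=0$, $\eta_{-}=\eta_{+}$, and the count is the ordinary Morse differential applied to the sum of minima. This vanishes because each index-one critical point has exactly two descending half-flowlines. Each half ends either at a minimum or escapes to infinity, and escape is excluded because $P$ is of type $+$, i.e.\ outward pointing. The two ends therefore cancel; with orientation signs over $\Z/p\Z$, the coefficient is $\partial$ of the constant function, which is $0$. This is the standard fact that the sum of minima represents $1\in H^{0}$.

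\textbf{Preservation under continuation maps.} Let $\phi$ be an admissible path from $P_{\eta}$ to $P'_{\eta}$. We apply the same index count to rigid solutions of \eqref{eq:morse-cont-map} whose input is an index-$0$ pair $(q_{+},y_{+})$. Rigid means
\begin{equation*}
\mathrm{index}(q_{-})+\mathrm{index}(\eta_{-})=0,
\end{equation*}
which forces $k=0$, $\pi$ constant at the pole, and $q_{-}$ a minimum. The map therefore reduces to the non-equivariant Morse continuation map in degree $0$, which is the map $H^{0}\to H^{0}$ sending the constant function to the constant function. We verify this on chain level in the standard way: for each minimum $q_{-}$ of the target, count the rigid continuation lines ending at $q_{-}$ and starting at some minimum. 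Summing over all starting minima, a $1$-parameter cobordism argument gives total count $1$, as in \cite{frauenfelder-schlenk-israeljm-2007}. The argument uses compactness from admissibility: type $+$ to type $+$ prevents escape.

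Finally, the Case 1 reduction in the cycle argument depends on this preservation statement, so the logical order is to prove preservation first and then deduce that $1$ is a cycle.
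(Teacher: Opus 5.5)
Your plan follows the paper's own sketch: the sum of minima is a cycle, and continuation sends the sum of minima to the sum of minima. Your index bookkeeping, Case 2, and the reduction of $\mathfrak{c}(1)$ to the terms with $k=0$, $\pi$ constant at the pole and $q_-$ a minimum are all correct.

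\emph{The gap: the Case 1 reduction runs in the wrong direction.} A path \emph{from} $P_{\eta}$ \emph{to} the nice data $P'_{\eta}$ gives $\mathfrak{c}:\mathrm{CM}_{\mathrm{eq}}(P_{\eta})\to\mathrm{CM}_{\mathrm{eq}}(P'_{\eta})$ with $\mathfrak{c}(1)=1'$. From $d'_{\mathrm{eq}}1'=0$ you then only get $\mathfrak{c}(d_{\mathrm{eq}}1)=0$. Continuation maps are not injective on chain level (the two complexes need not even have the same rank), so this says nothing about $d_{\mathrm{eq}}1$. You need the continuation $\mathfrak{c}'$ from $P'_{\eta}$ to $P_{\eta}$, which is admissible (type $+$ to type $+$). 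Then $d_{\mathrm{eq}}1=d_{\mathrm{eq}}\mathfrak{c}'(1')=\mathfrak{c}'(d'_{\mathrm{eq}}1')=0$.

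A second point: for $p\ge 3$, being $\eta$-independent near the minima of $P'_{v_{0,e}}$ does not force $q$ to be constant, because $P'_{v_{1,e}}$ may have further minima elsewhere. Take $P'$ globally $\eta$-independent and $G$-invariant, as in (EP). Then $q$ is a $-P'$ flow line between minima, hence constant.

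\emph{The detour can be avoided.} The same observation also replaces your unspecified cobordism argument.
\begin{itemize}
\item A nondegenerate minimum is a sink of $-P$, and the data is $\eta$-independent near the poles by (M4). So a solution with $q(-\infty)$ a minimum of $P_{\eta_{-}}$ stays at that minimum until $\pi$ leaves the pole's neighbourhood.
\item After that, $q$ is determined by $\pi$ through ODE uniqueness, and the type $+$ condition keeps it in a compact set.
\item Regularity forces $q(+\infty)$ to be a minimum: the expected dimension is $1-\mathrm{index}(q_{+})$, which is incompatible with a free $\R$-action unless $\mathrm{index}(q_+)=0$.
\end{itemize}
Hence, for arbitrary regular data and each rigid $\pi$, every minimum at $\eta_{-}$ is the output of exactly one solution whose input is a minimum. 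Summing over all inputs, $d_{\mathrm{eq}}(1)$ is the signed count of the $\pi$'s times the sum of minima (transported to the distinguished lift). This vanishes by the $BG$ computation.

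Likewise, exactly one continuation line leaves each minimum of the target, and it lands on a minimum of the source with the canonical sign. This gives $\mathfrak{c}(1)=1$ on chain level directly.

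With the direction corrected your argument is valid. The direct version is shorter and removes the dependence of the cycle property on the preservation statement.
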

\begin{proof}
  The proof is based on two standard ideas in
  Morse theory:
  \begin{itemize}
  \item the sum of all local minima is a cycle,
  \item the continuation map from one Morse
    complex to another sends the sum of all local
    minima to the sum of all local minima.
  \end{itemize}
  The details are left to the reader.
\end{proof}

\subsubsection{Equivariant pseudogradients}
\label{sec:equiv-pseud}

Let us now focus on a special class of
Borel--Morse data of type $+$, namely, data $P=P_{\eta}$ which
is globally independent of $\eta$, and which
satisfy the following additional axiom:
\begin{enumerate}[label=(EP)]
\item\label{item:EP} around each critical point
  $q$ in the fixed submanifold $F\subset W$ of the
  $G=\Z/p\Z$ action, there is a coordinate chart
  $B^{2r}\times B^{2n-2r}$ so that $F$ is
  identified with $B^{2r}\times \set{0}$, and
  $g\in G$ acts by
  $(z_{1},z_{2})\mapsto (z_{1},g z_{2})$ for some
  unitary representation of $G$ on $\C^{n-r}$, and
  so that $P$ appears as a direct sum
  $P_{1}+P_{2}$ of two linear vector fields, and
  $P_{2}$ points radially outwards.
\end{enumerate}
\begin{lemma}
  There exist vector fields $P$ satisfying
  \ref{item:EP} which define regular Borel--Morse
  data of type $+$.
\end{lemma}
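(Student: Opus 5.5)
The plan is to build $P$ in three stages: first a $G$-invariant Morse function adapted to the fixed locus, then its gradient for a $G$-invariant metric, and finally a $G$-equivariant perturbation away from the fixed critical points to reach regularity. Throughout, $P=P_{\eta}$ is $\eta$-independent, so \ref{item:M-equivariant} reduces to $g_{*}P=P$. Conditions \ref{item:M-self-similarity} and \ref{item:M-near-poles} then hold trivially.

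\textbf{Step 1: an invariant Morse function.} I would start from the fixed set $F$ of the $G=\Z/p\Z$-action, which is a closed symplectic (hence even-dimensional) submanifold contained in a compact region, since the action is free on the ideal boundary. Choose a $G$-invariant metric and a $G$-invariant tubular neighborhood of $F$, using the equivariant exponential map. In this neighborhood the normal bundle splits into nontrivial isotypic pieces, on which $G$ acts unitarily after choosing a compatible complex structure. Take a Morse function $f_{1}$ on $F$ and set $f=f_{1}\circ\pr+\abs{z_{2}}^{2}$ near $F$, using a $G$-invariant fiber norm. Away from $F$ the action is free, so I would take a Morse function on the quotient $(W\setminus F)/G$, which is a manifold. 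This function should equal the radial coordinate $r$ in the convex end, giving \ref{item:M-liouville} with $a>0$, and should be patched to $f$ by a $G$-invariant partition of unity. Morse-ness is generic on the quotient and is preserved by the patching if the cut-off region contains no critical points, arranged by choosing the neighborhood of $F$ small and the patching collar with nonzero gradient.

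\textbf{Step 2: linear normal form.} Near each critical point $q\in F$, I would adjust $f_{1}$ to be a Morse quadratic in Morse coordinates on $F$, and choose the metric to be a product of the flat metric on $B^{2r}$ and a $G$-invariant Hermitian metric on $B^{2n-2r}$. Then $P=\nabla f=P_{1}+P_{2}$, with $P_{1}$ linear and $P_{2}=2z_{2}$ radially outward, which is \ref{item:EP}. This step is routine via the equivariant Morse lemma, or simply by construction.

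\textbf{Step 3: regularity, the main obstacle.} Transversality of $\mathscr{M}(P_{\eta})$ for the family over $S^{\infty}$ requires perturbations depending on $\eta$, yet \ref{item:EP} and $\eta$-independence near poles must be kept, and equivariance forbids arbitrary perturbations near $F$. I would allow a $G$-equivariant, $\tau$-invariant perturbation $P_{\eta}=P+\delta_{\eta}$. Here $\delta_{\eta}$ is supported away from small neighborhoods of the critical points and of the poles, and is compactly supported in $W$. The key observation is that every nonconstant flow line $q$ passes through points off these neighborhoods. At such a point $q(s_{0})$, equivariance constrains $\delta$ only by relating values at $q(s_{0})$ and $gq(s_{0})$, together with $\eta$ versus $g\eta$. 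Since $G$ acts freely on $EG$, the pair $(q(s_{0}),\pi(s_{0}))$ always has a free orbit in $W\times EG$, even when $q(s_{0})\in F$. Equivariant perturbations therefore realize arbitrary values in a neighborhood of that point, and the standard Sard--Smale argument (as in \cite{seidel-smith-GAFA-2010,shelukhin-zhao-JSG-2021}) yields surjectivity of the linearization for generic $\delta$.

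Constant solutions need separate treatment: for them the linearized operator is the Morse linearization at the nondegenerate critical point, hence automatically surjective. The index matches the stated formula because of \ref{item:M-morse}. Thus, for generic $\delta$, the datum is regular, still satisfies \ref{item:EP} near the fixed critical points (where $\delta=0$), and is of type $+$.
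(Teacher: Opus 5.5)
\textbf{There is a genuine gap in Step 3, and it is where the actual content of the lemma lies.}

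\emph{Your output is not $\eta$-independent.} The lemma concerns the special class introduced just before \ref{item:EP}: data $P=P_{\eta}$ that is \emph{globally} independent of $\eta$. The later localization argument uses this. Step 3 replaces $P$ by $P+\delta_{\eta}$, which contradicts your own opening sentence and produces data outside that class.

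\emph{Even with $\eta$-dependence, Step 3 does not give regularity.} You take $\delta_{\eta}=0$ for $\eta$ near the poles. Consider any solution $(q,\pi)$ with $\pi$ constant at a pole; these are the components with zero index difference in $EG$, which give the non-equivariant part of the differential. Along such a solution the equation is just $q'=-P(q)$, and $\delta$ vanishes identically. Your ``key observation'' needs $\pi(s_{0})$ to lie off the pole neighbourhoods as well, and that fails here. For nonconstant $\pi$ it is also not automatic that the interval where $\pi$ is away from the poles meets the interval where $q$ is away from the critical points, since you cut off near both.

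\emph{The unproved core.} Regularity therefore still requires the $G$-equivariant field $P$ itself to be Morse--Smale, and you never prove this. It is not a generic-perturbation statement. $F$ is invariant under the flow of any equivariant field, and along flow lines inside $F$ you may only perturb within equivariant fields, so Sard--Smale does not apply directly. The free-orbit trick in $W\times EG$ is exactly what $\eta$-independence and \ref{item:M-near-poles} rule out at the poles.

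\textbf{The missing idea} (the paper's argument, following \cite{seidel-smith-GAFA-2010}) uses the normal form you already built in Steps 1--2, applied directly to the Morse--Smale property of $P$:
\begin{enumerate}
\item Choose $P|_{F}$ Morse--Smale inside $F$.
\item Since $P_{2}$ points radially outward, the Morse index of a critical point in $F$ is the same in $F$ and in $W$. Hence the descending manifold of such a point coincides with its descending manifold in $F$, and every flow line starting in $F$ stays in $F$.
\item Along such a flow line, equivariance splits the linearization into two parts. The tangential part is surjective by the choice of $P|_{F}$. The normal part $\xi_{N}'=-A(s)\xi_{N}$, with $A(s)>0$, has index $0$ and trivial kernel: $\abs{\xi_{N}}$ is non-increasing, so decay at $-\infty$ forces $\xi_{N}=0$. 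This is the paper's ``no change in the index difference.''
\item Any flow line not contained in $F$ lies in $W\setminus F$ for all finite $s$, since $F$ is closed and flow-invariant. There the action is free, so a generic equivariant extension makes it transverse; this is your ``generic choice on $(W\setminus F)/G$.''
\end{enumerate}
These two cases exhaust all flow lines. For $\eta$-independent $P$ the linearization at $(q,\pi)$ is the direct sum of those for $q$ and for $\pi$, so this already gives regularity with no $\eta$-dependent perturbation.
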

\begin{proof}
  See \cite[\S2.2]{seidel-smith-GAFA-2010}. The
  idea is to pick $P$ first on the fixed
  submanifold $F\subset W$, and then extend it to
  a tubular neighborhood of $F$ by requiring it
  points outwards (from the perspective of
  Lyapunov functions, we add a positive definite
  quadratic function on each fiber of the normal
  bundle $NF$). Then we extend $P$ equivariantly
  to the rest of $W$ (generically), bearing in
  mind it should agree with $Z$ in the convex end.

  If the choice of extension from $NF$ to $W$ is
  done sufficiently generically, then all flow
  lines asymptotic to a critical point in
  $W\setminus F$ will be cut transversally.

  Moreover, if the initial choice of $P$ on $F$ is
  done sufficiently generically, then all flow
  lines contained entirely in $F$ will be cut
  transversally \emph{inside of $F$}. However, the
  assumption that $P$ points outwards along fibers
  of $NF$ ensures that such flow lines are also
  cut transversally when considered in $W$ (there
  is no change in the index difference).

  The cases covered in the two preceding
  paragraphs cover all possible flow lines, and
  the proof is complete.
\end{proof}

\subsection{Localization in equivariant Morse
  theory}
\label{sec:loc-in-eq-morse}

\emph{Localization} refers to the phenomenon that
the ring-theoretic localization of the equivariant
cohomology ring of a $G$-space at certain elements
recovers the regular cohomology of the fixed point
set. This statement can be made quite
sophisticated (we refer the reader to
\cite{atiyah-bott-moment-map} for a refined
result). In our paper, we will require only a
small input from this theory (for Theorem
\ref{theorem:main-infinity} part
\ref{theorem:main-infinity-2}):
\begin{proposition}\label{proposition:unit-not-torsion}
  If the action of $G=\Z/p\Z$ on a $G$-filling $W$
  has at least one fixed point, then the unit
  element $1\in \mathrm{HM}_{\mathrm{eq}}(W)$ is
  not $x$-torsion.
\end{proposition}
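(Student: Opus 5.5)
Fix a fixed point $w_{0}\in W$ of the $G$-action. The plan is to restrict the unit to the fixed point and use that the equivariant cohomology of a point is $\mathbf{k}[[x]]$, which has no $x$-torsion. Concretely, we construct a chain map
\begin{equation*}
  r:\mathrm{CM}_{\mathrm{eq}}(P)\to \mathrm{CM}_{\mathrm{eq}}(\mathrm{pt})
\end{equation*}
of $\mathbf{k}[[x]]$-modules sending $1$ to the unit of $\mathrm{CM}_{\mathrm{eq}}(\mathrm{pt})$. Here $\mathrm{CM}_{\mathrm{eq}}(\mathrm{pt})$ is the Borel complex of the Morse model on $BG$ alone, that is, $\mathbf{k}[[x]]$ for $p=2$ and $\mathbf{k}[[x]]\oplus\mathbf{k}[[x]][1]$ for $p\ge 3$. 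Its homology is $\mathbf{k}[[x]]$ for $p=2$, and $\mathbf{k}[[x]]\otimes\Lambda[\theta]$ for $p\ge 3$ (the group cohomology of $\Z/p\Z$, with $x$ in degree $2$). In either case the unit generates a free $\mathbf{k}[[x]]$-summand. If $x^{d}1=0$ in $\mathrm{HM}_{\mathrm{eq}}(W)$, then $x^{d}r(1)=0$ in $H(\mathrm{CM}_{\mathrm{eq}}(\mathrm{pt}))$, which is a contradiction. Since $1$ is preserved by continuation maps (previous lemma), it suffices to work with one convenient datum.

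To build $r$, we choose $P$ globally $\eta$-independent satisfying \ref{item:EP}, and arrange that $w_{0}$ is a local minimum of $P|_{F}$ on its fixed component $F$. This is possible because the construction in the preceding lemma lets us choose $P|_{F}$ freely and generically. By \ref{item:EP}, $P$ points radially outward in the normal directions, so $w_{0}$ is a local minimum of $P$ on $W$, of index $0$, and it is $G$-fixed.

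The generators of $\mathrm{CM}_{\mathrm{eq}}(P)$ are pairs $(q,y)$, with differential coefficient counting pairs $(q(s),\pi(s))$. Since $P$ is $\eta$-independent, $q$ is simply an ordinary $P$-flow line from $gq_{-}$ to $q_{+}$ in $W$. Now define $r$ as the projection onto the summand spanned by the generators $(w_{0},y)$, identified with $\mathrm{CM}_{\mathrm{eq}}(\mathrm{pt})$ via $y$. To check that $r$ is a chain map, we must see two things.
\begin{itemize}
\item Terms of $d_{eq}$ with output at $w_{0}$ only come from inputs at $w_{0}$. A nonconstant flow line with $q(-\infty)=gw_{0}=w_{0}$ cannot exist, since $w_{0}$ is a local minimum and $q'=-P$ flows downward. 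So the outputs into $w_{0}$ arise only from constant $q\equiv w_{0}$.
\item For these constant $q$, the moduli space is exactly the space of $BG$ flow lines $\pi$. Regularity holds because the linearized $q$-operator at a constant index-$0$ solution is surjective. The twist $g$ acts trivially on $w_{0}$, so the count reproduces the differential of $\mathrm{CM}_{\mathrm{eq}}(\mathrm{pt})$.
\end{itemize}
Since $d_{eq}$ maps into the $w_{0}$-summand only from itself, projection onto that summand is a chain map.

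The main obstacle is the orientation bookkeeping for $p\ge 3$. We must check that the orientation lines $\mathfrak{o}(w_{0})$ are canonically trivial, with the empty basis at an index-$0$ point. We must also check that the $G$-action by $g$ on $\mathfrak{o}(w_{0})$ is trivial. This holds because $g$ acts on the zero-dimensional unstable manifold. Finally, the signs of the constant solutions must match those in the model complex of \cite{shelukhin-zhao-JSG-2021}. Once this is done, $r(1)$ is the sum of index-zero generators of the model complex, which is the generator of $H^{0}$, i.e.\ the unit. It is not $x$-torsion because $H(\mathrm{CM}_{\mathrm{eq}}(\mathrm{pt}))$ is torsion-free over $\mathbf{k}[[x]]$ in the relevant summand.
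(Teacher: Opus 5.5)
Your proposal is correct and follows the paper's proof essentially step for step. You choose an $\eta$-independent pseudogradient satisfying \ref{item:EP} with an index-$0$ critical point at a $G$-fixed point, and observe that the summand at that point is a quotient complex, since no nonconstant $-P$ flow line leaves a local minimum. You then identify that quotient with the Borel complex of a point and conclude from the freeness of $\mathrm{HM}_{\mathrm{eq}}(\mathrm{pt})$ over $\mathbf{k}[[x]]$. Your extra checks, namely regularity of the constant solutions and triviality of the $G$-twist on $\mathfrak{o}(w_{0})$, only fill in details the paper treats as obvious.
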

The group $\mathrm{HM}_{\mathrm{eq}}(W)$ is a
finitely generated $\mathbf{k}[[x]]$-module,
so it admits a torsion submodule and a free
quotient; the proposition guarantees the free
quotient is non-zero (in the presence of at least
one fixed point). More refined localization
results identify exactly the rank of this
free quotient.

The somewhat involved Morse-theoretic arguments of
\cite[\S 2]{seidel-smith-GAFA-2010}, which are
carried out in the case of a closed manifold with
a \( \Z/2 \)-action, prove something stronger. We
will take a simpler approach\footnote{Another
  reasonable approach is to essentially translate
  each step of \cite{atiyah-bott-moment-map} into
  Morse theoretic terms. This requires four
  ingredients: \emph{embedding functoriality}
  (i.e., pull-back maps), \emph{the relative
    cohomology of a pair}, \emph{the Thom
    isomorphism}, and the \emph{ring structure on
    (equivariant) cohomology}. The interpretation
  of these steps within Morse theory is more or
  less standard. The recipe in
  \cite{atiyah-bott-moment-map} then gives a
  refined localization result.} than that of
\cite{seidel-smith-GAFA-2010}.

\subsubsection{Proof of Proposition
  \ref{proposition:unit-not-torsion}}
\label{sec:proof-prop-not-torsion}

We assume $p\ge 3$, as the case $p=2$ is simpler
(and is essentially established in
\cite{seidel-smith-GAFA-2010}).

Without loss of generality, we pick $P_{\eta}=P$
satisfying \ref{item:EP}. Since we assume there is
at least one fixed point, the fixed point
submanifold $F$ is non-empty, and, by construction
of $P$, there is at least one index $0$ critical
point $q_{0}$ in the fixed point submanifold.

Let us introduce symbols $v_{0},v_{1}$ as the
index $0$ and index $1$ critical points on $BG$;
all other critical points are of the form
$\tau^{k}(v_{0})$ or $\tau^{k}(v_{1})$, and each
such critical point has $p$ lifts to critical
points on $EG$ (shown in Figure
\ref{fig:mod-p-equivariant} using notation
$v_{0,g}$ and $v_{1,g}$ for the lifts of
$v_{0},v_{1}$).

Recall from \S\ref{sec:defin-borel-equiv} that:
\begin{equation}\label{eq:morse-complex-eq}
  \mathrm{CM}_{\mathrm{eq}}(P) := \bigoplus_{q}(\mathfrak{o}(v_{0})\otimes \mathfrak{o}(q)\otimes \mathbf{k}[[x]])\oplus (\mathfrak{o}(v_{1})\otimes \mathfrak{o}(q)\otimes \mathbf{k}[[x]]),
\end{equation}
where the sum is over all critical points $q$
(zeros of $P$). Let us consider the projection map
onto the summand:
\begin{equation*}
  Q(P,q_{0}):=(\mathfrak{o}(v_{0})\otimes \mathfrak{o}(q_{0})\otimes \mathbf{k}[[x]])\oplus (\mathfrak{o}(v_{1})\otimes \mathfrak{o}(q_{0})\otimes \mathbf{k}[[x]])
\end{equation*}
associated to the chosen index $0$ critical point
$q_{0}\in F$. This map is a quotient by a
subcomplex because any flow line $(q(s),\pi(s))$
which ends with $q(-\infty)=q_{0}$ must also start
with $q(+\infty)=q_{0}$.

The induced differential on $Q(P,q_{0})$
yields a complex which computes the
equivariant Morse cohomology of a point
$\mathrm{HM}_{\mathrm{eq}}(\mathrm{pt})$
(this is obvious, since the quotient
differential on $Q(P,q_{0})$ only considers
flow lines $(q(s),\pi(s))$ for which
$q(s)=q_{0}$ while $\pi(s)$ is allowed to
vary arbitrarily on $EG$).\footnote{The
  quotient map
  $\mathrm{CM}_{\mathrm{eq}}(P) \to Q(P,
  q_{0})$ is the Morse-theoretic realization
  of the cohomological pullback
  $H^{*}_{G}(W) \to H^{*}_{G}(\set{q_0})$
  induced by the inclusion $\set{q_0}\to W$.}
It is also clear from the definitions that
this quotient map sends
$1\in \mathrm{HM}_{\mathrm{eq}}(W)$ to
$1\in
\mathrm{HM}_{\mathrm{eq}}(\mathrm{pt})$. We
now invoke the following lemma:
\begin{lemma}
  The element
  $1\in \mathrm{HM}_{\mathrm{eq}}(\mathrm{pt})$ is
  not $x$-torsion; here it is important to assume
  that $G=\mathbf{k}=\Z/p\Z$.
\end{lemma}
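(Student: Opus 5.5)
Since $\mathrm{CM}_{\mathrm{eq}}$ of a point depends only on the fixed Morse model $V$ on $EG=S^{\infty}$, the plan is to compute the quotient complex $Q(P,q_{0})$ explicitly. I will identify its homology with $H^{*}(BG;\mathbf{k})$ viewed as a $\mathbf{k}[[x]]$-module, and then check that $1$ generates a free summand. Concretely, $Q(P,q_{0})$ is the free $\mathbf{k}[[x]]$-module on the two generators $v_{0}\otimes q_{0}$ and $v_{1}\otimes q_{0}$. Its differential counts rigid flow lines of $-V$ on $EG$ between lifts of critical points, weighted by the twisting conventions of \S\ref{sec:defin-borel-equiv}. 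Because of the degree constraints (the two generators have degrees $0$ and $1$, and $x$ has degree $2$), the differential is determined by two pieces of data. The first is the count $d(v_{0})=c\,v_{1}$ with $c\in\mathbf{k}$, coming from the index-one flow lines inside the critical circle of Figure \ref{fig:mod-p-equivariant}. The second is the count $d(v_{1})=c'\,x\,v_{0}$, coming from the index-one flow lines from the lifts of $\tau(v_{0})$ to the lifts of $v_{1}$. No other terms are allowed: $d(v_{0})$ cannot involve $v_{0}$, and $d(v_{1})$ cannot involve $v_{1}$, since the degrees would not match.

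Next I compute $c$. The lifts $v_{0,g}$ and $v_{1,g}$ alternate around the circle. Each $v_{1,g}$ has two outgoing flow lines, to $v_{0,g}$ and to $v_{0,g'}$ for the neighbouring $g'$. After translating both asymptotics back to the distinguished lifts by the group action, these two lines carry opposite orientations, since they are the two boundary points of an arc. So $c=1-1=0$. This is simply the cellular computation $H^{*}(S^{1}/G)=H^{*}(S^{1})$, whose differential vanishes.

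For $c'$, the paper's differential formula \eqref{eqn:p3-differential} already has this shape. The flow lines from the $p$ lifts of $\tau(v_{0})$ to $v_{1,e}$ all come, after twisting, with the same sign: they are related by the $G$-action, and the complex-linear orientations of $\mathfrak{o}(x^{k})$ are $G$-invariant because $G\subset U(1)$ acts complex-linearly. The total count is therefore $c'=\pm p=0$ in $\mathbf{k}=\Z/p\Z$. This is where the hypothesis $G=\mathbf{k}=\Z/p\Z$ enters, and it is the step I expect to be the main obstacle. The orientation bookkeeping must be checked carefully, and the cleanest route is to appeal to \cite[\S A]{shelukhin-zhao-JSG-2021}, where this Morse model is shown to compute $H^{*}(B\Z/p;\Z/p)=\mathbf{k}[[x]]\otimes\Lambda[\theta]$ with vanishing differential. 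As a sanity check, over $\Z$ one would get $c'=p$, recovering the known $\Z/p$-torsion in $H^{*}(B\Z/p;\Z)$.

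With both $c$ and $c'$ equal to zero, the differential on $Q(P,q_{0})$ vanishes. Hence $\mathrm{HM}_{\mathrm{eq}}(\mathrm{pt})\cong\mathbf{k}[[x]]\oplus\mathbf{k}[[x]][1]$ is a free $\mathbf{k}[[x]]$-module. The unit $1$ is the canonical generator $v_{0}\otimes q_{0}\otimes 1$ of the first free summand, so $x^{d}\cdot 1\neq 0$ for every $d$, and $1$ is not $x$-torsion. For $p=2$ the same argument works with a single generator per degree on $\mathbb{R}P^{\infty}$: there the incidence numbers are $1\pm1=0$ mod $2$, giving $\mathrm{HM}_{\mathrm{eq}}(\mathrm{pt})=\mathbf{k}[[x]]$ with $x$ of degree $1$.
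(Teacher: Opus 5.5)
Your proposal is correct and follows the paper's own argument. The paper says the lemma is immediate once $\mathrm{HM}_{\mathrm{eq}}(\mathrm{pt})$ is identified with $H^{*}_{G}(\mathrm{pt};\mathbf{k})$, or alternatively follows from a direct computation of the differentials for the model pseudogradient on $BG$ as in \cite{shelukhin-zhao-JSG-2021}; you carry out that direct computation and reduce the claim to $c=0$ and $c'=0$ in $\mathbf{k}$.

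One caveat: the equal-sign claim behind $c'=\pm p$ does not follow from $G$-equivariance alone, since the $G$-action also moves the target $v_{1,e}$. It comes instead from the descending $2$-disk of $\tau(v_{0,e})$ wrapping once around the fibre circle over $[1:0:\cdots]$ (equivalently, $H^{2}(B\Z/p\Z;\Z)\cong\Z/p\Z$), which is exactly what the cited computation supplies.
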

\begin{proof}
  This is obvious provided one knows that
  $\mathrm{HM}_{\mathrm{eq}}(q_{0})$ is the
  ordinary Borel $G$-equivariant cohomology
  $H^{*}_{G}(q_{0},\mathbf{k})$. However, it also
  follows from a direct computation of the
  differentials for the special pseudogradient
  $BG$; see, e.g.,
  \cite[{\S4}]{shelukhin-zhao-JSG-2021}.
\end{proof}

This completes the proof of Proposition
\ref{proposition:unit-not-torsion}.\hfill$\square$

\subsection{The equivariant PSS construction}
\label{sec:equiv-pss-constr}

The goal is to give a precise description of the
$\infty$-categories $\mathscr{P}(Y),\mathscr{N}(Y)$ introduced in
\S\ref{sec:pss-introduction}, and to construct the
$\infty$-functors:
\begin{equation*}
  \mathscr{P}(Y)\to \mathrm{N}_{\mathrm{dg}}\mathrm{Ch}(\mathbf{k}[[x]])\quad \mathscr{N}(Y)\to \mathrm{N}_{\mathrm{dg}}\mathrm{Ch}(\mathbf{k}[[x]])
\end{equation*}
with the properties stated in Theorems \ref{theorem:pss} and \ref{theorem:negative-pss}.

The aspects of the construction involving the
negative PSS category $\mathscr{N}(Y)$ are
entirely analogous to the aspects involving
the positive PSS category $\mathscr{P}(Y)$,
and for this reason we focus our discussion
on the case of $\mathscr{P}(Y)$.

\subsubsection{The PSS category}
\label{sec:PSS-category}

In this section we define the PSS category
$\mathscr{P}(Y)$. Recall from
\S\ref{sec:pss-introduction} and Remark
\ref{remark:barC-also} the $\infty$-category
$\bar{\mathscr{C}}(Y)$. 

\begin{definition}
  An $n$-simplex in $\mathscr{P}_{n}(Y)$ is a pair $(j,\Phi)$ where $-1\le j \le n$, and $\Phi$ is an $n$-simplex in $\bar{\mathscr{C}}(Y)$ such that:
  \begin{itemize}
  \item the vertices $j+1,\dots,n$ lie in $\mathscr{C}(Y)$ (the non-discriminant part),
  \item the subsimplex spanned by vertices $0,\dots,j$ remains at $\id$ in the sense of Definition \ref{definition:floer-remains-at}.
  \end{itemize}
  In particular, the simplices with $j=-1$ form a copy of $\mathscr{C}(Y)$.
\end{definition}
\begin{definition}\label{definition:floer-remains-at}
  A $j$-simplex $\Phi$ in
  $\bar{\mathscr{C}}(Y)$ \emph{remains at $\id$} provided all of
  its subcubes are mapped into the subset of
  $\mathrm{NN}(Y)$ consisting of the constant
  paths at $\id$ (of any length).
\end{definition}

\begin{definition}[Face and degeneracy maps]
  If $(j,\Phi)\in \mathscr{P}_{n}(Y)$, then the $i$th face map is given by:
  \begin{equation*}
    d_{i}(j,\Phi)=\left\{
      \begin{aligned}
        &(j,d_{i}\Phi)&&\text{ if }j<i,\\
        &(j-1,d_{i}\Phi)&&\text{ if }j \ge i.
      \end{aligned}
    \right.
  \end{equation*}
  The $i$th degeneracy map is given by:
  \begin{equation*}
    s_{i}(j,\Phi)=\left\{
      \begin{aligned}
        &(j,s_{i}\Phi)&&\text{ if }j<i,\\
        &(j+1,s_{i}\Phi)&&\text{ if }j\ge i.
      \end{aligned}
    \right.
  \end{equation*}    
\end{definition}

\begin{lemma}
  The collection of $n$-simplices
  $\mathscr{P}_{n}(Y)$, with the above face
  and degeneracy maps, defines an
  $\infty$-category.
\end{lemma}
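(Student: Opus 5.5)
We have to check two things: that $\mathscr{P}(Y)$ is a simplicial set, and that it has inner horn fillers. The plan is to realise $\mathscr{P}(Y)$ as a simplicial subset of the product $\Delta^{1}\times\bar{\mathscr{C}}(Y)$ and to derive both facts from the corresponding facts for $\bar{\mathscr{C}}(Y)$ (Lemma \ref{lemma:degeneracy-well-defined-and-identities}, Proposition \ref{theorem:C-is-infty-category} and Remark \ref{remark:barC-also}).

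\textbf{Step 1: simplicial structure.} The integer $j$ encodes the map $[n]\to[1]$ sending $0,\dots,j$ to $0$ and $j+1,\dots,n$ to $1$. Under this dictionary the face and degeneracy formulas for $j$ are exactly those of $\Delta^{1}$. The simplicial identities therefore hold componentwise, using Lemma \ref{lemma:degeneracy-well-defined-and-identities} for the $\Phi$-component.

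It remains to check that the two defining conditions are preserved. For the vertices $j+1,\dots,n$ lying in $\mathscr{C}(Y)$ this is clear: faces and degeneracies only delete or repeat vertices, and they keep the relabelling consistent with $j$.

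For the condition ``remains at $\id$'', recall that the subcubes of $d_{i}\Phi$ are subcubes of $\Phi$. The subcubes of $s_{i}\Phi$ are either subcubes of $\Phi$ reparametrised by $\mathrm{smax}$, or subcubes of $\Phi$ concatenated with stationary paths $\id\#_{\tau}(-)$ or $(-)\#_{\tau}\id$. When all vertices involved lie in $\{0,\dots,j\}$ (resp.\ $\{0,\dots,j+1\}$ after a degeneracy with $i\le j$), these are constant paths at $\id$ of some length. So Definition \ref{definition:floer-remains-at} is preserved.

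\textbf{Step 2: inner horn filling.} Fix $0<i<n$ and a horn $\Lambda^{n}_{i}\to\mathscr{P}(Y)$. Since $n\ge2$, every vertex lies in some face $d_{k}$ with $k\ne i$, so the horn determines a well-defined $j$. The faces glue compatibly to a horn in $\bar{\mathscr{C}}(Y)$, and Proposition \ref{theorem:C-is-infty-category} (via Remark \ref{remark:barC-also}) gives a filler $\Phi$.

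We must check that $(j,\Phi)$ lies in $\mathscr{P}(Y)$. The vertex condition is automatic, because the vertices of $\Phi$ are those of the horn.

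The main obstacle is the condition that $\Phi|_{[0\dots j]}$ remains at $\id$. If $j<n$, the subsimplex on $\{0,\dots,j\}$ is contained in the face $d_{n}$. Since $i\ne n$, that face belongs to the horn, so the condition holds.

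If $j=n$, the top cube itself must be constant at $\id$, and an arbitrary filler will not be. Here we instead fill directly with the constant data. Define $\Phi^{v_{0}\dots v_{m}}(x)$ to be the stationary path at $\id$ of width $\sum_{k}\tau(x_{k})$. We then verify that this family satisfies \ref{item:break}, \ref{item:set-face-zero} and \ref{item:microcollaring}. Axioms \ref{item:break} and \ref{item:set-face-zero} hold by additivity of widths under $\#_{\tau}$, and \ref{item:microcollaring} holds because $\tau$ vanishes to infinite order at $0$.

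The subtle point is whether the horn's own constant paths have exactly these widths. If their widths are arbitrary functions $w(x)$, we take as top cube the constant path at $\id$ whose width is an extension of the given boundary widths. This extension is obtained by the same Serre-type retraction argument as in Proposition \ref{theorem:C-is-infty-category}, applied to the $[0,\infty)$-valued width function alone. The width space is convex, so such an extension always exists and keeps the path constant at $\id$.

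Finally, the left-lifting arguments apply verbatim, so no further conditions need to be checked.
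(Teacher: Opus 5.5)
Your proposal is correct and takes the same route as the paper, which only asserts that one tracks the index $j$ (the join/$\Delta^{1}$ bookkeeping you make explicit) and bootstraps the simplicial identities and inner horn fillers from $\bar{\mathscr{C}}(Y)$. You also treat separately the case $j=n$, where the filler must itself remain at $\id$, so one extends only the width function; your preliminary formula with width $\sum_k\tau(x_k)$ is not needed. The paper leaves this case implicit.
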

\begin{proof}
  The face and degeneracy maps are closely
  related to the \emph{join operation}; see
  \cite[\S1.2.8]{lurie-HTT-2009}. It is an
  easy combinatorial verification to track
  the $j$ variable and bootstrap the known
  $\infty$-category structure on
  $\bar{\mathscr{C}}(Y)$ to conclude
  $\mathscr{P}(Y)$ is also an
  $\infty$-category.
\end{proof}

\begin{remark}
  The negative PSS category $\mathscr{N}(Y)$
  is defined analogously, except now
  the subsimplex spanned by the last vertices
  $n-j,\dots,n$ is required to be remain at $\id$,
  and the first $0,\dots,n-j-1$ vertices are
  required to lie in $\mathscr{C}_{0}(Y)$.
\end{remark}

\subsubsection{The category of Morse--Floer data}
\label{sec:category-morse-floer}

As with the definition of the $\infty$-functor in \S\ref{sec:equiv-floer-compl}, we will actually define a canonical diagram:
\begin{equation}\label{eq:diagram-QPNdg}
  \begin{tikzcd}
    \mathscr{Q}^{*}(W)\arrow[r]\arrow[d]& \mathrm{N}_{\mathrm{dg}}\mathrm{Ch}(\mathbf{k}[[x]]),\\
    \mathscr{P}(Y)
  \end{tikzcd}
\end{equation}
and prove that the vertical arrow is a
trivial Kan fibration. The category
$\mathscr{Q}^{*}(W)$ contains auxiliary
information; most importantly, the fibers of
$\mathscr{Q}^{*}(W)$ over
$\id\in \mathscr{P}(Y)$ represent choices of
regular Morse--Borel data.

Similarly to the discussion in
\S\ref{sec:auxil-categ-floer}, we will first
define a larger category $\mathscr{Q}(W)$ and
then define $\mathscr{Q}^{*}(W)$ as the
subcategory of ``regular'' simplices.

\begin{definition}\label{definition:MF}
  Let us define $\mathfrak{MF}$ to be the space of triples $(w,\phi,\varphi)$ where:
  \begin{itemize}
  \item $(w,\phi)\in \mathfrak{M}$, as in \S\ref{sec:defin-morse-continuation map}, (a Moore path of Morse--Borel data),
  \item $(w,\varphi)\in \mathfrak{F}$, as in \S\ref{sec:auxil-categ-floer}, (a Moore path of Floer--Borel data),
  \end{itemize}
  satisfying either:
  \begin{enumerate}[label=(\roman*)]
  \item $\phi$ remains at $0$ (Definition \ref{definition:remains-at-zero}), and $\varphi$ has non-degenerate endpoints,
  \item $\phi$ has non-degenerate endpoints
    of type $+$, and $\varphi$ remains at
    $\id$ (Definition
    \ref{definition:floer-remains-at}),
  \item $\phi$ joins type $+$ data to the $0$ vector field, and $\varphi$ joins $\id$ to non-degenerate data.
  \end{enumerate}
  The space $\mathfrak{MF}$ should be thought of as the space of Moore paths in the space of hybrid Morse-Floer data. The concatenation operations $\#_{\tau}$ (insertion of a stationary path) is well-defined between any two paths in $\mathfrak{MF}$ with composable endpoints ((iii) composed with (i) is again of type (iii), etc).
\end{definition}
\begin{definition}
  A zero simplex in $\mathscr{Q}_{n}(W)$ is
  either a Morse--Borel datum $P_{\eta}$ of
  type $+$, as in
  \S\ref{sec:morse-borel-data}, or a
  Floer--Borel data
  $(\varphi_{\eta,t},J)$ as in
  \S\ref{sec:auxil-categ-floer}.
  
  An $n$-simplex $\Psi$ in $\mathscr{Q}_{n}(W)$, for $n\ge 1$, is a collection of zero simplices $\Psi^{i}\in \mathscr{Q}_{0}(W)$ for $i=0,\dots,n$, and a collection smooth cubes:
  \begin{equation*}
    \Psi^{v_{0}\dots v_{m}}:[0,\infty)^{m-1}\to \mathfrak{MF},
  \end{equation*}
  for all $0\le v_{0}<\dots<v_{m}\le n$ and
  $m>0$, such that, as in Definition \ref{definition:simplices},
  \begin{enumerate}
  \item the endpoints of
    $\Psi^{v_{0}\dots v_{m}}(x)$ are
    $\Psi^{v_{0}}$ and $\Psi^{v_{m}}$, for
    all $x$; this is understood in the
    appropriate sense that we ignore the
    component which is set to $0$ or $\id$
    in Definition \ref{definition:MF},
  \item \label{item:break} as $x_{j}\to \infty$,
    \begin{equation*}
      \Psi^{v_{0}\dots v_{m}}(x)=\Psi^{v_{0}\dots v_{j}}(x_{1},\dots,x_{j-1})\#_{\tau(x_{j})} \Psi^{v_{j}\dots v_{m}}(x_{j+1},\dots,x_{m-1}),
    \end{equation*}
  \item \label{item:set-face-zero} when $x_{j}=0$,
    \begin{equation*}
      \Psi^{v_{0}\dots v_{m}}(x)=\Psi^{v_{0}\dots \hat{v_{j}}\dots v_{m}}(x_{1},\dots,x_{j-1},x_{j+1},\dots,x_{m-1}),   
    \end{equation*}
  \item \label{item:microcollaring} on the
    face $x_{j}=0$,
    $\partial_{x_{j}}\Psi^{v}(x)$ vanishes to
    infinite order.
  \end{enumerate}
\end{definition}

The definition of the simplicial structure on
$\mathscr{Q}(W)$ and the verification that it
is an infinity category proceeds exactly the
same as in
\S\ref{sec:infin-categ-contact-isotopies}.

\subsubsection{The subcategory of regular Morse--Floer data}
\label{sec:subc-regul-morse}

The next step is to define the category
$\mathscr{Q}^{*}(W)$ as the subcategory of
simplices which render a certain collection
of countably many moduli spaces tranverse.

To each simplex we will associate the moduli spaces for each subcollection of vertices $\set{v_{0}<\dots<v_{m}}$. The primary case of interest is when $v_{0}$ is Morse type and $v_{m}$ is Floer type, as these are associated to the hybrid moduli spaces. Let us therefore focus on this case.

From each element
$(w,\phi,\varphi)\in \mathfrak{MF}$ joining a
Morse type vertex to a Floer type vertex, one
can extract:
\begin{itemize}
\item Hamiltonian vector fields $Y_{\eta,s,t}$ and $X_{\eta,s,t}$, as in \S\ref{sec:moduli-spaces},
\item Morse pseudogradients $P_{\eta,s}$, as in \S\ref{sec:defin-morse-continuation map},
\end{itemize}
defined for $s\in \R$ and $t\in [0,1]$. Using these, we associate to $w,\phi,\varphi$ the moduli space $\mathscr{M}(w,\phi,\varphi)$ of solutions $(\pi,q,u)$ to:
\begin{equation}\label{eq:hybrid-morse-floer}
  \left\{
    \begin{aligned}
      &\pi:\R\to BG,\quad q:\R\to W,\quad u:\R\times \R/\Z\to W,\\
      &\pi'(s)=-V(\pi(s)),\\
      &q'(s)=-P_{-s,\pi(s)}(q(s)),\quad \lim_{s\to\infty} q(s)\text{ exists},\\
      &(\bd_{s}u-\rho(t)Y_{\pi(s),s,t}(u))+J_{s}(u)(\bd_{t}u-X_{\pi(s),s,t}(u))=0,\\
      &\text{the integral of }\omega(\bd_{s}u-\rho(t)Y_{\pi(s),s,t}(u),\bd_{t}u-X_{\pi(s),s,t}(u))\text{ is finite},\\ 
      &u(+\infty)=q(-\infty).
    \end{aligned}
  \right.
\end{equation}
This is considered as a hybrid between
\eqref{eq:morse-cont-map} and
\eqref{eq:single-moore-path-eqn-mod-2}; see
Figure \ref{fig:morse-floer}.\footnote{The
  negative PSS version of this equation, used
  for Theorem \ref{theorem:negative-pss}, is
  a reflected version, with $u(-\infty)=q(\infty)$.}

Given $\Psi\in \mathscr{Q}_{n}(W)$, and any $\set{v_{0}<\dots<v_{m}}\subset \set{0,\dots,n}$ we consider the cubes:
\begin{equation*}
  \Psi^{v_{0}\dots v_{m}}:[0,\infty)^{m-1}\to \mathfrak{MF}.
\end{equation*}
This is associated with the parametric moduli space of tuples $(x,\pi,q,u)$:
\begin{equation}\label{eq:parametric-depending-on-cube}
  \left\{
    \begin{aligned}
      &x\in [0,\infty)^{m-1},\\
      &\Psi^{v_{0}\dots v_{m}}(x)=(w,\phi,\varphi),\\
      &(\pi,q,u)\in \mathscr{M}(w,\phi,\varphi)\text{ as in }\eqref{eq:hybrid-morse-floer}.
    \end{aligned}
  \right.
\end{equation}
There are two edge cases; if all vertices $v_{0},\dots,v_{m}$ are of Morse type, then $\varphi$ is just a constant path at $\id$, so we ignore $u$ and only ask that:
\begin{itemize}
\item $(\pi,q)$ solves equation \eqref{eq:morse-cont-map}.
\end{itemize}
On the other hand, if $v_{0},\dots,v_{m}$ are all Floer type vertices, then we ignore the Morse part and only ask that $(\pi,u)$ solves:
\begin{equation}\label{eq:just-u-part}
  \left\{
    \begin{aligned}
      &\pi:\R\to BG\quad u:\R\times \R/\Z\to W,\\
      &\pi'(s)=-V(\pi(s)),\\
      &(\bd_{s}u-\rho(t)Y_{\pi(s),s,t}(u))+J_{s}(u)(\bd_{t}u-X_{\pi(s),s,t}(u))=0,\\
      &\text{the integral of }\omega(\bd_{s}u-\rho(t)Y_{\pi(s),s,t}(u),\bd_{t}u-X_{\pi(s),s,t}(u))\text{ is finite}.
    \end{aligned}
  \right.  
\end{equation}

\begin{definition}\label{definition:regular-PSS}
  If the parametric moduli spaces \eqref{eq:parametric-depending-on-cube}, or \eqref{eq:morse-cont-map} or \eqref{eq:just-u-part}, are transverse for each subcube determined by $\Psi\in \mathscr{Q}_{n}(W)$, then we say $\Psi$ is a regular $n$-simplex. The collection of all regular simplices is denoted $\mathscr{Q}^{*}(W)$.
\end{definition}

\begin{lemma}\label{lemma:Qast-closed}
  The regular simplices $\mathscr{Q}^{*}(W)$ form a sub $\infty$-category.
\end{lemma}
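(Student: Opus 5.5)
The plan is to follow the template of Propositions \ref{proposition:degeneracy-closed} and \ref{proposition:Dstar-is-infinity} for $\mathscr{D}^{*}(W)$, now in the hybrid Morse--Floer setting. Two things must be checked. First, $\mathscr{Q}^{*}(W)$ is a simplicial subset of $\mathscr{Q}(W)$, i.e.\ it is closed under face and degeneracy maps. Second, every inner horn $\Lambda^{n}_{i}\to\mathscr{Q}^{*}(W)$, $0<i<n$, admits a filler in $\mathscr{Q}^{*}(W)$.

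\textbf{Closure under faces and degeneracies.} Closure under faces is immediate. Regularity of $\Psi$ is defined (Definition \ref{definition:regular-PSS}) by transversality of the moduli spaces attached to \emph{every} subcube $\Psi^{v_{0}\dots v_{m}}$. The subcubes of $d_{i}\Psi$ are exactly those subcubes of $\Psi$ whose vertex set omits $i$.

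For degeneracies, I will repeat the argument of Proposition \ref{proposition:degeneracy-closed}. Each subcube of $s_{i}\Psi$ is pulled back from a subcube of $\Psi$ along one of two kinds of map:
\begin{itemize}
\item a diffeomorphism, or
\item a surjective submersion $[0,\infty)^{k}\to[0,\infty)^{k-1}$, built from $\mathrm{smax}$ or from a coordinate projection in the edge cases $\id\#_{\tau(x_{1})}(-)$ and $(-)\#_{\tau(x_{k})}\id$.
\end{itemize}
Prepending or appending a stationary piece changes \eqref{eq:hybrid-morse-floer}, \eqref{eq:morse-cont-map} and \eqref{eq:just-u-part} only by an $s$-translation. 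This holds for the Morse part $q$ (a stationary Morse--Borel segment, or one that remains at $0$) as well as for the Floer part $u$. Hence the parametric moduli space of the degenerate subcube is the fiber product of the original one with the submersion, and surjectivity of the linearization is inherited. Non-degeneracy of vertices is unchanged, since degeneracies add no new vertices.

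\textbf{Horn filling.} Let $\Lambda^{n}_{i}\to\mathscr{Q}^{*}(W)$ be an inner horn. By the $\infty$-category property of $\mathscr{Q}(W)$, which has the same proof as Proposition \ref{theorem:C-is-infty-category}, there is a filler $\Psi\in\mathscr{Q}_{n}(W)$. The only subcubes not already prescribed by the horn are:
\begin{itemize}
\item the top cube $\Psi^{0\dots n}$, and
\item the face cube $\Psi^{0\dots\hat{i}\dots n}$ coming from the missing face.
\end{itemize}
I will perturb these two cubes, relative to the horn, so that the corresponding parametric moduli spaces become transverse. All other subcubes are already regular by hypothesis. The order is:
\begin{enumerate}
\item Perturb $\Psi^{0\dots\hat{i}\dots n}$ relative to its boundary, which consists of faces of the horn and is already regular.
\item Perturb $\Psi^{0\dots n}$ relative to its whole boundary, which is now regular.
\end{enumerate}
Each step is a standard parametric Sard--Smale argument. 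The perturbations are taken to be compactly supported in $W$ and in the interior of the cube, so the ideal restriction (the image in $\mathscr{P}(Y)$) is unchanged. The perturbations also vanish to infinite order near the faces and at infinity, so that the collaring axioms \ref{item:break}, \ref{item:set-face-zero} and \ref{item:microcollaring} are preserved.

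The perturbations are chosen as follows:
\begin{itemize}
\item on the Floer part, the Hamiltonian and $J$, exactly as in Proposition \ref{proposition:Dstar-is-infinity};
\item on the Morse part, the $G$- and $\tau$-equivariant family $P_{\eta,s}$.
\end{itemize}
Both are allowed to depend on the cube parameter $x$ and on $\eta\in EG$, as in \cite{seidel-smith-GAFA-2010,seidel-eq-pop}.

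\textbf{Main obstacle.} The main obstacle is transversality for the hybrid equation \eqref{eq:hybrid-morse-floer} under equivariance. Two issues must be handled.
\begin{itemize}
\item Equivariance $g_{*}P_{\eta}=P_{g\eta}$ prevents achieving transversality at $G$-fixed points by perturbations that are $\eta$-independent. My approach is to use the $\eta$-dependence away from the poles, together with the freedom in the parameter $x$. Near the poles, where the data must be $\eta$-independent, I will argue as in the lemma on \ref{item:EP}: flow lines contained in the fixed locus are transverse inside $F$, and they remain transverse in $W$ because the normal directions are expanding.
\item The matching condition $u(+\infty)=q(-\infty)$ requires the evaluation map from the Floer (PSS half-cylinder) part to be transverse to the descending manifold of the Morse part. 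I will obtain this by perturbing $P_{\eta,s}$ on the region where $s$ corresponds to the Morse segment. This uses the same recursive scheme over subcubes as in \cite{seidel-book-2008}.
\end{itemize}
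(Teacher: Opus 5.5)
Your proposal is correct. Faces and degeneracies are handled exactly as in the paper: faces by the ``all subcubes'' clause of Definition \ref{definition:regular-PSS}, and degeneracies by the translation/submersion argument of Proposition \ref{proposition:degeneracy-closed}.

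\textbf{Horn filling.} Here the routes differ. The paper gives no separate perturbation argument. It deduces horn filling from Lemma \ref{lemma:triv-kan-fibr-PSS}: $\mathscr{P}(Y)$ is an $\infty$-category, and $\mathscr{Q}^{*}(W)\to\mathscr{P}(Y)$ is a trivial Kan fibration. A trivial Kan fibration lifts against every monomorphism, in particular $\Lambda^{n}_{i}\hookrightarrow\Delta^{n}$. So one fills the projected horn in $\mathscr{P}(Y)$ and lifts the filler.

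Your direct argument is the same mechanism unpacked. The lift against $\Lambda^{n}_{i}\hookrightarrow\Delta^{n}$ splits into two boundary lifts: first the missing face over $\partial\Delta^{n-1}$, then the top cube over $\partial\Delta^{n}$. Each boundary lift is proved, via Lemma \ref{lemma:is-a-trivial-kan-fibration}, by extending and then perturbing on a compact part to reach regularity; these are your steps (1) and (2). After step (1) you must re-extend the top cube, not merely perturb it, so that its $x_{i}=0$ face matches the modified face.

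\textbf{What each route buys.}
\begin{itemize}
\item The paper's packaging costs nothing extra, since the fibration lemma is needed anyway to choose the sections that define the PSS functor. It also produces fillers lying over any prescribed filler in $\mathscr{P}(Y)$.
\item Your route is self-contained and makes the transversality input explicit. Your requirement that perturbations be compactly supported, so that the ideal restriction is preserved, is not needed for this lemma, but it is exactly what the fibration statement requires.
\end{itemize}

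\textbf{Equivariance is not an obstacle.} The first item of your ``main obstacle'' does not arise. $G$ acts freely on $EG$, so $g_{*}P_{\eta}=P_{g\eta}$ only relates data at distinct points $\eta\neq g\eta$. It imposes no invariance on $P_{\eta}$ at any single $\eta$. In particular, the data at a pole, including the $s$-dependent family entering the $k=0$ moduli spaces, can be made regular by ordinary genericity.

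The fixed-locus argument from the existence lemma in \S\ref{sec:equiv-pseud} relies on the special normal form \ref{item:EP} for globally $\eta$-independent data. General Morse--Borel data need not have this form, so that argument is not available here, and it is not needed. Moreover, in an inner horn all vertices are already present and regular.
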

\begin{proof}
  It follows, essentially by construction, that
  $\mathscr{Q}^{*}(W)$ is closed under face
  maps (since we require transversality for
  all subcubes in Definition
  \ref{definition:regular-PSS}). In fact,
  $\mathscr{Q}^{*}(W)$ is also closed under
  degeneracy maps, as follows from the same
  argument used in Proposition
  \ref{proposition:degeneracy-closed}.
  Briefly: the actual equations which are
  associated to a degenerate simplex are the
  same equations which appear for the underlying
  non-degenerate simplex, but the parameter
  space has been enlarged with additional
  degrees of freedom. This implies that
  regularity is preserved under taking
  degeneracies.

  The fact that $\mathscr{Q}^{*}(W)$
  satisfies the horn filling property follows
  from Lemma \ref{lemma:triv-kan-fibr-PSS}
  below.
\end{proof}

\begin{figure}[h]
  \centering
  \begin{tikzpicture}[yscale=.5]
    \draw (0,0)--(4,0)node[draw,circle,inner sep=1pt,fill]{}node[right]{$\text{input}$};
    \draw (-4,1)--(-2,1)to[out=0,in=90](0,0)to[out=-90,in=0](-2,-1)--(-4,-1);
    \draw (-4,0) circle (0.2 and 1); \node at (-4.2,0)[left]{$\text{output}$};
    \draw[<->] (0.0,0.4)-- node[above]{$w(\phi)$} (1.4,0.4);
    \node[draw,circle,inner sep=1pt,fill] (A) at (1.4,0){} node[below]at(A){$s=0$};
    \node[draw,circle,inner sep=1pt,fill] at (0,0){};
  \end{tikzpicture}
  \caption{Illustration of the hybrid Morse--Floer equation \eqref{eq:hybrid-morse-floer}.}
  \label{fig:morse-floer}
\end{figure}

\subsubsection{The trivial Kan fibration property}
\label{sec:triv-kan-fibr}

\begin{lemma}\label{lemma:triv-kan-fibr-PSS}
  There is a commutative square of simplicial maps:
  \begin{equation*}
    \begin{tikzcd}
      \mathscr{D}^{*}(W)\arrow[d]\arrow[r,"\subset"]&\mathscr{Q}^{*}(W)\arrow[d]\\
      \mathscr{C}(Y)\arrow[r,"\subset"]&\mathscr{P}(Y)
    \end{tikzcd}
  \end{equation*}
  and the vertical morphisms are trivial Kan fibrations.
\end{lemma}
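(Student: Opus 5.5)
The plan has three parts: construct the square, prove that $\mathscr{Q}(W)\to\mathscr{P}(Y)$ is a trivial Kan fibration before imposing regularity, and then show that regularity can be achieved by perturbing relative to the boundary.

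\emph{The square.} The vertical map $\mathscr{Q}^{*}(W)\to\mathscr{P}(Y)$ is defined simplexwise. A simplex $\Psi$ is sent to the pair $(j,\Phi)$, where $j$ is the largest index of a Morse-type vertex ($j=-1$ if there is none), and $\Phi$ is the ideal restriction of the Floer components of the cubes. On cubes of type (ii) in Definition \ref{definition:MF}, the Floer component remains at $\id$, so the ideal restriction is the constant path at $\id$. The requirement that the subsimplex on vertices $0,\dots,j$ remains at $\id$ in the sense of Definition \ref{definition:floer-remains-at} is therefore automatic. Compatibility with faces and degeneracies follows because the rule for shifting $j$ in the face and degeneracy maps of $\mathscr{P}(Y)$ matches the count of Morse vertices. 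The horizontal inclusions identify $\mathscr{D}^{*}(W)$ with the simplices having no Morse vertex, and $\mathscr{C}(Y)$ with $j=-1$. The left vertical arrow is Lemma \ref{lemma:is-a-trivial-kan-fibration}, so only the right arrow needs proof.

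\emph{Lifting without regularity.} Fix $\partial\Delta^{n}\to\mathscr{Q}^{*}(W)$ and an extension $(j,\Phi)\colon\Delta^{n}\to\mathscr{P}(Y)$. All subcubes of the desired lift except the top cube $\Psi^{0\dots n}$ are already prescribed. As in the proof of Proposition \ref{theorem:C-is-infty-category}, the boundary of $[0,\infty)^{n-1}$ (faces $x_{i}=0$, ends $x_{i}\to\infty$) has prescribed values in $\mathfrak{MF}$ with fixed ideal restriction. Here the whole boundary is prescribed, not a horn. I would split into cases.
\begin{itemize}
\item If $j=-1$, the statement is Lemma \ref{lemma:is-a-trivial-kan-fibration}.
\item If $j=n$, every vertex is Morse-type and the fiber is a space of Moore paths of Morse--Borel data between fixed endpoints, which is convex.
\item If $-1<j<n$, the fiber over $\Phi(x)$ consists of pairs $(\phi,\varphi)$ of type (iii). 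Here $\phi$ ranges over a convex set of paths from the type $+$ datum $\Psi^{0}$ to $0$, and $\varphi$ ranges over Borel extensions of the fixed contact data.
\end{itemize}
The extensions of the contact data are built by cutoff, $f(r)H$ with $f$ a $G$-invariant radial cutoff supported in the convex end, and they form an affine space modulo compactly supported, $\eta$-dependent terms satisfying \ref{item:B-equiv}--\ref{item:B-IR}. In every case the fiber is therefore contractible (affine).

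To extend the boundary data to the cube, I would take a collared convex combination with a fixed reference extension. I would use the $\tau$-collars and the microcollaring axiom, exactly as in the smoothness argument of Proposition \ref{theorem:C-is-infty-category}, so that the breaking axiom \ref{item:break} and the face axiom \ref{item:set-face-zero} hold on the nose near the boundary. Concatenation $\#_{\tau}$ is compatible with the three types because (iii) composed with (i) is (iii), and similarly for the other compositions.

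\emph{Regularity (the main obstacle).} The extended top cube must be made regular while keeping its boundary fixed, since the boundary is already regular. I would perturb compactly in $W$, with the perturbation depending on $x$ and vanishing near $\partial[0,\infty)^{n-1}$ and in the collars. The Floer part is treated as in Proposition \ref{proposition:Dstar-is-infinity}. The Morse pseudogradients $P_{\eta,s}$ are perturbed equivariantly, as in the regularity lemma for \ref{item:EP}.

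For the hybrid moduli spaces \eqref{eq:hybrid-morse-floer}, transversality of the matching condition $u(+\infty)=q(-\infty)$ is the delicate point. I would obtain it by perturbing the Hamiltonian term $\rho(t)Y$ near the input end and the vector field $P_{-s,\pi(s)}$ independently, giving surjectivity onto the evaluation constraint (a standard Sard--Smale argument, as in \cite{frauenfelder-schlenk-israeljm-2007}). Equivariance might obstruct this at fixed points. To handle that, I would work over the $EG$-parameter, where the Borel family has free $G$-action on $S^{\infty}$, so $\eta$-dependent perturbations remain generic. This step is the one I expect to require the most care: boundary transversality for the hybrid configurations with breaking in the Morse part must be inherited by the glued interior solutions, and this rests on the collar conditions.

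Finally, degeneracy closure was already shown in Lemma \ref{lemma:Qast-closed}. With the lifting property for every $n$ established, the right vertical arrow is a trivial Kan fibration and the square commutes by construction.
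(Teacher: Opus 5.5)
Your proposal is correct and follows the paper's argument. The paper also reduces the lifting problem to extending a prescribed boundary lift in $\mathfrak{MF}$ across the interior of the top cube: the Floer component uses the cut-off construction and compactly supported perturbation of Lemma~\ref{lemma:is-a-trivial-kan-fibration}, and the Morse component uses convexity of Moore paths of vector fields. Your version adds detail the paper omits: the vertical map via the Morse-vertex count, the collared extension, and regularity relative to the boundary. One small imprecision is that the almost complex structure part of the fiber is contractible rather than literally affine, but contractibility is enough for the extension.
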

\begin{proof}
  This boils down to the following problem: given a $[0,\infty)^{\infty}\to \mathrm{NN}(Y)$ arising from a simplex in $\mathscr{P}(Y)$, and a lift of the boundary of the cube to $\mathfrak{MF}$, can one extend the lift to the interior? The Floer part can be handled in the same way as Lemma \ref{lemma:is-a-trivial-kan-fibration}. The Morse part is handled straightforwardly by convexity in the space of Moore paths of vector fields.
\end{proof}

\subsubsection{Definition of the PSS infinity functor}
\label{sec:defin-infin-funct-PSS}

In this section we complete the first part of Theorem \ref{theorem:pss} (and, by analogy, also Theorem \ref{theorem:negative-pss}), and construct the $\infty$-functor $\mathscr{Q}^{*}(W)\to \mathrm{N}_{\mathrm{dg}}\mathrm{Ch}(\mathbf{k}[[x]])$; the passage to a functor defined on $\mathscr{P}(Y)$ (by choosing sections of a trivial Kan fibration) follows the same lines as \S\ref{sec:defin-infty-funct}.

Each $n$-simplex $\Psi\in \mathscr{Q}^{*}(W)$ yields a collection of subcubes; let us label the important ones for the purposes of verifying Definition \ref{definition:infinity-functor}:
\begin{equation}
  \label{eq:important-subcubes}
  \text{$\Psi^{0\dots n}$, $\Psi^{0\dots \hat{i}\dots n}$, $\Psi^{0\dots i}$, and, $\Psi^{i\dots n}$, for $i=1,\dots,n-1$}
\end{equation}
Each of these subcubes determines a parametric moduli space, which is assumed to be regular (as we start with $\Psi\in \mathscr{Q}^{*}(W)$).

The overall strategy is as follows:
\begin{enumerate}[label=(\alph*)]
\item\label{item:pss-strategy-1} vertices $\Psi$ of
  Morse type are sent to
  $\mathrm{CM}_{\mathrm{eq}}(\Psi)$, while
  vertices of Floer type are sent to
  $\mathrm{CF}_{\mathrm{eq}}(\Psi)$; to unify
  notation, let us denote this chain complex
  by $\mathrm{C}_{\mathrm{eq}}(\Psi)$;
\item\label{item:pss-strategy-2}
  $n$-simplices $\Psi$ with $n\ge 1$
  between zero-simplices $\Psi|_{0}$ and
  $\Psi|_{n}$ are sent to the map
  $\mathfrak{c}_{\sigma}:\mathrm{C}_{\mathrm{eq}}(\Psi|_{0})\to
  \mathrm{C}_{\mathrm{eq}}(\Psi|_{n})$
  obtained by counting the rigid solutions of
  \eqref{eq:parametric-depending-on-cube} for
  the top subcube (corresponding to the full
  set of vertices $0,\dots,n$) in an
  appropriate fashion (recalled below);
\end{enumerate}
Then, to prove the $\infty$-functor relations, one argues as follows:
\begin{enumerate}[label=(\alph*),resume]
\item\label{item:pss-strategy-3} the relation
  between the one-dimensional components of
  \eqref{eq:parametric-depending-on-cube},
  for the top subcube, and the rigid elements
  in the moduli spaces associated to the
  lower dimensional subcubes listed in
  \eqref{eq:important-subcubes} is used to show that
  the first part of Definition
  \ref{definition:infinity-functor} is
  satisfied;
\item\label{item:pss-strategy-4} the rigid
  components of the moduli space associated
  to the top subcube of a degenerate
  $n$-simplex are shown to either yield the
  identity chain map (if $n=1$) or to the $0$
  map (if $n>1$); the latter case is proved
  by exhibiting a degree of freedom in the
  moduli space, similarly to Lemma
  \ref{lemma:Qast-closed}.
\end{enumerate}

There is not much left to explain for
\ref{item:pss-strategy-1}; the complexes are
as defined in \S\ref{sec:defin-equiv-chain},
\S\ref{sec:case-p-3},
\S\ref{sec:defin-infin-funct} (Floer), and
\S\ref{sec:defin-borel-equiv} (Morse).

The counting scheme used for
\ref{item:pss-strategy-2} follows mostly the
same lines as \S\ref{sec:defin-infin-funct}
and \S\ref{sec:defin-borel-equiv}. We only explain the case when there are both types of vertices; the cases when the equation is entirely Morse or entirely Floer follow simpler lines and are left to the reader.

Each solution $(x,\pi,q,u)$ determines an
linearized operator:
\begin{equation*}
  D_{\pi,q,u}:T_{x}\oplus T_{\pi}\oplus T_{q}\oplus W^{1,p}(u^{*}TW)\to L^{p}(u^{*}TW)\oplus TW_{u(\infty)},
\end{equation*}
where $T_{x},T_{\pi},T_{q}$ are shorthands for the
tangent spaces at $x,\pi$ and $q$. The map to
the second factor is the linearization of the constraint $u(\infty)=q(w)$; if
$(\delta x,\delta \pi,\delta q,\delta u)$ is
a variation, the map to $TW_{u(\infty)}$ is
given by:
\begin{equation*}
  q'(w(x))dw_{x}(\delta x)+\delta q(w(x))-\delta u(\infty),
\end{equation*}
while the map to the first factor is defined
by a standard linearization procedure, as
discussed in \S\ref{sec:defin-equiv-chain} and \S\ref{sec:note-param-moduli}.

This linearized operator is surjective, by
definition of regularity, and the kernel of
this operator is the tangent space of the
moduli space at $(x,\pi,q,u)$.

Thus, in the case of the zero-dimensional moduli spaces, one
can use the orientability of the moduli space
and the parallel transport maps for Fredholm
determinants to produce generators of:
\begin{equation*}
  \mathfrak{o}(T_{x})\otimes\mathfrak{o}(T_{\pi})\otimes \mathfrak{o}(T_{q})\otimes \mathfrak{o}(D_{u})\otimes \mathfrak{o}(TW_{u(\infty)}),
\end{equation*}
and then use the canonical gluing isomorphisms from \S\ref{sec:line-proc} to identify:
\begin{equation*}
  \mathfrak{o}(D_{u})\simeq \mathfrak{o}(\gamma_{-})\text{ and }\mathfrak{o}(T_{\pi})\simeq \mathfrak{o}(y_{-})\otimes \mathfrak{o}(y_{+}),
\end{equation*}
and consequently, as in \S\ref{sec:defin-infin-funct}, we have a canonical identification:
\begin{equation*}
  \mathfrak{o}(T_{\pi})\otimes \mathfrak{o}(T_{q})\otimes \mathfrak{o}(D_{u})\otimes \mathfrak{o}(TW_{u(\infty)})\simeq \mathfrak{o}(\gamma_{-})\otimes \mathfrak{o}(y_{-})\otimes \mathfrak{o}(q_{+})\otimes \mathfrak{o}(y_{+}),
\end{equation*}
where $\mathfrak{o}(q_{+})$ is the orientation line of the \emph{unstable manifold} at $q_{+}=q(+\infty)$ (note that $T_{q}$ is naturally identified with the stable manifold at $q_{+}$).

In this fashion, the rigid solutions in the moduli space associated to the top subcube of $\sigma$ are packaged into a linear map:
\begin{equation*}
  \mathfrak{c}_{\sigma}:\bigoplus \mathfrak{o}(q_{+})\otimes \mathfrak{o}(y_{+})\otimes \mathbf{k}[[x]]\to   \bigoplus \mathfrak{o}(\gamma_{-})\otimes \mathfrak{o}(y_{-})\otimes \mathbf{k}[[x]].
\end{equation*}
This completes our discussion of \ref{item:pss-strategy-2}. The discussion of \ref{item:pss-strategy-3} is analogous to \S\ref{sec:chain-homot-assoc} and \S\ref{sec:defin-infin-funct}; we only briefly comment on the novel aspects introduced by the hybrid equation \eqref{eq:parametric-depending-on-cube}.

Let us consider the case of an end in the 1-dimensional moduli space where $x_{i}\to \infty$; there are two cases to consider, depending on whether the vertex $v_{i}$ is of Morse type or Floer type. In either case, $\Psi(x)$ breaks into:
\begin{itemize}
\item $\Psi^{0\dots i}(x_{1},\dots,x_{i-1})\#_{\tau(x_{i})}\Psi^{i\dots n}(x_{i+1},\dots,x_{n-1})$,
\end{itemize}
and the only difference is which piece of $\mathfrak{MF}$; if $v_{i}$ is Morse type, then:
\begin{itemize}
\item $\Psi^{0\dots i}$ takes values in the Morse-Morse part of $\mathfrak{MF}$,
\item $\Psi^{i\dots n}$ takes values in the Morse-Floer part of $\mathfrak{MF}$,
\end{itemize}
otherwise, if $v_{i}$ is of Floer type, then:
\begin{itemize}
\item $\Psi^{0\dots i}$ takes values in the Morse-Floer part of $\mathfrak{MF}$,
\item $\Psi^{i\dots n}$ takes values in the Floer-Floer part of $\mathfrak{MF}$.
\end{itemize}
These is the expected breaking, and terms with
this type of breaking contribute the term
$\mathfrak{c}_{\sigma|[i\dots
  n]}\circ \mathfrak{c}_{\sigma|[0\dots i]}$ which
appears in \eqref{eq:dg-nerve}.

Let us explain in a bit more detail the
degenerating sequence in the case when
$v_{i}$ is of Morse type. A sequence of
solutions $(x_{k},\pi_{k},q_{k},u_{k})$ will
converge, after passing to a subsequence, and
on compact subsets of the domain (either $\R$
or $\R\times \R/\Z$), to an equation for the
subcube $\Psi^{0\dots i}$. Note that the
Floer part converges to a constant, as it
solves the holomorphic curve equation and has
finite energy. To catch the other limit, one
needs to translate the solutions by the
amount:
\begin{equation*}
  w(\Psi^{0\dots i}(x_{1},\dots,x_{i-1}))+\tau(x_{i}),
\end{equation*}
where $w$ stands for the width of a Moore
path. After this translation is done, the
sequence of solutions will converge on
compact subsets to a solution in the equation
for the subcube $\Psi^{i\dots n}$ (after passing to a subsequence).

The rest of the details of \ref{item:pss-strategy-3} follows similar lines to \S\ref{sec:chain-homot-assoc}, and the necessary deferal to the orientation line gluing arguments as in \S\ref{sec:defin-infin-funct}.

Finally, \ref{item:pss-strategy-4},
concerning the counts associated to
degenerate simplices, follows easily from the
discussion in Lemma \ref{lemma:Qast-closed}
and the same argument used in
\S\ref{sec:chain-homot-assoc}. Namely, when
one degenerates, the parameter space is
simply enlarged by one dimension, without
changing the actual underlying solutions;
this precludes the formation of rigid
solutions. The case of a degenerate
$1$-simplex is an edge case, but it is standard and handled, as in, say, \cite{hofer-salamon-1995}.

This completes our construction of the functor $\mathscr{Q}^{*}(Y)\to \mathrm{N}_{\mathrm{dg}}\mathrm{Ch}(\mathbf{k}[[x]])$. The passage to $\mathscr{P}(Y)$ follows the strategy of \S\ref{sec:defin-infty-funct} and \S\ref{sec:survey-infty-categories}, namely by picking sections of the trivial Kan fibration from \S\ref{sec:triv-kan-fibr}.

\subsection{The low-slope PSS isomorphism}
\label{sec:low-slope-pss}

We explain why the maps:
\begin{equation*}
  \mathrm{PSS}:\mathrm{CM}_{\mathrm{eq}}(P_{\eta})\to \mathrm{CF}_{\mathrm{eq}}(R_{\epsilon t})
\end{equation*}
associated to specific $1$-simplices
$\varphi_{s,t}=R_{s\epsilon t}$ joining $\id$
to $R_{\epsilon t}$, for small positive
slopes $\epsilon$, are chain-homotopy
equivalences. The analogous statement for the
negative PSS map of Theorem
\ref{theorem:negative-pss} is proved in the
exact same way, and so we only focus on the
``positive'' PSS map. The argument we will
use follows
\cite[pp.\,24-25]{frauenfelder-schlenk-israeljm-2007};
the idea is to construct a chain-homotopy
inverse. Due to the similarity with
\cite{piunikhin-salamon-schwarz-1996,
  frauenfelder-schlenk-israeljm-2007}, we
only sketch the results.

\subsubsection{Geometric set up for the inverse of
  PSS}
\label{sec:geometric-set-up-PSS}

Let $r$ be a radial coordinate on the convex end
of $W$, so that $\Omega=\set{r\le 1}$ is an
aspherical symplectic domain with contact-type
boundary, and so that $r\ge 1$ is identified with
the positive half of the symplectization of
$\partial W$. We suppose that $r$ is
$G$-invariant. Then $X_{r}$ has an ideal
restriction to $\partial W$ as a Reeb flow $R$
which lifts a Reeb flow on $Y=\partial W/G$.

Let $h:\R\to \R$ be a convex cut-off function so
that:
\begin{itemize}
\item $h(r)=r$ for $r\ge 2$,
\item $h'(r)>0$ for $r>1$,
\item $r=3/2$ on $\Omega=\set{r\le 1}$,
\end{itemize}
and let $H=\epsilon h(r)$. We assume $\epsilon$ is
small enough that the only orbits of $X_{H}$ are
the constant orbits in $\Omega$. For a choice of
almost complex structure $J$ as in
\S\ref{sec:auxil-categ-floer}, the pair $(H,J)$ is
equivariant Borel data (certainly not regular)
whose ideal restriction is $R_{\epsilon t}$.

We also pick a vector field $P$ which agrees with
the Liouville vector field $Z$ in the convex end and is a
regular equivariant Morse--Borel data as in
\S\ref{sec:equiv-pseud}.

\subsubsection{Reverse PSS cylinders}
\label{sec:reverse-pss-cylinders}

Let $(\psi_{\eta,t},J)$ be regular Borel data
whose ideal restriction is $R_{\epsilon t}$. As in
\S\ref{sec:moduli-spaces}, let us denote by
$X_{\eta,t}$ the generator of $\psi_{\eta,f(t)}$,
where $f$ is the cut-off function from Figure
\ref{fig:graph-of-f}. We may assume, without loss
of generality, that $X_{\eta,t}=f'(t)X_{H}$
outside $\Omega$ (i.e., we assume
$\psi_{\eta,t}=R_{\epsilon t}$ outside
$\Omega$). We use this $f(t)$ only to be
consistent with the set-up of
\S\ref{sec:moduli-spaces}, rather than for any
purpose internal to this section.

Introduce the continuation data:
\begin{equation*}
  X_{\eta,s,t}:=(1-f(s))f'(t)X_{H}+f(s)X_{\eta,t}+E_{s,t},
\end{equation*}
for $s\in [0,1]$, extended to the rest of the
line by $s$-independence, where:
\begin{itemize}
\item $E_{s,t}$ is a $C^{\infty}$-small
  Hamiltonian pertubation, supported in $\Omega$.
\end{itemize}
Introduce the moduli space $\mathscr{R}$ of
\emph{reverse PSS continuation cylinders}:
\begin{equation}
  \label{eq:moduli-space-inverse-of-PSS}
  \left\{
    \begin{aligned}
      &u:\R\times \R/\Z\to W,\ q:(-\infty,0]\to W,\text{ and }\pi:\R\to EG,\\
      &\pi'(s)=-V(\pi(s)),\ q'(s)=-P(q(s)),\\
      &\bd_{s}u+J(u)(\bd_{t}u-X_{\pi(s),s,t}(u))=0,\\
      &u(-\infty)=q(0).
    \end{aligned}
  \right.
\end{equation}
This solution is asymptotic at the input to a
Hamiltonian orbit, and has a flow line of $-P$
connected at the output end.

Importantly, because $X_{\pi(s),s,t}(u)$ is
independent of $s$, outside of $\Omega$, the
finite energy solutions obey the necessary a
priori energy estimates needed for the usual Floer
theory compactness results.

We only consider those solutions
$(u,q,\pi)\in \mathscr{R}$ so that:
\begin{itemize}
\item $u$ has finite energy, which implies the
  left asymptotic is a removable singularity
  $u(-\infty)\in \Omega$ (see \cite[Figure
  3]{frauenfelder-schlenk-israeljm-2007});\footnote{This
    point is slightly subtle since $f'(t)X_{H}$
    has every point in the domain $\Omega$ as a
    constant $1$-periodic orbit, and has no other
    orbits. For any sequence $s_{n}\to-\infty$,
    there is a subsequence of $u(s_{n},t)$ which
    converges to a point $q$. Then, by some
    maximum principle, e.g., the one of
    \cite[Lemma 7.2]{abouzaid-seidel-GT-2010}, we
    conclude that $u$ takes values entirely in
    $\Omega$. It follows that $u$ is a genuine
    holomorphic curve on the half-cylinder
    $s\le 0$; having established this, the
    analysis follows the usual PSS arguments of
    \cite{piunikhin-salamon-schwarz-1996}.}
\item $q(-\infty)$ converges to a zero of
  $P$ (i.e., we do not consider solutions where
  $q(-\infty)$ drifts off in the non-compact end);
\end{itemize}

By counting the rigid such solutions in
$\mathscr{R}$ in the same way as in
\S\ref{sec:defin-infin-funct-PSS} (for generic choice
of the perturbation term $E_{s,t}$), we obtain a
map:
\begin{equation}\label{eq:inverse-of-PSS}
  \mathrm{CF}_{\mathrm{eq}}(\psi_{\eta,t},J)\to \mathrm{CM}_{\mathrm{eq}}(P);
\end{equation}
consideration of the $1$-dimensional part
of $\mathscr{R}$ proves this is a chain map.

\subsubsection{The chain homotopy inverse}
\label{sec:chain-homot-inverse}

We now explain why \eqref{eq:inverse-of-PSS} is
the chain homotopy inverse of the PSS map of
\S\ref{sec:defin-infin-funct-PSS}.

To define the PSS map, one ultimately uses the equation
\eqref{eq:hybrid-morse-floer}. In the case of
the $1$-simplex $s\mapsto R_{\epsilon s t}$, one needs
to pick a path of Borel data
$s\mapsto (\psi_{\eta,s,t},J)$ whose ideal
restriction is $R_{\epsilon s t}$. As in
\S\ref{sec:reverse-pss-cylinders}, we pick
$\psi_{\eta,s,t}$ agreeing with
$R_{\epsilon s t}$ outside of $\Omega$.

Unpacking \eqref{eq:hybrid-morse-floer} leads to
a family of vector fields $Y_{\eta,s,t}$ and
$X_{\eta,s,t}$ so that:
\begin{enumerate}[label=(\roman*)]
\item\label{item:curv-1} $Y_{\eta,s,t}=0$ outside
  of $s\in [-w(\varphi),0]$,
\item\label{item:curv-2} $X_{\eta,s,t}=X_{\eta,t}$
  for $s\le -w(\varphi)$, where $X_{\eta,t}$ is as in
  \S\ref{sec:reverse-pss-cylinders},
\item\label{item:curv-3} $X_{\eta,s,t}=0$ for
  $s\ge 0$,
\item\label{item:curv-4} both
  $X_{\eta,s,t},Y_{\eta,s,t}$ are $\eta$
  independent and lie in the line spanned by the
  Reeb vector field $R$, outside of $\Omega$,
\item\label{item:curv-5} the curvature of the
  Hamiltonian connection\footnote{see
    \cite[{\S3.2}]{brocic-cant-arXiv-2025} for an
    introduction to Hamiltonian connections} on
  the complement of $\Omega$ determined by
  $X_{s,t},Y_{s,t}$ is non-positive.
\end{enumerate}

One counts the rigid solutions of the parametric
moduli space where $\eta$ is constrained to equal
$\pi(s)$ for flow lines $\pi:\R\to EG$, as usual,
to define the chain map. This PSS chain map can be
post-composed with \eqref{eq:inverse-of-PSS}. By
standard Floer theory, the composition is chain
homotopic to the count of rigid cylinders solving
Floer's equation for glued Hamiltonian connections
(with flow lines attached at both ends, in the PSS
sense).

\begin{figure}[h]
  \centering
  \begin{tikzpicture}
    \draw (1.5,0.5)arc(90:-90:0.5)--(-1.5,-0.5)arc(-90:-270:0.5)--cycle;
    \draw (1.5,0) circle (0.1 and 0.5) (-1.5,0) circle (0.1 and 0.5);
    
    \draw (2,0)to[out=10,in=190](4,0)node[fill,circle,inner sep=1pt]{} (-2,0)to[out=190,in=10](-4,0)node[fill,circle,inner sep=1pt]{};
  \end{tikzpicture}
  \caption{Cartoon of the composition of
    PSS followed by \eqref{eq:inverse-of-PSS}}
  \label{fig:compo-pss}
\end{figure}
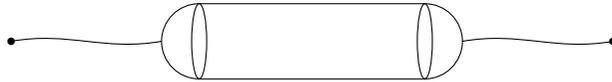

The glued Hamiltonian connections are
similarly determined by vector fields
$X_{R,\eta,s,t},Y_{R,\eta,s,t}$, where $R$ is a
gluing parameter. These glued connections still satisfy
\ref{item:curv-4} and \ref{item:curv-5}. In
addition, they satisfy the property that:
\begin{enumerate}[resume,label=(\roman*)]
\item\label{item:curv-6}
  $X_{R,\eta,s,t}=f'(t)X_{H}$ for $s\le -s_{R}$,
  and $X_{R,\eta,s,t}=0$ for $s\ge s_{R}$, for
  some $s_{R}$.
\end{enumerate}
The key idea is that the space of connections on
the cylinder (depending on the auxiliary parameter
$\eta$) satisfying \ref{item:curv-4},
\ref{item:curv-5} and \ref{item:curv-6} is a
convex space.\footnote{this uses \ref{item:curv-4}
  in a crucial way, otherwise having non-positive
  curvature is a non-linear condition; for related
  discussion, see
  \cite[\S3.4.2]{brocic-cant-arXiv-2025}} Thus we
can deform the glued data
$X_{R,\eta,s,t},Y_{R,\eta,s,t}$ in a one parameter
family until it agrees with the following:
\begin{itemize}
\item $X_{s,t}=(1-f(s))X_{H}$ on $s\in [0,1]$, and
  extended by $s$-independence.
\end{itemize}
Indeed, we can simply travel in a straight line
through the space of affine Hamiltonian
connections.

The solutions for this final Hamiltonian
connection are as follows:
\begin{enumerate}
\item a cylinder $u$ solving
  $\partial_{s}
  u+J(u)(\partial_{t}u-X_{s,t}(u))=0$,
\item a flow line of $-P$ on the interval
  $[1,\infty)$ starting at the removable
  singularity $u(+\infty)$,
\item a flow line of $P$ on the interval
  $(-\infty,0]$ ending at the removable
  singularity $u(-\infty)$,
\item an underyling flow line of $\pi:\R\to EG$,
  which does not influence the equation because we
  use equivariant data.
\end{enumerate}
By the maximum principle, e.g., \cite[Lemma
7.2]{abouzaid-seidel-GT-2010}, $u$ must lie
entirely in the domain $\Omega$, and so must be a
holomorphic sphere, and therefore must be
constant. Thus all we are doing is counting the
flow lines of $P$. The index zero requirement
implies the only solutions we will count are the
constant solutions, and so the resulting map is
the identity map
$\mathrm{CM}_{\mathrm{eq}}(P)\to
\mathrm{CM}_{\mathrm{eq}}(P)$.

Thus, by the usual chain homotopy argument, we
conclude the composition of the PSS map with
\eqref{eq:inverse-of-PSS} is chain homotopic to
the identity map, as desired.

Thus we have proved \eqref{eq:inverse-of-PSS} is
the left inverse of the PSS map. A similar
argument to the one given in
\cite{piunikhin-salamon-schwarz-1996,
  frauenfelder-schlenk-israeljm-2007} proves that
\eqref{eq:inverse-of-PSS} is also the right
inverse. We leave the details of this half of the
argument to the reader. This completes the proof
of Theorem \ref{theorem:pss}. \hfill$\square$

\subsection{Proof of Theorem \ref{theorem:commutative-square}}
\label{sec:proof-theorem-c-square}

The argument involves similar analysis as the
proof that \eqref{eq:inverse-of-PSS} is the
right inverse of PSS; namely it involves
nodal gluing between PSS cylinders whose
removable singularities are coincident. The
details are similarly left to the reader. For
further discussion, we refer the reader to
\cite[\S2.3.3]{cant-hedicke-kilgore-arXiv-2023}.\hfill$\square$

% 4

\section{Local Floer cohomology as the cone of a
  simple crossing}
\label{sec:local-floer-cohom}

\emph{Local Floer cohomology} is associated to a suitable set $C$ of
orbits. What is important is that Floer cylinders
which start and end in $C$ occupy a certain
minimal amount of energy (in a way which persists
under perturbations). This is the perspective
developed in \cite{floer-comm-math-phys-1989}. Some special cases are:
\begin{itemize}
\item $C$ is a single isolated fixed point; see \cite{ginzburg-gurel-JSG-2010, zhao-JSG-2019,
    shelukhin-zhao-JSG-2021,
    shelukhin-on-HZ-annals-2022};
\item $C$ is an $S^{1}$ family of orbits; see, e.g.,
  \cite{cieliebak-et-al-mathZ-1996,
    mclean-AGT-2012}.
\end{itemize}
In our setting, the families of orbits for which we
will develop a local Floer cohomology are
associated to crossings with the discriminant.

\subsection{Extenders}
\label{sec:extenders}

To any compact contact manifold $Y$ we will associate
a class of ``simple'' morphisms
$\mathscr{E}\to \mathscr{C}_{1}(Y)$ and a
map:
$$\mathrm{CF}_{\mathrm{loc}}:\mathscr{E}\to
\mathrm{Ch}(\mathbf{k})$$ by applying local
Floer cohomology theory in the symplectization
$SY$.

Define an \emph{extender}, denoted
$\psi_{s,t}\in \mathscr{E}$, to be a Hamiltonian
isotopy on $SY$ with $s,t$-generating Hamiltonians
$K_{s,t},H_{s,t}$ satisfying:
\begin{enumerate}[label=(E\arabic*)]
\item\label{item:extender-1} $H_{s,t}=H_{0,t}$ and
  $K_{s,t}=0$ on the negative end,
\item\label{item:extender-2} $X_{H_{0,t}}$ is
  Liouville equivariant everywhere,
\item\label{item:extender-3} $X_{H_{s,t}}$ is
  Liouville equivariant on the positive end,
\item\label{item:extender-4} $K_{s,1}$ is
  non-negative,
\item\label{item:extender-5} $X_{H_{1,t}}$
  generates a Hamiltonian diffeomorphism with a
  compact set of fixed points, all of which have
  negative action,
\item\label{item:extender-6} there is at most a
  single action value attained in
  \ref{item:extender-5}.
\end{enumerate}
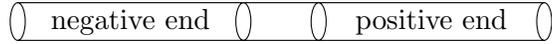
\begin{figure}[h]
  \centering
  \begin{tikzpicture}[scale=0.5]
    \draw (-2,0) coordinate (A) circle (0.2 and
    0.5) (4,0) coordinate (B) circle (0.2 and 0.5)
    (6,0) coordinate (C) circle (0.2 and 0.5)
    (12,0) coordinate (D) circle (0.2 and 0.5)
    (-2,0.5)--(12,0.5) (-2,-0.5)--(12,-0.5);
    \path (A)--node{negative end}(B)
  (C)--node{positive end}(D);
\end{tikzpicture}
\caption{The domain of an extender is the
  symplectization $SY$ decomposed into a negative
  end, a positive end, and a compact part in between.}
\label{fig:domain-of-extender}
\end{figure}

There is an ideal restriction
$\mathscr{E}\to \mathscr{C}_{1}(Y)$ given by the
flow generated by $H_{s,t}$ on the positive
end.

Given an extender $\psi_{s,t}\in \mathscr{E}$, and
a subset $\kappa\subset \pi_{0}(\Lambda Y)$ of free
homotopy classes of loops, we will define in
\S\ref{sec:local-floer-cohomology-of-an-extender}
the \emph{local Floer chain homotopy
  type}\footnote{Here a chain homotopy type is a
  chain complex up to chain homotopy equivalence.}
$\mathrm{CF}_{\mathrm{loc}}(\psi_{s,t};\kappa)$
for $H_{1,t}$ following
\cite{floer-comm-math-phys-1989}. Then:

\begin{theorem}\label{theorem:main-local-hf}
  Consider the infinity functor constructed in
  \S\ref{sec:equiv-floer-compl}:
  $$\mathrm{CF}_{\mathrm{eq}}:\mathscr{C}(Y)\to
  \mathrm{N}_{\mathrm{dg}}\mathrm{Ch}(\mathbf{k}[[x]]),$$
  assuming the hypotheses of Theorem
  \ref{theorem:main-infinity}. For
  $\psi_{s,t}\in \mathscr{E}$ with ideal
  restriction
  $\varphi_{s,t}\in \mathscr{C}_{1}(Y)$, the cone
  of the continuation map associated to
  $\varphi_{s,t}$ lies in the chain homotopy class
  $\mathrm{CF}_{\mathrm{loc}}(\psi_{s,t};\kappa)$
  where $\kappa$ is the collection of
  $W$-contractible orbits in $Y$ (namely, those
  orbits in $Y$ which lift to orbits in
  $\partial W$ which are contractible in $W$).
\end{theorem}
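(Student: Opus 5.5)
The strategy is to use the freedom in the choice of section of the trivial Kan fibration (Lemma \ref{lemma:is-a-trivial-kan-fibration}) and pick a very particular regular lift of the $1$-simplex $\varphi_{s,t}$ to Borel data. For such a lift, the continuation map should be visibly an inclusion of a subcomplex whose quotient is the local complex. The abstract machinery of \S\ref{sec:survey-infty-categories} then implies that any other section gives a chain homotopy equivalent cone. Indeed, two sections are joined by a path in the contractible Kan complex of sections. This produces chain homotopy equivalences on source and target that intertwine the two continuation maps up to a specified homotopy, and hence an equivalence of cones.

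First, I will construct the lift. Starting from the extender $\psi_{s,t}$, I will graft it into the convex end of $W$. Pick Borel data $(\psi_{0,\eta,t},J)$ regular for $\varphi_{0,t}$ whose Hamiltonian, on a collar $[r_0,\infty)\times\partial W$, equals the ($G$-lifted) $H_{0,t}$. By \ref{item:extender-2} this Hamiltonian is Liouville equivariant there. Then define the path $\psi_{s,\eta,t}$ to agree with $\psi_{0,\eta,t}$ on $\{r\le r_0\}$, and with the lift of the extender $\psi_{s,t}$ on the collar, shifted far out so that its compact part sits in $\{r_0<r<r_1\}$. Property \ref{item:extender-1} makes this gluing smooth. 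By \ref{item:extender-3} the ideal restriction is $\varphi_{s,t}$, and by \ref{item:extender-4} the path is non-negative, so it is a legitimate $1$-simplex of $\mathscr{D}(W)$. The orbits of the endpoint $\psi_{1}$ are then the orbits of $\psi_0$ in $\{r\le r_0\}$, together with the new set $C$ of orbits created by the extender in the collar; $C$ is compact by \ref{item:extender-5}. Only $W$-contractible classes survive in $\mathrm{CF}_{\mathrm{eq}}$, which accounts for $\kappa$. Since the collar data are $\eta$-independent and $G$-equivariant, $C$ is a union of free $G$-orbits.

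Second, I will filter by action, after rescaling the collar by the Liouville flow. The orbits in $C$ have action in a narrow window: by \ref{item:extender-6}, up to a small perturbation needed for regularity, they sit at a single value, and by \ref{item:extender-5} this value is negative. Pushing the extender far out multiplies this negative action by a large factor, separating it from the actions of the orbits in $\{r\le r_0\}$. With cohomological conventions and non-negative continuation, the Floer differential and the continuation map respect the action filtration up to the small perturbation error. This gives the following structure.

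(i) Equivariant cylinders cannot go from the interior orbits to $C$. Hence $\mathrm{CF}_{\mathrm{eq}}(\psi_1)$ contains the $C$-part as a subcomplex, and the quotient is identified with $\mathrm{CF}_{\mathrm{eq}}(\psi_0)$.

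(ii) The continuation map for the chosen path is, at the chain level, the identity onto the interior generators. This follows by choosing the continuation data to be $s$-independent on $\{r\le r_0\}$ and applying a maximum principle, in the form of \cite[Lemma 7.2]{abouzaid-seidel-GT-2010}, across the hypersurface $r=r_0$, where everything is Liouville equivariant. Stationary cylinders are then the only rigid solutions that stay inside.

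Consequently the cone of the continuation map is chain homotopy equivalent to the $C$-subcomplex. Because $C$ consists of free $G$-orbits, Borel equivariant theory on a free orbit collapses: the $C$-subcomplex is quasi-isomorphic to the non-equivariant local complex of the quotient, with $x$ acting nilpotently. That local complex is exactly the complex computed in $SY$, i.e.\ $\mathrm{CF}_{\mathrm{loc}}(\psi_{s,t};\kappa)$ in the sense of \cite{floer-comm-math-phys-1989}.

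The main obstacle is step (ii) together with the energy gap. I must show that all Floer and continuation cylinders with both ends in $C$ stay in the collar, and that those with one end in $C$ and one end inside are excluded, uniformly in the $EG$-parameter and under perturbations achieving regularity. I would attack this with the action gap from \ref{item:extender-6} and the scaling argument, combined with the monotonicity and no-escape lemmas for Liouville-equivariant data. I would then check that the identification with the $SY$ local complex is independent of the collar placement, by a Floer-style continuation argument for local homology.
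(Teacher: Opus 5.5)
Your overall strategy matches the paper's. You graft the extender into the convex end after conjugating by a long Liouville flow $Z_{L}$, separate actions via \ref{item:extender-5}, and confine the extender cylinders using \ref{item:extender-6} and a small admissible perturbation. You then identify the extender part with $\mathrm{CF}(X_{1,t})\otimes\mathrm{CM}(EG)$ by freeness, and transfer the result to arbitrary sections of the trivial Kan fibration. However, step (i) has the filtration backwards, and it contradicts your own opening paragraph.

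In the paper's conventions (input $u(+\infty)$, action $\int H_{t}\,dt-\int y^{*}\lambda$ as in \eqref{eq:action-estimate}), a cylinder's energy is $\mathcal{A}(\text{output})-\mathcal{A}(\text{input})$. For the equivariant differential this holds up to a curvature term bounded independently of $L$. So once the extender actions are very negative, cylinders with an interior input and an extender output are excluded. Cylinders with an extender input and an interior output are not excluded, and they are present in general: they form the term $\Delta$ in Claim~\ref{claim:cone-matrix}. Hence the interior generators span the subcomplex (literally the complex of the initial Borel datum), the extender generators form the quotient $E$, $\mathfrak{c}$ is the inclusion, and $\mathrm{Cone}(\mathfrak{c})\to E$ is the equivalence. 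This is the same reason the constant orbits span a subcomplex in $H^{*}(W)\to\mathrm{SH}^{*}(W)\to\mathrm{SH}^{*}_{+}(W)$.

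Your version would make $C$ a subcomplex with quotient the interior complex, and $\mathfrak{c}$ induces the identity on that quotient. Then $\mathfrak{c}$ would split the extension and be split injective on homology. That already fails for $W=\C^{n}$ and the radial extender from $R_{-\epsilon t}$ to $R_{\epsilon t}$. By Theorems~\ref{theorem:pss}, \ref{theorem:negative-pss} and \ref{theorem:commutative-square}, that continuation map is $H^{*}_{G}(D^{2n},S^{2n-1})\to H^{*}_{G}(D^{2n})$, i.e.\ multiplication by the nonzero equivariant Euler class. This map is injective with nonzero torsion cokernel inside a free module, so it is not split.

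Second, \ref{item:extender-4} does different work from what you assign it. Membership in $\mathfrak{F}(W)$ constrains only the ideal restriction, which is non-negative simply because $\varphi_{s,t}\in\mathscr{C}_{1}(Y)$. Where \ref{item:extender-4} is needed is the vanishing of the $E$-component of $\mathfrak{c}$. Conjugation by $Z_{L}$ multiplies by $e^{L}$ both the extender actions and the curvature of the continuation connection on the compact part of the extender, where the data is $s$-dependent and not Liouville equivariant. So the action gap alone cannot exclude continuation cylinders from an interior input to an extender output. Non-negativity of $K_{s,1}$ on all of $SY$, not merely at infinity, makes that curvature non-positive on the whole region $r\ge 1$, so the rescaled term has the favourable sign. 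Your phrase ``non-negative continuation'' has to mean exactly this.

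Finally, the Abouzaid--Seidel maximum principle you invoke in (ii) needs a radial Hamiltonian and contact-type $J$ near the hypersurface, and a general Liouville-equivariant $H_{0,t}$ is not radial. You need interior-to-interior continuation cylinders to stay confined, so that only stationary ones are rigid. This is better obtained from the energy-quantum argument of Lemma~\ref{lemma:minimum-principle}, applied to the long neck created by $Z_{L}$. On that neck the data is the $s$-independent system $H_{0,t}$, which has no $1$-periodic orbits there.
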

With this result in mind,
we say an extender $\psi_{s,t}$ is
\emph{trivial relative $W$} if $X_{H_{1,t}}$
has no $W$-contractible $1$-periodic orbits.

The proof of Theorem \ref{theorem:main-local-hf}
is given in
\S\ref{sec:proof-theorem-main-local-hf}. In order
to apply the theorem, it is necessary to show that
there is an abundance of such extenders:
\begin{theorem}\label{theorem:composition}
  For any $\varphi\in \mathscr{C}_{1}(Y)$
  which is a strictly positive Moore path,
  there exists an $n$-simplex $\Phi$ in
  $\mathscr{C}_{n}(Y)$ whose $[0,n]$ edge is
  $\sigma$, and whose $[i,i+1]$ edge is one of
  two types:
  \begin{enumerate}[label=(\roman*)]
  \item\label{type-extenders} the ideal
    restriction of an extender, or,
  \item\label{type-isomorphisms} an isomorphism in
    $\mathscr{C}_{1}(Y)$, i.e., projects to an
    isomorphism in the homotopy category,
  \end{enumerate}
  If $\varphi\in \mathrm{NN}(Y)$ never intersects the
  $W$-contractible discriminant, then one can pick
  the extenders in \ref{type-extenders} to be
  trivial relative $W$, and the continuation map
  associated to $\varphi$ is a
  quasi-isomorphism.
\end{theorem}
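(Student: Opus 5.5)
We plan to prove Theorem \ref{theorem:composition} by subdividing the strictly positive Moore path $\varphi$ into short pieces and realizing each short piece, up to an isomorphism in $\mathscr{C}_{1}(Y)$, as the ideal restriction of an extender. The $n$-simplex $\Phi$ is then assembled using the horn-filling property of Proposition \ref{theorem:C-is-infty-category}.

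\textbf{Step 1: subdivision.} Write $\varphi$ as $s\mapsto \varphi_{s,t}$ with $s\in[0,w]$. Because $\varphi$ is strictly positive, its generator $K_{s,1}$ is bounded below by $c>0$ with respect to a fixed contact form $\alpha$. We choose a partition $0=s_{0}<\dots<s_{N}=w$ fine enough that on each $[s_{i},s_{i+1}]$ the path is $C^{1}$-close to a path of the form $\varphi_{s_{i},t}$ composed with a small positive (Reeb-like) push.

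By genericity, we may perturb the partition points and the path within its homotopy class of non-negative paths with fixed endpoints. After this perturbation, each $\varphi_{s_{i},t}$ lies off the discriminant, so it is a legitimate $0$-simplex. Moreover, on each subinterval the path crosses the discriminant at most at one parameter value $s_{i}^{*}$, and the translated points appearing at $s_{i}^{*}$ share a single length/action value. The last condition is what will give \ref{item:extender-6}.

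The $n$-simplex whose edges are the pieces and whose long edge is $\varphi$ is then built by induction, filling inner horns $\Lambda^{k}_{j}\subset \Delta^{k}$ with the concatenation cubes $\#_{\tau}$. This is possible because concatenations of Moore paths satisfy the breaking axiom \ref{item:break} by construction.

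\textbf{Step 2: each short piece is an extender up to isomorphism.} Conjugating by $\varphi_{s_{i},t}$, a short piece becomes a short positive path starting at a time-dependent contact isotopy $\varphi_{s_i,t}$. We lift it to $SY$ as follows:
\begin{itemize}
\item On the negative end we use the Liouville-equivariant lift of $\varphi_{s_{i},t}$, which gives \ref{item:extender-1} and \ref{item:extender-2}.
\item In the middle region we interpolate the $s$-generator $K_{s,t}$ from $0$ to the Liouville-homogeneous lift, via a cutoff in the radial coordinate $r$.
\item Positivity of $K_{s,1}$ survives the cutoff because the cutoff is non-negative; this gives \ref{item:extender-4}.
\end{itemize}

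The fixed points of the time-one map of $H_{1,t}$ on $SY$ then correspond to translated points (discriminant points) of the ideal restriction crossed during the piece. Their actions are of the form $-r\cdot(\text{length})$. Choosing the cutoff region and sign conventions so that these actions are negative gives \ref{item:extender-5}, and compactness of the fixed point set follows from non-degeneracy of the endpoints. The remaining discrepancy between the ideal restriction of this extender and the original piece is a reparametrization or homotopy through non-negative paths with fixed endpoints. By \S\ref{sec:equivalent-morphisms} such a discrepancy is an equivalence in $\mathscr{C}(Y)$, so pieces of this kind are of type \ref{type-isomorphisms}.

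\textbf{Step 3: the $W$-contractible case.} If $\varphi$ never meets the $W$-contractible discriminant, the fixed points produced in Step 2 all lie in non-$W$-contractible classes, so each extender is trivial relative $W$. Theorem \ref{theorem:main-local-hf} then gives that each cone is chain homotopic to $\mathrm{CF}_{\mathrm{loc}}(\psi;\kappa)=0$, so each extender edge maps to a quasi-isomorphism. Isomorphism edges map to quasi-isomorphisms because $\infty$-functors preserve equivalences. The $\infty$-functor relations \eqref{eq:dg-nerve} applied to the $2$-dimensional faces show that the map for $\varphi$ is chain homotopic to the composite along the spine, and is therefore a quasi-isomorphism.

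\textbf{Main obstacle.} We expect the hardest part to be Step 2, specifically the action control: we must arrange simultaneously that all fixed points of $H_{1,t}$ have negative action at a single value, while keeping $H$ Liouville-equivariant at both ends and $K_{s,1}\ge 0$. Our first attempt would be to take $K$ homogeneous of degree one times a cutoff $\beta(r)$ with $\beta'\ge0$, and to compute the action of a fixed point at radius $r$ directly as $\beta(r)$ times minus the crossing length. We would then check that the generic single-crossing assumption from Step 1 forces a unique action value, after which the fixed point set consists of whole Liouville-invariant families truncated by the cutoff.
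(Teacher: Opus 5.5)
You have a genuine gap in Step~2: it does not verify \ref{item:extender-5} and \ref{item:extender-6}, and the picture it rests on is incorrect. Take your ansatz $K_{s,t}=\beta(r)K'_{s,t}$, with $K'$ the Liouville-homogeneous $s$-generator and $r$ the $1$-homogeneous radial function. Then $X_{\beta K'}=\beta X_{K'}+K'\beta'(r)X_{r}$, so $\psi_{1,1}$ mixes the flow of $K'$ with a Reeb-type push transverse to the Liouville rays. Its fixed points are therefore not simply the rays over the discriminant points crossed, and their actions are neither $-r\cdot(\text{length})$ nor $\beta(r)$ times minus the length.

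Negativity can be rescued. Since $H_{0,t}$ is homogeneous and $K_{s,0}=0$, differentiate in $s$ the action of the path $t\mapsto\psi_{s,t}(x)$ and use \eqref{eq:curvature-IVP}. This shows that the orbit through a fixed point $x$ has action $\int_{0}^{1}(K_{s,1}-\lambda(X_{K_{s,1}}))(\psi_{s,1}(x))\,ds$. For your ansatz this equals $-\int_{0}^{1}r\beta'(r)K'_{s,1}\,ds$, which is negative provided $\beta'>0$ on $\set{0<\beta<1}$; if the integrand vanished identically, $x$ would be fixed by the homogeneous lift of an endpoint. This is the analogue of \eqref{eq:action-estimate}.

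The missing idea is \ref{item:extender-6}, which does not follow from genericity of the crossings as your Step~1 claims. Action values belong to the orbits of the extender, not to the translated point. What is needed is the argument of Lemma~\ref{lemma:case-single-intersection}:
\begin{enumerate}
\item localize to a very short $s$-interval around a single crossing that is transverse in the sense of \eqref{eq:transversality}, a condition your Step~1 never imposes;
\item expand $\psi_{1,1}$ to first order in the length of the interval;
\item show that fixed points concentrate on the Liouville ray over the crossing point, at radii cut out by a scalar equation;
\item use strict convexity of the cutoff and non-degeneracy to get exactly one fixed point by a contraction argument.
\end{enumerate}
In your setting, using $\mathrm{im}(\d A-1)=\ker\lambda$ for the homogeneous lift $A$ at the crossing, that equation reads $\frac{d}{dr}(r\beta(r))=\sigma_{0}$. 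Here $\sigma_{0}\in(0,1)$ is the relative position of the crossing in the interval; compare $\gamma'(S)=\sigma_{0}$ in the paper's autonomous case. For a merely monotone $\beta$ this equation can have several solutions, generally with different actions. So you must require $r\beta(r)$ to be strictly convex where its slope lies in $(0,1)$. This is exactly the role of the convex $\gamma$ in \eqref{eq:extender-ansatz}, which the paper applies to the positive generator $S_{s}$ itself after the type~\ref{type-M} reduction of Lemma~\ref{lemma:can-be-decomposed}.

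Likewise, triviality relative $W$ in your Step~3 cannot rest on the false orbit correspondence. It comes from the compactness argument of Lemma~\ref{lemma:case-no-intersections}. For a fine enough partition, each piece has time-one map close to homogeneous lifts of $\varphi_{s,1}$. Its periodic orbits therefore lie near the orbits $t\mapsto\varphi_{s,t}(y)$ of discriminant points, and so lie in the same free homotopy classes.
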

This result together with Theorem
\ref{theorem:main-local-hf} implies part
\ref{theorem:main-infinity-1} of Theorem
\ref{theorem:main-infinity}. It also implies,
together with \S\ref{sec:comp-proof-thm-decomp},
Theorem \ref{theorem:zig-zag}.

\subsection{Decompositions of continuation data}
\label{sec:decomp-cont-data}

The goal of this section is to prove Theorem
\ref{theorem:composition} on the decomposability
of 1-simplices into simpler pieces.

\subsubsection{Decompositions}
\label{sec:decompositions}

We say that a $1$-simplex $\sigma$ is a
\emph{composition} of simplices
$\sigma_{1}\dots\sigma_{n}$ provided there is an
$n$-simplex whose $[i-1,i]$ edge equals
$\sigma_{i}$ and whose $[0,n]$ edge equals
$\sigma$. Then $[\sigma]$ is equal to the
composition of $[\sigma_{1}],\dots,[\sigma_{n}]$
in the homotopy category
$\mathrm{h}\mathscr{C}$. We refer to this process
as \emph{decomposition of} $\sigma$.

Our first observation is rather basic, but useful
nonetheless:
\begin{lemma}\label{lemma:when-two-1-simplices-can-be-decomposed}
  Let $\varphi,\varphi'\in \mathscr{C}(Y)$ be two 1-simplices with the
  same source and target. The following are
  equivalent:
  \begin{enumerate}[label=(\alph*)]
  \item $\varphi,\varphi'$ lie in the same path component of $\mathrm{NN}(Y)$ with fixed endpoints,
  \item $[\varphi']=[\varphi]$ in the homotopy
    category $\mathrm{h}\mathscr{C}(Y)$;
  \item $\varphi'$ can be decomposed into the
    sequence $\varphi_{1}=\id$, $\varphi_{2}=\varphi$;
  \item $\varphi'$ can be decomposed into the
    sequence $\varphi_{1}=\varphi$, $\varphi_{2}=\id$;
  \end{enumerate}
  here $\id$ is the degenerate 1-simplex (the
  identity element).
\end{lemma}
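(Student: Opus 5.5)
The plan is to establish the chain of equivalences (a)$\Leftrightarrow$(c), (c)$\Leftrightarrow$(b), (b)$\Leftrightarrow$(d), unwinding the definition of a $2$-simplex in $\mathscr{C}(Y)$. By Definition \ref{definition:simplices}, a $2$-simplex $\Phi$ consists of its three vertices, three edges $\Phi^{01},\Phi^{12},\Phi^{02}$, and a single top cube $\Phi^{012}:[0,\infty)\to \mathrm{NN}(Y)$. This cube satisfies two conditions. First, $\Phi^{012}(0)=\Phi^{02}$. Second, $\Phi^{012}(x_{1})=\Phi^{01}\#_{\tau(x_{1})}\Phi^{12}$ for $x_{1}$ large. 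It also satisfies the microcollaring condition at $x_{1}=0$, and its endpoints are fixed at $\Phi^{0},\Phi^{2}$. Saying that $\varphi'$ decomposes as $(\id,\varphi)$ therefore amounts to the existence of such a cube with $\Phi^{02}=\varphi'$ and $\Phi^{12}=\varphi$, and with $\Phi^{01}$ the degenerate $1$-simplex (the width-zero stationary path). For large $x_{1}$ the cube is then $\id\#_{\tau(x_{1})}\varphi$, which is a translate of $\varphi$ by a stationary segment of length $\tau(x_1)$. The case (d) is identical, with $\varphi\#_{\tau(x_{1})}\id$.

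For (c)$\Rightarrow$(a), the cube $\Phi^{012}$ is a path in $\mathrm{NN}(Y)$ with fixed endpoints from $\varphi'$ to $\id\#_{\tau(x)}\varphi$. The latter is joined to $\varphi$ by the path $\lambda\mapsto \id\#_{\lambda\tau(x)}\varphi$, which is continuous in the Moore path space because inserting a stationary segment depends continuously on its length. For (a)$\Rightarrow$(c), start from a fixed-endpoint path $\gamma:[0,1]\to\mathrm{NN}(Y)$ from $\varphi'$ to $\varphi$. First smooth it, which is standard since the space of non-negative Moore paths with fixed endpoints is open enough in the smooth family sense; alternatively, approximate by a smooth family, using that paths which are close and share endpoints can be joined. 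Then reparametrize $\gamma$ by a map $[0,\infty)\to[0,1]$ that is flat at $0$ and equal to $1$ for $x_{1}\ge 1$. Finally, concatenate with the insertion path $\id\#_{\tau(x_{1})-\tau(1)}\varphi$ for $x_{1}\ge1$; the ambiguity near $x_{1}=1$ is handled by a flat cutoff. This produces a cube meeting conditions \ref{item:break}--\ref{item:microcollaring}, and assembling it with its edges gives the required $2$-simplex. The lower faces pose no issue because the faces of a $2$-simplex are just its edges.

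For (b)$\Leftrightarrow$(c), I would invoke the standard fact \cite[\S1.2.3]{lurie-HTT-2009} that in an $\infty$-category, $[\varphi']=[\varphi]$ in $\mathrm{h}\mathscr{C}$ holds iff there is a $2$-simplex with edges $(\id,\varphi,\varphi')$, i.e.\ a homotopy from $\varphi$ to $\varphi'$ with the degenerate edge first. This uses Proposition \ref{theorem:C-is-infty-category}, and Lurie shows that the choice of which side carries $\id$ does not matter, which also gives (b)$\Leftrightarrow$(d). Alternatively, (b)$\Leftrightarrow$(a) is exactly the observation already recorded in \S\ref{sec:equivalent-morphisms}, namely that equivalent $1$-simplices are precisely those homotopic in $\mathrm{NN}(Y)$ rel endpoints, and then the argument of the previous paragraph handles (c) and (d) directly.

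The main obstacle is the smoothing and collaring in (a)$\Rightarrow$(c): a merely continuous homotopy in $\mathrm{NN}(Y)$ has to become a smooth cube that is flat at $x_{1}=0$ and exactly equal to $\id\#_{\tau(x_1)}\varphi$ near infinity. I would handle this as in the proof of Proposition \ref{theorem:C-is-infty-category}, by approximating by a smooth family relative to the endpoints while preserving non-negativity, which is an open-type condition after a small positive reparametrization, and then precomposing with flat cutoffs.
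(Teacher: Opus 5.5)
Your proposal is essentially correct and follows the paper's route. The paper cites \cite[\S1.2.3]{lurie-HTT-2009} for (b)$\Leftrightarrow$(c)$\Leftrightarrow$(d), and uses the dictionary of \S\ref{sec:equivalent-morphisms} between $2$-simplices and fixed-endpoint homotopies in $\mathrm{NN}(Y)$ for (a)$\Leftrightarrow$(d); that dictionary is exactly what you unwind by hand for (a)$\Leftrightarrow$(c), with the degenerate edge on the other side.

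Two small repairs are needed. First, the breaking axiom requires $\Phi^{012}(x_{1})=\id\#_{\tau(x_{1})}\varphi$ exactly for large $x_{1}$, so beyond $x_{1}=1$ you should use insertion length $\beta(x_{1})\tau(x_{1})$, with a flat cutoff $\beta$ rising from $0$ to $1$, instead of $\tau(x_{1})-\tau(1)$. Second, your smoothing justification does not work as stated: non-negativity is a closed condition, and neither a reparametrization nor a positive push preserves it together with the fixed endpoints. The paper tacitly avoids this with its appeal to ``standard facts about Moore paths,'' and you should do the same by reading (a) as a smooth fixed-endpoint homotopy; your flat reparametrization then suffices.
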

\begin{proof}
  The equivalence of (b) through (d) can be found
  in \cite[\S1.2.3]{lurie-HTT-2009}. The
  equivalence of (a) and (d) follows from
  \S\ref{sec:equivalent-morphisms}.
\end{proof}

Mapping cones also interact nicely with decompositions:
\begin{lemma}
  Let $\mathscr{C}$ be an $\infty$-category and
  let
  $F:\mathscr{C}\to
  \mathrm{N}_{\mathrm{dg}}\mathrm{Ch}(\mathbf{k}[[x]])$
  be an $\infty$-functor. Suppose that $\sigma$ is
  the composition of $\sigma_{1},\dots,\sigma_{n}$
  and the homology of the mapping cones of the
  chain maps $F(\sigma_{i})$ is $x$-torsion for
  each $i$. Then the homology of the mapping cone
  of $F(\sigma)$ is also $x$-torsion.
\end{lemma}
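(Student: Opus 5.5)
Proof proposal. The plan is to reduce to the case $n=2$ by induction, and then handle $n=2$ with the octahedral-type exact triangle, which the dg-nerve structure makes available at chain level.

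\emph{Reduction to $n=2$.} Suppose $\Phi$ is an $n$-simplex with $[i-1,i]$ edges $\sigma_{i}$ and $[0,n]$ edge $\sigma$. The face $\Phi|[0\dots n-1]$ exhibits its $[0,n-1]$ edge $\sigma'$ as a composition of $\sigma_{1},\dots,\sigma_{n-1}$. The 2-dimensional subsimplex $\Phi|[0,n-1,n]$ exhibits $\sigma$ as a composition of $\sigma'$ and $\sigma_{n}$. By induction on $n$, the cone of $F(\sigma')$ has $x$-torsion homology, so it suffices to treat $n=2$. (The case $n=1$ is tautological.)

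\emph{The case $n=2$.} Let $\Phi$ be a 2-simplex with edges $f=F(\Phi|[01])$, $g=F(\Phi|[12])$, $h=F(\Phi|[02])$. The structural equation \eqref{eq:dg-nerve} for $n=2$ reads, up to sign,
\[
g\circ f-h=\mathfrak{c}_{\Phi}d+d\,\mathfrak{c}_{\Phi},
\]
so $h$ is chain homotopic to $g\circ f$. Cones of chain-homotopic maps are isomorphic as complexes: use $\mathfrak{c}_{\Phi}$ to write an explicit upper-triangular isomorphism $(a,b)\mapsto(a,b\pm\mathfrak{c}_{\Phi}a)$. Hence it suffices to show that $\mathrm{Cone}(g\circ f)$ has $x$-torsion homology.

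\emph{The octahedral sequence.} For chain maps $f:A\to B$ and $g:B\to C$ of $\mathbf{k}[[x]]$-complexes, there is a short exact sequence of complexes
\[
0\to \mathrm{Cone}(g)[-1]\to \mathrm{Cone}(g\circ f)\to \mathrm{Cone}(f)\to\cdots,
\]
or more precisely a quasi-isomorphism between $\mathrm{Cone}(g\circ f)$ and an extension of $\mathrm{Cone}(f)$ by $\mathrm{Cone}(g)$ (up to shift). Concretely, consider the map $\mathrm{Cone}(g\circ f)\to\mathrm{Cone}(f)$ given by $(a,c)\mapsto(a,0)$ corrected appropriately, or use the standard octahedral construction in the homotopy category of complexes, which is triangulated. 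Either way we obtain a long exact sequence of $\mathbf{k}[[x]]$-modules
\[
\cdots\to H(\mathrm{Cone}(g))\to H(\mathrm{Cone}(g f))\to H(\mathrm{Cone}(f))\to\cdots
\]
with $\mathbf{k}[[x]]$-linear maps.

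\emph{Torsion in the middle term.} If $x^{d_{1}}$ kills the left term and $x^{d_{2}}$ kills the right term, then $x^{d_{1}+d_{2}}$ kills the middle term. Indeed, for a middle class $m$, the element $x^{d_{2}}m$ maps to zero on the right, so it comes from the left, and $x^{d_{1}}$ then kills it.

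\emph{Expected obstacle.} The only real care needed is writing the octahedral exact sequence at chain level with the $\Z/2\Z$-grading sign conventions, and checking that all maps are $\mathbf{k}[[x]]$-linear. This is automatic because every chain map here is $\mathbf{k}[[x]]$-linear. I would handle it by citing that the homotopy category of complexes over any ring is triangulated and that cones of homotopic maps are isomorphic.
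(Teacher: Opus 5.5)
Your proposal is correct and follows the paper's route: reduce to $n=2$, identify $\mathrm{Cone}(h)$ up to homotopy equivalence with an extension of $\mathrm{Cone}(f)$ by $\mathrm{Cone}(g)$, and conclude from the resulting long exact sequence. The paper writes this extension out explicitly as the complex $\Delta$, with chain maps $p,i$ built from the $2$-simplex homotopy $k$, whereas you absorb $k$ into a cone isomorphism and cite the octahedral axiom.

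One small correction: the octahedral triangle is $\mathrm{Cone}(f)\to\mathrm{Cone}(g\circ f)\to\mathrm{Cone}(g)\to\mathrm{Cone}(f)[1]$. Here $\mathrm{Cone}(f)$ is the sub-object, matching the paper's filtration of $\Delta$, not the reverse order you wrote (and $(a,c)\mapsto(a,0)$ is not a chain map). Your torsion argument is symmetric in the two outer terms, so the conclusion is unaffected.
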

\begin{proof}
  It suffices to prove the case $n=2$, by
  induction. Each of the three vertices of
  $\sigma$ is mapped by $F$ to a chain complex
  over $\mathbf{k}[[x]]$; let us call these
  $C_{0},C_{1},C_{2}$. The three faces are mapped
  to chain maps $f,g,h$. The equation for the
  $\infty$-functor:
  \begin{equation*}
    h-gf=kd+dk
  \end{equation*}
  where $k:C_{0}[1]\to C_{2}$ is the map
  associated to the 2-simplex. Throughout we use
  gradings modulo two.

  Introduce:
  \begin{equation*}
    \Gamma=C_{2}\oplus C_{0}[1]\quad\Delta=C_{1}\oplus C_{0}[1]\oplus C_{2}\oplus C_{1}[1]
  \end{equation*}
  with morphisms:
  \begin{equation*}
    d_{\Gamma}=\left[
      \begin{smallmatrix}
        d&h\\
        0&d
      \end{smallmatrix}
    \right]\quad d_{\Delta}=\left[
      \begin{smallmatrix}
        d&f&0&1\\
        0&d&0&0\\
        0&0&d&g\\
        0&0&0&d
      \end{smallmatrix}
    \right]\quad p=\left[
      \begin{smallmatrix}
        g&k&1&0\\
        0&1&0&0\\
      \end{smallmatrix}
    \right]\quad i=\left[
      \begin{smallmatrix}
        0&0\\
        0&1\\
        1&k\\
        0&f
      \end{smallmatrix}
    \right],    
  \end{equation*}
  Some care is needed to explain the sign
  conventions, but the basic rule is that the
  ``shifting'' symbol $[1]$ introduces $\pm$
  signs; we leave this subtle point to the
  reader and work modulo 2. We claim:
  \begin{itemize}
  \item $d_{\Gamma}^{2}=d_{\Delta}^{2}=0$,
  \item $p,i$ are chain maps.
  \item $p,i$ are chain homotopy inverses of each other.
  \end{itemize}
  The non-obvious part is that $ip$ is chain homotopic to $1$; for this we have:
  \begin{equation*}
    1-ip=\left[
      \begin{smallmatrix}
        1&0&0&0\\
        0&0&0&0\\
        g&0&0&0\\
        0&f&0&1
      \end{smallmatrix}
    \right]=\left[
      \begin{smallmatrix}
        d&f&0&1\\
        0&d&0&0\\
        0&0&d&g\\
        0&0&0&d
      \end{smallmatrix}
    \right]\left[
      \begin{smallmatrix}
        0&0&0&0\\
        0&0&0&0\\
        0&0&0&0\\
        1&0&0&0
      \end{smallmatrix}
    \right]+\left[
      \begin{smallmatrix}
        0&0&0&0\\
        0&0&0&0\\
        0&0&0&0\\
        1&0&0&0
      \end{smallmatrix}
    \right]\left[
      \begin{smallmatrix}
        d&f&0&1\\
        0&d&0&0\\
        0&0&d&g\\
        0&0&0&d
      \end{smallmatrix}
    \right]
  \end{equation*}
  Thus we have constructed a complex $\Delta$
  which is chain homotopy equivalent to $\Gamma$
  (the cone of $h$). On the other hand, $\Delta$
  has a two term filtration whose associated
  graded complex is isomorphic to the direct sum
  of the cone of $f$ and the cone of $g$. Thus, by
  a simple spectral sequence argument, if the
  cones of $f$ and $g$ are $x$-torsion, then so is
  the cone of $h$.
\end{proof}

Our next lemma gives an explicit formula
representing the composition in the case of $\mathscr{C}(Y)$:
\begin{lemma}\label{lemma:compo-concat}
  Let $\varphi_{1},\dots,\varphi_{n}$ be a
  composable sequence. Writing
  $\varphi_{i}=\varphi_{i,s,t}$ as an element in $\mathrm{NN}(Y)$, the $1$-simplex:
  \begin{equation}\label{eq:composition-formula}
    (\varphi_{n,s,t}\circ \varphi_{n,0,t}^{-1})\circ \dots \circ (\varphi_{1,s,t}\circ \varphi_{1,0,t}^{-1})\circ \varphi_{1,0,t}
  \end{equation}
  represents the composition of
  $\varphi_{1},\dots,\varphi_{n}$.
\end{lemma}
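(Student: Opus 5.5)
Because $\mathscr{C}(Y)$ is an $\infty$-category (Proposition \ref{theorem:C-is-infty-category}), composites are defined up to equivalence. By Lemma \ref{lemma:when-two-1-simplices-can-be-decomposed} and \S\ref{sec:equivalent-morphisms}, two $1$-simplices with the same endpoints are equivalent exactly when they lie in the same path component of $\mathrm{NN}(Y)$ with fixed endpoints. So the plan is to show two things. First, \eqref{eq:composition-formula} is a non-negative Moore path from $\varphi_{1,0,t}$ to $\varphi_{n,w_n,t}$. Second, it lies in that path component together with some honest composite. For the honest composite we will write down an explicit $n$-simplex whose $[i-1,i]$ edges are the $\varphi_i$.

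For the first point, write $\rho_{i,s,t}=\varphi_{i,s,t}\circ\varphi_{i,0,t}^{-1}$. This is a non-negative Moore path starting at $\id$, because right multiplication by a fixed contact isotopy preserves non-negativity of $s\mapsto \varphi_{i,s,1}(y)$: one simply evaluates at a different point $y$. A product of non-negative paths in the group $\mathrm{CI}(Y)$ is non-negative, since the $s$-derivative of the time-$1$ map of a product is a sum of push-forwards of non-negative contact vector fields, and contactomorphisms preserve the coorientation. The endpoints telescope, using composability $\varphi_{i,w_i,t}=\varphi_{i+1,0,t}$. Here we interpret \eqref{eq:composition-formula} with each factor $\rho_i$ reparametrized in $s$. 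There are two natural readings: the concatenation reading, where $\rho_i$ runs on the $i$th consecutive interval, and the simultaneous reading, where all $\rho_i$ run at once.

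Next we show that both readings lie in the same component. Rescale each $\rho_i$ to width $1$ and consider the cube $[0,1]^n\ni(\sigma_1,\dots,\sigma_n)\mapsto \rho_{n,\sigma_n}\cdots\rho_{1,\sigma_1}\varphi_{1,0}$. Any monotone path from $0$ to $(1,\dots,1)$ in this cube gives a non-negative path with the correct endpoints. Monotone paths form a contractible space, so the concatenation and the simultaneous readings are homotopic through non-negative paths. This is the key point.

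For the honest composite we build an $n$-simplex $\Phi$ by the same cube trick. For a face $v_0<\dots<v_m$, the edge $\Phi^{v_{j-1}v_j}$ is set to be the concatenation of $\varphi_{v_{j-1}+1},\dots,\varphi_{v_j}$. The cube $\Phi^{v_0\dots v_m}(x)$ is set to be the path traversing the monotone staircase in the $\sigma$-cube, where coordinate $x_j$ controls a stationary pause of length $\tau(x_j)$ inserted at the corner $v_j$. As $x_j\to 0$ the pause disappears, which gives axiom \ref{item:set-face-zero}. As $x_j\to\infty$ the path becomes the $\#_{\tau(x_j)}$ concatenation, which gives axiom \ref{item:break}. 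The infinite-order vanishing of $\tau$ at $0$ gives axiom \ref{item:microcollaring}. The main obstacle is smoothness and microcollaring at corners where several $x_j$ vanish simultaneously. We will handle this exactly as in Lemma \ref{lemma:degeneracy-well-defined-and-identities}, using the smoothed max $\mathrm{smax}$ to combine adjacent pauses, and by assuming each $\varphi_i$ is stationary to infinite order near its endpoints. This last assumption costs nothing, since it can be achieved by a reparametrization that is an equivalence (Lemma \ref{lemma:when-two-1-simplices-can-be-decomposed}). The edge $\Phi^{0n}$ is the concatenation reading, so by the previous paragraph it is equivalent to \eqref{eq:composition-formula}, which proves the lemma.
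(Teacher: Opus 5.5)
Your argument is correct, but it is organized differently from the paper's.

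\emph{The paper's route.} It reduces to $n=2$ by induction. It then builds a single $2$-simplex whose cube $\Phi^{012}:[0,\infty)\to\mathrm{NN}(Y)$ interpolates directly between two ends. At $x=0$ it is the timewise product $s\mapsto\varphi_{2,s,t}\circ\varphi_{2,0,t}^{-1}\circ\varphi_{1,s,t}$, so the formula is literally the $02$ edge. For large $x$ it is the concatenation with pause $\tau(x)$. The interpolation itself is dismissed as a ``standard affair.''

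\emph{Your route.} You split this into a combinatorial part and a geometric part. The $n$-simplex of concatenations with pauses $\tau(x_j)$ satisfies the axioms of Definition \ref{definition:simplices} trivially. The monotone-path argument in the parameter cube then gives a homotopy through non-negative paths from the concatenation to the formula, and you finish with \S\ref{sec:equivalent-morphisms}.

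\emph{What each buys.} You handle all $n$ at once, without induction. You also give an explicit reason why the interpolation stays non-negative, which the paper does not spell out. The cost is that the formula comes out only as a $1$-simplex \emph{equivalent} to the long edge of your simplex. That suffices for the statement in $\mathrm{h}\mathscr{C}(Y)$. If ``composition'' is meant in the strict sense of \S\ref{sec:decompositions}, i.e.\ an $n$-simplex whose $[0,n]$ edge is literally the formula, you also need the standard fact that composites are closed under equivalence. For $n=2$ this is one inner horn filling: fill the $\Lambda^{3}_{2}$ horn whose faces are your concatenation $2$-simplex, the equivalence $2$-simplex (concatenation, $\id$, formula), and the degenerate $2$-simplex $s_{1}\varphi_{2}$. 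The paper's induction to general $n$ implicitly relies on the same kind of bookkeeping.

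Three small remarks on details.
\begin{enumerate}
\item The smoothed max is not needed. In your simplex each $x_j$ enters only through its own pause length $\tau(x_j)$. So the cubes are smooth on all of $[0,\infty)^{m-1}$, axioms (1) and (2) hold identically rather than just asymptotically, and (3) follows from $\tau'(x)=e^{-1/x}$ being flat at $0$.
\item Flatness of each $\varphi_i$ at its endpoints is automatic. A Moore path is smooth on $\R$ and constant outside $[0,w_i]$, so no reparametrization is needed.
\item Rescaling into $[0,1]^n$ creates a problem. There the staircase has corners, and a straight-line interpolation to the diagonal gives paths that are not smooth at points where some $\rho_i$ is not flat. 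It is cleaner to use the Moore paths on all of $\R$. Then the staircase is the smooth map $s\mapsto(s-a_1,\dots,s-a_n)$ with $a_i=w_1+\dots+w_{i-1}$. Its linear interpolation to $s\mapsto(sw_1/W,\dots,sw_n/W)$, where $W=\sum w_i$, consists of smooth, coordinatewise non-decreasing paths that are stationary outside $[0,W]$.
\end{enumerate}
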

\begin{proof}
  It suffices to prove the case $n=2$, as the
  rest follows by induction. A two-simplex
  is the information of:
  \begin{itemize}
  \item $\Phi^{012}:[0,\infty)\to \mathrm{NN}(Y)$,
  \item $\Phi^{01},\Phi^{12},\Phi^{02}\in \mathrm{NN}(Y)$,
  \end{itemize}
  satisfying certain interrelations; see Definition
  \ref{definition:simplices}. In particular, we are seeking a map:
  \begin{equation*}
    \Phi^{012}:[0,\infty)\to \mathrm{NN}(Y)
  \end{equation*}
  such that
  \begin{itemize}
  \item $\Phi^{012}(0)=(s\mapsto \varphi_{2,s,t}\circ \varphi_{2,0,t}^{-1}\circ \varphi_{1,s,t})$,
  \item $\Phi^{012}(x)=\varphi_{2}\#_{\tau(x)}\varphi_{1}$ for $x$ large enough.
  \end{itemize}
  Finding such $\Phi^{012}$ is a standard
  affair of interpolating between timewise
  composition and concatenation. This can be
  achieved in a way compatible with the
  axioms in Definition
  \ref{definition:simplices}.
\end{proof}

\subsubsection{Isomorphisms in the homotopy
  category}
\label{sec:isom-homot-categ}

We characterize those $1$-simplices which project
to isomorphisms in the homotopy category.
\begin{lemma}\label{lemma:isom-homot-categ}
  The following are equivalent conditions on a
  $1$-simplex $\varphi$:
  \begin{enumerate}[label=(\alph*)]
  \item $[\varphi]$ is an isomorphism in the
    homotopy category;
  \item $\varphi_{s,t}$ satisfies
    $\varphi_{s,1}=\varphi_{0,1}$ for all
    $s\in \R$.
  \end{enumerate}
\end{lemma}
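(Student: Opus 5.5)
We prove the two implications separately, working in the homotopy category $\mathrm{h}\mathscr{C}(Y)$, where, by \S\ref{sec:equivalent-morphisms} and Lemma \ref{lemma:when-two-1-simplices-can-be-decomposed}, morphisms are homotopy classes of non-negative Moore paths with fixed endpoints. Composition is represented by the formula of Lemma \ref{lemma:compo-concat}.

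\emph{(b) implies (a).} Suppose $\varphi_{s,1}=\varphi_{0,1}$ for all $s$. Then the reversed path $\bar\varphi_{s,t}:=\varphi_{w-s,t}$ satisfies the same condition. Its time-one endpoint is stationary, so $s\mapsto\bar\varphi_{s,1}(y)$ is constant and in particular never negatively transverse to $\xi$. Hence $\bar\varphi$ is a non-negative Moore path, i.e., a $1$-simplex from $\varphi_{w,t}$ to $\varphi_{0,t}$.

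We then show $[\bar\varphi]\circ[\varphi]=\id$ and $[\varphi]\circ[\bar\varphi]=\id$. By Lemma \ref{lemma:when-two-1-simplices-can-be-decomposed}, it suffices to homotope the concatenation $\varphi\#\bar\varphi$ to the constant path through non-negative paths with fixed endpoints. The standard contraction $\lambda\mapsto \varphi|_{[0,\lambda w]}\#\overline{\varphi|_{[0,\lambda w]}}$ does this. Every path in this family still has stationary time-one endpoint $\varphi_{0,1}$, so each is non-negative.

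\emph{(a) implies (b).} Suppose $[\psi]\circ[\varphi]=\id$ for some $1$-simplex $\psi$ from $\varphi_{w,t}$ back to $\varphi_{0,t}$. By Lemma \ref{lemma:when-two-1-simplices-can-be-decomposed}, the concatenation $\varphi\#\psi$ is then homotopic, through non-negative Moore paths with fixed endpoints, to the constant path at $\varphi_{0,t}$. Consider the time-one endpoints: composing with $\varphi_{0,1}^{-1}$ turns $s\mapsto (\varphi\#\psi)_{s,1}\varphi_{0,1}^{-1}$ into a non-negative loop of contactomorphisms based at $\id$. Here we use that non-negativity of a path of contactomorphisms is preserved under right composition by a fixed contactomorphism. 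This loop is contractible through non-negative loops based at $\id$.

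The key step, and the main obstacle, is to show that a non-negative loop $\gamma_s$ of contactomorphisms which is homotopic to the constant loop \emph{through non-negative loops} must be constant. Mimicking the argument in Lemma \ref{lemma:natural-not}, we would argue as follows. Along the homotopy, the set of parameters where the loop is constant is closed. It is also open, since loops $C^{0}$-close to the constant loop are small non-negative loops, and by the Chekanov/Colin--Sandon result cited there such loops must be constant.

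Connectedness of the homotopy parameter then gives that $\varphi\#\psi$ has stationary time-one endpoint. In particular $\varphi_{s,1}=\varphi_{0,1}$ for all $s$, which is (b). The delicate point is checking that ``$C^{0}$-close to constant'' is actually achieved along the homotopy near a constant loop. This should follow from smoothness of the family in the Moore-path topology.
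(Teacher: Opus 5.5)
Your proposal is correct and follows essentially the same route as the paper: for (b)$\Rightarrow$(a) you reverse the path (the paper contracts the pointwise composite $\varphi_{1-s,t}\circ\varphi_{1,t}^{-1}\circ\varphi_{s,t}$ rather than the concatenation, which makes no difference), and for (a)$\Rightarrow$(b) you use the small-loop argument of Lemma~\ref{lemma:natural-not}, with your open--closed argument supplying details the paper leaves implicit. The paper invokes non-existence of $C^{1}$-small loops here rather than $C^{0}$-small ones; since your homotopy is a smooth family, the same compactness argument gives closeness in $C^{1}$ (indeed $C^{\infty}$), so either version of the small-loop statement applies.
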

\begin{proof}
  That (b) implies (a) follows from the fact that
  $\varphi_{1-s,t}$ remains a well-defined
  $1$-simplex, and that the composition:
  \begin{equation*}
    \varphi_{1-s,t}\circ \varphi_{1,t}^{-1}\circ \varphi_{s,t}
  \end{equation*}
  is homotopic through non-negative paths to
  the identity morphism based at
  $\varphi_{0,t}$. The implication that (a)
  implies (b) is slightly more subtle, but
  follows from the same reasoning used in
  \S\ref{sec:complexity-mathscrcy} appealing
  to the non-existence of $C^{1}$ small loops
  of contactomorphisms.
\end{proof}

\subsubsection{Time reparametrization trick}
\label{sec:time-repar-trick}

In this subsection, we will use the explicit
formula for the composition
\eqref{eq:composition-formula} to decompose
morphisms into simpler pieces. Let us introduce
the following special class of $1$-simplices,
which have certain benefits vis-à-vis the
construction of extenders.

\begin{definition}\label{definition:M-AM}  
  A 1-simplex $\sigma_{s,t}$ is said to be of type
  {\rm\makeatletter\phantomsection\edef\@currentlabel{(M)}\label{type-M}{\rm(M)}\makeatother}
  provided that there is a positive isotopy
  $\psi_{s}$ and a contact isotopy
  $\varphi_{\tau}$ such that:\footnote{In the
    following $\beta$ is a standard cut-off
    function; one can take, e.g., $\beta(x)=f(x)$
    for $x\in [0,1]$ and $\beta'(x)=0$ for
    $x\not\in [0,1]$, where $f$ is as in Figure
    \ref{fig:graph-of-f}}
  \begin{equation*}
    \sigma_{s,t}=\psi_{s\beta(2t-1)}\circ \varphi_{\beta(2t)}=\left\{
      \begin{aligned}
        \varphi_{\beta(2t)}\text{ for }t\le 1/2\\
        \psi_{s\beta(2t-1)}\circ \varphi_{1}\text{ for }t\ge 1/2\\
      \end{aligned}
    \right.
  \end{equation*}
  If, in addition, $\psi_{s}$ is autonomous,
  we say that $\sigma_{s,t}$ is of type
  {\rm\makeatletter\phantomsection\edef\@currentlabel{(AM)}\label{type-AM}{(AM)}\makeatother}.
  We implicitly assume that endpoints do not
  lie on the discriminant, in order for
  $\sigma_{s,t}$ to be considered as a
  $1$-simplex in $\mathscr{C}(Y)$.
\end{definition}

Not every morphism $\sigma$ can be decomposed into
morphisms of type \ref{type-M} or type
\ref{type-AM}, because of the assumption that
$\psi$ must be positive. However, if we restrict
to those $\sigma$ which are positive then:
\begin{lemma}\label{lemma:can-be-decomposed}
  Any positive morphism $\sigma$ can be decomposed
  into a sequence
  $\sigma_{1},\sigma_{2},\sigma_{3}$ where
  $\sigma_{1}$ and $\sigma_{3}$ are isomorphisms
  and $\sigma_{2}$ is of type \ref{type-M}.
\end{lemma}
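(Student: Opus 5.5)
Write the positive morphism as $\sigma=\sigma_{s,t}$, $s\in[0,1]$ (after rescaling the Moore width), with endpoints $\sigma_{0,t}$ and $\sigma_{1,t}$ off the discriminant. Since $\sigma$ is positive, $s\mapsto \sigma_{s,1}$ is positively transverse to $\xi$. Put $\varphi_{\tau}:=\sigma_{0,\tau}$, and define the positive isotopy $\psi_{s}:=\sigma_{s,1}\circ\sigma_{0,1}^{-1}$, so that $\psi_{0}=\id$ and $\psi_{s}\circ\varphi_{1}=\sigma_{s,1}$. The type (M) morphism will be
$$\sigma_{2,s,t}=\psi_{s\beta(2t-1)}\circ\varphi_{\beta(2t)}.$$
Its endpoints are the reparametrized isotopies $t\mapsto\varphi_{\beta(2t)}$ and $t\mapsto \psi_{\beta(2t-1)}\circ\varphi_{1}$. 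These have the same time-one maps as $\sigma_{0,t}$ and $\sigma_{1,t}$, so they lie off the discriminant. It is a non-negative (indeed positive) path, since its time-one map is $\psi_{s}\varphi_{1}$.

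Next I construct the isomorphisms $\sigma_{1}$ and $\sigma_{3}$. For $\sigma_{1}$, I take a path from $\sigma_{0,t}$ to $\varphi_{\beta(2t)}$ that only reparametrizes time: $t\mapsto \sigma_{0,(1-r)t+r\beta(2t)}$ for $r\in[0,1]$. Its time-one map is constant, so by Lemma \ref{lemma:isom-homot-categ} it is an isomorphism. For $\sigma_{3}$, I need a path from $\psi_{\beta(2t-1)}\varphi_{1}$ to $\sigma_{1,t}$ with fixed time-one map $\sigma_{1,1}$. The two-parameter family $(a,b)\mapsto \sigma_{a,b}$ on the square gives a homotopy rel endpoints in the $t$-variable between two paths from $\id$ to $\sigma_{1,1}$. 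The first goes along $b$ at $a=0$ and then along $a$ at $b=1$, which is exactly the time-one-fixed-endpoint class of $\psi_{\beta(2t-1)}\varphi_{1}$ after reparametrization. The second goes along $b$ at $a=1$, which is $\sigma_{1,t}$. Sweeping the square (for instance via paths that follow $b$ at height $a=r$ and then $a$ from $r$ to $1$ at $b=1$) gives a path of isotopies, all ending at $\sigma_{1,1}$. This is a valid non-negative 1-simplex, since $s\mapsto\sigma_{s,1}$ is constant, and Lemma \ref{lemma:isom-homot-categ} again shows it is an isomorphism.

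Finally I assemble the three pieces into a 3-simplex whose $[0,3]$ edge is $\sigma$. By Lemma \ref{lemma:compo-concat}, the composition of $\sigma_{1},\sigma_{2},\sigma_{3}$ is represented by formula \eqref{eq:composition-formula}, namely $(\sigma_{3,s}\sigma_{3,0}^{-1})(\sigma_{2,s}\sigma_{2,0}^{-1})(\sigma_{1,s}\sigma_{1,0}^{-1})\sigma_{1,0}$. It then suffices, by Lemma \ref{lemma:when-two-1-simplices-can-be-decomposed} together with horn filling, to show that this composite is homotopic to $\sigma$ through non-negative paths with fixed endpoints.

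I expect this last step to be the main obstacle. The plan is to reparametrize $s$ so that the three factors act in successive $s$-intervals, making the composite a concatenation, and then to contract the two isomorphism legs. The difficulty is that non-negativity must be kept throughout the homotopy. My approach uses the monotone homotopy $(a,b)\mapsto\sigma_{\lambda(a),b}$, where $\lambda$ is a nondecreasing reparametrization: its time-one maps move only along $s\mapsto\sigma_{s,1}$, so it is non-negative in the relevant direction. I would interpolate linearly in the square domain between the staircase path and the diagonal, checking that at every stage the time-one projection is a monotone reparametrization of $s\mapsto\sigma_{s,1}$, and hence non-negative.
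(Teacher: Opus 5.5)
Your proposal is correct and follows essentially the paper's route. You make the same choices $\varphi_{\tau}=\sigma_{0,\tau}$ and $\psi_{s}=\sigma_{s,1}\circ\sigma_{0,1}^{-1}$, build the same type (M) morphism, and obtain the isomorphisms from families with fixed time-one map via Lemma \ref{lemma:isom-homot-categ}; the paper compresses your explicit $\sigma_{1},\sigma_{3}$ and the final monotone linear interpolation in the square into three homotopies ``relative $t=0,1$''. One small slip: the $s=1$ endpoint of $\sigma_{2}$ is $t\mapsto\psi_{\beta(2t-1)}\circ\varphi_{\beta(2t)}$, not $t\mapsto\psi_{\beta(2t-1)}\circ\varphi_{1}$, although you use the correct concatenated path later.
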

\begin{proof}
  Let us say that two $1$-simplices are equivalent $\simeq$ if they differ by right or left multiplication by isomorphisms. Then we claim:  
  \begin{itemize}
  \item $\sigma_{s,t}\simeq \sigma_{s,\beta(2t)}$,
  \item $\sigma_{s,\beta(2t)}\simeq \sigma_{s,\beta(2t-1)}\sigma_{0,\beta(2t-1)}^{-1}\sigma_{0,\beta(2t)}$,
  \item $\sigma_{s,\beta(2t-1)}\sigma_{0,\beta(2t-1)}^{-1}\sigma_{0,\beta(2t)}\simeq \sigma_{s\beta(2t-1),1}\sigma_{0,1}^{-1}\sigma_{0,\beta(2t)}$,
  \end{itemize}
  the crucial observation is that these
  homotopies are relative the sets $t=1$,
  $t=0$, which is what ensures the
  equivalence up to right or left
  muliplcation by isomorphisms (using Lemma
  \ref{lemma:isom-homot-categ}). The final
  morphism is of type~\ref{type-M}, provided
  we set
  $\psi_{s}=\sigma_{s,1}\circ
  \sigma_{0,1}^{-1}$ and
  $\varphi_{\tau}=\sigma_{0,\tau}$ to satisfy
  Definition \ref{definition:M-AM}.
\end{proof}

\subsubsection{Extender ansatz}
\label{sec:extender-ansatz}

Suppose that $\sigma_{s,t}$ is of type
\ref{type-M} for $\varphi_{\tau}$ and $\psi_{s}$
as in Definition \ref{definition:M-AM}. Then the
generator of $\sigma_{s,t}$ on the symplectization
$SY$ is:
\begin{itemize}
\item
  $\partial_{t}\sigma_{s,t}=X'_{s,t}\circ
  \sigma_{s,t}$ and $\lambda(X'_{s,t})=H'_{s,t}$;
\end{itemize}
here the ``prime'' notation signifies that these
are Liouville equivariant\footnote{Recall that
  here we mean these objects commute with the
  Liouville flow on the symplectization $SY$;
  Liouville equivariant Hamiltonian vector fields
  are uniquely determined by the ideal
  restrictions to contact vector fields of $Y$.}
for all $s,t$; we will deform $H_{s,t}'$ in the
subsequent discussion to construct an extender
$H_{s,t}$ satisfying the axioms in
\S\ref{sec:extenders}. We observe:
\begin{equation*}
  H_{s,t}'=2\beta'(2t)F_{\beta(2t)}+2s\beta'(2t-1)S_{s\beta(2t-1)},
\end{equation*}
where $S_{s}$ generates $\psi_{s}$ and $F_{\tau}$
generates $\varphi_{\tau}$.

Define the \emph{extender ansatz} by the formula:
\begin{equation}\label{eq:extender-ansatz}
  H_{s,t}=2\beta'(2t)F_{\beta(2t)}+2s\beta'(2t-1)\gamma(S_{s\beta(2t-1)})
\end{equation}
where $\gamma$ is the convex cut-off function
illustrated in Figure \ref{fig:convex-cut-off},
which is required to vanish in a neighborhood of
$0$.

\begin{figure}[h]
  \centering
  \begin{tikzpicture}
    \draw
    (-2,0)--(0,0)coordinate(X)to[out=0,in=-135](1,0.5)--+(1,1);
    \draw (X)node[below
    right]{$x=0$}node[draw,circle,fill,inner
    sep=1pt]{} +(-.5,-.5)--+(1.5,1.5);
  \end{tikzpicture}
  \caption{Convex cut-off function $\gamma(x)$
    used in the extender ansatz equals $0$ in a
    neighborhood of $x\le 0$ and equals
    $x-\epsilon$ for $x\ge 1$. Assume that
    $\gamma''(x)>0$ if $\gamma'(x)\in (0,1)$.}
  \label{fig:convex-cut-off}
\end{figure}
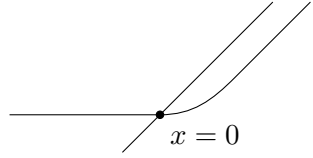

\begin{lemma}  
  The isotopy $\Psi_{s,t}$ generated by
  \eqref{eq:extender-ansatz} satisfies
  \ref{item:extender-1} through
  \ref{item:extender-5}, but maybe not axiom
  \ref{item:extender-6} (in other words, we do not
  show all the orbits of the extender ansatz have
  the same action).
\end{lemma}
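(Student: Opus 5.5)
Before verifying the axioms one at a time, I note that the generating Hamiltonian \eqref{eq:extender-ansatz} is obtained from the Liouville-equivariant $H'_{s,t}$ by replacing $S$ with $\gamma(S)$ in the second half $t\in[1/2,1]$. The first half $t\le 1/2$ is untouched, and throughout I use the contact Hamiltonians $F,S$ extended to $SY$ as $r\cdot(\text{contact Hamiltonian})$, so that they are Liouville equivariant.

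\emph{Axioms \ref{item:extender-1}--\ref{item:extender-3}.} Since $\psi_{s}$ is positive, $S_{s}>0$ on all of $SY$, and $S_{s}\to 0$ on the negative end $r\to 0$ while $S_{s}\to\infty$ on the positive end. Because $\gamma$ vanishes near $0$, the second term of \eqref{eq:extender-ansatz} is identically zero on a neighborhood $\{r\le r_{0}\}$ of the negative end. There $H_{s,t}=2\beta'(2t)F_{\beta(2t)}=H_{0,t}$, which is $s$-independent, so the $s$-generator $K_{s,t}$ vanishes (I normalize $K_{s,t}$ so that it vanishes where the isotopy is $s$-independent). This gives \ref{item:extender-1}. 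At $s=0$ the second term vanishes identically, so $H_{0,t}=H'_{0,t}$ is Liouville equivariant everywhere, giving \ref{item:extender-2}. Where $S\ge 1$, which is true on the positive end for $r$ large uniformly in the compact parameters, we have $\gamma(S)=S-\epsilon$. A constant shift does not change the Hamiltonian vector field, so $X_{H_{s,t}}=X_{H'_{s,t}}$ there, giving \ref{item:extender-3}.

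\emph{Axiom \ref{item:extender-4}.} I compute $K_{s,1}$ through the standard curvature identity $\partial_{t}K-\partial_{s}H+\{H,K\}=0$. It is easier to write the time-one map directly as $\Psi_{s,1}=\phi^{\gamma(S)}_{s}\circ\varphi_{1}$, where $\phi^{\gamma(S)}_{s}$ is the isotopy generated in the parameter $\sigma\in[0,s]$ by $\gamma(S_{\sigma})$. Then $\partial_{s}\Psi_{s,1}$ is generated by $\gamma(S_{s})\ge 0$, hence $K_{s,1}=\gamma(S_{s})\ge 0$. This requires checking the reparametrization: for $t\ge1/2$, the flow of $2s\beta'(2t-1)G_{s\beta(2t-1)}$ in $t$ is the flow of $G$ in the variable $\sigma=s\beta(2t-1)$. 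That holds because the first factor in the time-$t$ composition is generated by $\sigma\mapsto\gamma(S_{\sigma})$, exactly as in the Liouville-equivariant case.

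\emph{Axiom \ref{item:extender-5}.} The fixed points of $\Psi_{1,1}=\phi^{\gamma(S)}_{1}\circ\varphi_{1}$ need to be analysed in three regions.
\begin{itemize}
\item On $\{r\le r_{0}\}$ the map is $\varphi_{1}$, which is Liouville equivariant. A fixed point there would give a fixed point $x$ of the ideal restriction, and since the map commutes with dilation, also a discriminant point. But the source vertex $\varphi_{0,t}$ lies in $\mathscr{C}_{0}(Y)$, so there are none.
\item On the positive end, the map agrees with the Liouville-equivariant lift of the target $\psi_{1}\varphi_{1}$, which is also off the discriminant. Hence there are no fixed points there either.
\item So the fixed points lie in a compact set. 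To show their action is negative, I compute the action with the primitive $\lambda=r\alpha$. For the Liouville-equivariant part the action integrand $\lambda(X)-H$ vanishes. The only contribution comes from the second half: it is $\int(\lambda(X_{\gamma(S)})-\gamma(S))=\int(\gamma'(S)S-\gamma(S))$ up to the factor $2s\beta'$.
\end{itemize}
Because $\gamma$ is convex with $\gamma(0)=0$, we have $\gamma'(x)x-\gamma(x)\ge 0$. So the sign convention must be the one in which the action is $\int u^{*}\lambda$ minus $\int H$ with the opposite sign, i.e.\ $-\int(x\gamma'-\gamma)$. I would fix conventions (those of \cite{cant-arXiv-2024}) and confirm that fixed points must occur where $\gamma'\in(0,1)$, where strict convexity gives $x\gamma'(x)-\gamma(x)>0$ strictly. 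This is the main obstacle: a fixed point lying in the flat region $\gamma=0$ or the linear region would force $\varphi_{1}$ or $\psi_{1}\varphi_{1}$ to have a discriminant point, which is excluded, so strictness holds on the actual fixed point set. Compactness of the fixed set follows from the two end analyses.
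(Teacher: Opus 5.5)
Your proof is correct and follows the paper's argument in all essentials: the $\gamma$-term vanishes near the negative end, and $K_{s,1}=\gamma(S_{s})$ (the paper gets this by solving the initial value problem for $K_{s,t}$ with $K_{s,0}=0$ rather than by your time-one factorization, and the two normalizations agree); the action is computed via homogeneity and the substitution $\sigma=\beta(2t-1)$, and compactness comes from both endpoints being off the discriminant. Two simplifications: the paper's action convention is simply $\int H\,dt-\int y^{*}\lambda$, so no sign needs reverse-engineering; and strictness needs no case analysis, since $\gamma(x)-x\gamma'(x)<0$ wherever $\gamma(x)>0$ (in the linear region it equals $-\epsilon$), so it suffices that $\gamma(S_{\sigma})>0$ somewhere along the orbit, which holds because otherwise the orbit is stationary for $t\in[1/2,1]$ and its initial point is a fixed point of the lift of $\varphi_{1}$.
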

\begin{proof}
  Recall that $K_{s,t}$ is determined from
  $H_{s,t}$ by the initial value problem:
  \begin{equation}\label{eq:curvature-IVP}
    \left\{
      \begin{aligned}
        &\partial_{t} K_{s,t}+\omega(X_{H_{s,t}},X_{K_{s,t}})=\partial_{s}H_{s,t},\\
        &K_{s,0}=0;
      \end{aligned}\right.
  \end{equation}
  see, e.g., \cite[pp.\,16]{cant-sh-barcode}. It
  follows that $K_{s,t}=0$ on the negative end
  where $H_{s,t}=H_{0,t}$. Thus
  \ref{item:extender-1} holds. The two axioms
  \ref{item:extender-2} and \ref{item:extender-3}
  are clear.

  We compute:
  \begin{equation*}
    \partial_{s}H_{s,t}=2\beta'(2t-1)[\gamma(S_{s\beta(2t-1)})+s\gamma'(S_{s\beta(2t-1)})D_{s\beta(2t-1)}\beta(2t-1)],
  \end{equation*}
  where $D_{s}=\partial_{s}S_{s}$. After some
  manipulation, this yields:
  \begin{equation*}
    \begin{aligned}
      \partial_{s}H_{s,t}=\partial_{t}(\beta(2t-1)\gamma(S_{s\beta(2t-1)})),
    \end{aligned}
  \end{equation*}
  and thus we can set:
  \begin{equation*}
    K_{s,t}=\beta(2t-1)\gamma(S_{s\beta(2t-1)})
  \end{equation*}
  to solve the IVP \eqref{eq:curvature-IVP}, as
  $\omega(X_{K_{s,t}},X_{H_{s,t}})=0$ (this is
  clear for $t\le 1/2$ as $K_{s,t}$ vanishes
  there, and it holds for $t\ge 1/2$ since then
  both vector fields are proportional to the
  vector field $X_{S_{s}}$). Then, by inspection
  it follows that $K_{s,1}=\gamma(S_{s})$, i.e.,
  \ref{item:extender-4} holds.

  The remaining axiom \ref{item:extender-5} is
  concerned with the actions of the orbits of
  $X_{H_{1,t}}$. If $H_{s,t}$ is given by
  \eqref{eq:extender-ansatz}, then we compute:
  \begin{equation*}
    \int_{0}^{1/2}H_{1,t}(y(t))dt-y^{*}\lambda =0,
  \end{equation*}
  provided that $y(t)=\Psi_{1,t}(y(0))$. This is
  because $H_{1,t}$ is 1-homogeneous with respect
  to the Liouville flow, for $t\le 1/2$. Moreover:
  \begin{equation}\label{eq:action-estimate}
    \int_{1/2}^{1}H_{1,t}(y(t))dt-y^{*}\lambda=\int_{0}^{1}\gamma(S_{s})-S_{s}\gamma'(S_{s})ds<0.
  \end{equation}
  where the equality follows from the change of
  variables for integrals, and the inequality
  follows from the fact that $\gamma(S_{s})>0$
  must hold somewhere along the orbit $y(t)$,
  otherwise $\varphi_{t}$ would lie on the
  discriminant; convexity then yields the inequality
  $\gamma(S_{s})-S_{s}\gamma'(S_{s})<0$.

  As a final remark, we note that compactness of
  the orbit set follows from the assumption that
  $\varphi_{1},\psi_{1}\varphi_{1}$ do not lie on
  the discriminant. This completes the proof.
\end{proof}

\subsubsection{The orbit set in the autonomous
  case}
\label{sec:orbit-set-autonomous}

Let $H_{s,t}$ be as in
\eqref{eq:extender-ansatz}. The final axiom
\ref{item:extender-6} concerns a global property
of the orbit set of the system generated by
$H_{1,t}$, namely, that all orbits have the same
action. This property is not automatic, but there
is a straightforward criterion in the case the
input data was of type \ref{type-AM}, i.e., in the
case $\psi_{\sigma}$ autonomous.

\begin{lemma}
  Suppose that there is a unique
  $\sigma_{0}\in (0,1)$ such that
  $\psi_{\sigma_{0}}\varphi_{1}$ lies on the
  discriminant, and let $H_{s,t}$ be the extender
  ansatz given in \eqref{eq:extender-ansatz} for
  data $\psi_{\sigma},\varphi_{\tau}$ of type
  \ref{type-AM}. Then $H_{s,t}$ satisfies the
  final axiom \ref{item:extender-6}.
\end{lemma}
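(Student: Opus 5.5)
The plan is to locate the $1$-periodic orbits of $H_{1,t}$ explicitly and compute their actions from formula \eqref{eq:action-estimate}, showing the action depends only on $\sigma_{0}$.

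First, describe the time-one map. For $t\le 1/2$ the flow of $H_{1,t}$ is the Liouville equivariant lift of $\varphi_{\beta(2t)}$, ending at the lift $\tilde\varphi_{1}$. For $t\ge 1/2$ the flow is that of $2\beta'(2t-1)\gamma(S)$, where $S$ is the autonomous generator of $\psi$. Since $\gamma(S)$ is a function of $S$ and $S$ is autonomous, $\{S,\gamma(S)\}=0$. Hence the flow preserves the level sets of $S$ and acts on the level $S=c$ as the lift of $\psi_{\gamma'(c)}$, composed with a Liouville rescaling that is determined by $c$. In total, $\Psi_{1,1}$ restricted to a point $y$ with $S(\tilde\varphi_{1}(y))=c$ equals $\Phi_{\gamma(S)}^{1}\circ\tilde\varphi_{1}$, and this is a reparametrization of $\tilde\psi_{\gamma'(c)}\tilde\varphi_{1}$ on that level.

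Second, identify the fixed points. A fixed point $y$ of $\Psi_{1,1}$ on the level $c$ projects to a point $x\in Y$ with $\psi_{\gamma'(c)}\varphi_{1}(x)=x$. Being fixed in $SY$ rather than only in $Y$ forces the conformal factor to be $1$, so $x$ is a discriminant point of $\psi_{\gamma'(c)}\varphi_{1}$. The value $\gamma'(c)$ lies in $[0,1)$. By hypothesis the only such parameter in $(0,1)$ is $\sigma_{0}$. The endpoints $\varphi_{1}$ and $\psi_{1}\varphi_{1}$ are off the discriminant, which rules out $\gamma'(c)=0$ and $\gamma'(c)=1$. Therefore every orbit satisfies $\gamma'(c)=\sigma_{0}$. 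Because $\gamma''>0$ wherever $\gamma'\in(0,1)$, the equation $\gamma'(c)=\sigma_{0}$ has a unique solution $c_{0}$, so all orbits lie on the single level $S=c_{0}$.

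Third, compute the action. By \eqref{eq:action-estimate} the first half contributes $0$. The second half contributes the integral of $\gamma(S)-S\gamma'(S)$ along the orbit. Since $S$ is constant equal to $c_{0}$ along the orbit, this contribution is $\gamma(c_{0})-c_{0}\gamma'(c_{0})$, independent of the orbit. This establishes axiom \ref{item:extender-6}.

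The main obstacle is the second step: checking carefully that the Liouville-homogeneous lift really gives fixed points exactly at discriminant points, with the correct level. The subtle point is that $S$ is evaluated at the lifted point, so $c$ depends on the radial coordinate. To handle this, I would use the $1$-homogeneity of $S$ to relate the radial coordinate to the level, and then confirm that fixing the conformal factor equal to $1$ is precisely the condition $\psi_{\gamma'(c)}\varphi_{1}^{*}\alpha_{x}=\alpha_{x}$.
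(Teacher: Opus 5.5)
Your proposal is correct and follows the paper's own argument: on each level of $S$ the time-one map is $\tilde\psi_{\gamma'(S)}\circ\tilde\varphi_{1}$, fixed points force $\gamma'(S)=\sigma_{0}$, strict convexity of $\gamma$ pins them to a single level $c_{0}$, and the action there is $\gamma(c_{0})-c_{0}\gamma'(c_{0})$. Your explicit exclusion of $\gamma'(c)\in\{0,1\}$, using that $\varphi_{1}$ and $\psi_{1}\varphi_{1}$ are off the discriminant, fills in a step the paper leaves implicit. Two small corrections: $\gamma'$ takes values in $[0,1]$, not $[0,1)$, which your endpoint argument already handles. Also, no extra Liouville rescaling occurs, since the time-one flow of $\gamma(S)$ on $\{S=c\}$ is exactly the Liouville-equivariant lift of $\psi_{\gamma'(c)}$.
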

\begin{proof}
  The argument is similar to many arguments in
  symplectic homology theory and its cousin
  Rabinowitz--Floer homology, in particular, the
  arguments used to analyze translated points. The
  key idea is that $S$ is a positive
  one-homogeneous Hamiltonian and can be used as a
  coordinate on the symplectization. We observe:
  \begin{equation*}
    H_{s,t}=2\beta'(2t)F_{\beta(2t)}+2s\beta'(2t-1)\gamma(S),
  \end{equation*}
  which can be solved explicitly:
  \begin{equation*}
    \Psi_{s,t}=\psi_{s\beta(2t-1)\gamma'(S)}\circ \varphi_{\beta(2t)},
  \end{equation*}
  here $\psi_{s\beta(2t-1)\gamma'(S)}$ is the
  diffeomorphism \emph{preserving the level sets
    of $S$} obtained by flowing by
  $2s\beta'(2t-1)\gamma'(S)X_{S}$ over the time
  interval $t\in [0,1]$. Therefore $\Psi_{1,1}$
  has a fixed point at $x$ if and only if:
  \begin{equation}\label{eq:slope-at-fixed-point}
    \gamma'(S(x))=\sigma_{0}.
  \end{equation}
  Since $\gamma$ is convex, and strictly convex
  when $\gamma'(S)=\sigma_{0}$, it holds that
  $S(x)$ is uniquely determined by $\sigma$ and
  \eqref{eq:slope-at-fixed-point}. As in
  \eqref{eq:action-estimate}, the action at such a
  fixed point is equal to
  $\gamma(S(x))-S(x)\gamma'(S(x))$. It follows
  there is a unique action value, i.e.,
  \ref{item:extender-6} follows.
\end{proof}

\subsubsection{Completing the proof of Theorem
  \ref{theorem:composition}}
\label{sec:comp-proof-thm-decomp}

Let us describe our strategy for achieving
\ref{item:extender-6} in general. Fix a positive
$1$-simplex $\sigma$; the first step is to use
Lemma \ref{lemma:can-be-decomposed} to decompose
$\sigma$ into $\sigma_{1},\sigma_{2},\sigma_{3}$
where $\sigma_{1},\sigma_{3}$ are isomorphisms in
$\mathscr{C}(Y)$ and $\sigma_{2}$ is of type
\ref{type-M}. For the purposes of proving Theorem
\ref{theorem:composition}, we can replace $\sigma$
by $\sigma_{2}$, and just suppose that $\sigma$
was \ref{type-M} from the start.

Let us therefore suppose that $\sigma$ is
type \ref{type-M} for inputs
$\psi_{s},\varphi_{\tau}$, and without loss
of generality, let us assume the length of
the Moore path is $1$.

We prove the following statement concerning
the generic intersections with the
discriminant:
\begin{lemma}\label{lemma:genericity-statement}
  Suppose that $\varphi_{1}$ and
  $\psi_{1}\varphi_{1}$ do not have discriminant
  points. For a generic perturbation of $\psi_{s}$
  relative its endpoints, we can ensure that:
  \begin{equation*}
    D=\set{(y,s):\psi_{s}\circ \varphi_{1}\text{ has $y$ as a discriminant point}}
  \end{equation*}
  is a finite set, and the projection map
  $(y,s)\in D\mapsto s\in (0,1)$ is injective.

  Moreover, we can assume that:
  \begin{equation}\label{eq:transversality}
    \mathrm{im}(\d\psi_{s}\circ \d\varphi_{1}-1)\text{ is transverse to }T_{s}
  \end{equation}
  at all fixed points of
  $\psi_{s}\circ \varphi_{1}$ in $SY$, where
  $T_{s}$ is the vector field generating the
  positive isotopy $\psi_{s}$ (the ideal
  restriction of $T_{s}$ is an $s$-dependent Reeb
  flow on $Y$).
\end{lemma}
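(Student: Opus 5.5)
The plan is to recast the discriminant condition as a transversality problem for a section of a bundle over a finite-dimensional parameter space, and then apply a parametric Thom transversality argument to generic perturbations of $\psi_{s}$. Since everything is Liouville-equivariant, I will work on $Y\times(0,1)$ rather than in $SY$. A point $y$ is a discriminant point of $\psi_{s}\varphi_{1}$ precisely when $(\psi_{s}\varphi_{1})(y)=y$ and the conformal factor satisfies $\kappa_{s}(y)=0$, where $(\psi_{s}\varphi_{1})^{*}\alpha=e^{\kappa_{s}}\alpha$. Equivalently, the Liouville lift of $\psi_{s}\varphi_{1}$ to $SY$ has a fixed point over $y$; this is the reformulation that connects to \eqref{eq:transversality}. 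The locus $D$ is therefore the zero set of the section
\begin{equation*}
  \Sigma(y,s)=\bigl(\mathrm{graph}(\psi_{s}\varphi_{1})\cap\Delta,\ \kappa_{s}(y)\bigr)
\end{equation*}
of a rank-$2n$ bundle over the $2n$-dimensional manifold $Y\times(0,1)$, since $\dim Y=2n-1$. Equivalently, $D$ is the set of fixed points of the Liouville lift $\Psi_{s}$ of $\psi_{s}\varphi_{1}$ on $SY$, taken modulo the $\R$-action and with $s$ as an extra parameter. Its expected dimension is $0$.

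The main step is transversality of $\Sigma$ to the zero section for generic perturbations. I would perturb $\psi_{s}$ by composing with $\exp$ of a small contact vector field $\epsilon\,h_{s}$, where $h_{s}$ ranges over compactly supported functions on $Y\times(0,1)$. Positivity of $\psi_{s}$ is an open condition, and the endpoints are preserved, so the perturbation stays admissible. The universal section is then a submersion at every zero. This is the standard argument that a contact Hamiltonian can move a point and, independently, prescribe the conformal factor at a fixed point: the Hamiltonian value controls the $\kappa$-component and its differential controls the displacement. By Sard--Smale the transversality holds for generic $h$. Transversality of the enlarged map, including the $s$-direction, is exactly condition \eqref{eq:transversality}. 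It says the linearized fixed-point equation of $\Psi_{s}$ in $SY$, augmented by the direction $T_{s}=\partial_{s}\psi_{s}\circ\psi_{s}^{-1}$, is surjective. Since $\dim SY=2n$ and the parameter contributes one extra dimension, this yields a $1$-dimensional solution space, namely the $\R$-orbit. Hence $D$ is a $0$-dimensional manifold.

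For finiteness, the endpoints $s=0,1$ carry no discriminant points by hypothesis, and this is an open condition. Thus $D$ is a closed subset of $Y\times[\delta,1-\delta]$ for some $\delta>0$, hence compact and therefore finite.

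For injectivity of the projection $D\to(0,1)$, I would use a second genericity argument on the fiber product $D\times_{(0,1)}D$ minus the diagonal. This is the zero set of a section of rank $4n$ over a space of dimension $4n-1$, namely $Y\times Y\times(0,1)$. Because perturbations supported near distinct points $y\neq y'$ are independent, a generic $h$ makes this zero set empty. When $y$ and $y'$ are close but distinct, one needs the perturbation family to separate them; I expect this to be the main technical nuisance. I would first try to handle it by using $h$ depending on both $y$ and $s$, with bump functions of shrinking support, and noting that distinct points of the finite set $D$ are uniformly separated once $D$ is fixed. Alternatively, one can perturb iteratively: first achieve finiteness, then move the $s$-values of the finitely many crossings apart by a small local reparametrization of $s$ near each crossing. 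That reparametrization preserves positivity and transversality.
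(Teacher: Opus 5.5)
Your proposal is correct and follows essentially the paper's route: Propositions \ref{prop:trans1}, \ref{prop:trans2} and \ref{prop:trans3} likewise use Sard--Smale with the abundance of contact Hamiltonians, then a dimension count to rule out simultaneous crossings, then the identification of transversality in the $s$-direction with \eqref{eq:transversality}; the paper works on $SY$ modulo the Liouville $\R$-action, which is equivalent to your $Y\times(0,1)$ formulation with the conformal factor. Three small corrections: your worry about nearby $y\neq y'$ is unfounded, because Sard--Smale on the open set $\{y\neq y'\}$ only needs pointwise surjectivity of the universal linearization, and Hamiltonians with independent $1$-jets at two distinct points always exist; your fallback of reparametrizing $s$ would not work, because it moves all crossings at a given $s$ together, so you must instead perturb locally in $Y$ near one of the points, as the proof of Proposition \ref{prop:trans2} does; and it is the Reeb derivative $\d h(R)$, not the value of $h$, that controls the conformal factor (the value and $\d h|_{\xi}$ control the displacement), though the surjectivity you need still holds.
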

The proof is given in
\S\ref{sec:gener-inters-with}. Let us briefly
comment on the conclusions:
\begin{itemize}
\item discriminant points, when they occur, are
  isolated in $Y\times (0,1)$;
\item condition \eqref{eq:transversality} says
  that the subspace
  $\mathrm{im}(\d \psi_{s}\circ \d\varphi_{1}-1)$
  in $TSY$ has codimension $1$ at any lift to
  $SY$ of a discriminant point in $Y$.
\end{itemize}
Because positivity of $\psi_{s}$ is an open
condition, we can assume that $\psi_{s}$ and
$\varphi_{1}$ satisfy the conclusions of Lemma
\ref{lemma:genericity-statement} by a small
perturbation of $\sigma$.

The third step is to ``chop up'' the $s$-interval
so as to isolate the discriminant points; this is
illustrated in Figure \ref{fig:chopping-up}. To be
clear, ``chopping up'' refers to the process of
decomposing
$[0,1]=[s_{0},s_{1}]\cup [s_{1},s_{2}]\cup \dots
[s_{n-1},s_{n}]$ with $0=s_{0}<\dots<s_{n}=1$ so
that:
\begin{itemize}
\item $\psi_{s_{j}}\varphi_{1}$ does not have
  discriminant points for $j=0,\dots,n$.
\end{itemize}
Then one considers the $1$-simplices
$\sigma^{j}_{s,t}$ given by:
\begin{equation}\label{eq:chop-up}
  \sigma_{j,s,t}=\sigma_{s_{j-1}+s(s_{j}-s_{j-1}),t}=\psi_{(s_{j-1}+s(s_{j}-s_{j-1}))\beta(2t-1)}\circ \varphi_{\beta(2t)}.
\end{equation}
It is clear that $\sigma$ can be decomposed into
$\sigma_{1},\dots,\sigma_{n}$. Observe that:
\begin{equation*}
  \psi_{(s(s_{j}-s_{j-1})\beta(2t-1)+s_{j-1}\beta(2t-\tau))}\circ \varphi_{\beta(2t)}.
\end{equation*}
is a homotopy relative the boundary of the square,
as $\tau\in [0,1]$, which agrees with
\eqref{eq:chop-up} when $\tau=1$, and which is of
type \ref{type-M} when $\tau=0$ with data:
\begin{enumerate}[label=(c\arabic*)]
\item\label{chop-up-1}
  $\varphi^{j}_{t}=\psi_{s_{j-1}t}\varphi_{t}$,
\item\label{chop-up-2}
  $\psi^{j}_{s}=\psi_{s(s_{j}-s_{j-1})+s_{j-1}}\circ \psi_{s_{j-1}}^{-1}$.
\end{enumerate}
Thus we can replace $\sigma$ by the type
\ref{type-M} data given by \ref{chop-up-1} and
\ref{chop-up-2}. By choosing the partition fine
enough, we can suppose that $\sigma_{s,1}$ has at
most one transverse intersection with the
discriminant.

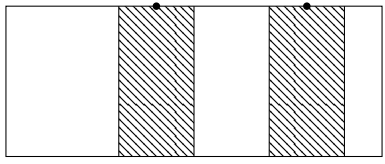
\begin{figure}[h]
  \centering
  \begin{tikzpicture}
    \draw (0,0) rectangle (5,2); \path[every
    node/.style={circle,fill,inner sep=1pt}]
    (2,2)node{} (4,2)node{}; \draw (2.5,0)--+(0,2)
    (1.5,0)--+(0,2) (4.5,0)--+(0,2)
    (3.5,0)--+(0,2); \fill[pattern=north west
    lines] (1.5,0) rectangle +(1,2) (3.5,0)
    rectangle +(1,2);
  \end{tikzpicture}
  \caption{Chopping up a 1-simplex into a
    concatenation so that each piece has either 0
    or 1 intersection with the discriminant; the
    shaded parts have 1 intersection with the
    discriminant, located at the midpoint.}
  \label{fig:chopping-up}
\end{figure}

The idea now is to pick the partition
$s_{0}<\dots<s_{n}$ so that $s_{j}-s_{j-1}$
becomes very small. Using this trick, we first
determine what happens if the case $\sigma_{s,1}$
has no intersections with the discriminant:
\begin{lemma}\label{lemma:case-no-intersections}
  Suppose that $\psi_{s}\varphi_{1}$ does not have
  discriminant points, for any value of
  $s\in [0,1]$. For a sufficiently fine partition
  $0=s_{0}<\dots<s_{n}=1$, the extender ansatz
  from \S\ref{sec:extender-ansatz} associated to
  the data given by \ref{chop-up-1} and
  \ref{chop-up-2} satisfies \ref{item:extender-6};
  moreover it has no 1-periodic orbits at all.

  Relatedly, if
  $t\mapsto
  \psi_{st}\varphi_{t}$ has no
  discriminant orbits in
  $\kappa\subset \pi_{0}(\Lambda Y)$, then the
  extender ansatz for \ref{chop-up-1} and
  \ref{chop-up-2} has no $1$-periodic orbits in
  $\kappa$ for sufficiently fine partition.
\end{lemma}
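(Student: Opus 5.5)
We analyze the fixed points of the time-one map of the extender ansatz \eqref{eq:extender-ansatz} for the chopped data \ref{chop-up-1}, \ref{chop-up-2}. First we make the orbits explicit. For $t\le 1/2$ the flow is the Liouville equivariant lift of $\varphi^{j}_{\beta(2t)}=\psi_{s_{j-1}\beta(2t)}\varphi_{\beta(2t)}$. For $t\ge 1/2$ the Hamiltonian is $2\beta'(2t-1)\gamma(S^{j}_{\beta(2t-1)})$, where $S^{j}_{\sigma}=(s_{j}-s_{j-1})S_{s_{j-1}+\sigma(s_{j}-s_{j-1})}$ generates $\psi^{j}$ after the time change. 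This generator has size $O(\delta)$ with $\delta=\max_j(s_j-s_{j-1})$, uniformly on the relevant region. Because $\gamma$ vanishes near $x\le 0$ and $\gamma'\in[0,1]$, the second half of the flow is supported where $S^{j}>0$, and there it moves points by a reparametrized portion of $\psi^{j}$. Thus a $1$-periodic orbit $y$ of $H_{1,t}$ gives a point $y(0)$ with
\[\Psi^{j}_{1,1}(y(0))=\psi^{j}_{\rho}\circ\varphi^{j}_{1}(y(0))=y(0),\]
where $\rho\in[0,1]$ is some \emph{effective time}. Here $\rho$ depends on $y$ through the values $\gamma'(S^{j})$ along the orbit, in the spirit of the explicit solution found in \S\ref{sec:orbit-set-autonomous}. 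If $\psi^{j}$ is not autonomous, the composition is still a contactomorphism of the form $\psi_{s'}\psi_{s_{j-1}}^{-1}$ for some $s'$ in $[s_{j-1},s_j]$ up to an $O(\delta)$ correction.

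Next we project to $Y$. Liouville equivariance of the whole flow outside the region where $\gamma$ is non-linear (and $C^0$-closeness there) lets us project $y(0)$ to a point $x\in Y$. The point $x$ is then a fixed point of a contactomorphism which is $C^1$-close, uniformly in $\delta$, to $\psi_{s'}\varphi_{1}$ for some $s'\in[0,1]$. The hypothesis says that no $\psi_{s}\varphi_1$ has a discriminant point, and $[0,1]$ is compact. So there is $c>0$ such that, for every $x$ and $s$, either $\psi_s\varphi_1(x)\ne x$ with $d(\psi_s\varphi_1(x),x)\ge c$, or the conformal factor at a fixed point differs from $1$ by at least $c$. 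Hence a $C^1$-small perturbation of $\psi_{s'}\varphi_1$ still has no discriminant points. Lifted to $SY$, this means $\Psi^{j}_{1,1}$ has no fixed points: a fixed point in $SY$ projects to a fixed point in $Y$ whose conformal factor equals $1$, i.e.\ a discriminant point. This gives \ref{item:extender-6} vacuously.

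The main obstacle is justifying the uniform $C^1$-closeness. The cut-off $\gamma$ breaks Liouville equivariance, so the time-one map in the region $0<S^j<1$ is not exactly the lift of a contactomorphism. We will handle it as follows. On each level set of $S^j_\sigma$ the flow is that of $\gamma'(S^j)X_{S^j}$ plus an $O(\delta)$ term transverse to the level sets; the transverse term arises because $S^j_\sigma$ depends on $\sigma$. We then show that the radial coordinate changes by at most $O(\delta)$ times the radial coordinate. A fixed point therefore needs radial coordinate preserved exactly. Projecting, the contact part is $\psi$ run for a time in $[s_{j-1},s_j]$ composed with $\varphi_1$, up to $O(\delta)$ in $C^1$, with conformal factor exactly $1$. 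This contradicts the uniform gap $c$ once $\delta$ is small.

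For the relative statement we run the same argument restricted to orbits in the class $\kappa$. The free homotopy class of the orbit $y$ projected to $Y$ equals that of the discriminant orbit $t\mapsto\psi_{s't}\varphi_t(x)$ of the nearby system, because the isotopy is deformed only by $O(\delta)$. Compactness of $[0,1]$ again yields a uniform gap, now taken only over discriminant points whose orbit lies in $\kappa$. A fixed point in class $\kappa$ for a fine partition would give, in the limit $\delta\to0$, a discriminant orbit of $\psi_{s't}\varphi_t$ in $\kappa$, contradicting the hypothesis.
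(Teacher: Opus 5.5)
Your proposal is correct and follows essentially the same route as the paper. The second half of the extender ansatz is an $O(\delta)$ perturbation of the Liouville-equivariant lift of $\psi_{s_{j-1}}\varphi_{1}$, measured in the Liouville-invariant sense that your radial estimate makes precise; these lifts are fixed-point free uniformly in $s_{j-1}\in[0,1]$ by compactness, so no $1$-periodic orbits survive for fine partitions (and, via your limiting argument, none in $\kappa$ for the relative version). Two small corrections: the detour through an ``effective time'' $\rho$ is unnecessary, and the uniform gap should be phrased as $\inf_{x,s}\bigl(d(\psi_{s}\varphi_{1}(x),x)+|\log g_{s}(x)|\bigr)>0$, with $g_{s}$ the conformal factor of $\psi_{s}\varphi_{1}$, rather than as your either/or dichotomy, which fails at points near non-discriminant fixed points.
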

\begin{proof}
  Unpacking the definitions, we need to show that,
  for $s_{j}-s_{j-1}$ small enough, the system
  generated by:
  \begin{equation}\label{eq:extender-ansatz-j}
    H_{1,t}=2\beta'(2t)F^{j}_{\beta(2t)}+2\beta'(2t-1)\gamma(S^{j}_{\beta(2t-1)})
  \end{equation}
  has no $1$-periodic orbits, where:
  \begin{itemize}
  \item $F^{j}_{t}$ generates
    $\psi_{s_{j-1}t}\varphi_{t}$, and,
  \item $S^{j}_{s}$ generates
    $\psi_{s(s_{j}-s_{j-1})+s_{j-1}}\circ \psi_{s_{j-1}}^{-1}$.
  \end{itemize}
  In particular, $S_{s}^{j}$ is proportional to
  $s_{j}-s_{j-1}$ and hence converges in the
  $C^{\infty}$ topology to the zero function, as
  the partition gap tends to zero. The argument
  then follows from a standard compactness
  argument: since $F_{t}^{j}$ generates
  $\psi_{s_{j-1}t}\varphi_{t}$, the system
  generated by \eqref{eq:extender-ansatz-j} has a
  time-1 map which is close to
  $\psi_{s_{j-1}}\varphi_{1}$; since the latter
  has no orbits (uniformly in $s_{j-1}\in [0,1]$),
  and this is an open condition on the generating
  Hamiltonian, we conclude that
  \eqref{eq:extender-ansatz-j} defines a trivial
  extender, provided the partition is chosen small
  enough.

  The same argument applies verbatim to the
  modified statement when the free homotopy class
  $\kappa$ is included.
\end{proof}

\begin{lemma}\label{lemma:case-single-intersection}
  Suppose $\psi_{s}\varphi_{1}$ has exactly one
  discriminant point at $s=s_{0}$, and the
  intersection is transverse in the sense of
  \eqref{eq:transversality}. After localizing to a
  sufficiently small interval around $s=s_{0}$,
  the extender ansatz satisfies
  \ref{item:extender-6}.
\end{lemma}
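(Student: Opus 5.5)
The plan is to reduce to the autonomous situation handled in \S\ref{sec:orbit-set-autonomous}, up to a small error that is controlled by a transversality estimate. After the chopping procedure of \S\ref{sec:comp-proof-thm-decomp}, the data \ref{chop-up-1} and \ref{chop-up-2} take the form $\varphi^{j}_{t}=\psi_{s_{j-1}t}\varphi_{t}$ and $\psi^{j}_{s}$. Here $\psi^{j}_{s}$ is a positive isotopy of total length $\delta=s_{j}-s_{j-1}$, and its generator $S^{j}_{s}$ equals $\delta$ times $T_{s_{j-1}+s\delta}$. As $\delta\to 0$, this generator is $C^{\infty}$-close to $\delta\,T_{s_{0}}$, which is autonomous. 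The fixed points of the time-one map of the extender ansatz \eqref{eq:extender-ansatz-j} are the points $x$ with $\psi^{j}_{\gamma'}\circ\varphi^{j}_{1}(x)=x$. The formula for $\Psi_{1,1}$ only holds literally in the autonomous case; in general the flow runs by an amount depending on $\gamma'(S)$ along the level sets. Rescaling, we write the fixed point equation as $\psi_{s_{j-1}+\sigma\delta}\varphi_{1}(y)=y$ with $\sigma=\gamma'(\cdot)\in[0,1]$, where $\sigma$ is read off from the radial coordinate.

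The first step is to identify the orbit set. By the genericity Lemma \ref{lemma:genericity-statement}, there is a unique $(y_{0},s_{0})$ in $D$, so every fixed point of $\Psi_{1,1}$ projects to $y_{0}$ and satisfies $s_{j-1}+\sigma\delta=s_{0}$. This pins down $\sigma=\sigma_{0}:=(s_{0}-s_{j-1})/\delta$. It also pins down the fiber: fixed points in $SY$ over $y_{0}$ of a contactomorphism with a discriminant point form a whole Liouville ray. Along that ray the radial coordinate is then fixed by the equation $\gamma'(S^{j}(x))=\sigma_{0}$, with $S^{j}$ evaluated at the appropriate time parameter. Condition \eqref{eq:transversality} is exactly what I would use to show this is the complete description. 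It guarantees that the linearized fixed point equation, in the direction normal to the ray together with the $\sigma$-parameter, is nondegenerate modulo the Liouville direction. The implicit function theorem then gives that the fixed points form a single connected, possibly degenerate, family near $(y_{0},\text{ray})$, with $\sigma$ locally constant. Restricting to a small enough interval around $s_{0}$ ensures by compactness that no other fixed points appear.

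The second step is to compute the action. Following \eqref{eq:action-estimate}, each fixed point has action $\int_{0}^{1}\gamma(S^{j}_{s})-S^{j}_{s}\gamma'(S^{j}_{s})\,ds$ along its orbit. Both pieces are invariant along the connected family: the family lies in a single Liouville ray where $\gamma'$ takes the fixed value $\sigma_{0}$, and strict convexity of $\gamma$ where $\gamma'\in(0,1)$ makes the radial value unique. So the action is a single value, negative as before, and \ref{item:extender-6} follows.

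The main obstacle is non-autonomy. In the autonomous case, $\gamma'(S)$ is constant along orbits, but here $S^{j}_{s}$ varies with $s$, so $\gamma'(S^{j}_{s}(y(t)))$ is not obviously constant. I would first try to compensate by a further homotopy of type \ref{type-M} data, rel boundary, replacing $\psi^{j}$ by an autonomous positive isotopy $e^{s\delta T_{s_{0}}}$ composed with an isomorphism-type correction. I would justify this with Lemma \ref{lemma:isom-homot-categ} and the fact that, on a small interval, the straight-line homotopy between $\psi^{j}$ and the autonomous flow introduces no new discriminant crossings by the transversality \eqref{eq:transversality}. This reduces the claim to the lemma of \S\ref{sec:orbit-set-autonomous}. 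If that fails, the fallback is the direct implicit-function-theorem argument above.
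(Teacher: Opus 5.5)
Your argument only works when $\psi_{s}$ is autonomous, and that case is already the lemma of \S\ref{sec:orbit-set-autonomous}. The present lemma exists precisely to handle non-autonomous $\psi_{s}$, and there your Step~1 has no foundation.

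When $S^{j}_{\tau}$ depends on $\tau$, the factor $\gamma'(S^{j}_{\tau})$ is not constant along trajectories of the second half of the extender. So the time-one map is not of the form $x\mapsto \psi^{j}_{\gamma'(\cdot)}\circ\varphi^{j}_{1}(x)$. Its fixed points therefore need not project to a discriminant point of any $\psi_{s}\varphi_{1}$, need not lie over $y_{0}$, and need not lie on a Liouville ray. The deduction $\sigma=\sigma_{0}$, the ``connected family'' picture, and the action computation in Step~2 (which again uses a single ray with fixed value of $\gamma'$) all rest on the autonomous formula you yourself flagged as invalid.

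Your primary repair also fails. A homotopy of type \ref{type-M} data that changes the morphism only by isomorphisms must keep the time-one maps of both ends fixed, since by Lemma~\ref{lemma:isom-homot-categ} isomorphisms have stationary time-one map. So the autonomous positive flow would need time-one map exactly $\psi_{s_{j}}\psi_{s_{j-1}}^{-1}$. The discrepancy $e^{-\delta T_{s_{0}}}\psi_{s_{j}}\psi_{s_{j-1}}^{-1}$ is a nontrivial contactomorphism of size $O(\delta^{2})$ that no isomorphism can absorb. There is also no reason $\psi_{s_{j}}\psi_{s_{j-1}}^{-1}$ should be the time-one map of any autonomous contact flow. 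The absence of new discriminant crossings along a straight-line homotopy does not address this.

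What is missing is a quantitative perturbation argument in rescaled coordinates, which is how the paper proceeds.
\begin{itemize}
\item Localize to an interval of length $2\epsilon$ centred at $s_{0}$. The generator of the second half is then approximately $2\epsilon r$, where $r$ generates $T_{s_{0}}$.
\item Any fixed point must therefore sit where $\gamma'\in(0,1)$, i.e.\ at $r\sim\epsilon^{-1}$. These points escape into the positive end as $\epsilon\to 0$, which is why your unscaled ``compactness'' and ``$C^{\infty}$-closeness'' are not uniform.
\item Conjugate by the Liouville flow $Z_{-\log(2\epsilon)}$. Up to $O(\epsilon^{2})$ errors, which is where the non-autonomy is absorbed, the time-one map becomes $\psi_{s_{0}}\varphi_{1}$ followed by the time-one flow of $(2\gamma'(r)-1)\epsilon\,T_{s_{0}}$.
\item This model map has exactly one fixed point, on $Z_{\R}(y_{0})$ at $\gamma'(r)=1/2$.
\end{itemize}
Your intuition that \eqref{eq:transversality} plus strict convexity of $\gamma$ gives non-degeneracy is right, but it must be applied to this rescaled model. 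There the non-degeneracy is of order $\epsilon$ and dominates the $O(\epsilon^{2})$ error. A contraction (Banach fixed point) argument then shows the actual time-one map has a unique fixed point for small $\epsilon$. Then \ref{item:extender-6} is immediate, with no separate action computation needed.
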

\begin{proof}
  We will prove that, after a sufficiently small
  localization, the extender ansatz has a single
  $1$-periodic orbit. Evidently, this implies
  there is a single action value.

  Let us consider a sequence of localizations to
  the interval
  $[s_{0}-\epsilon_{n},s_{0}+\epsilon_{n}]$ of the
  extender ansatz:
  \begin{equation*}
    H^{n}_{1,t}=2\beta'(2t)F_{\beta(2t)}^{n}+2\beta'(2t-1)\gamma(S_{\beta(2t-1)}^{n}),
  \end{equation*}
  where $F_{t}^{n}$ generates
  $\psi_{(s_{0}-\epsilon_{n})t}\varphi_{t}$, and
  $S_{s}^{n}$ generates
  $\psi_{s2\epsilon_{n}+s_{0}-\epsilon_{n}}\circ
  \psi_{s_{0}-\epsilon_{n}}^{-1}$.

  Following the same argument used in the proof of
  Lemma \ref{lemma:case-no-intersections},
  $H_{1,t}^{n}$ generates a system $\xi_{n,t}$
  whose time-$1$ map $\xi_{n,1}$ is $C^{\infty}$
  close to $\psi_{s_{0}}\varphi_{1}$. In
  particular, if $\xi_{n,1}$ has a fixed point
  $z_{n}$, then $z_{n}$ must converge to some
  point on the Liouville flow line $Z_{\R}(y)$
  through the unique discriminant point $y$ of
  $\psi_{s_{0}}\varphi_{1}$.

  The time-$1$ map of $\xi_{n,t}$ can be written
  as a composition of two maps:
  \begin{itemize}
  \item $\psi_{s_{0}-\epsilon_{n}}\varphi_{1}$,
  \item the isotopy generated by
    $V_{n,t}=2\epsilon_{n}\gamma'(S^{n}_{t})T_{2\epsilon_{n}t+s_{0}-\epsilon_{n}}$.
  \end{itemize}
  Since $S_{t}^{n}\approx 2\epsilon_{n} r$, where
  $r$ is the Hamiltonian generating $T_{s_{0}}$,
  we conjugate with the Liouville flow
  $Z_{-\log(2\epsilon_{n})}$ so that
  $2\epsilon_{n}r$ converges to $r$; then the
  second part is approximately the flow by
  $2\epsilon_{n}\gamma'(r)T_{s_{0}}$, with error
  given by terms which are of order
  $\epsilon_{n}^{2}$.

  Similarly, up to terms of order
  $\epsilon_{n}^{2}$, the first part can be
  replaced by $\psi_{s_{0}}\varphi_{1}$ followed
  by the flow of $-\epsilon_{n}T_{s_{0}}$ for time
  $1$. Thus, in this analysis to first order, we
  have that $\xi_{n,1}$ is approximately:
  \begin{itemize}
  \item $\psi_{s_{0}}\varphi_{1}$, followed by,
  \item the time $1$ flow of
    $(2\gamma'(r)-1)\epsilon_{n}T_{s_{0}}$.
  \end{itemize}
  There is a unique fixed point located at
  $\gamma'(r)=1/2$ on the Liouville flow line
  $Z_{\R}(y)$. This is non-degenerate, by our
  transversality assumptions. This non-degeneracy
  ensures that the analysis up to first order in
  $\epsilon_{n}$ is stable under small
  perturbations of order $\epsilon_{n}^{2}$; a
  more precise argument uses the Banach fixed
  point theorem to prove the existence of a unique
  fixed point. Thus $\xi_{n,1}$ has a unique fixed
  point for $n$ sufficiently large, as desired.
\end{proof}

\subsection{On the generic intersections with the
  discriminant}
\label{sec:gener-inters-with}

The discriminant may be highly singular, but it
has a canonical resolution as a smooth (Fréchet)
manifold, as follows. Consider the graph map:
\[ \Gamma:\mathrm{Cont}(Y)\times SY \rightarrow SY
  \times SY \] sending $(\phi,y)$ to
$(y,\phi(y))$; this map is submersive (by the
abundance of contact Hamiltonians), and the
inverse image of the diagonal yields a Fréchet
manifold $\Gamma^{-1}(\Delta)$. The projection
(forgetting the $SY$ factor) yields a continuous
surjection:
\begin{equation}
  \Gamma^{-1}(\Delta)/\R\to \text{the discriminant}
\end{equation}
which is the aforementioned resolution; here the
quotient corresponds to the $\R$-action by the
Liouville flow on $SY$. This suggests the
following characterization\footnote{A toy model
  for discriminant points are critical points of a
  smooth function with critical value zero.} of
when a finite-dimensional family of
contactomorphisms should be called transverse to
the discriminant:
\begin{definition}
  Let \( P \) be a finite dimensional smooth
  manifold. We say that a smooth map
  \( f:P\to \mathrm{Cont}(Y) \) is
  \emph{transverse to the discriminant} if:
  \begin{equation*}
    \Gamma_{f}:(p,y)\in P\times SY\mapsto \Gamma(f(p),y)\in SY\times SY
  \end{equation*}
  is transverse to the diagonal $\Delta$. One can
  also speak of maps which are transverse along
  some closed subset $A\subset P$.
\end{definition}

\begin{proposition}\label{prop:trans1}
  Let \( P \) be a smooth, finite dimensional,
  compact manifold, and $f:P \to \mathrm{Cont}(Y)$
  a smooth map. Suppose \( f \) is transverse to
  the discriminant along some closed, possibly
  empty, subset \( A \subset P. \) Then \( f \) is
  homotopic relative \( A \) to a smooth map which
  is everywhere transverse to the discriminant and
  arbitrarily \( C^{\infty} \) close to \( f. \)
\end{proposition}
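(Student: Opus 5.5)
The plan is to run the standard parametric transversality argument (Sard--Smale in the finite-dimensional form of the parametric transversality theorem), with the perturbations drawn from a finite-dimensional family of contact Hamiltonians so that no Fréchet technology is needed.

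\emph{Step 1: reduction to a compact problem.} Everything is invariant under the Liouville $\R$-action on $SY$. Indeed, $\Gamma_{f}$ intertwines the action on $SY$ with the diagonal action on $SY\times SY$, which preserves $\Delta$. Hence transversality of $\Gamma_{f}$ to $\Delta$ at $(p,y)$ depends only on $(p,[y])$ with $[y]\in Y$. I would therefore work on the slice $\{r=1\}\cong Y$, and at a point $(p,y)$ with $f(p)(y)=y$ replace $\Delta$ by $\Delta\cdot\R$. Concretely, fixing a contact form $\alpha$, transversality to $\Delta$ at such a point is equivalent to the map
\begin{equation*}
  (p,y)\mapsto \bigl(y,f(p)(y),\log(f(p)^{*}\alpha/\alpha)(y)\bigr)
\end{equation*}
being transverse to $\Delta_{Y}\times\{0\}$ in $Y\times Y\times\R$. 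Here $f(p)^{*}\alpha/\alpha$ is the conformal factor, so the third coordinate records the Liouville coordinate of $f(p)$ acting on the lift. Since $P$ and $Y$ are compact, this is now a finite-dimensional transversality problem for a smooth map of compact manifolds.

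\emph{Step 2: a finite-dimensional submersive family.} Choose finitely many contact Hamiltonians $h_{1},\dots,h_{N}$ on $Y$, with time-one maps forming
\begin{equation*}
  \theta_{c}=\phi^{h_{1}}_{c_{1}}\circ\dots\circ\phi^{h_{N}}_{c_{N}},\qquad c\in B^{N}.
\end{equation*}
The choice must be such that, at every point $y$, the derivatives in $c$ at $c=0$ of $(\theta_{c}(y),\log(\theta_{c}^{*}\alpha/\alpha)(y))$ span $T_{y}Y\oplus\R$. This is possible by compactness of $Y$ and the abundance of contact Hamiltonians. A contact Hamiltonian $h$ with $h(y)=0$ prescribes the $1$-jet of its vector field at $y$. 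Its value at $y$ moves $y$ transversely to $\xi$, the first derivative of $h$ moves $y$ along $\xi$, and $dh(R)$ at $y$ controls the conformal factor there. Pointwise spanning is an open condition, so finitely many $h_{i}$ suffice.

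To work relative to $A$, take a cutoff $\chi:P\to[0,1]$ vanishing on a neighbourhood $U$ of $A$ on which transversality already holds; this is possible because transversality is an open condition and $A$ is closed. Then set
\begin{equation*}
  F(p,c)=\theta_{\chi(p)c}\circ f(p).
\end{equation*}
\emph{Step 3: transversality and the conclusion.} On $\{\chi>0\}\times B^{N}$ the map $(p,y,c)\mapsto\Gamma_{F}$ is a submersion near the relevant points, by Step 2, possibly after shrinking $B^{N}$. The parametric transversality theorem then gives that for generic small $c$ the map $\Gamma_{F(\cdot,c)}$ is transverse to $\Delta$ over $\{\chi>0\}$. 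Over $U$ transversality is inherited from $f$, and the intermediate compact region $\{0<\chi<\delta\}$ is handled by taking $c$ small, since transversality is open. The homotopy is $s\mapsto F(\cdot,sc)$: it is relative to $A$ and $C^{\infty}$-close to $f$ as $c\to 0$.

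The main obstacle is Step 2 near points where $\chi$ is small but positive. There the perturbation $\chi(p)c$ has small effective range, so submersivity in $c$ only holds after rescaling by $\chi(p)$. I would handle this by applying parametric transversality to the pair $(p,\chi(p)c)$ on $\{\chi>0\}$, where the rescaling is a diffeomorphism onto its image in the $c$-variable. Non-transverse points could then only accumulate where $\chi\to0$, and those points lie in the region where $f$ is already transverse; by openness they are excluded. The remaining care point is checking that the conformal-factor coordinate is really controlled by $dh(R)$, which is a direct calculation from the contact Hamiltonian vector field formula.
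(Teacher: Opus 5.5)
Your proposal is correct and follows the paper's approach. The paper only sketches it as ``abundance of contact Hamiltonians plus Sard--Smale''; your family $\theta_{c}$, chosen so that $c\mapsto(\theta_{c}(z),\log(\theta_{c}^{*}\alpha/\alpha)(z))$ is a submersion at every $z\in Y$, is exactly the abundance input, and finite-dimensional parametric transversality stands in for Sard--Smale without Fr\'echet issues. One point should be stated precisely: the zero set of $\chi$ must lie in the open set where $f$ is already transverse (there $d\chi=0$, so $F(\cdot,c)$ and $f$ have the same $1$-jet), after which parametric transversality on the open set $\{\chi>0\}$ needs no uniformity in $\chi$ and your ``main obstacle'' disappears.
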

\begin{proof}
  This follows from the abundance of contact
  Hamiltonians and the Sard--Smale theorem. The
  details are left to the reader.
\end{proof}

By counting dimensions, one can also bound the
number of discriminant points which appear at any
given moment in generic $1$-parameter families of
contactomorphisms:
\begin{proposition}\label{prop:trans2}
  Suppose that $P$ is a one-dimensional manifold
  and consider a smooth map
  $f:P\to \mathrm{Cont}(Y)$ which is transverse to
  the discriminant. After a generic perturbation,
  there is at most one discriminant
  point\footnote{In fact, if $P$ has dimension
    $d$, then there are at most $d-1$ many
    discriminant points which occur above any
    given point $p\in P$, for generic $f$. We will
    not use this fact.} which occurs at any given
  point $p$.
\end{proposition}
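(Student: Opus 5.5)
Since $f$ is transverse to the discriminant and $\dim P=1$, the set $\Sigma=\Gamma_f^{-1}(\Delta)\subset P\times SY$ is a smooth submanifold of dimension $1+2n-2n=1$. It is invariant under the Liouville $\R$-action, so $\Sigma/\R$ is a zero-dimensional manifold. Each point of $\Sigma/\R$ is a pair $(p,[y])$ with $[y]$ a discriminant point of $f(p)$. The statement to prove is that, after a generic perturbation, the projection $\Sigma/\R\to P$ is injective; in other words, no two distinct points of $\Sigma/\R$ have the same $p$-coordinate.

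I will run a dimension count in the style of multijet transversality. Consider the ``double'' map
\[
\Gamma^{(2)}_f:(p,y_1,y_2)\mapsto (y_1,f(p)(y_1),\,y_2,f(p)(y_2))\in (SY\times SY)^2 ,
\]
defined on the open set of $P\times SY\times SY$ where $y_1,y_2$ lie on distinct Liouville orbits. Transversality to $\Delta\times\Delta$ produces a preimage of dimension $1+4n-4n=1$. Dividing by the free $\R^2$-action of the Liouville flows on the two factors leaves dimension $-1$, so the preimage is empty. Emptiness of this preimage is exactly the statement that no $p$ carries two distinct discriminant points.

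The remaining step is to arrange this transversality by a small perturbation, and I expect this to be the main obstacle. I would follow the proof of Proposition \ref{prop:trans1}. Take the universal family $\tilde f(p,h)=\phi^h_1\circ f(p)$, where $h$ ranges over a Banach space of small contact Hamiltonians, and show that $\Gamma^{(2)}_{\tilde f}$ is a submersion onto a neighborhood of $\Delta\times\Delta$ at every preimage point. Since $y_1$ and $y_2$ project to distinct points of $Y$, I can choose contact Hamiltonians supported near one of the image points and vanishing near the other. This moves $f(p)(y_1)$ in an arbitrary direction while fixing $f(p)(y_2)$, and vice versa; the freedom is up to the Liouville-homogeneous direction, which is already absorbed by the $\R^2$ quotient. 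Sard--Smale then yields a residual set of parameters $h$ for which $\Gamma^{(2)}$ is transverse.

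Two technical points need care at this stage. The first is non-compactness near the diagonal $[y_1]=[y_2]$. I would handle it by noting that, near a transverse discriminant point, the point of $\Sigma/\R$ is isolated, so pairs cannot collide; alternatively I would exhaust the configuration space by compact sets. The second is keeping the first transversality (Proposition \ref{prop:trans1}) while perturbing. This is automatic, because that condition is open, and the two conditions can be imposed together since the intersection of residual sets is residual.
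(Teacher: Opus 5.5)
Your argument is correct, but it is organized differently from the paper's. The paper uses that $D=\Gamma_f^{-1}(\Delta)/\R$ is zero-dimensional to get finitely many discriminant points over each $p$. It then removes coincidences by hand: it perturbs $f$ near each offending point $(p,y_i)$ so as to slide that point's basepoint slightly, justified by submersivity of the projection $(f,p,y)\mapsto p$ on the universal moduli space.

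You instead apply Sard--Smale to the two-point map $\Gamma^{(2)}$ on the space of pairs lying on distinct Liouville orbits. Emptiness of the coincidence locus then follows from the count $1+4n-4n-2<0$ together with freeness of the $\R^2$-action. The geometric input is the same in both: contact Hamiltonians localized near one discriminant point move it without touching the others.

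What your version buys:
\begin{itemize}
\item It yields a residual set directly.
\item It needs neither the finiteness step nor an argument that removing one coincidence does not create another.
\item Its $k$-point analogue handles a $d$-dimensional family: generically at most $d$ simultaneous points, which is the bound consistent with the case $d=1$.
\end{itemize}

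What the paper's local surgery buys: it makes evident that the perturbation is small and supported away from $\partial P$. That is what Lemma \ref{lemma:genericity-statement} needs, since there $\psi_s$ is perturbed relative to its endpoints while staying positive. Your $p$-independent family $\phi^h_1\circ f(p)$ moves the endpoints. In that application you should let $h$ depend on $p$ with support away from the endpoints; this is harmless because no discriminant points occur near them.

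One small correction. Your hedge that the $h$-variations give freedom only ``up to the Liouville-homogeneous direction'' is unnecessary, and its stated justification is wrong. Varying $y_i$ along $Z$ maps into $T\Delta$, so the $\R^2$ quotient cannot make up for a missing normal direction. In fact the lifted fields $X_{rh}$ span $T_ySY$ pointwise, because $d(rh)=h\,dr+r\,dh$ can be prescribed arbitrarily.
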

\begin{proof}
  The subset $D=\Gamma_{f}^{-1}(\Delta)/\R$ is a
  zero dimensional submanifold of the total space
  $P\times Y$. Thus, for any given $p\in P$, there
  are a finite number of discriminant points, say
  $y_{1},\dots,y_{N}$. One perturbs $f$ in a
  neighborhood of each point $y_{i}$ in order to
  slightly move its basepoint $p$. The technical
  reason this is possible is that Sard Smale
  argument establishes the universal moduli space
  of all pairs $(f,p,y)$ such that $y$ is a
  discriminant point of $f(p)$ has a submersive
  projection $(f,p,y)\mapsto p$, (again by the
  abundance of contact Hamiltonians).
\end{proof}

We relate this notion of transversality to Lemma
\ref{lemma:genericity-statement}.
\begin{proposition}\label{prop:trans3}
  Suppose that
  $s\in [0,1]\mapsto \psi_{s}\circ \varphi_{1}$ is
  transverse to the discriminant, where $\psi_{s}$
  is positive isotopy. Then, for each $s$ such
  that $\psi_{s}\circ \varphi_{1}$ lies on the
  discriminant, it holds that
  $\mathrm{im}(\d\psi_{s}\circ \d\varphi_{1}-1)$
  is transverse to $T_{s}$ in $SY$, where $T_{s}$
  is the generator of $\psi_{s}$.
\end{proposition}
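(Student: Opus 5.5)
Unwinding the transversality definition is the core of the argument; once transversality is stated in terms of the graph map $\Gamma_f$, the claim reduces to linear algebra at a single point. Set $P=[0,1]$ and $f(s)=\psi_{s}\circ\varphi_{1}$, with everything lifted Liouville-equivariantly to $SY$. Let $s$ be a parameter at which $f(s)$ has a discriminant point, and let $y\in SY$ be a lift that is a fixed point of $f(s)$; such a lift exists because $f(s)^{*}\alpha=\alpha$ at the point. By hypothesis $\Gamma_{f}(s,y)=(y,f(s)(y))$ is transverse to $\Delta$ at $(s,y)$.

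The first step is to compute the differential. For a variation $(\delta s,\delta y)$ we have
\begin{equation*}
  \d\Gamma_{f}(\delta s,\delta y)=\bigl(\delta y,\ \d f(s)\,\delta y+\delta s\,T_{s}(y)\bigr).
\end{equation*}
Here we use $\partial_{s}(\psi_{s}\circ\varphi_{1})=T_{s}\circ\psi_{s}\circ\varphi_{1}$, so that at the fixed point $y$ the derivative in $s$ is $T_{s}(y)$. Since $T\Delta=\{(v,v)\}$, the space $\mathrm{im}\,\d\Gamma_{f}+T\Delta$ equals all of $T_ySY\oplus T_ySY$ exactly when its image under $(a,b)\mapsto b-a$ is onto. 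That image is
\begin{equation*}
  \mathrm{im}(\d\psi_{s}\circ\d\varphi_{1}-1)+\R\, T_{s}(y).
\end{equation*}
Transversality to the diagonal is therefore equivalent to the equality $\mathrm{im}(\d\psi_{s}\d\varphi_{1}-1)+\R T_{s}=T_{y}SY$, which is precisely the statement that the image is transverse to $T_{s}$.

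The step requiring care is the Liouville quotient, since the definition was phrased on $SY$ with $\Gamma^{-1}(\Delta)/\R$. The Liouville vector field $Z$ commutes with $f(s)$, so $Z(y)\in\ker(\d f(s)-1)$. This means the kernel is at least one-dimensional, and the image has codimension at least $1$. It follows that the transversality requirement on $T_{s}$ is meaningful and that the image has codimension exactly $1$; this is consistent with the remark following Lemma \ref{lemma:genericity-statement}. I would also note that $T_{s}(y)$ is nonzero and not in the image, because $\alpha(T_{s})>0$ by positivity. That observation is not needed for the implication itself, but it explains why positivity appears in the hypothesis.

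The main thing to verify is that the notion of transversality in the definition, applied on $SY\times SY$, is the one used here and is not the version for the quotient by $\R$. If the intended version is on the quotient, I would repeat the same linear algebra on $T(SY)/\R Z$, using that $\d f(s)$ preserves $Z$. In that setting transversality of the quotient map gives $\mathrm{im}+\R T_{s}+\R Z=T_ySY$. Since $Z$ lies in the image only modulo this correction, one then adds $Z$ back: either one argues by a direct check using Liouville homogeneity ($\lambda$ evaluated on the image) that $Z$ lies in $\mathrm{im}+\R T_{s}$, or one notes that the definition was stated on $SY\times SY$ to begin with. I expect the first reading, so the proof is the one-line differential computation above.
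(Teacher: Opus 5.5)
Your argument is correct and is the same as the paper's. In both, one differentiates $(s,y)\mapsto (y,\psi_s\varphi_1(y))$ at a fixed point and observes that transversality to $\Delta$ is equivalent to $\mathrm{im}(\d\psi_s\circ\d\varphi_1-1)+\R T_s(y)=T_ySY$. Your projection $(a,b)\mapsto b-a$ is a cleaner version of the paper's step ``solve $u\oplus u+\dots=0\oplus w$'', and you correctly keep the factor $\delta s$ that the paper drops.

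The paper's definition is stated directly on $SY\times SY$, so the hedge in your last paragraph is unnecessary. Quotienting by the diagonal Liouville action would not change the condition, since $(Z,Z)\in T\Delta$. The ``direct check'' you sketch there would also not work in general: pair with $\lambda$, which vanishes on $Z$ and on $\mathrm{im}(\d\psi_s\circ\d\varphi_1-1)$ but not on $T_s$. This shows that $Z\in\mathrm{im}+\R T_s$ forces $Z\in\mathrm{im}$, which need not hold.
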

\begin{proof}
  By the above discussion, transversality ensures:
  \begin{equation*}
    (s,y)\in [0,1]\times SY\mapsto (y,\psi_{s}(\varphi_{1}(y)))
  \end{equation*}
  is transverse to the diagonal. Differentiating
  with respect to $y$ and $s$ at a fixed point yields the
  subspace of tangent vectors of the form
  \begin{equation*}
    v\oplus (\d\psi_{s}\circ \d\varphi_{1}(v)+T_{s}(y)).
  \end{equation*}
  If this is transverse to the diagonal, then we can solve:
  \begin{equation*}
    u\oplus u+v\oplus (\d\psi_{s}\circ \d\varphi_{1}(v)+T_{s}(y))=0\oplus w
  \end{equation*}
  for any $w$. The only option is $u=-v$, so:
  \begin{equation*}
    -v+\d\psi_{s}\circ \d\varphi_{1}(v)+T_{s}(y)=w
  \end{equation*}
  can be solved for any $w$, for some $v$. This
  yields the desired result.  
\end{proof}

Combining Propositions \ref{prop:trans1},
\ref{prop:trans2}, and \ref{prop:trans3} yields
Lemma
\ref{lemma:genericity-statement}.\hfill$\square$

\subsection{The local Floer cohomology of an
  extender}
\label{sec:local-floer-cohomology-of-an-extender}

In this section we develop the local Floer
cohomology of an extender.

\subsubsection{Admissible perturbations}
\label{sec:admiss-pert}

Let $\psi_{s,t}$ be an extender, as in
\S\ref{sec:extenders}. Fix a Liouville equivariant
$\omega$-tame almost complex structure $J$ on
$SY$.

An \emph{$\epsilon$-admissible perturbation}
for $\psi_{s,t},J$ is a family $\delta_{s,t}$ of
Hamiltonian diffeomorphisms such that:
\begin{itemize}
\item $\delta_{s,0}=\delta_{0,t}=\id$, for all
  $s,t$,
\item $\delta_{s,t}$ is compactly supported,
  uniformly in $s,t$,
\item $\psi_{1,t}\delta_{1,t}$ is a non-degenerate
  Hamiltonian system, for which all Floer
  differential cylinders are cut transversally,
\item the Hamiltonian generator of
  $t\mapsto \delta_{s,t}$, and its first
  derivative with respect to $s$, are bounded in
  absolute value by $\epsilon$.
\end{itemize}

As in the statement of Theorem
\ref{theorem:main-local-hf}, we let
$\kappa\subset \pi_{0}(\Lambda Y)$ be a collection
of free homotopy classes of loops.

\begin{lemma}\label{lemma:minimum-principle}
  If $\epsilon$ is small enough, then the Floer
  differential on:
  $$\mathrm{CF}(\psi_{1,t}\delta_{1,t},J,\kappa)$$
  squares to zero, and the chain homotopy type of
  the resulting complex is independent of the
  choice of $J$ or the $\epsilon$-admissible
  perturbation $\delta_{s,t}$.
\end{lemma}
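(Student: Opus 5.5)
The plan is to follow Floer's original local Floer cohomology argument \cite{floer-comm-math-phys-1989}, adapted to the symplectization $SY$. The key point is a uniform \emph{compactness} statement: Floer cylinders for $\psi_{1,t}\delta_{1,t}$ whose asymptotics lie in $\kappa$ stay in a fixed compact subset of $SY$ and cannot degenerate in an uncontrolled way. Once that is established, the usual arguments ($d^{2}=0$ by counting ends of one-dimensional moduli spaces, invariance via continuation maps) go through verbatim.

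\textbf{Step 1: energy and action.} By \ref{item:extender-5} and \ref{item:extender-6}, all fixed points of $\psi_{1,1}$ form a compact set $C$ with a single action value $a<0$. For $\epsilon$ small, every orbit of the perturbed system $\psi_{1,t}\delta_{1,t}$ lies in a small neighborhood $U$ of $C$, and its action differs from $a$ by $O(\epsilon)$. The reason is that the orbit set depends upper semicontinuously on the system, and the perturbation is compactly supported and $C^{1}$-small. Hence every Floer differential cylinder between such orbits has energy $O(\epsilon)$.

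\textbf{Step 2: a priori $C^{0}$ bounds.} I would show that cylinders of energy at most $O(\epsilon)$ remain in a compact region and in fact stay close to $C$.
\begin{itemize}
\item On the positive end, $X_{H_{1,t}}$ is Liouville equivariant by \ref{item:extender-3}, so the standard maximum principle for $J$ Liouville equivariant (as in \cite[Lemma 7.2]{abouzaid-seidel-GT-2010}) prevents escape to $+\infty$.
\item On the negative end the system is $H_{0,t}$, which is Liouville equivariant everywhere by \ref{item:extender-1} and \ref{item:extender-2}. I expect this to be the main obstacle, since no maximum principle controls escape to $-\infty$. The plan is to use the action and energy bound: a cylinder entering deep into the negative end has small $\omega$-area there because $\omega$ scales by $e^{r}$. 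So I would instead argue by contradiction with Gromov compactness. A sequence of cylinders of energy tending to zero converges, after rescaling by the Liouville flow, to a solution of the $s$-invariant equation for the Liouville equivariant system $H_{0,t}$ that has zero energy, and is therefore an orbit of $H_{0,t}$. Because $X_{H_{0,t}}$ is Liouville equivariant, such an orbit would have action exactly $0$. This contradicts its being a limit of orbits whose actions are close to $a<0$.
\item Having confined everything to a compact set, the monotonicity lemma together with small energy forces each cylinder to lie in $U$, with no bubbling, since $SY$ is exact.
\end{itemize}

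\textbf{Step 3: $d^{2}=0$ and invariance.} With compactness in hand, the one-dimensional moduli spaces have only broken-trajectory ends, which gives $d^{2}=0$. The restriction to $\kappa$ is compatible with breaking because free homotopy classes are additive along broken trajectories. For independence, take two choices $(J,\delta)$ and $(J',\delta')$ and join them by a path of $\epsilon$-admissible data. Here the assumption on the $s$-derivative of the generator of $\delta_{s,t}$ bounds the curvature term, so continuation cylinders still have energy $O(\epsilon)$ and the Step 2 argument applies. The standard continuation maps and homotopies then give chain homotopy equivalences whose composites are homotopic to the identity. This yields a well-defined chain homotopy type.
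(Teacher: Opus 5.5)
Your overall architecture matches the paper's: energy $O(\epsilon)$ from \ref{item:extender-5}--\ref{item:extender-6}, then a $C^{0}$ bound, then the standard $d^{2}=0$ and continuation arguments. The $C^{0}$ bound in Step 2, however, has a genuine gap at both ends.

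\emph{Negative end.} The rescaling step fails. Translating $u_{n}$ by the Liouville flow so that its deepest point (at level $r_{n}\to-\infty$) returns to a fixed level multiplies the $\omega$-energy by $e^{-r_{n}}$. So $E(u_{n})\to 0$ gives no bound at all on the rescaled energies, and there is no zero-energy (or even finite-energy) limit to extract. The closing contradiction is also misdirected:
\begin{enumerate}
\item[(i)] the rescaled pieces do not converge to the asymptotic orbits of $u_{n}$, since those are pushed off to $r\to+\infty$;
\item[(ii)] actions of loops in the homogeneous region are themselves multiplied by $e^{-r_{n}}$, so ``action $0$ versus action near $a$'' compares nothing.
\end{enumerate}
The relevant fact is simply that $X_{H_{0,t}}$ has no $1$-periodic orbits in $SY$ at all. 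It is Liouville equivariant everywhere by \ref{item:extender-2}, and its ideal restriction $\varphi_{0,t}$ lies off the discriminant.

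\emph{Positive end.} \cite[Lemma 7.2]{abouzaid-seidel-GT-2010} is a maximum principle for Hamiltonians depending only on $r$, with $J$ of contact type. For a non-radial Liouville-equivariant $H_{1,t}=e^{r}h_{t}$, the function $r\circ u$ does not satisfy a maximum principle in general. Moreover, the lemma allows an arbitrary Liouville-equivariant tame $J$, so the cited result does not apply.

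\emph{The missing idea.} The paper uses a single mechanism at both ends, which avoids both the maximum principle and any analysis deep in the negative end.
\begin{enumerate}
\item[(a)] Small energy and the mean-value inequality make $\partial_{s}u_{n}$ and $\partial_{t}u_{n}-X_{t}(u_{n})$ uniformly $C^{0}$-small. So every loop $u_{n}(s,\cdot)$ is an almost-orbit, and nearby loops move slowly.
\item[(b)] Hence a cylinder leaving the compact region must contain a whole subcylinder $u_{n}([s_{n}-1,s_{n}+1]\times\R/\Z)$ inside an end.
\item[(c)] The system has no orbits there, so such a subcylinder carries a definite energy quantum $\hbar$. This contradicts $E(u_{n})\to 0$.
\end{enumerate}
The $e^{r}$-decay you correctly flagged is handled by taking this subcylinder in a fixed collar $\set{-K\le r\le 0}$, which any escaping cylinder must cross. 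That collar has bounded geometry and $X_{H_{0,t}}$ has no orbits on it, so $\hbar$ is uniform. In the positive end the quantum only grows with $r$. The same argument run for continuation cylinders gives the $C^{0}$ bounds your Step 3 needs.
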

\begin{proof}
  The Floer differential is defined as usual (as a
  sum of the index $1$ Floer differential
  cylinders, recording the input and output
  orbits, and the orientation line, as an
  endomorphism). To prove it squares to zero, we
  need only prove the required compactness result:
  \begin{itemize}
  \item \emph{any sequence of Floer cylinders
      $u_{n}$ remains in a compact subset of the
      symplectization $SY$}.
  \end{itemize}
  The other necessary a priori estimates on the
  derivatives of $u_{n}$, say, follow from the
  standard machinery once the above $C^{0}$ bound
  is proved.

  To analyze this compactness problem, we
  introduce the symplectization coordinate $r$. We
  assume that $\psi_{1,t}$ is Liouville
  equivariant, and $\delta_{s,t}=0$, on the ends
  $r\le 0$ and $r\ge r_{0}$, for some $r_{0}>0$,
  (positioning the left end at $r\le 0$ is without
  any loss of generality, as we can translate by
  the Liouville flow).

  Then, using the mean-value property for energy
  density as in \cite[Appendix
  B]{robbin-salamon-AIHP-AN-2001}, we conclude
  that the $C^{0}$ sizes of $\bd_{s}u_{n}$ and
  $\bd_{s}u_{n}-X_{t}(u_{n})$ must be small; here
  $X_{t}$ is the Hamiltonian vector field of
  $\psi_{1,t}\delta_{1,t}$. This is because the
  energy of Floer cylinders can be made
  arbitrarily small (by shrinking $\epsilon$).

  In particular, if $u_{n}$ fails to remain
  bounded, then one can find numbers $s_{n}$ so
  that subcylinders
  $u_{n}([s_{n}-1,s_{n}+1]\times \R/\Z)$ are
  mapped into the region $r\le 0$ (negative end),
  or the region $r\ge r_{0}$ (positive end). In
  either case, we obtain a contradiction (for
  $\epsilon$ small enough), because in these ends
  the Hamiltonian system
  $\psi_{1,t}\delta_{1,t}=\psi_{1,t}$ has no
  orbits, so these subcylinders occupy a minimum
  quantum of energy $\hbar$ (by a standard
  argument, e.g., \cite{salamon-notes-1997} or
  \cite[Proposition
  2.2.]{brocic-cant-JFPTA-2024}). The
  contradiction is ensured by picking $\epsilon$
  small enough.

  A similar argument works for proving a priori
  $C^{0}$-bounds on continuation cylinders
  relating two choices $J,\delta_{s,t}$ and
  $J',\delta'_{s,t}$, if both are
  $\epsilon$-admissible and $\epsilon$ is small
  enough. In this fashion we prove that the
  resulting chain complexes are chain homotopy
  equivalent (the quasi-isomorphisms are
  continuation maps, as in
  \cite{hofer-salamon-1995,salamon-notes-1997}).
\end{proof}

By virtue of the preceding result, the following
is well-defined:

\begin{definition}\label{definition:local-floer-cohomology-CHT}
  For any extender $\psi_{s,t}$ and any
  $\kappa\subset \pi_{0}(\Lambda Y)$, we define
  $\mathrm{CF}_{\mathrm{loc}}(\psi_{s,t},\kappa)$
  to be the chain homotopy type of
  $\mathrm{CF}(\psi_{1,t}\delta_{1,t},J,\kappa)$
  for any $J$ and any $\epsilon$-admissible
  $\delta_{s,t}$, for $\epsilon$ small enough.
\end{definition}

\subsubsection{Insertion of an extender into the filling}
\label{sec:insert-extend-into}

Let us present the $G$-filling $W$ of $Y$ as a
compact aspherical domain $\Omega$ with contact
type boundary, with the positive half of
$S\partial W$ attached as the convex end. This
presentation specifies a radial coordinate $r$ so
that $\partial \Omega=\set{r=1}$. We fix the
geometric set-up as shown in Figure
\ref{fig:geom-set-up-cone}.

Pick an extender $\psi_{s,t}$ whose ideal
restriction is the $1$-simplex $\varphi_{s,t}$,
and pick an $\epsilon$-admissible perturbation
$\delta_{s,t}$.

Fix regular Borel data $\psi_{\eta,0,t}$ whose
ideal restriction is $\varphi_{0,t}$. We can
suppose that the $t$-generator $X_{\eta,0,t}$ of
$\psi_{\eta,0,t}$ is Liouville equivariant outside
of $\Omega$. We can also suppose (by appropriate
choice of the initial radial coordinate) that all
orbits of $X_{v,0,t}$ remain entirely in $\Omega$,
when $v$ is any critical point on $EG$.

Introduce $X_{s,t}$ as the $t$-generator of the
perturbed extender $\psi_{s,t}\delta_{s,t}$; we
suppose that $X_{s,t}=X_{0,t}$ holds on $r\le 1$,
$X_{s,t}=X_{1,t}$ holds on $r\ge r_{0}$, and that
$\delta_{s,t}=\id$ holds for $r$ outside
$[1,r_{0}]$. This $X_{s,t}$ lifts from $SY$ to
$S\partial W$, as a family of $G$-invariant vector
fields.

This set-up ensures that the piecewise definition:
\begin{equation}\label{eq:piecewise-patch}
  X_{\eta,s,t}=\left\{
    \begin{aligned}
      &X_{\eta,0,t}\text{ inside }\Omega,\\
      &\d Z_{L}\circ X_{s,t}\circ Z_{-L}\text{ for }r\ge 1,
    \end{aligned}
  \right.
\end{equation}
is smooth and integrates to a path of Borel data
whose ideal restriction is $\varphi_{s,t}$
(throughout we always use the same $J$ in our
Borel data). In other words, we conjugate the
extender $\psi_{s,t}$ and the
$\epsilon$-admissible perturbation by Liouville
flow. This has the effect of exponentially scaling
the actions of the orbits arising from the
extender; importantly, orbits with negative action
will have a \emph{very} negative action when we
increase $L$.

\begin{claim}
  A generic choice of $X_{\eta,0,t}$ and
  $\delta_{s,t}$ ensures that
  \eqref{eq:piecewise-patch} is a regular path of
  Borel data.
\end{claim}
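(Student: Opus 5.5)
The plan is to split the regularity requirements of Definition \ref{definition:regular-n-simplex-D} for the path \eqref{eq:piecewise-patch} into three groups and handle each by a separate genericity argument. The first group is the non-degeneracy condition \ref{item:ast-non-degenerate} at the two endpoints. The second is transversality of the equivariant Floer cylinders for the endpoint data. The third is parametric transversality of the continuation moduli spaces $\mathscr{M}(\phi,k,\pm)$ attached to the path. Throughout, the perturbations are localized in the two regions where we have freedom: the inside of $\Omega$ (through $X_{\eta,0,t}$) and the collar $\set{1\le r\le r_{0}}$, conjugated by $Z_{L}$ (through $\delta_{s,t}$). The ideal restriction is unaffected, since both kinds of perturbation are compactly supported. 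We also respect the constraints \ref{item:B-equiv}--\ref{item:B-IR}: perturbations are $G$-equivariant and shift-invariant in $\eta$, and $\eta$-independent near the poles.

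First, the endpoints. At $s=0$ the data is $\psi_{\eta,0,t}$, which is assumed regular. At $s=1$, the orbits of $X_{\eta,1,t}$ at a pole are of two kinds: those of $X_{v,0,t}$ lying in $\Omega$, and those coming from $\psi_{1,t}\delta_{1,t}$ in the conjugated collar. These two sets are disjoint, because by assumption the orbits of $X_{v,0,t}$ stay inside $\Omega$, while the extender orbits live in $r\ge1$. The first kind is non-degenerate by hypothesis. The second kind is non-degenerate by the definition of an $\epsilon$-admissible perturbation, and conjugation by the Liouville flow preserves non-degeneracy. Since $G$ acts freely on $\partial W$, the collar orbits have trivial stabilizer. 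This means a generic $\delta_{s,t}$, lifted $G$-invariantly, already achieves transversality there. Alternatively, we can add a further small $G$-equivariant perturbation supported in the collar. Because $G$ acts freely on that region, such perturbations are unobstructed, as in \cite{seidel-smith-GAFA-2010}.

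Next, the moduli spaces. For Floer cylinders (and continuation cylinders over flow lines $\pi$ in $EG$) we run the standard Sard--Smale argument in the universal moduli space, as in \cite[\S4]{seidel-eq-pop} and \cite[\S8]{mcduff-salamon-book-2012}. The perturbations are $\eta$-dependent Hamiltonian terms with compact support in $W$. The key point is to find, for every solution $u$, a region where $u$ is somewhere injective in the appropriate sense and where perturbations can be inserted equivariantly. For the parametric problem over $\pi$, the perturbations can vary freely in $\eta$ away from the poles. For the $s$-dependent continuation equation, we may also vary $\delta_{s,t}$ and $X_{\eta,0,t}$ in $s$ inside the collar. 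The one subtlety is cylinders lying entirely in the fixed point set, where equivariance restricts the allowed perturbations. These are handled as in the proof of the lemma on \ref{item:EP}. Transversality inside the fixed locus together with the $G$-linearization splits the operator. Alternatively, we exploit $\eta$-dependence off the poles, which breaks the symmetry along nonconstant $\pi$.

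The main obstacle is the interaction with the Liouville rescaling and with compactness. The perturbations must stay small in the sense required by Lemma \ref{lemma:minimum-principle} so that the relevant moduli spaces still have the expected compactness. They must also remain compatible with the piecewise structure \eqref{eq:piecewise-patch}, so that the path is still smooth and still equals $X_{\eta,0,t}$ inside $\Omega$ for the purposes of later arguments. My first attempt would be to take the perturbation in the collar $C^{\infty}$-small after conjugation by $Z_{-L}$. I would then argue that regularity is an open condition on each of the countably many moduli spaces, which combine into a Baire intersection of residual sets. Any remaining non-transverse solutions passing through $\Omega$ are then killed by perturbing $X_{\eta,0,t}$ away from its orbits.
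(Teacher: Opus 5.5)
Your split by region is the right one: generic $X_{\eta,0,t}$ handles everything that meets $\Omega$, and generic $\delta_{s,t}$ handles what lives in the collar. The collar half of your argument, however, has a gap.

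In \eqref{eq:piecewise-patch} the data on $r\ge 1$ is $\d Z_{L}\circ X_{s,t}\circ Z_{-L}$. This is $\eta$-independent and is the $G$-invariant lift of data on $SY$, so the only freedom there is $\delta_{s,t}$. The tools you invoke in the collar are therefore not available for the class of data in the claim: ``$\eta$-dependent Hamiltonian terms,'' perturbations that ``vary freely in $\eta$ away from the poles,'' and ``a further small $G$-equivariant perturbation supported in the collar.'' Using them would also destroy the structure the paper needs right afterwards. The identification of $(E,d_{E})$ with $\mathrm{CF}(X_{1,t})\otimes\mathrm{CM}(EG)$ requires the $r\ge 1$ data to be $\eta$-independent and pulled back from $SY$.

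Nor is ``collar orbits have trivial stabilizer'' the reason $G$-invariant perturbations suffice. A cylinder can pass through both $x$ and $gx$, and then a $G$-invariant perturbation placed near one point also acts near the other. So an upstairs ``somewhere injective point where perturbations can be inserted equivariantly'' is not automatic.

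The missing idea is the projection argument the paper uses.
\begin{itemize}
\item If $(\pi,u)$ has $u$ contained in the symplectization end, the equation for $u$ does not see $\pi$. So the parametric linearization is onto exactly when $D_{u}$ is.
\item Since $G$ acts freely on $S\partial W$, $u$ projects to a solution $\bar{u}$ in $SY$ of the equation determined by $X_{s,t}$. Because the projection is a covering map, $D_{u}$ is identified with $D_{\bar{u}}$.
\item Downstairs, $\delta_{s,t}$ is an unconstrained perturbation, so ordinary Sard--Smale makes every $\bar{u}$ regular, and regularity lifts back up.
\item Cylinders meeting $\Omega$ are then handled by generic $X_{\eta,0,t}$.
\end{itemize}
The Liouville rescaling you flagged as the main obstacle is harmless for regularity: for a Liouville-equivariant $J$ on the end, $Z_{L}$ simply conjugates solutions and their linearizations.

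Your ``one subtlety'' about the fixed point set is a red herring. The fixed points lie in $\Omega$, where Borel data at a given $\eta$ is not required to be $G$-invariant, because the action on $EG$ is free. So there is no equivariant obstruction there, and the \ref{item:EP} lemma, which concerns $\eta$-independent Morse data, does not apply.

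Finally, the $\Omega$-part of \eqref{eq:piecewise-patch} must stay $s$-independent. This is what later makes the continuation map the identity on the type \ref{item:summand-a} summands, so ``varying $X_{\eta,0,t}$ in $s$'' is not allowed.
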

\begin{proof}
  The engine for ensuring regularity is the usual
  Sard-Smale argument of
  \cite{floer-hofer-salamon-duke-1995,
    mcduff-salamon-book-2012}.
  
  Any continuation cylinder (or Floer differential
  cylinder) which intersects $\Omega$ can be
  assumed to be regular by the generic choice of
  $X_{\eta,0,t}$. The other cylinders are
  contained entirely in the symplectization end;
  since the action is free on this part, the
  cylinders project to a cylinder in $SY$ for data
  determined by $X_{s,t}$. One ensures regularity
  for the projected cylinders by generic variation
  of $\delta_{s,t}$ (and the Sard-Smale
  theorem). The regularity of the original
  cylinders (before projection) follows since the
  projection map is a covering map (variations of
  the cylinder before projection correspond to
  variations of the cylinder after projection).
\end{proof}

To summarize, we have used the perturbed extender
to build an explicit regular path of Borel data
whose ideal restriction is the $1$-simplex under
consideration. The construction depends on the
extender $\psi_{s,t}$, the initial Borel datum
$X_{\eta,0,t}$, the parameter $L$, and the choice
of perturbation $\delta_{s,t}$.

\begin{figure}[h]
  \begin{tikzpicture}[scale=.7]
    \draw (0,1)--+(8,0) (0,0)--+(8,0)
    (0,0)to[out=180,in=0](-1,-1)to[out=180,in=180](-1,2)to[out=0,in=180](0,1);
    \draw (-1,1)to[out=-120,in=120](-1,0)
    (-1,0.8)to[out=-60,in=60](-1,0.2); \draw
    (1,0.5) circle (0.2 and 0.5)
    +(0,-0.5)node[below]{$r=1$} (4,0.5)
    circle (0.2 and
    0.5)+(0,-0.5)node[below]{$r=e^{L}\vphantom{e^{L}}$};
  \end{tikzpicture}
  \caption{Regions in the $G$-filling $W$ of
    $Y$. The length parameter $L$ should be
    considered as quite large.}
  \label{fig:geom-set-up-cone}
\end{figure}
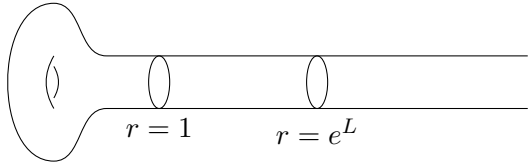

\subsubsection{Proof of Theorem
  \ref{theorem:main-local-hf}}
\label{sec:proof-theorem-main-local-hf}

In this section we prove that the cone of the
morphism
$\mathrm{CF}_{\mathrm{eq}}(\varphi_{0,t})\to
\mathrm{CF}_{\mathrm{eq}}(\varphi_{1,t})$
associated to the $1$-simplex $\varphi_{s,t}$
arising as the ideal restriction of the extender
$\psi_{s,t}$ lies in the local Floer cohomology
chain homotopy class
$\mathrm{CF}_{\mathrm{loc}}(\psi_{s,t},\kappa)$
from Definition
\ref{definition:local-floer-cohomology-CHT}, where
$\kappa$ is the collection of $W$-contractible
orbits.

We fix the geometric set-up from
\S\ref{sec:insert-extend-into}, yielding the
regular path of Borel data $X_{\eta,s,t}$. We will
now analyze the cone of the continuation map
associated to this regular path of Borel data.

The equivariant chain complex for the Borel data
determined by $X_{\eta,1,t}$ is generated as a
$\mathbf{k}[[x]]$-module by summands
$\mathfrak{o}(\gamma)\otimes \mathfrak{o}(v)$
where $v$ is a critical point of the
pseudogradient on $BG$ lying above the pole
$[1:0:\cdots]$, and $\gamma$ is a contractible
orbit of $X_{v,1,t}$, as we have explained already
in \S\ref{sec:defin-equiv-chain},
\S\ref{sec:case-p-3},
\S\ref{sec:defin-infin-funct}. These summands come
in two types:
\begin{enumerate}[label=(\alph*)]
\item\label{item:summand-a} summands where $\gamma$ is contained in
  $\Omega$, and is an orbit of $X_{0,1,t}$,
\item\label{item:summand-b} summands where
  $\gamma$ passes through $r\in [1,r_{0}]$, and
  projects to a $W$-contractible orbit for the
  vector field $X_{1,t}$ contributing to the local
  Floer cohomology of the extender $\psi_{s,t}$
  and the $\epsilon$-admissible perturbation
  $\delta_{s,t}$.
\end{enumerate}

On the other hand, the equivariant chain complex
for the Borel data determined by $X_{\eta,0,t}$ is
generated as a $\mathbf{k}[[x]]$-module by only
those summands of type \ref{item:summand-a}. The
summands of type \ref{item:summand-a} for
$\mathrm{CF}_{\mathrm{eq}}(X_{\eta,0,t})$ and
$\mathrm{CF}_{\mathrm{eq}}(X_{\eta,1,t})$ are
literally the same. In other words, there is a
decomposition:
\begin{equation}\label{eq:cone-decomposition}
  \mathrm{CF}_{\mathrm{eq}}(X_{\eta,1,t})=
  \mathrm{CF}_{\mathrm{eq}}(X_{\eta,0,t})\oplus E,
\end{equation}
where $E$ is the $\mathbf{k}[[x]]$-module
generated by the summands of type
\ref{item:summand-b}.
\begin{claim}\label{claim:cone-matrix}
  With respect to the direct sum decomposition
  \eqref{eq:cone-decomposition}, it holds that:
  \begin{equation*}
    d_{eq}=\left[
      \begin{matrix}
        d_{eq}&\Delta\\
        0&d_{E}
      \end{matrix}
    \right]\qquad
    \mathfrak{c}=
    \left[
      \begin{matrix}
        \id\\
        0
      \end{matrix}
    \right],
  \end{equation*}
  where $d_{eq}$ is the differential on
  $\mathrm{CF}_{\mathrm{eq}}(X_{\eta,1,t})$ and
  $\mathfrak{c}$ is the continuation
  map. Moreover, the term $d_{E}$ only counts
  cylinders which remain outside of $\Omega$.

  In the deduction, we allow the operations of
  increasing the length parameter $L$ in
  \S\ref{sec:insert-extend-into}, and picking the
  perturbation term $\delta_{s,t}$ smaller, if
  necessary.
\end{claim}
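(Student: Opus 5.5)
The plan is to prove the three entries of the matrices separately, all by action/energy arguments that exploit the Liouville rescaling by $Z_{L}$. The summands of type \ref{item:summand-b} come from orbits of the extender, which by \ref{item:extender-5} and \ref{item:extender-6} have negative action and a single action value $-a<0$. After conjugating by $Z_{L}$ in \eqref{eq:piecewise-patch}, their action becomes approximately $-e^{L}a$, up to an error controlled by $\epsilon$. By contrast, the orbits of type \ref{item:summand-a} lie in $\Omega$ and have actions bounded independently of $L$. Since $W$ is aspherical and $\kappa$ consists of $W$-contractible orbits, the action is well defined on contractible orbits.

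\textbf{Lower-left entry.} The equivariant differential, with its family $\pi$ over $EG$, still decreases action along solutions, up to an error. The Borel data is $\eta$-independent outside $\Omega$ by \ref{item:B-IR}, and the $\eta$-dependence inside $\Omega$ is fixed and independent of $L$, so the error is bounded uniformly in $L$. With cohomological conventions, an output of type \ref{item:summand-a} starting from an input of type \ref{item:summand-b}, or the reverse, is only possible in one direction. The action gap $e^{L}a$ then kills one of the two off-diagonal blocks once $L$ is large. I will fix the sign convention so that the vanishing block is the lower-left one.

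\textbf{Continuation map.} The continuation data $X_{\eta,s,t}$ has non-positive curvature, since $K_{s,1}\ge 0$ by \ref{item:extender-4}, up to $\epsilon$-small terms from $\delta_{s,t}$. Hence continuation cylinders also decrease action up to a uniformly bounded error. The inputs are of type \ref{item:summand-a}, so the component of the output landing in $E$ vanishes by the same gap argument.

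For the $\CF_{\mathrm{eq}}(X_{\eta,0,t})$ component, I would argue as follows. Inside $\Omega$ the data is $s$-independent, and $K$ vanishes on the negative end by \ref{item:extender-1}. Any continuation cylinder between type \ref{item:summand-a} orbits with small energy loss must remain in $\Omega$. This is the step where a maximum principle or the energy quantum of Lemma~\ref{lemma:minimum-principle} is needed, to rule out excursions into the region $r\in[1,e^{L}r_{0}]$. Once the cylinder stays in $\Omega$, it solves an $s$-independent equation. Index-zero solutions of an $s$-independent equation are translation-invariant, so only stationary ones survive, and this gives $\id$. The same reasoning identifies the upper-left block of $d_{eq}$ with the differential of $\CF_{\mathrm{eq}}(X_{\eta,0,t})$.

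\textbf{The $d_{E}$ entry.} Cylinders between two type \ref{item:summand-b} orbits have energy at most the action spread, which is $O(\epsilon e^{L})$ in rescaled units; after undoing $Z_{L}$ this is $O(\epsilon)$. Lemma~\ref{lemma:minimum-principle} then confines them to the region $[1,r_{0}]$ pushed forward by $Z_{L}$. The data there is $\eta$-independent, so they project to cylinders in $SY$.

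\textbf{Main obstacle.} The hardest step is making the action estimates uniform in the parametric equivariant setting. The error terms from $\eta$-dependence inside $\Omega$ must stay bounded while $L\to\infty$ and $\epsilon\to0$. I would handle this by first fixing $X_{\eta,0,t}$, then taking $L$ large, and finally choosing $\epsilon$ after $L$.
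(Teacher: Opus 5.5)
Your route is the paper's. The Liouville rescaling by $Z_L$ together with (E5) creates an action gap of order $e^{L}$ between summands (a) and (b). (E4) keeps the curvature of the continuation data non-positive on $r\ge 1$, so rescaling cannot make it large and positive. (E6), a small $\delta_{s,t}$, and the energy quantum of Lemma~\ref{lemma:minimum-principle} keep the $d_E$ cylinders out of $\Omega$. Translation invariance leaves only stationary cylinders in the (a)-to-(a) block of $\mathfrak{c}$. The choices are also made in the paper's order: first $X_{\eta,0,t}$, then $L$, then $\delta_{s,t}$.

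\emph{Which block vanishes.} The step you do not actually prove is which off-diagonal block of $d_{eq}$ vanishes, and this is not a convention you are free to fix. (E5) is stated in the normalization $\mathcal{A}(\gamma)=\int_0^1 H_t(\gamma(t))\,dt-\int\gamma^*\lambda$ (compare \eqref{eq:action-estimate}). In that normalization the energy identity for the equivariant Floer and continuation equations gives $0\le E(u)\le \mathcal{A}(u(-\infty))-\mathcal{A}(u(+\infty))+C$, with $C$ independent of $L$. So the output $u(-\infty)$ has action at least that of the input minus $C$. The negative sign in (E5) therefore forces exactly the block from $\mathrm{CF}_{\mathrm{eq}}(X_{\eta,0,t})$ to $E$ to vanish. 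This is what makes $\mathrm{CF}_{\mathrm{eq}}(X_{\eta,0,t})$ a subcomplex and $\mathfrak{c}=\left[\begin{smallmatrix}\id\\0\end{smallmatrix}\right]$ a chain map. It is the same inequality you already use, correctly, for the $E$-component of $\mathfrak{c}$, so state it once and apply it in both places.

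\emph{Confinement in the identity step.} Continuation cylinders between two type (a) orbits have energy bounded independently of $L$, but not small. The energy quantum scales like $r$ at height $r$ in the orbit-free neck, so it confines these cylinders below a fixed height, not inside $\Omega$. That is enough: the data is $s$-independent on all of $\{r\le e^{L}\}$, and the $s$-dependence of the rescaled extender sits at heights in $[e^{L},e^{L}r_0]$. The paper just invokes the stationary-cylinder fact without any such confinement, so your extra step is worthwhile once stated in this form.
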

\begin{proof}
  By \ref{item:extender-5}, and the operation of
  increasing $L$ which scales actions
  exponentially, we may assume that all of the
  orbits in summand \ref{item:summand-b} have
  actions which are much more negative than the
  actions of the orbits in summand
  \ref{item:summand-a}. Even taking into account
  the curvature of Hamiltonian connections arising
  in the definition of the equivariant
  differential, we can then preclude the existence
  of cylinders with input in \ref{item:summand-a}
  and output in \ref{item:summand-b}, in the
  definition of $d_{eq}$. This proves the first
  part.

  Furthermore, by subsequently picking
  $\delta_{s,t}$ small enough, and using
  \ref{item:extender-6}, we may suppose that all
  orbits contributing to summand
  \ref{item:summand-b} have nearby actions; then,
  by the same energy estimate used in Lemma
  \ref{lemma:minimum-principle} we conclude that
  all cylinders contributing to $d_{E}$ remain in
  the complement of $\Omega$.

  For the conclusion about the continuation map
  $\mathfrak{c}$, we need to be careful that this
  ``increasing $L$'' operation does not
  simultaneously increase the curvature of the
  Hamiltonian connection used to define
  $\mathfrak{c}$ --- indeed, since the curvature
  is non-zero in the region where the extender is
  supported, there is a risk that we also scale
  the curvature exponentially when we conjugate by
  the Liouville flow. However, this is where we
  use assumption \ref{item:extender-4}; since
  $K_{s,1}$ is non-negative, it follows, from the
  same arguments used in the cited references in
  Remark \ref{remark:energy-estimates} (which we
  used to get the a priori energy estimates on the
  moduli spaces of \S\ref{sec:moduli-spaces}),
  that the curvature of the Hamiltonian connection
  used to define $\mathfrak{c}$ is non-positive
  everywhere on the region $r\ge 1$, and so
  exponential scaling will not introduce
  arbitrarily positive curvature.

  To conclude that the matrix entry in
  $\mathfrak{c}$ acting on the summands of type
  \ref{item:summand-a} is the identity, we use the
  standard fact (used in all Floer theoretic works
  concerning continuation maps) that continuation
  cylinders for $s$-independent continuation data
  which contribute to $\mathfrak{c}$ are only the
  stationary cylinders.
\end{proof}

\begin{corollary}
  Assume the set-up, conclusion, and notation of
  Claim \ref{claim:cone-matrix}. The cone of the
  continuation map $\mathfrak{c}$ is in the chain
  homotopy class of $(E,d_{E})$.
\end{corollary}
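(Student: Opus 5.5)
The plan is to write out the mapping cone of $\mathfrak{c}$ explicitly using the block decomposition of Claim \ref{claim:cone-matrix}, and then cancel the acyclic identity block. Write $A=\mathrm{CF}_{\mathrm{eq}}(X_{\eta,0,t})$, so that $\mathrm{CF}_{\mathrm{eq}}(X_{\eta,1,t})=A\oplus E$. The cone is
\begin{equation*}
  \mathrm{Cone}(\mathfrak{c})=(A\oplus E)\oplus A[1],\qquad d_{\mathrm{Cone}}=\left[
    \begin{matrix}
      d_{eq}&\Delta&\id\\
      0&d_{E}&0\\
      0&0&d_{eq}
    \end{matrix}
  \right],
\end{equation*}
with signs suppressed (they are the usual shift signs, and irrelevant modulo $2$). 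Two preliminary observations come straight from Claim \ref{claim:cone-matrix}. First, $(E,d_{E})$ is a quotient complex of $\mathrm{CF}_{\mathrm{eq}}(X_{\eta,1,t})$, so $d_{E}^{2}=0$. Second, $d_{E}$ counts only cylinders outside $\Omega$, which project to $SY$. Hence $(E,d_{E})$, or at least its reduction $E\otimes_{\mathbf{k}[[x]]}\mathbf{k}$ depending on how the statement is read, is identified with $\mathrm{CF}(\psi_{1,t}\delta_{1,t},J,\kappa)$, with $\kappa$ the $W$-contractible classes.

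The main step is to show that the subcomplex $A\oplus A[1]$, with differential $\left[\begin{smallmatrix}d_{eq}&\id\\0&d_{eq}\end{smallmatrix}\right]$, is the cone of the identity and therefore contractible. The natural way to exploit this is to write down explicit maps $p:\mathrm{Cone}(\mathfrak{c})\to E$, the projection onto the middle factor, and $i:E\to \mathrm{Cone}(\mathfrak{c})$ given by $e\mapsto(0,e,\Delta e)$. This choice of $i$ is forced: the correction term $\Delta e$ in the $A[1]$ slot is exactly what cancels $\Delta e$ in the top row.

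Next I would check the chain map identities. For $p$ this is immediate from the block upper-triangular shape. For $i$ it reduces to $d_{eq}\Delta+\Delta d_{E}=0$, which is precisely the off-diagonal part of $d_{eq}^{2}=0$ on $A\oplus E$. Clearly $pi=\id_{E}$. Then I would verify $\id-ip=dh+hd$ with the homotopy $h$ sending the $A$-summand identically onto the $A[1]$-summand and vanishing on the other summands. This is a short matrix computation of the same kind as the one in the decomposition lemma above.

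The only step needing care is bookkeeping of signs for $p\ge 3$, and of the $\mathbf{k}[[x]]$-linearity. All the maps are built from $\id$, $\Delta$ and projections, so they are $\mathbf{k}[[x]]$-linear, and the sign conventions follow the dg-nerve conventions of Definition \ref{definition:infinity-functor}. The last point is that "chain homotopy class" in Theorem \ref{theorem:main-local-hf} refers to the class of the cone for any section. This follows because different choices of section give chain homotopy equivalent continuation maps, hence homotopy equivalent cones. That is standard, and I expect it to cause no real obstacle.
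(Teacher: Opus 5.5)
Your argument is correct and is the paper's own proof: the same cone matrix, the projection onto $E$ as the equivalence with inverse $e\mapsto 0\oplus e\oplus \Delta e$, and your homotopy $h$ (sending the $A$-summand onto the $A[1]$-summand) simply supplies the check $\id - ip = dh+hd$ that the paper leaves to the reader. Your asides identifying $E$, or $E\otimes_{\mathbf{k}[[x]]}\mathbf{k}$, with $\mathrm{CF}(\psi_{1,t}\delta_{1,t},J,\kappa)$ are not needed for this corollary. That identification is the next claim, proved by a Cartan-isomorphism argument on $\mathrm{CF}(X_{1,t})\otimes \mathrm{CM}(EG)$, and the version with $E\otimes_{\mathbf{k}[[x]]}\mathbf{k}$ would be false, because that quotient is built on all $p$ lifts of each orbit to $S\partial W$ and is generally larger than the local complex.
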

\begin{proof}
  This is standard homological algebra; the projection map:
  \begin{equation*}
    C\oplus E\oplus C[1]\to E
  \end{equation*}
  is the desired chain homotopy equivalence, when
  the domain is given the cone differential (with
  sign conventions due to supergradings left to
  the reader):
  \begin{equation*}
    \left[
      \begin{matrix}
        d_{C}&\Delta&\id\\
        0&d_{E}&0\\
        0&0&d_{C}
      \end{matrix}
    \right].
  \end{equation*}
  The inverse map sends $e\in E$ to $0\oplus e\oplus \Delta e$.
\end{proof}

To complete the proof of Theorem
\ref{theorem:main-local-hf}, we prove:
\begin{claim}
  Assume the set-up, conclusion, and notation of
  Claim \ref{claim:cone-matrix}. The complex
  $(E,d_{E})$ is in the chain homotopy class of
  $\mathrm{CF}_{\mathrm{loc}}(\psi_{s,t},\kappa)$.
\end{claim}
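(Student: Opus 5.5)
The plan is to identify $(E,d_{E})$ with the Borel-equivariant complex of a \emph{free} $G$-complex, and then use the standard fact that the Borel construction of a free action computes the (non-equivariant) complex of the quotient. Let $\tilde C$ denote the Floer complex, over $\mathbf{k}$, of the lift to $S\partial W$ of the perturbed extender $X_{1,t}$ (conjugated by $Z_{L}$). We restrict $\tilde C$ to those orbits which pass through $r\in[1,r_{0}]$ and are contractible in $W$. Since $G$ acts freely on $S\partial W$ and $X_{1,t}$ is $G$-invariant, $\tilde C$ is a complex of free $\mathbf{k}[G]$-modules. Its generators are the $G$-orbits of lifts of the $W$-contractible orbits in $\kappa$. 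Projecting cylinders along the covering $S\partial W\to SY$ then gives an isomorphism of complexes
\begin{equation*}
  \tilde C\otimes_{\mathbf{k}[G]}\mathbf{k}\cong \mathrm{CF}(\psi_{1,t}\delta_{1,t},J,\kappa).
\end{equation*}
This uses the regularity claim of \S\ref{sec:insert-extend-into}, by which regularity upstairs and downstairs correspond. By Definition \ref{definition:local-floer-cohomology-CHT} the right-hand side represents $\mathrm{CF}_{\mathrm{loc}}(\psi_{s,t},\kappa)$, provided $\delta_{s,t}$ is $\epsilon$-admissible for small $\epsilon$. That is exactly the regime in which Claim \ref{claim:cone-matrix} was established.

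Next I compute $d_{E}$ explicitly. By Claim \ref{claim:cone-matrix}, every cylinder contributing to $d_{E}$ stays in $r\ge 1$. There, by \eqref{eq:piecewise-patch} and \ref{item:B-IR}, the data $X_{\eta,1,t}$ is independent of $\eta$. Hence the parametric moduli space $\mathscr{M}(\Phi,k,g,i)$ restricted to such cylinders is a product of a space of flow lines $\pi$ on $EG$ with an ordinary Floer moduli space for $u$. For a product, a rigid-up-to-translation element must have either:
\begin{itemize}
\item $k=0$ and $\mathrm{index}(u)=1$, which gives the Floer differential $d_{0}$ of $\tilde C$; or
\item $u$ stationary and $\pi$ a rigid flow line on $EG$ of index one or two, which gives the algebraic terms of the Morse model on $BG$, namely $x(1+g)$ for $p=2$, and the norm element $N=\sum_{g\in G}g$ together with $1-g$ in the $\theta$-component for $p\ge 3$.
\end{itemize}
All other index combinations are excluded by dimension counting, since the $\pi$-factor would contribute a positive-dimensional family. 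So $(E,d_{E})$ is literally $\tilde C\otimes_{\mathbf{k}[G]}\mathcal{B}$, where $\mathcal{B}$ is the standard periodic free resolution of $\mathbf{k}$ over $\mathbf{k}[G]$ produced by the Morse model on $EG$ (cf.\ \cite{seidel-smith-GAFA-2010,shelukhin-zhao-JSG-2021}), with $x$ acting by the periodicity operator.

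Finally, because $\tilde C$ is bounded and free over $\mathbf{k}[G]$, the filtration by the $\tilde C$-degree gives a spectral sequence comparison. It shows that the natural augmentation $\tilde C\otimes_{\mathbf{k}[G]}\mathcal{B}\to \tilde C\otimes_{\mathbf{k}[G]}\mathbf{k}$ is a quasi-isomorphism: on associated gradeds this is $\mathrm{Tor}^{\mathbf{k}[G]}_{>0}(\text{free},\mathbf{k})=0$. Working with complexes of finitely generated free modules over the relevant rings, this quasi-isomorphism upgrades to a chain homotopy equivalence, with $x$ acting null-homotopically. Combining this with the isomorphism of the first paragraph proves the claim.

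The step I expect to be the main obstacle is the second one. There I need transversality to hold while the data is kept genuinely $\eta$-independent in the end, and I need the orientation lines $\mathfrak{o}(y)$, $\mathfrak{o}(x^{k})$ for $p\ge 3$ to reproduce exactly the signs of the standard resolution. I would first try to achieve regularity solely through the $\eta$-independent perturbation $\delta_{s,t}$, exactly as in the regularity claim of \S\ref{sec:insert-extend-into}. For the signs, I would compare with the computation of $\mathrm{HM}_{\mathrm{eq}}(\mathrm{pt})$ in \cite[\S4]{shelukhin-zhao-JSG-2021}: for a product moduli space the sign factors as the Morse sign times the Floer sign.
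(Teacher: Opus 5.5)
Your route is the paper's route. You use the $\eta$-independence of the data on $r\ge 1$ to split rigid trajectories into (index-one flow line, stationary $u$) and (constant $\pi$, index-one $u$). You use freeness of the action to move the $G$-label from the orbit to the critical point on $EG$, and you finish with contractibility of $EG$. Writing $E\cong\tilde C\otimes_{\mathbf{k}[G]}\mathcal{B}$ records the twisted differential more honestly than the paper's ``tensor product differential on $\mathrm{CF}(X_{1,t})\otimes\mathrm{CM}(EG)$''.

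However, your last step fails as written, because the comparison map points the wrong way. In the paper's cohomological conventions the flow line $\pi$ runs from $v_{k,\pm}$ at $s=-\infty$ (the output end) to $v_{0,+}$ at $s=+\infty$ (the input end). So $d_E$ raises Morse index on $EG$. Hence $\mathcal{B}=\mathrm{CM}(EG)$ is not a resolution $\mathcal{B}\to\mathbf{k}$. It is a coresolution $0\to\mathbf{k}\to\mathcal{B}^{0}\to\mathcal{B}^{1}\to\cdots$, whose cohomology is spanned by the norm element $\sum_{g}v_{0,g}$. The augmentation $\mathcal{B}^{0}=\mathbf{k}[G]\to\mathbf{k}$ sends this class to $p\cdot 1=0$ in $\mathbf{k}=\Z/p\Z$, so your map $\tilde C\otimes_{\mathbf{k}[G]}\mathcal{B}\to\tilde C\otimes_{\mathbf{k}[G]}\mathbf{k}$ is zero on homology. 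For example, take $p=2$ and a single downstairs orbit $\zeta$ with lifts $\gamma,g\gamma$. Then $d_E\gamma=d_E(g\gamma)=x(\gamma+g\gamma)$, so $H(E)=\mathbf{k}\cdot[\gamma+g\gamma]$, and your augmentation sends this class to $2\zeta=0$. The $\mathrm{Tor}_{>0}$ reasoning does not apply, since $\mathcal{B}$ is not a projective resolution.

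The fix is to use the map in the other direction:
\begin{equation*}
\tilde C\otimes_{\mathbf{k}[G]}\mathbf{k}\to\tilde C\otimes_{\mathbf{k}[G]}\mathcal{B},\qquad [c]\mapsto c\otimes\textstyle\sum_{g}v_{0,g}.
\end{equation*}
This is exactly the paper's map $a\mapsto a\otimes 1$, with $1$ the sum of the local minima. It is a quasi-isomorphism because tensoring a free $\mathbf{k}[G]$-module with the acyclic cone of $\mathbf{k}\to\mathcal{B}$ gives an acyclic complex.

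For convergence you cannot filter by ``$\tilde C$-degree'', since the Floer gradings here are only $\Z/2$. Filter $\tilde C$ by action instead. This is a finite filtration by $G$-invariant subcomplexes, and its associated graded pieces are free with vanishing induced Floer differential, because non-stationary cylinders strictly change action. Over the field $\mathbf{k}$ the resulting quasi-isomorphism is then automatically a chain homotopy equivalence.
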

\begin{proof}
  This claim should be understood as the
  well-known ``Cartan isomorphism'' in equivariant
  cohomology, which asserts that the
  $G$-equivariant cohomology of a space with a
  free $G$-action is isomorphic to the ordinary
  cohomology of the quotient space. Versions of
  the Cartan isomorphism in equivariant Floer
  theory have appeared before, see, e.g.,
  \cite{sampietro-christ-2025}.

  For the Borel equivariant cohomology of spaces,
  the idea is the following, if a space $M$ admits
  a free action, then:
  \begin{equation*}
    M\times_{G}EG\to M/G,
  \end{equation*}
  sending $[(m,\eta)]$ to $[m]$ is a fibration
  with contractible fibers, and hence is a
  homotopy equivalence. The task of the present
  claim is to translate this into Floer theoretic
  language.

  Let us identify $E$ with the direct sum of generators:
  \begin{equation}\label{eq:strange-decomposition}
    E=\bigoplus \mathfrak{o}(\zeta)\otimes \mathfrak{o}(\eta)\otimes \mathbf{k}\simeq \mathrm{CF}(X_{1,t})\otimes \mathrm{CM}(EG)
  \end{equation}
  where $\eta$ is a critical point of the
  pseudogradient on $EG$ and $\zeta$ is a
  $W$-contractible orbit in $SY$. This is a small
  sleight of hand: this complex is identified with
  the usual complex by the identification:
  \begin{equation*}
    \mathfrak{o}(\zeta)\otimes \mathfrak{o}(x^{k}v_{i,g})= x^{k}\mathfrak{o}(g\gamma_{\zeta})\otimes \mathfrak{o}(v_{i}),
  \end{equation*}
  where we pick some distinguished lift
  $\gamma_{\zeta}$ to $S\bd W$ of each
  $W$-contractible orbit $\zeta$; here $v_{i,g}$
  is the lift of $v_{i}$ as indicated in Figure
  \ref{fig:mod-p-equivariant}. That this
  identification is well-defined uses the freeness
  of the action on $S\partial W$.
  
  Because the data is equivariant in the region
  $r\ge 1$, the trajectories $(\pi,u)$
  contributing to $d_{E}$ project to Floer
  cylinders in $SY$ and come in two types:
  \begin{itemize}
  \item $\pi$ has index $1$ and $u$ has index $0$,
  \item $\pi$ has index $0$ and $u$ has index $1$.
  \end{itemize}
  It follows that, with respect to the
  decomposition \eqref{eq:strange-decomposition},
  the differential is a tensor product
  differential on
  $\mathrm{CF}(X_{1,t})\otimes \mathrm{CM}(EG)$.

  Let $1$ denote the sum of the local minima in
  $\mathrm{CM}(EG)$ (in the notation of Figure
  \ref{fig:mod-p-equivariant}, this element $1$ is
  the sum $v_{0,g}$ as $g$ ranges over all
  elements of $G$), and consider the chain map:
  \begin{equation*}
    a\in \mathrm{CF}(X_{1,t})\mapsto a\otimes 1\in \mathrm{CF}(X_{1,t})\otimes
    \mathrm{CM}(EG).
  \end{equation*}
  The contractibility of $EG$ and standard Morse
  theoretic arguments imply that this is a chain
  homotopy equivalence, as desired.
\end{proof}

\subsubsection{Proof of Theorem \ref{theorem:zig-zag}}
\label{sec:proof-zig-zag}

The first part of the theorem follows from Lemma
\ref{lemma:case-no-intersections} and Theorem
\ref{theorem:main-local-hf}; briefly, one
decomposes the $1$-simplex into a composition of
$1$-simplices associated to extenders which have
zero local Floer cohomology in the free homotopy
class of $W$-contractible orbits.

The second part follows from a soft argument
involving approximating paths by zig-zags of
positive and negative paths; if we assume the
original path remains in the complement of the
$W$-contractible discriminant, then we can pick
the approximation so that each segment of the
zig-zag is also in the complement of the
$W$-contractible discriminant.\hfill$\square$

% 5
\section{Tying up loose ends}
\label{sec:tying-up-loose}

In this section, we prove various technical lemmas
needed to complete the proofs of the main
applications stated in
\S\ref{sec:statement-main-result}. Throughout we
assume that $Y$ admits a $G$-filling $W$, with
$G=\Z/p\Z$, as in \S\ref{sec:defin-meas}, and that
the $G$-action has at least one fixed point
$q_{0}$. At this stage, we have completed the
proof of our main structural theorems Theorems
\ref{theorem:main-infinity}, \ref{theorem:pss},
and \ref{theorem:zig-zag}.

\subsection{The unit element}
\label{sec:unit-element}

We prove a few lemmas about unit elements. Let us
briefly recall the construction; in
$\mathrm{CM}_{\mathrm{eq}}(X_{\eta})$, for any
Morse--Borel datum $X_{\eta}$ on $W$, there is a
well-defined cycle $1$ by summing up all local
minima. Then, for any $1$-simplex $\varphi_{s,t}$
in the PSS category $\mathscr{P}(Y)$ starting at
$\id$ and ending at
$\varphi_{1,t}\in \mathscr{C}(Y)$, one has a chain
map:
\begin{equation*}
  \mathrm{PSS}:\mathrm{CM}_{\mathrm{eq}}(X_{\eta})\to \mathrm{CF}_{\mathrm{eq}}(\varphi_{1,t}).
\end{equation*}
This gives a class
$1(\varphi_{s,t})\in
\mathrm{HF}_{\mathrm{eq}}(\varphi_{1,t})$, and
pushing forward by the colimit map
$\mathrm{HF}_{\mathrm{eq}}(\varphi_{1,t})\to
\mathrm{SH}_{\mathrm{eq}}(W)$ gives a class
$1(\varphi_{s,t})\in \mathrm{SH}_{\mathrm{eq}}$.

\subsubsection{The unit element is well-defined}
\label{sec:unit-element-well}

We prove Lemma
\ref{lemma:unit-well-defined-in-colimit} stating
that the unit element
$1(\varphi_{s,t})\in \mathrm{SH}_{\mathrm{eq}}(W)$
is well-defined independently of the $1$-simplex
$\varphi_{s,t}$ chosen.

\begin{proof}[Proof of Lemma \ref{lemma:unit-well-defined-in-colimit}]
  The key idea is the following: for any
  $1$-simplex $\varphi_{s,t}$ in $\mathscr{P}(Y)$
  starting at $\id$, there is a $2$-simplex whose
  $01$ face is $\varphi_{s,t}$ and whose $02$ face
  is a Reeb flow $R_{ast}$ for all sufficiently
  large speeds $a>0$.

  Indeed, if $R_{ast}\varphi_{s,t}^{-1}\varphi_{1,t}$ is
  positive (which it certainly is for $a$ large
  enough), then we can simply use Lemma
  \ref{lemma:compo-concat}.

  The result then follows, since the axioms of an
  $\infty$-functor:
  $$\mathscr{P}(Y)\to
  \mathrm{N}_{\mathrm{dg}}\mathrm{Ch}(\mathbf{k}[[x]])$$
  imply
  $1(R_{ast})=\mathfrak{c}(1(\varphi_{s,t}))$,
  where $\mathfrak{c}$ is the chain map associated
  to the $[1,2]$ edge. Passing to the colimit, we
  conclude $1(\varphi_{s,t})=1(R_{ast})$, for all
  sufficiently large $a$, for any $1$-simplex
  $\varphi_{s,t}$, and the desired result follows.
\end{proof}

\subsubsection{The unit element is not eternal}
\label{sec:unit-element-not}

We prove the assertion:
\begin{lemma}\label{lemma:unit-not-eternal}
  If $\mathrm{SH}_{\mathrm{eq}}(W)\neq 0$, the
  unit $1$ does not lie in the image of:
  $$\mathrm{HF}_{\mathrm{eq}}(R_{-\epsilon
    t}) \to \mathrm{SH}_{\mathrm{eq}}(W)$$ for any
  negative Reeb flow $R_{-\epsilon t}$.
\end{lemma}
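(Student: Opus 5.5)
The plan is to argue by contradiction, using the commutative square of Theorem \ref{theorem:commutative-square} to factor the map $\mathrm{HF}_{\mathrm{eq}}(R_{-\epsilon t})\to \mathrm{SH}_{\mathrm{eq}}(W)$ through the relative equivariant Morse cohomology. By monotonicity of the colimit maps along non-negative paths, it suffices to treat $\epsilon$ small. Indeed, $R_{-\epsilon t}\le R_{-\epsilon' t}$ for $\epsilon\ge \epsilon'$, and the map from $\mathrm{HF}_{\mathrm{eq}}(R_{-\epsilon t})$ factors through $\mathrm{HF}_{\mathrm{eq}}(R_{-\epsilon' t})$. So assume $\epsilon$ is small enough that Theorems \ref{theorem:pss}, \ref{theorem:negative-pss} and \ref{theorem:commutative-square} apply. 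Then $\mathrm{HF}_{\mathrm{eq}}(R_{-\epsilon t})\to\mathrm{SH}_{\mathrm{eq}}$ factors as the continuation $\mathfrak{c}$ to $\mathrm{HF}_{\mathrm{eq}}(R_{\epsilon t})$ followed by the colimit map. Using the square and the PSS isomorphisms, the first map is identified with
\begin{equation*}
\mathrm{HM}_{\mathrm{eq}}(W,\partial W)\xrightarrow{\mathfrak{c}_{\mathrm{CM}}}\mathrm{HM}_{\mathrm{eq}}(W)\xrightarrow{\mathrm{PSS}}\mathrm{HF}_{\mathrm{eq}}(R_{\epsilon t}).
\end{equation*}
By Lemma \ref{lemma:unit-well-defined-in-colimit}, the PSS image of the Morse unit $1$ maps to the unit in $\mathrm{SH}_{\mathrm{eq}}$.

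Suppose $1=\iota(a)$ with $a\in\mathrm{HF}_{\mathrm{eq}}(R_{-\epsilon t})$. The next step is to exploit the ring structure on equivariant Morse cohomology: the map $\mathrm{HM}_{\mathrm{eq}}(W,\partial W)\to\mathrm{HM}_{\mathrm{eq}}(W)$ lands in an ideal of classes which are ``compactly supported''. For an $x$-torsion-free argument it is cleaner to avoid products, though, and instead run the following. Choose $b\in \mathrm{HM}_{\mathrm{eq}}(W,\partial W)$ with $\mathrm{PSS}(\mathfrak{c}_{\mathrm{CM}}(b))-1$ mapping to zero in the colimit. Then compare with the \emph{entire} colimit: every element of $\mathrm{SH}_{\mathrm{eq}}$ is represented in some $\mathrm{HF}_{\mathrm{eq}}(R_{a t})$. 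One wants to show that multiplication-type action of $1$ is the identity on $\mathrm{SH}_{\mathrm{eq}}$, while anything factoring through the negative-slope complex acts by zero in the colimit. This would force $\mathrm{SH}_{\mathrm{eq}}=0$, contradicting the hypothesis.

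Concretely, I would implement this ``action'' via the pair-of-pants product, defined in the same hybrid manner as PSS. Its existence is assumed at the level of homology, with $1$ a two-sided unit. The key geometric input is a product $\mathrm{HF}_{\mathrm{eq}}(R_{-\epsilon t})\otimes \mathrm{HF}_{\mathrm{eq}}(\varphi_t)\to \mathrm{HF}_{\mathrm{eq}}(R_{-\epsilon t}\varphi_t)$. Iterating with the colimit, a class $z=1\cdot z$ lies in the image of $\mathrm{HF}_{\mathrm{eq}}(R_{-\epsilon t}\varphi_t)$ whenever $z$ comes from $\varphi_t$. By induction, $z$ comes from $R_{-k\epsilon t}\varphi_t$ for all $k$, hence from $R_{-Nt}$ for arbitrarily large $N$. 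Finally, show that the maps $\mathrm{HF}_{\mathrm{eq}}(R_{-Nt})\to \mathrm{SH}_{\mathrm{eq}}$ factor through $\mathrm{HF}_{\mathrm{eq}}(R_{-\epsilon t})\to\mathrm{HF}_{\mathrm{eq}}(R_{\epsilon t})$ with action window tending to $-\infty$. Action/energy filtration then forces $z=0$.

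The main obstacle is the product structure, which the paper has not constructed; this is where I expect the real work. A fallback avoiding products is to use Theorem \ref{theorem:zig-zag} together with the translation invariance trick. Conjugating by a Reeb loop is unavailable in general, so I would first try the product route, building only the homology-level pair of pants on equivariant complexes.
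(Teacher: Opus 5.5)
\paragraph{Where the argument breaks.} Your opening reductions are sound. Monotonicity lets you shrink $\epsilon$. Theorem~\ref{theorem:commutative-square}, together with the two PSS equivalences, identifies the map with $\mathrm{HM}_{\mathrm{eq}}(W,\partial W)\to\mathrm{HM}_{\mathrm{eq}}(W)\to\mathrm{SH}_{\mathrm{eq}}(W)$. And, granting the product of Theorem~\ref{theorem:product-structure}, an eternal unit does make every class eternal.

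The proof fails at the last step, the claim that a class coming from $\mathrm{HF}_{\mathrm{eq}}(R_{-Nt})$ for all $N$ must vanish ``by action filtration.'' There is no shrinking action window. For every $N$, the complex $\mathrm{CF}_{\mathrm{eq}}(R_{-Nt})$ contains the same constant orbits, with action near $0$, that generate $\mathrm{CF}_{\mathrm{eq}}(R_{-\epsilon t})\cong\mathrm{CM}_{\mathrm{eq}}(W,\partial W)$. Which of these classes lift to slope $-N$ is decided by the connecting maps of the cones at the crossings, not by action.

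The principle itself is also false in general. Already non-equivariantly, on $T^{*}S^{2}$ over a field of characteristic $\neq 2$, the Thom class lifts to every negative slope and maps to the Euler class $\pm 2[\mathrm{pt}]\neq 0$ in $\mathrm{SH}$. So ``every class is eternal'' gives no contradiction by itself; the argument must use something specific to the unit.

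\paragraph{The real obstruction and the missing ideas.} Non-equivariantly, what finishes the proof is nilpotency: $H^{*}(W,\partial W)\to H^{*}(W)$ lands in positive degrees, so a hit unit would be nilpotent. Equivariantly this breaks down, and this, rather than constructing a product, is the real obstacle. For $W=\C^{n}$, the image of $\mathrm{HM}_{\mathrm{eq}}(W,\partial W)\to\mathrm{HM}_{\mathrm{eq}}(W)=\mathbf{k}[[x]]$ is generated by the equivariant Euler class, a unit times a power $x^{m}$. So $x^{m}\cdot 1$ lies in the image of $\mathrm{HF}_{\mathrm{eq}}(R_{-\epsilon t})$, yet it is nonzero in $\mathrm{SH}_{\mathrm{eq}}(\C^{n})$, on which $x$ acts invertibly. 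Your heuristic that anything factoring through the negative-slope complex ``acts by zero'' is therefore false here. The proof has to separate $1$ from its $x$-multiples.

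The paper does this with three inputs your proposal never uses:
\begin{enumerate}
\item \emph{Localization at a fixed point.} The image of $1$ in the free quotient of $\mathrm{HM}_{\mathrm{eq}}(W)$ is not divisible by $x$. This is where the fixed point and $\mathrm{char}\,\mathbf{k}=p$ enter.
\item \emph{Reduction mod $x$.} Write the hit as $1-b\mapsto 0$, with $b$ in the image of $\mathrm{CM}_{\mathrm{eq}}(-X)$. The image of $b$ in $\mathrm{HM}_{\mathrm{neq}}(W)$ is nilpotent. Using $(1-b)^{p^{k}}=1-b^{p^{k}}$ and the exact sequence $\mathrm{HM}_{\mathrm{eq}}\xrightarrow{x}\mathrm{HM}_{\mathrm{eq}}\to\mathrm{HM}_{\mathrm{neq}}$, one gets $b^{p^{k}}=xa$, so $1-xa\mapsto 0$.
\item \emph{Torsion cones.} All cones of continuation maps are $x$-torsion (Theorem~\ref{theorem:main-infinity}\ref{theorem:main-infinity-1}). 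Hence the free quotient of $\mathrm{HF}_{\mathrm{eq}}(R_{\epsilon t})\cong\mathrm{HM}_{\mathrm{eq}}(W)$ injects into that of $\mathrm{SH}_{\mathrm{eq}}(W)$.
\end{enumerate}
By (1), $1-xa$ is nonzero in the free quotient, so by (3) it cannot map to $0$, contradicting (2).
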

\begin{proof}
  This is related to the results of
  \cite{ritter-jtopol-2013,
    ritter_negative_line_bundles,
    cant-hedicke-kilgore-arXiv-2023,
    cant-arXiv-2024,
    djordjevic-uljarevic-zhang-arXiv-2025} on
  non-existence of eternal classes in the
  symplectic cohomology of Liouville manifolds.

  Let us fix an equivariant Morse--Borel datum $X$
  on $W$ as in \S\ref{sec:equiv-pseud}, and
  consider the chain complexes:
  \begin{itemize}
  \item $\mathrm{CM}_{\mathrm{eq}}(X)$,
  \item
    $\mathrm{CM}_{\mathrm{neq}}(X)=\mathrm{CM}_{\mathrm{eq}}(X)\otimes_{\mathbf{k}[[x]]}\mathbf{k}$
    (\emph{the non-equivariant quotient}).
  \end{itemize}
  Since $\mathrm{HM}_{\mathrm{eq}}(X)$ is a
  finitely generated $\mathbf{k}[[x]]$-module, it fits into a canonical short exact sequence $0\to T\to \mathrm{HM}_{\mathrm{eq}}(X)\to F\to 0$,
  % \begin{equation*}
  %   \begin{tikzcd}
  %     0\arrow[r]&T\arrow[r]&\mathrm{HM}_{\mathrm{eq}}(X)\arrow[r]&F\arrow[r]&0
  %   \end{tikzcd}
  % \end{equation*}
  where $T$ is a torsion
  $\mathbf{k}[[x]]$-module and $F$ is a free
  $\mathbf{k}[[x]]$-module (by classification
  of modules over a principal ideal
  domain).
  \begin{claim}\label{subclaim:free-quot}
    The projection of the unit element to the
    free quotient $F$ cannot be written as
    $x a$ for any element $a$.
  \end{claim}
  \begin{proof}[Proof of Claim
    \ref{subclaim:free-quot}]
    This follows from a similar argument to
    the one used in the localization
    statement
    \S\ref{sec:proof-prop-not-torsion}; one
    can reduce the general case to the case
    $W=\mathrm{pt}$, by ``localizing'' at a
    local minimum $q_{0}$ on the fixed point
    submanifold. Here we need to again appeal
    to the fact that the characteristic of
    the coefficient field $\mathbf{k}$
    matches the order of the prime cyclic
    group $G$, to conclude that the
    projection of the unit to the free quotient
    cannot be written as $xa$, since in this
    case:
    \begin{equation}\label{eq:cohomology-of-BG}
      \mathrm{HM}^{*}_{\mathrm{eq}}(\mathrm{pt})\simeq \mathbf{k}[[x]]1\oplus \mathbf{k}[[x]]\theta,
    \end{equation}
    where $\theta$ is an element of degree
    $1$ (here we assume that $p\ge 3$, for
    simplicity, the result is similar and
    easier in the case $p=2$).
  \end{proof}
  
  Using Theorem \ref{theorem:commutative-square}, if $1$ lies in the image of
  $\mathrm{HF}_{\mathrm{eq}}(R_{-\epsilon
    t})\to \mathrm{SH}_{\mathrm{eq}}$,
  then there would be some element
  $b\in \mathrm{HM}_{\mathrm{eq}}(W)$ so that:
  \begin{enumerate}[label=(\alph*)]
  \item\label{item:go-to-zero} $1-b$ is mapped
    (via PSS and continuation) to zero in
    $\mathrm{SH}_{\mathrm{eq}}$,
  \item\label{item:inwards-outwards} $b$ is
    represented by a cycle lying in the image of
    the Morse continuation map
    $\mathrm{CM}_{\mathrm{eq}}(-X)\to
    \mathrm{CM}_{\mathrm{eq}}(X).$
  \end{enumerate}

  Next we use the product structure on the
  equivariant cohomology
  $\mathrm{HM}_{\mathrm{eq}}(W)$; we do not
  need to assume that this product structure
  respects the $\mathbf{k}[[x]]$-module
  structure, and so we can use the soft
  construction in
  \S\ref{sec:pair-pants-product} (below); in
  any case, we only need the product structure
  in Morse theory for the next steps.

  Observe that $(1-b)^{kp}=1-b^{kp}$ for any
  number $k$ (where $p$ is the prime
  characteristic), and so that
  \ref{item:go-to-zero} and
  \ref{item:inwards-outwards} still hold, with $b$
  replaced by $b^{kp}$.

  By taking $k$ large enough, we can then assume
  $b^{kp}$ is mapped to zero in the
  non-equivariant group
  $\mathrm{HM}_{\mathrm{neq}}(W)$ (the homology of
  the complex
  $\mathrm{CM}_{\mathrm{neq}}(X)$). This is
  because every cycle lying in the image of
  $\mathrm{CM}_{\mathrm{neq}}(-X)$ is
  nilpotent\footnote{It is important here that
    $\omega$ vanishes on holomorphic spheres; see
    \cite[\S1.7]{cant-arXiv-2024}. Otherwise
    quantum corrections can ruin this argument.},
  for degree reasons; this is same argument as
  \cite[\S2.3.3]{cant-hedicke-kilgore-arXiv-2023}.

  In particular, it follows by the long-exact
  sequence:
  \begin{equation*}
    \mathrm{HM}_{\mathrm{eq}}(W)\to \mathrm{HM}_{\mathrm{eq}}(W)\to \mathrm{HM}_{\mathrm{neq}}(W),
  \end{equation*}
  that $b^{kp}=xa$ for some
  $a\in \mathrm{HM}_{\mathrm{eq}}(W)$. Thus
  $1-xa$ is mapped to zero in
  $\mathrm{SH}_{\mathrm{eq}}(W)$, via PSS and
  continuation.

  The final step is to pass to the free
  quotient using:
  \begin{claim}\label{claim:injective}
    The free quotient of
    $\mathrm{HM}_{\mathrm{eq}}(W)\simeq
    \mathrm{HF}_{\mathrm{eq}}(R_{\epsilon
      t})$ is mapped injectively into the
    free quotient of
    $\mathrm{SH}_{\mathrm{eq}}(W)$ (the
    quotient by the torsion submodule).
  \end{claim}
  \begin{proof}[Proof of Claim \ref{claim:injective}]
    Suppose that $\zeta\in \mathrm{HF}_{\mathrm{eq}}(R_{\epsilon t})$ is not torsion; consider the long exact sequence of the cone:
    \begin{equation*}
      \dots \to \mathrm{Cone}\to \mathrm{HF}_{\mathrm{eq}}(R_{\epsilon t})\to \mathrm{HF}_{\mathrm{eq}}(R_{s t})\to \mathrm{Cone}\to \mathrm{HF}_{\mathrm{eq}}(R_{\epsilon t})\to \dots.
    \end{equation*}
    Suppose that $\zeta$ becomes torsion
    after continuation to $s$. Then
    $x^{k}\zeta$ is sent to zero for some
    power $k$ large enough. Thus $x^{k}\zeta$
    lies in the image of the map from Cone to
    $\mathrm{HF}_{\mathrm{eq}}(R_{\epsilon
      t})$. But the $x^{k+\ell}\zeta=0$ for
    some further power $\ell$, because all
    cones are torsion (by Theorem
    \ref{theorem:main-infinity}). This
    contradicts the hypothesis that $\zeta$
    is not torsion, and proves the claim.
  \end{proof}
  
  Then, combining \ref{claim:injective} and
  \ref{subclaim:free-quot}, we conclude
  $1-xa$ must inject into the free quotient
  of $\mathrm{SH}_{\mathrm{eq}}(W)$,
  contradicting the earlier deduction that
  $1-xa$ was sent to zero. This completes the
  proof of the lemma.
\end{proof}

\subsection{Pair of pants product}
\label{sec:pair-pants-product}

In this section, we briefly outline the
construction of the pair-of-pants product on
equivariant Floer cohomology, which is required to
establish the sub-additivity property
\ref{item:R-sub-additivity} in
\S\ref{sec:sub-additivity}. Due to the technical
nature of this construction, the similarity with
the main results of \cite{cant-arXiv-2024}, and
the existence of similar equivariant product
structures in
\cite{gonzalez-mak-pomerleano-arXiv-2023}, we do
not give a detailed construction.

The structural theorem we claim is:
\begin{theorem}\label{theorem:product-structure}
  Assume the setup and conclusion of Theorem
  \ref{theorem:main-infinity}. Then there exists
  a natural transformation $\Pi$ between the two
  functors:
  \begin{itemize}
  \item
    $\varphi_{0,t},\varphi_{1,t}\in
    \mathrm{h}\mathscr{C}\times
    \mathrm{h}\mathscr{C}\mapsto
    \mathrm{HF}_{\mathrm{eq}}(\varphi_{0,t})\otimes
    \mathrm{HF}_{\mathrm{eq}}(\varphi_{1,t})$,
  \item constant functor
    $\mathrm{h}\mathscr{C}\times\mathrm{h}\mathscr{C}\to
    \mathrm{SH}_{\mathrm{eq}}$,
  \end{itemize}
  and $\Pi_{\varphi_{0,t},\varphi_{1,t}}$ factors
  through the natural map
  $\mathrm{HF}_{\mathrm{eq}}(\varphi_{0,t}\varphi_{1,t})\to
  \mathrm{SH}_{\mathrm{eq}}$.

  Furthermore, the induced product:
  \begin{equation*}
    \Pi:\mathrm{SH}_{\mathrm{eq}}\otimes \mathrm{SH}_{\mathrm{eq}}\to \mathrm{SH}_{\mathrm{eq}}
  \end{equation*}
  is unital with $1$ acting as the unit element.
\end{theorem}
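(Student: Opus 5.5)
We would construct $\Pi$ at chain level by counting equivariant pair-of-pants solutions, by analogy with the non-equivariant construction in \cite{cant-arXiv-2024} and the equivariant products of \cite{gonzalez-mak-pomerleano-arXiv-2023}. The plan has two ingredients. For each pair of $0$-simplices $\varphi_{0,t},\varphi_{1,t}\in\mathscr{C}_{0}(Y)$ with composition $\varphi_{0,t}\varphi_{1,t}$ off the discriminant, we first choose Borel data on a pair of pants $\Sigma$ with two inputs and one output. The Hamiltonian connection restricts near the inputs to the regular Borel data for $\varphi_{0,t}$ and $\varphi_{1,t}$. Near the output it restricts to Borel data whose ideal restriction is $\varphi_{0,t}\varphi_{1,t}$. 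In the convex end the connection is Liouville equivariant with non-positive curvature, exactly as in conditions \ref{item:curv-4}, \ref{item:curv-5} of \S\ref{sec:chain-homot-inverse}. We then parametrize by a single flow line $\pi:\R\to EG$ of $-V$ and use the same $\pi(s)$-dependence on all three ends. This is the ``diagonal'' Borel model, and it gives a $\mathbf{k}$-bilinear map
$$\mathrm{CF}_{\mathrm{eq}}(\varphi_{0,t})\otimes\mathrm{CF}_{\mathrm{eq}}(\varphi_{1,t})\to\mathrm{CF}_{\mathrm{eq}}(\varphi_{0,t}\varphi_{1,t}).$$
Because the theorem explicitly does not require $\mathbf{k}[[x]]$-bilinearity (it is only used through the soft argument in Lemma \ref{lemma:unit-not-eternal}), we are free to take the tensor product over $\mathbf{k}$ and to let only one input carry the $EG$ parameter. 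That avoids the genuinely hard $EG\times EG\to EG$ diagonal issue. The chain map property follows from the usual analysis of ends of $1$-dimensional moduli spaces, as in \S\ref{sec:defin-equiv-chain}. Compactness follows from the non-positive curvature and the energy estimates cited in Remark \ref{remark:energy-estimates}.

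The second step is naturality. Given $1$-simplices $\varphi_{0,s,t}$ and $\varphi_{1,s,t}$, the pointwise product $\varphi_{0,s,t}\varphi_{1,s,t}$ is again a non-negative path, so a gluing/homotopy argument shows the square commutes up to homotopy. In this square the product for the sources is followed by continuation along $\varphi_{0,s,t}\varphi_{1,s,t}$, and this is compared with continuation on each factor followed by the product for the targets. The homotopy is obtained by sliding the continuation region through the pants, as in standard TQFT-style arguments. Composing with $\mathrm{HF}_{\mathrm{eq}}(\varphi_{0,t}\varphi_{1,t})\to\mathrm{SH}_{\mathrm{eq}}$ gives a natural transformation from $\mathrm{HF}_{\mathrm{eq}}\otimes\mathrm{HF}_{\mathrm{eq}}$ to the constant functor. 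It factors through that map by construction, and it passes to the colimit as a product $\Pi$ on $\mathrm{SH}_{\mathrm{eq}}$. Independence of the auxiliary choices follows from convexity of the space of pants data satisfying the curvature constraints, which is the argument footnoted in \S\ref{sec:chain-homot-inverse}.

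For unitality we represent $1$ by $1(R_{a s t})$, using Lemma \ref{lemma:unit-well-defined-in-colimit}. We glue the PSS half-cylinder capped by Morse flow lines from minima onto the first input. The glued surface is a cylinder with data interpolating from $R_{at}\varphi_{t}$ back to $\varphi_{t}$ up to continuation. We then deform through admissible connections, again using convexity and maximum-principle arguments as in \S\ref{sec:chain-homot-inverse}, to the continuation map along $\varphi_{t}\le R_{at}\varphi_{t}$. In the colimit this continuation map is the identity. The main obstacle is this unit step. One must check that capping by the sum of equivariant minima, which involves the flow line $\pi$ on $EG$, commutes with gluing and does not produce $x$-corrections. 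We would handle it by choosing the PSS data to be $\eta$-independent, as in \S\ref{sec:equiv-pseud}, so the $EG$-factor decouples and only index-$0$ flow lines of $\pi$ contribute.
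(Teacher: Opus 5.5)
\textbf{Gap in the construction of the product.} The problem is your first step, the chain-level product. In the single-flow-line (``diagonal'') model both inputs sit over $\pi(+\infty)$. In a one-parameter family, $\pi$ can break with its input-side piece $\pi_{+}$ escaping up \emph{both} input ends at once. The limit then contains a pair of input cylinders parametrized by the same $\pi_{+}$. That stratum counts the equivariant differential of $W\times W$ with the diagonal $G$-action, not $d_{\mathrm{eq}}\otimes 1+1\otimes d_{\mathrm{eq}}$. So the ends of your $1$-dimensional moduli spaces do not give the chain-map equation on $\mathrm{CF}_{\mathrm{eq}}(\varphi_{0,t})\otimes\mathrm{CF}_{\mathrm{eq}}(\varphi_{1,t})$. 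This is exactly the $EG$ versus $EG\times EG$ problem, and tensoring over $\mathbf{k}$ does not remove it.

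Letting only one input carry the $EG$ parameter fails for a different reason. The higher components $d_{k}$, $k\ge 1$, of $d_{\mathrm{eq}}$ on the other factor never occur as boundary strata. At best your map therefore factors through $\mathrm{CF}_{\mathrm{eq}}(\varphi_{1,t})\to\mathrm{CF}_{\mathrm{neq}}(\varphi_{1,t})$. Naturality then forces $\Pi(a,b)$ to depend only on the image of $b$ in $\mathrm{SH}_{\mathrm{neq}}(W)$. By \eqref{eq:implication} that group vanishes whenever $\mathrm{SH}(W;\mathbf{k})=0$, for example when $W=\C^{n}$. So $\Pi\equiv 0$ there, which is incompatible with $1\neq 0$ being a unit (Proposition \ref{proposition:unit-not-torsion} and Claim \ref{claim:injective}). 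Unitality is precisely what sub-additivity needs.

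The paper avoids this by using Morse flow trees $\tau:T\to BG$: two input legs and one output leg meeting at a trivalent vertex. The trees are lifted to $EG$, the Borel parameter is set to $\eta=\bar{\tau}(p(z))$ via a collapsing map $p:\Sigma\to T$, and one counts rigid $G$-orbits of pairs $(\bar{\tau},u)$. Each input leg carries its own flow line, so breakings on different legs are independent. They produce $d_{\mathrm{eq}}$ on each input and on the output separately. What is given up is only (verified) $\mathbf{k}[[x]]$-bilinearity, not the equivariance of either input.

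\textbf{Two secondary points.} First, independence of choices does not follow from convexity. The convexity in \S\ref{sec:chain-homot-inverse} relies on \ref{item:curv-4}, namely that outside $\Omega$ the connection lies in the span of a single Reeb field. Pants data whose ends restrict to arbitrary $\varphi_{0,t}$, $\varphi_{1,t}$, $\varphi_{0,t}\varphi_{1,t}$ do not satisfy this, and non-positive curvature is then a nonlinear condition. The paper says explicitly that this space may not be contractible. It obtains well-definedness only after composing with the colimit map, using the arguments of \cite{cant-arXiv-2024}.

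Second, restricting to pairs with $\varphi_{0,t}\varphi_{1,t}$ off the discriminant leaves $\Pi$ undefined on some objects of $\mathrm{h}\mathscr{C}\times\mathrm{h}\mathscr{C}$. The paper instead works with triples satisfying $\varphi_{0,t}\circ\varphi_{1,t}\le\varphi_{\infty,t}$.
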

The construction is similar to the non-equivariant
approach detailed in \cite[\S3]{cant-arXiv-2024},
adapted to the equivariant setting, and with
coefficients over a field $\mathbf{k}$ with
arbitrary characteristic.

The generalization from ``non-equivariant'' to
``equivariant'' is akin to the generalization of
``non-family Floer cohomology'' to ``family Floer
cohomology'' in the sense of
\cite{hutchings-agt-2008}. In this context, we
refer the reader to the discussion in
\cite[\S3.6.3]{brocic-cant-arXiv-2025} which
constructs (in an unsurprising manner) a product
structure on some version of family Floer
cohomology. See
\cite{gonzalez-mak-pomerleano-arXiv-2023} for an
approach in this vein.

Let $\Sigma = \mathbb{C} \setminus \{0, 1\}$ be
the pair-of-pants surface equipped with two
positive cylindrical inputs $C_0, C_1$ and one
negative output $C_\infty$. For a triple of
contact isotopies
$\varphi_{0,t}, \varphi_{1,t}, \varphi_{\infty,t}$
such that
$\varphi_{0,t} \circ \varphi_{1,t} \le
\varphi_{\infty,t}$, we will consider a suitable space
of Hamiltonian connections on $SY \times \Sigma$
whose ideal restrictions match the respective
isotopies at the cylindrical ends. These Hamiltonian connections are constructed exactly as in \cite{cant-arXiv-2024}.

These lift to $G$-invariant Hamiltonian
connections on $S\partial W\times \Sigma$. The
subtle part is how to extend these to the compact
part of $W$ in a way which is compatible with
Borel data $\varphi_{i,\eta,t}$ extending
$\varphi_{i,t}$.

We do not really want to get into a technical
discussion of Hamiltonian connections on the pair
of pants surface (for this, we refer to the cited
references) but let us just say that:
\begin{itemize}
\item one can speak of families of Hamiltonian
  connections $\mathfrak{H}_{\eta}$ on
  $\Sigma\times W$, parametrized by points
  $\eta\in EG$, which agree with the connections
  determined by Borel data $\varphi_{i,\eta,t}$ in
  the ends, and whose ideal restriction are the
  $G$-invariant connections described above;
\item given a function $f:\Sigma\mapsto EG$, it
  makes sense to speak of the Hamiltonian
  connection obtained by setting $\eta=f(z)$ (in a
  similar manner to how we set $\eta=\pi(s)$ when
  defining the equivariant operations on
  cylinders).
\end{itemize}

Just as the equivariant operations in
\S\ref{sec:defin-equiv-chain}, etc, were defined
using flow lines of a pseudogradient on $BG$, the
product we are describing will be defined in terms
of \emph{Morse flow trees} on $BG$. There are
various approaches to this problem. For our
purposes, let us just suppose there is a
well-defined space $\mathscr{T}$ of trees $\tau$
which have evaluation maps $\tau:T\to BG$ and
which agree with flow lines of our pseudogradients
on the legs of the flow tree. Let us denote by
$\bar{\tau}$ the lift of a flow tree to $EG$. Here
$T$ is the underlying space of a flow tree (i.e.,
three copies of $[0,\infty)$ connected at the
vertex).

\begin{figure}[h]
  \centering
  \begin{tikzpicture}[scale=.5]
    \draw[draw=none]
    (-2,0)coordinate(Z0)--(0,0)coordinate(Z1)--(2,0)--+(1,1)coordinate(X0)--+(3,1)coordinate(X1)--+(1,-1)coordinate(Y0)--+(3,-1)coordinate(Y1);

    \draw (Z0) circle (0.2 and 0.4) (Z1) circle
    (0.2 and 0.4) (X0) circle (0.2 and 0.4) (X1)
    circle (0.2 and 0.4) (Y0) circle (0.2 and 0.4)
    (Y1) circle (0.2 and 0.4); \foreach \x in
    {X,Y,Z} { \draw
      (\x0)+(0,0.4)coordinate(\x2)--coordinate(\x6)+(2,0.4)coordinate(\x3)
      +(0,-0.4)coordinate(\x4)--+(2,-0.4)coordinate(\x5);
    } \draw (Z3)to[out=0,in=180](X2)
    (X4)to[out=180,in=180](Y2)
    (Z5)to[out=0,in=180](Y4);

    \node at (X6) [above] {$C_{1}$}; \node at (Y6)
    [above] {$C_{0}$}; \node at (Z6) [above]
    {$C_{\infty}$};

    \begin{scope}[shift={(10,0)}]
      \draw (0,0)to[out=60,in=180](3,1) (0,0)to[out=-60,in=180](3,-1) (0,0)--(-3,0);
    \end{scope}
    
  \end{tikzpicture}
  \caption{Pair of pants surface and the flow tree $T$.}
  \label{fig:pop}
\end{figure}
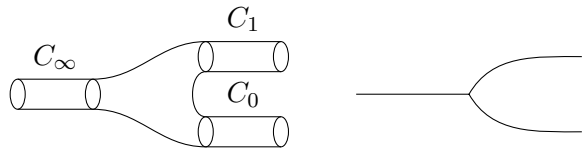

The moduli space used to define the product
consists of rigid pairs $(\bar{\tau}, u)$, where
$\tau\in \mathscr{T}$, and $u: \Sigma \to W$ is a
solution to the Floer equation associated to the
Hamiltonian connection $\mathfrak{H}_{\eta}$
obtained by setting $\eta=\bar{\tau}(p(z))$ where
$p:\Sigma\to T$ is an appropriate map sending the
pair of pants surface onto the trivalent
graph. This moduli space carries a $G$-action, and
we count the rigid $G$-orbits. The asymptotics are
interpreted using the same language of
``distinguished lifts'' common to all of our
equivariant operations.

To make a long story short, this induces a chain
map which descends to a product on homology:
\begin{equation*}
  \ast: \mathrm{HF}_{\mathrm{eq}}(\varphi_{0,t}) \otimes \mathrm{HF}_{\mathrm{eq}}(\varphi_{1,t}) \to \mathrm{HF}_{\mathrm{eq}}(\varphi_{\infty,t}).
\end{equation*}
This a priori depends on the choice of Hamiltonian
connections $\mathfrak{H}_{\eta}$ used (unlike the
setting of Borel data, it is not clear whether the
space of Hamiltonian connections on $SY$ with
non-positive curvature behaves as a contractible
space). However, the same arguments used in
\cite{cant-arXiv-2024} show that the map obtained
by post-composing $\ast$ with the colimit map
yield a well-defined product
$\Pi_{\varphi_{0,t},\varphi_{1,t}}$ as in Theorem
\ref{theorem:product-structure}.

By standard arguments, this operation satisfies
two properties:
\begin{enumerate}
\item The product commutes with the continuation
  maps.
\item The element
  $1 \in \mathrm{SH}_{\mathrm{eq}}(W)$ from in
  \S\ref{sec:unit-element} is the unit for this
  product.
\end{enumerate}
These ensure that if the colimit maps for
$\varphi_{0,t},\varphi_{1,t}$ hit the unit, then
the colimit map for $\varphi_{\infty,t}$ also hits
the unit. This, as in \cite{cant-arXiv-2024}, is
the mechanism used to verify the sub-additivity of
the spectral invariants
\ref{item:R-sub-additivity}.

\begin{remark}
  What we \emph{do not} verify is that the
  resulting product structure respect the
  $\mathbf{k}[[x]]$-module structure. This seems
  to be a subtle point. We do believe the product
  can be made to be $\mathbf{k}[[x]]$-linear on
  the level of homology groups. Whether this can
  be done on chain level seems far less
  certain. We leave this question for future
  research. If one proves the product structure is
  $\mathbf{k}[[x]]$-linear, then the invariants of
  type $\mu$ will be super-additive.
\end{remark}

\subsection{The axioms for the spectral
  invariants}
\label{sec:axioms-spectr-invar}

We establish the axioms \ref{item:R-spectrality}
through \ref{item:R-sub-additivity} for the
spectral invariants $c_{R}$ defined in
\eqref{eq:defin-spec-inv} in
\S\ref{sec:defin-meas}.

\subsubsection{Spectrality}
\label{sec:spectrality}

Property \ref{item:R-spectrality} follows from
Theorem \ref{theorem:zig-zag} and the fact the
unit is not eternal
\S\ref{sec:unit-element-not}. Indeed, the unit not
being eternal implies $c_{R}(\varphi_{t})\in
\R$. Having established this finitness, if $s$ is
not in the spectrum, then $s$ cannot be the
infimal value for which
$\mathrm{HF}(\varphi_{t}^{-1}\circ R_{st})\to
\mathrm{SH}_{\mathrm{eq}}(W)$ hits the unit
element, since the continuation maps for slightly
lower values of $s$ are isomorphisms by Theorem
\ref{theorem:zig-zag}.

Moreover, since the statement of Theorem
\ref{theorem:zig-zag} includes the refinement by
the $W$-contractible discriminant, we conclude
that

\subsubsection{Monotonicity}
\label{sec:monotonicity}

Property \ref{item:R-monotonicity} follows from
the fact that, if
$\varphi_{0,t}\le \varphi_{1,t}$, then there is a
1-simplex
$R_{st}\circ \varphi_{1,t}^{-1}\to R_{st}\circ
\varphi_{0,t}^{-1},$ for any value of $s$.

\subsubsection{Continuity from above}
\label{sec:cont-from-above}

Property \ref{item:R-continuity-from-above}
follows by definition when computing
$c_{R}(\varphi_{t})$ when $\varphi_{t}$ lies on
the discriminant. Otherwise it follows from
Theorem \ref{theorem:zig-zag}.

\subsubsection{Normalization}
\label{sec:normalization}

Property \ref{item:R-normalization} follows from
Lemma \ref{lemma:unit-not-eternal} in
\S\ref{sec:unit-element-not}.

\subsubsection{Sub-additivity}
\label{sec:sub-additivity}

Property \ref{item:R-sub-additivity} follows from
the existence of the pair of pants product
outlined in \S\ref{sec:pair-pants-product}; once
the product structure is set-up, the argument is
exactly the same as one used to prove
\cite[Theorem~4]{cant-arXiv-2024} and
\cite[Theorem~2.11]{djordjevic-uljarevic-zhang-arXiv-2025}.

\subsection{The axioms for the integer-valued
  measurements}
\label{sec:axioms-integ-valu}

In this section, we establish the axioms
\ref{item:G-monotonicity} through
\ref{item:G-discriminant} for the measurement
$\mu$ in \S\ref{sec:defin-meas}. We also prove the
implication \eqref{eq:implication} concerning the
vanishing of $\mathrm{SH}_{\mathrm{neq}}(W)$, and
Proposition \ref{prop:agrees-with-CZ} relating the
values of $\mu$ on linear symplectic isotopies
with the Conley-Zehnder index.

\subsubsection{On the non-equivariant symplectic
  cohomology}
\label{sec:non-equiv-sympl}

In this section we prove implication
\eqref{eq:implication}. Observe that for a Borel
data $(\psi_{\eta,t},J)$ it holds that:
\begin{equation}\label{eq:cf-neq}
  \mathrm{CF}_{\mathrm{neq}}(\psi_{\eta,t},J)=
  \left\{
    \begin{aligned}
      &\mathrm{CF}(\psi_{v_{0,e}})\otimes \mathfrak{o}(v_{0})\oplus \mathrm{CF}(\psi_{v_{1,e}})\otimes \mathfrak{o}(v_{1})&&\text{if }p\ge 3,\\
      &\mathrm{CF}(\psi_{v_{0,+}})&&\text{if }p=2,
    \end{aligned}
  \right.
\end{equation}
where the differential only counts the flow lines
which lie above the pole $[1:0:\cdots]$ in
projective space (either $\mathbb{C}P^{\infty}$ or
$\mathbb{R}P^{\infty}$, depending on $p\ge
3$). See \S\ref{sec:defin-infin-funct} and
\S\ref{sec:defin-borel-equiv} for details on the
chain complex.

It follows fairly tautologically that
$\mathrm{SH}_{\mathrm{neq}}(W)=\mathrm{SH}(W;\mathbf{k})$
in the case $p=2$, so we henceforth assume
$p\ge 3$.

Recall $\mathrm{SH}_{\mathrm{neq}}(W)$ is the
colimit of
$\mathrm{HF}_{\mathrm{neq}}(\psi_{\eta,t},J)$, and
so we need to prove this colimit vanishes under
the assumption that the ``ordinary'' symplectic
cohomology $\mathrm{SH}(W;\mathbf{k})$
vanishes. We will use a spectral sequence
argument.

With respect to the direct sum decomposition in
\eqref{eq:cf-neq} we can write the differential on
$\mathrm{CF}_{\mathrm{neq}}(\psi_{\eta,t},J)$ as a
lower triangular matrix, where the diagonal
entries are the ordinary Floer differentials.

Now, for any element
$e\in \mathrm{SH}_{\mathrm{neq}}(W)$, we can find
some $\psi_{\eta,t}$ so that $e$ lies in the image
of the continuation map from
$\mathrm{HF}_{\mathrm{neq}}(\psi_{\eta,t},J)$. This
element $e$ then splits into a summand
$e_{0}+e_{1}$. Since the element $e_{1}$ is a
cycle, and the ordinary symplectic cohomology
vanishes, we may assume (by replacing
$\psi_{\eta,t}$ by something closer to the
colimit) that $e_{0}$ is an exact cycle, say
$e_{0}=df_{0}$. Then $e$ is cohomologous to
$e_{1}'=e_{1}+\Delta f_{0}$, where $\Delta$ is the
off-diagonal term in the differential. But now
$e_{1}'$ is a cycle. By passing further into the
colimit \emph{and using the fact that the
  continuation maps are also lower triangular}, we
may suppose $e_{1}'$ is an exact cycle. But thus
the entire cycle $e$ is exact (up to passing
closer to the colimit). Since $e$ was an arbitrary
element of the colimit, we conclude
$\mathrm{SH}_{\mathrm{neq}}(W)=0$, as desired.

\subsubsection{Integer valuedness}
\label{sec:integer-valuedness}

Since $\mu(\varphi_{t})$ is defined as a supremum
of the set of integers $d$ for which $x^{-d}1$
lies in the image of the colimit map, to prove
$\mu(\varphi_{t})$ is finite, it is sufficient (to
obtain the integer valuedness of $\mu$) to prove
that this set of integers is non-empty and bounded
from above.

That the set of integers is bounded from above
follows from the structure theorem for finitely
generated modules over the ring
$\mathbf{k}[[x]]$. We argue by contradiction, and
suppose
$\mathrm{HF}_{\mathrm{eq}}(\varphi_{t})\to
\mathrm{SH}_{\mathrm{eq}}(W)$ can hit $x^{-d}1$
for arbitrarily large values of $d$. Since $x$
acts invertibly on $\mathrm{SH}_{\mathrm{eq}}(W)$,
the $x$-torsion part of
$\mathrm{HF}_{\mathrm{eq}}(\varphi_{t})$ is mapped
to zero under the colimit map. On the other hand,
since the cones of continuation maps are
$x$-torsion, the free part of
$\mathrm{HF}_{\mathrm{eq}}(\varphi_{t})$ is mapped
injectively into
$\mathrm{SH}_{\mathrm{eq}}(\varphi_{t})$. By our
assumption, there exists $a_{0},a_{d}$ in the free
part such that:
\begin{itemize}
\item $a_{0}\mapsto 1$
\item $a_{d}\mapsto x^{-d}1$.
\end{itemize}
But then $a_{0}=x^{d}a_{d}$ by the aforementioned
injectivity. In particular, $a_{0}$ can be divided
by $x^{d}$ for arbitrarily large $d$. In a
finitely generated $\mathbf{k}[[x]]$-module, this
can only happen if $a_{0}=0$, which implies
$1\in \mathrm{SH}_{\mathrm{eq}}(W)$ vanishes,
contradicting Lemma \ref{lemma:unit-not-eternal}
that the unit was not eternal.

To prove the set of integers is non-empty (i.e.,
$\mu$ is not $-\infty$), we need to prove that
\emph{some} power of the unit is hit. However,
since the cones of continuation maps are torsion,
the map
$\mathrm{HF}_{\mathrm{eq}}(R_{-at})\to
\mathrm{HF}_{\mathrm{eq}}(R_{\epsilon t})$ hits
$x^{k}1$ for large enough $k$, for any $a>0$. Thus
$\mu(R_{-at})$ is at least $-k$. By monotonicity
(see \S\ref{sec:monotonicity-1}) it follows that
$\mu(\varphi_{t})$ is at least $-k$ if
$\varphi_{t}$ is greater than $R_{-at}$. Since
$a>0$ was arbitrary, we conclude the desired
result.

\subsubsection{Monotonicity}
\label{sec:monotonicity-1}

Property \ref{item:G-monotonicity} follows from
the fact that, if
$\varphi_{0,t}\le \varphi_{1,t}$, then image of
the map
$\mathrm{HF}_{\mathrm{eq}}(\varphi_{0,t})\to
\mathrm{SH}_{\mathrm{eq}}(W)$ is contained in the
image of the map
$\mathrm{HF}_{\mathrm{eq}}(\varphi_{1,t})\to
\mathrm{SH}_{\mathrm{eq}}(W)$.

In particular, if
$\mathrm{HF}_{\mathrm{eq}}(\varphi_{0,t})\to
\mathrm{SH}_{\mathrm{eq}}(W)$ hits $x^{-d}1$, then
the same holds for $\varphi_{1,t}$; thus
$\mu(\varphi_{1,t})\ge \mu(\varphi_{0,t})$.

\subsubsection{Continuity from above}
\label{sec:cont-from-above-1}

For \ref{item:G-continuity} we use the same
argument as \ref{sec:cont-from-above}.

\subsubsection{Normalization}
\label{sec:normalization-1}

\ref{item:G-normalization} follows from Lemma \ref{lemma:unit-not-eternal} and \S\ref{sec:integer-valuedness}.

\subsubsection{Non-triviality}
\label{sec:non-triviality}

Every element of the colimit, including $x^{-d}1$,
lies in the image of
$\mathrm{HF}_{\mathrm{eq}}(\varphi_{t})\to
\mathrm{SH}_{\mathrm{eq}}(W)$ for some
zero-simplex $\varphi_{t}$. Thus
\ref{item:G-non-triviality} holds, i.e., $\mu$
attains arbitrarily large values.

\subsubsection{Discriminant}
\label{sec:discriminant}

Property \ref{item:G-discriminant} follows from
Theorem \ref{theorem:zig-zag}. Moreover, we have
the refinement from Theorem \ref{theorem:main-G}
where the discriminant is replaced by the
$W$-contractible discriminant (Definition
\ref{definition:W-contractible}), since the
statement of Theorem \ref{theorem:zig-zag}
includes this refinement.

\subsubsection{Agreement with the Conley-Zehnder
  index}
\label{sec:agre-with-conl}

In this section we prove Proposition
\ref{prop:agrees-with-CZ} that
$\mu(\varphi_{t})=\mathrm{CZ}(\varphi_{t})-n$
whenever $\varphi_{t}$ is a linear symplectic
isotopy whose time-1 map does not have $1$ as an
eigenvalue. In this setting, we focus only on the
case $G=\mathbf{k}=\Z/2\Z$.

The result follows easily from the fact that
$\varphi_{t}$ admits a canonical extension to
$\C^{n}$ as equivariant Borel data with a single
generator $\gamma(\varphi_{t})$ (the constant
orbit located at the origin). The underlying chain
complex is:
\begin{equation*}
  \mathbf{k}[[x]]\gamma(\varphi_{t}),
\end{equation*}
and all differentials vanish.

Now consider $s$ large enough that the
$\varphi_{t}$ admits a continuation to $R_{st}$,
and let $\mathfrak{c}$ be the continuation
map. Similarly let $\mathfrak{c}$ denote the
continuation map from $R_{\epsilon t}$ to $R_{st}$
for $\epsilon\in (0,1)$. It follows that:
\begin{itemize}
\item
  $\mathfrak{c}(\gamma(\varphi_{t}))=x^{k(\varphi_{t})}\gamma(R_{st}),$
\item
  $\mathfrak{c}(\gamma(R_{\epsilon
    t}))=x^{k(R_{\epsilon t})}\gamma(R_{st})=1$.
\end{itemize}
Note that we exclude the case that
$\mathfrak{c}=0$ by the structural theorems we
have proved above (e.g., otherwise $\mu$ would not
be a finite integer).

The integers $k(\varphi_{t})$ and
$k(R_{\epsilon t})$ can be computed using the
Fredholm index formula for Cauchy-Riemann
operators, and we have:
\begin{itemize}
\item
  $\mathrm{CZ}(\varphi_{t})-\mathrm{CZ}(R_{st})+k(\varphi_{t})=0,$
\item
  $\mathrm{CZ}(R_{\epsilon
    t})-\mathrm{CZ}(R_{st})+k(R_{\epsilon t})=0.$
\end{itemize}
Subtracting, we conclude:
\begin{equation*}
  \mathrm{CZ}(\varphi_{t})-n=k(R_{\epsilon t})-k(\varphi_{t}).
\end{equation*}
Because
$x^{k(\varphi_{t})}\gamma(R_{st})=x^{k(\varphi_{t})-k(R_{\epsilon
    t})}1$, we conclude
$\mu(\varphi_{t})=k(R_{\epsilon
  t})-k(\varphi_{t})$, and the desired result
follows.\hfill$\square$

\subsection{Acknowledgements}
\label{sec:acknowledgements}

First and foremost, the authors wish to
acknowledge and thank Joseph Helfer for a crucial suggestion leading to our Definition
\ref{definition:simplices}. Additionally, the
authors wish to thank Igor Uljarevi\'c for
invaluable discussions about contact Floer
homology. The first named author wishes to
thank Laurent Côté, and Julio Sampietro
Christ for enlightening discussions
surrounding equivariant cohomology. The third
named author wishes to thank Fabio Gironella and Zhengyi Zhou for
interesting discussions surrounding the paper
\cite{gironella-zhou-arXiv-2021}. The authors
also thank the organizers of the Simons
Center program: \emph{Contact geometry,
  general relativity, and thermodynamics}.

The first named author was partially supported by
funding from the Fondation Courtois, the ANR grant
CoSy, and the Université de Montréal DMS.

The second named author was partially supported by
an NSF Mathematical Sciences Postdoctoral Research
Fellowship, Award No. DMS-2503414.

The third named author is partially supported by
National Key R\&D Program of China
No. 2023YFA1010500, NSFC No. 12301081, and NSFC
No. 12361141812.

\bibliographystyle{amsalpha-doi}
\bibliography{citations}
\end{document}